\documentclass[reqno, 11pt]{amsart}
\usepackage{graphicx, color, amsmath, amsfonts, amssymb, amsthm, amscd}
\usepackage[colorlinks=true, pdfstartview=FitV, linkcolor=green,
  citecolor=green, urlcolor=green,pagebackref=false]{hyperref}
\usepackage[numeric,initials,nobysame]{amsrefs}
\usepackage[]{bbm,enumerate,paralist}
\setlength{\oddsidemargin}{5mm}
\setlength{\evensidemargin}{5mm}
\setlength{\textwidth}{150mm}
\setlength{\headheight}{0mm}
\setlength{\headsep}{12mm}
\setlength{\topmargin}{10mm}
\setlength{\textheight}{220mm}
\setcounter{secnumdepth}{2}

\numberwithin{equation}{section}


\DeclareMathSymbol{\leqslant}{\mathalpha}{AMSa}{"36} 
\DeclareMathSymbol{\geqslant}{\mathalpha}{AMSa}{"3E} 
\DeclareMathSymbol{\eset}{\mathalpha}{AMSb}{"3F}     
\renewcommand{\leq}{\;\leqslant\;}                   
\renewcommand{\geq}{\;\geqslant\;}                   
\newcommand{\Var} {{\ensuremath{\mathbb{V}\textup{ar}}}}



\newcommand{\maxtwo}[2]{\max_{\substack{#1 \\ #2}}} 




\makeatletter
\def\captionfont@{\footnotesize}
\def\captionheadfont@{\scshape}
\long\def\@makecaption#1#2{%
  \vspace{2mm}
  \setbox\@tempboxa\vbox{\color@setgroup
    \advance\hsize-6pc\noindent
    \captionfont@\captionheadfont@#1\@xp\@ifnotempty\@xp
        {\@cdr#2\@nil}{.\captionfont@\upshape\enspace#2}%
    \unskip\kern-6pc\par
    \global\setbox\@ne\lastbox\color@endgroup}%
  \ifhbox\@ne 
    \setbox\@ne\hbox{\unhbox\@ne\unskip\unskip\unpenalty\unkern}%
  \fi
  \ifdim\wd\@tempboxa=\z@ 
    \setbox\@ne\hbox to\columnwidth{\hss\kern-6pc\box\@ne\hss}%
  \else 
    \setbox\@ne\vbox{\unvbox\@tempboxa\parskip\z@skip
        \noindent\unhbox\@ne\advance\hsize-6pc\par}%
\fi
  \ifnum\@tempcnta<64 
    \addvspace\abovecaptionskip
    \moveright 3pc\box\@ne
  \else 
    \moveright 3pc\box\@ne
\nobreak
\vskip\belowcaptionskip
\fi
\relax
}
\makeatother


\def\writefig#1 #2 #3 {\rlap{\kern #1 truecm
\raise #2 truecm \hbox{#3}}}




\newtheorem{thm}{Theorem}[section]
\newtheorem{lem}[thm]{Lemma}
\newtheorem{prop}[thm]{Proposition}
\newtheorem{cor}[thm]{Corollary}
\newtheorem{rem}[thm]{Remark}




\newcommand{\cB}{\ensuremath{\mathcal B}}

\newcommand{\cF}{\ensuremath{\mathcal F}}
\newcommand{\cG}{\ensuremath{\mathcal G}}
\newcommand{\cH}{\ensuremath{\mathcal H}}

\newcommand{\cS}{\ensuremath{\mathcal S}}




\newcommand{\frB}{\ensuremath{\mathfrak B}}

\newcommand{\frp}{\ensuremath{\mathfrak p}}




\newcommand{\bbE}{{\ensuremath{\mathbb E}} }

\newcommand{\bbN}{{\ensuremath{\mathbb N}} }

\newcommand{\bbP}{{\ensuremath{\mathbb P}} }

\newcommand{\bbR}{{\ensuremath{\mathbb R}} }
\newcommand{\bbS}{{\ensuremath{\mathbb S}} }

\newcommand{\bbZ}{{\ensuremath{\mathbb Z}} }



\newcommand{\bS} {{\bf S}}

\newcommand{\bQ} {{\bf Q}}
\newcommand{\bW} {{\bf W}}



\newcommand{\rmD}{{\ensuremath{\mathrm D}} }

\newcommand{\rmT}{{\ensuremath{\mathrm T}} }




\newcommand{\ga}{\alpha}
\newcommand{\gb}{\beta}
\newcommand{\gga}{\gamma}            
\newcommand{\gd}{\delta}
\newcommand{\gvep}{\varepsilon}       
\newcommand{\gep}{\epsilon}           

\newcommand{\gz}{\zeta}

\newcommand{\gO}{\Omega}
\newcommand{\gl}{\lambda}

\newcommand{\gs}{\sigma}
\newcommand{\gS}{\Sigma}
\newcommand{\gt}{\theta}


\newcommand{\sfe}{{\sf e}}





\newcommand{\normI}[1] {\| #1 \|_{{\scriptscriptstyle 1}}}
\newcommand{\normII}[1]{\| #1 \|_{{\scriptscriptstyle 2}}}
\newcommand{\normIs}[1] {\left \| #1 \right \|_{{\scriptscriptstyle 1}}}

\newcommand{\normsup}[1]{\|#1\|_{{\scriptscriptstyle\infty}}}
\newcommand{\normTV}[1]{\|#1\|_{{\scriptscriptstyle \textrm{TV}}}}


\def\1{\ifmmode {1\hskip -3pt \rm{I}}
\else {\hbox {$1\hskip -3pt \rm{I}$}}\fi} 



\newcommand{\lb}{\left(}
\newcommand{\rb}{\right)}
\newcommand{\lbr}{\left\{}
\newcommand{\rbr}{\right\}}
\newcommand{\lan}{\left\langle}
\newcommand{\ran}{\right\rangle}
\newcommand{\labs}{\left |}
\newcommand{\rabs}{\right |}
\newcommand{\lsb}{\left [}
\newcommand{\rsb}{\right ]}
\newcommand{\lpr}{\left.}
\newcommand{\rpr}{\right.}



\newtheorem{bigthm}{Theorem}   



\newcommand{\wt}{\widetilde}


\newcommand{\grad}{\nabla}           

\newcommand{\wh}{\widehat}
\newcommand{\ol}{\overline}

\newcommand{\inv}[1]{\tfrac{1}{#1}}

\newcommand{\Escx}[1]{\bbE^{\textrm{SC}, {#1}}}

\newcommand{\Pst}{\bbP^{\star}}

\newcommand{\bydef}{\triangleq}

\newcommand{\minwith}{\land}
\newcommand{\maxwith}{\lor}

\newcommand{\symdiff}{\varDelta}

\newcommand{\forcenewline}{$ $ \\}

\setcounter{secnumdepth}{3}


\newcommand{\vq}{\wh{\sfe}}
\newcommand{\whS}{\wh{S}}

\newcommand{\whs}{\wh{s}}
\newcommand{\mix}{\textsc{mix}}
\newcommand{\one}{\mathbbm{1}}


\begin{document}

\title[]{Glauber dynamics for the mean-field Potts model}

\author{P.\ Cuff}
\address{Paul Cuff\hfill\break
Department of Electrical Engineering\\
Princeton University\\
Princeton, NJ 08544.}
\email{cuff@princeton.edu}
\urladdr{}

\author{J.\ Ding}
\address{Jian Ding\hfill\break
Department of Mathematics\\
Stanford University\\
Stanford, CA 94305.}
\email{jianding@math.stanford.edu}
\urladdr{}

\author{O.\ Louidor}
\address{Oren Louidor\hfill\break
Department of Mathematics\\
UCLA\\
Los Angeles, CA 90095.}
\email{louidor@math.ucla.edu}
\urladdr{}

\author{E.\ Lubetzky}
\address{Eyal Lubetzky\hfill\break
Microsoft Research\\
One Microsoft Way\\
Redmond, WA 98052-6399, USA.}
\email{eyal@microsoft.com}
\urladdr{}

\author{Y.\ Peres}
\address{Yuval Peres\hfill\break
Microsoft Research\\
One Microsoft Way\\
Redmond, WA 98052-6399, USA.}
\email{peres@microsoft.com}
\urladdr{}

\author{A.\ Sly}
\address{Allan Sly\hfill\break
Department of Statistics\\
UC Berkeley\\
Berkeley, CA 94720, USA.}
\email{sly@stat.berkeley.edu}
\urladdr{}

\setcounter{page}{1}

\begin{abstract}
We study Glauber dynamics for the mean-field (Curie-Weiss) Potts model with $q\geq 3$ states and show that it undergoes a critical slowdown at an inverse-temperature $\gb_s(q)$ strictly lower than the critical $\beta_c(q)$ for uniqueness of the thermodynamic limit. The dynamical critical $\gb_s(q)$ is the spinodal point marking the onset of metastability.

We prove that when $\beta<\gb_s(q)$ the mixing time is asymptotically $C(\gb, q) n \log n$ and the dynamics exhibits the cutoff phenomena, a sharp transition in mixing, with a window of order $n$.  At $\beta=\gb_s(q)$ the dynamics no longer exhibits cutoff and its mixing obeys a power-law of order $n^{4/3}$.  For $\beta>\gb_s(q)$ the mixing time is exponentially large in $n$.
Furthermore, as $\beta\uparrow \beta_s$ with $n$, the mixing time interpolates smoothly from subcritical to critical behavior, with the latter reached at a scaling window of $O(n^{-2/3})$ around $\beta_s$.
These results form the first complete analysis of mixing around the critical dynamical temperature --- including the critical power law --- for a model with a first order phase transition.
\end{abstract}
\maketitle
\vspace{-0.5cm}

\section{Introduction and Results}
\label{sec:Introduction}

We study the dynamics of the Potts model on the complete graph (mean-field) known as the {\it Curie-Weiss Potts} model.
For $n \geq 1$, $\gb \geq 0$, the Curie-Weiss Potts distribution is a probability measure on
$\gS_n = Q^V$ where $Q=\{1,\dots, q\}$ and $V=\{1,\dots, n\}$, defined by
\[
\mu_n(\gs) = Z_{\beta,n}^{-1} \exp \Big \{ (\gb/n) \sum_{u,v \in V} \one_{\gs(u) = \gs(v)} \Big \},
\]
where $\gs \in \gS_n$ and $Z_{\beta,n}$ is the normalizing constant. When $q=2$ this is the classic Ising model while in this paper we will focus on the case $q\geq 3$ for an integer $q$ (for an extension to non-integer $q$ via the random cluster model, see e.g.~\cite{Grimmett}). We use the standard notation
$\gb_c(q)$ for the (explicitly known) threshold value between the ordered and the disordered phases (see~\cite{CET}).

Throughout the paper $(\gs_t)_{t \geq 0}$ will denote the discrete time Glauber dynamics for this model, namely, starting from $\gs_0$, at each step we choose a vertex $u \in V$ uniformly and set
\[
    \gs_{t+1}(v) = \left\{
        \begin{array}{ll}
            \gs_t(v) & \textrm{if $v \neq u$} \\
            k & \textrm{with probability } \mu_n \lb \gs(u)=k \,\big|\, \gs(w)=\gs_t(w) \  \forall w \neq u \rb\mbox{ if $v=u$}.
        \end{array}
    \right.
\]
We denote by $P_n$ the transition kernel for this Markov process and by  $\bbP_{\gs_0}$ the underlying probability measure.
We will measure the distance between the distribution of the chain at time $t$ and its stationary distribution $\mu_n$ via the total-variation norm. Accordingly,
\[
d^{\gs_0}_t(n) = \normTV{\bbP_{\gs_0} \lb \gs_t \in \cdot \rb - \mu_n}
\quad \text{and} \quad
d_t(n) = \max_{\gs_0 \in \gS_n} d^{\gs_0}_t(n) .
\]
For $\gep \in (0,1)$, the $\gep$-mixing time is the number of steps until the total-variation distance to stationarity is at most $\gep$ in the worst case, i.e.:
\[
t_{\mix(\gep)}(n) = \inf \{ t : \: d_t(n) \leq \gep \}
\]
and by convention we set $t_{\mix}(n) := t_{\mix(1/4)}(n)$.
If for any fixed $\gep \in (0,1)$
\[
w_{\gep}(n) \bydef t_{\mix(\gep)}(n) - t_{\mix(1-\gep)}(n) = o(t_{\mix(1/4)}(n)) \quad \quad \text{as} \quad n \to \infty ,
\]
we say that the family of Markov chains exhibits the \emph{cutoff phenomenon}, which describes a sharp drop in the total variation distance from close to $1$ to close $0$ (in an interval of time of smaller order than $t_{\mix}(n)$ denoted as the \emph{cutoff window}).
Observe that cutoff occurs if and only if $t_{\mix(\delta)}(n) /  t_{\mix(\epsilon)}(n) \to 1$ as $n\to\infty$ for any fixed $\delta,\epsilon\in(0,1)$.

\subsection{Results}
\label{sub:Results}
We show that the dynamics for the Curie-Weiss model undergoes a critical slowdown
at an inverse-temperature $\gb_s(q) > 0$. This dynamical threshold is given by
\begin{equation}
\label{eqn:BDDef}
\gb_s(q) = \sup \lbr \gb \geq 0 : \:
	\lb 1 + (q-1) \mathrm{e}^{2\gb \frac{1 - qx}{q-1}} \rb^{-1} -x \neq 0
	\, \text{ for all } \, x \in (1/q, 1) \rbr.
\end{equation}
Unlike mean-field Ising, for which $\gb_s(2)=\gb_c(2)=1$, the dynamical transition for $q\geq 3$
occurs at a strictly higher temperature than the static phase transition, i.e., $\gb_s(q) < \gb_c(q)$.

\begin{figure}
\centering
\includegraphics[width=0.5\textwidth]{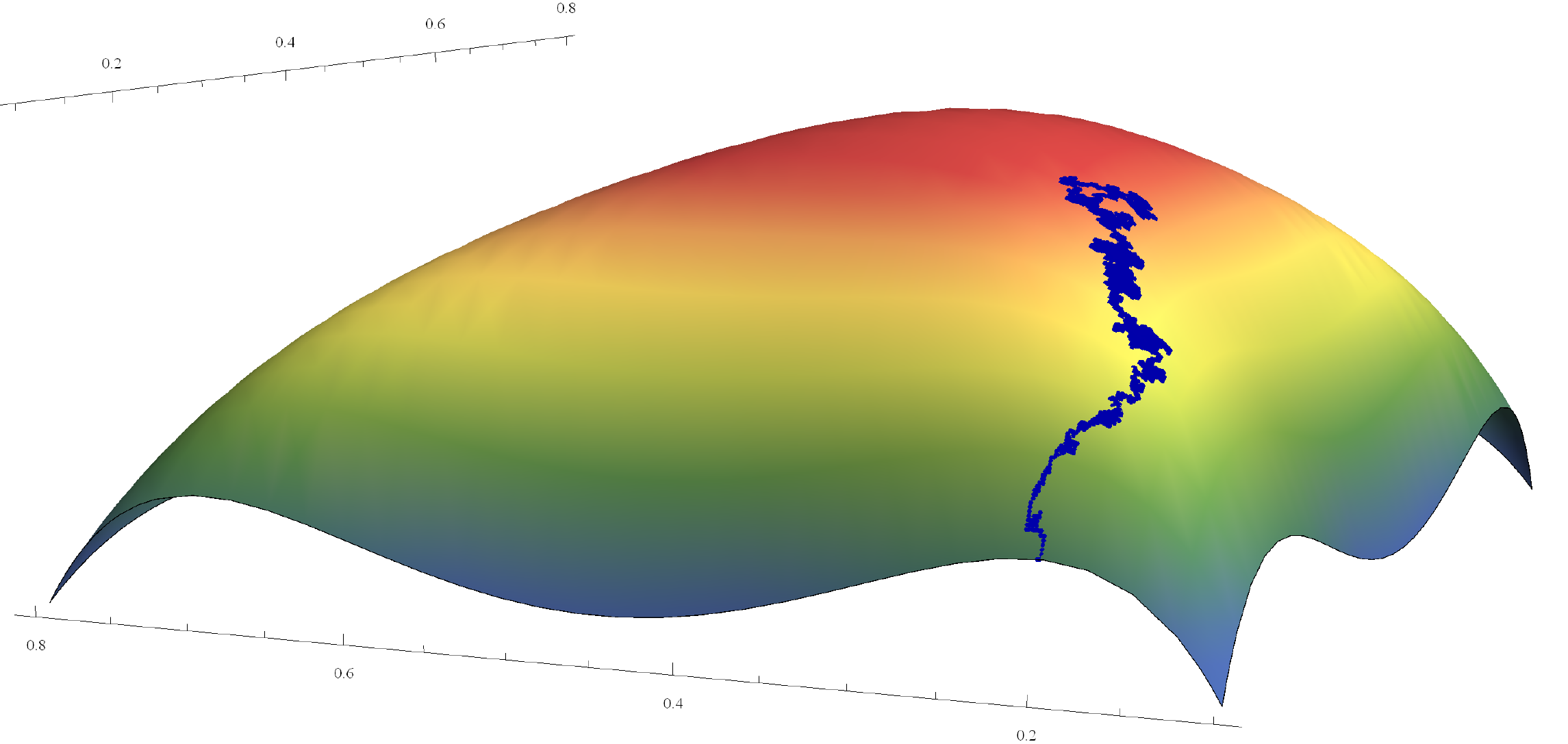}
\caption{Rapid mixing with cutoff in the subcritical regime of $\beta<\gb_s(q)$ for $q=3$.
Blue curve marks the magnetization vector of the Glauber dynamics along time.
}
\label{f:subcritical}
\end{figure}

Our first result addresses the regime $\gb < \gb_s(q)$, where rapid mixing occurs within $O(n \log n)$ steps and the dynamics exhibits cutoff  with a window of size $O(n)$ (see Fig.~\ref{f:subcritical}).
\begin{bigthm}
\label{thm:SubCriticalCutoff}
Let $q\geq 3$ be an integer. If $\gb < \gb_s(q)$ then the Glauber dynamics for the $q$-state Curie-Weiss Potts model exhibits cutoff at mixing time
\begin{equation}
\label{eqn:SubscriticalMixingAsymptotics}
	t_{\mix}(n) = \ga_1(\gb, q) n \log n 	
\end{equation}
with cutoff window $w_{\gep}(n) = O_{\gep} (n)$ where $\ga_1(\gb, q) = [2 \lb 1 - 2\gb/q \rb]^{-1}$.
\end{bigthm}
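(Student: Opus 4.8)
The plan is to project everything onto the $(q{-}1)$-dimensional magnetization statistic, analyse it through a fluid (ODE) limit together with a fluctuation analysis, and then transfer the conclusion back to the full configuration. For $\gs\in\gS_n$ write $S(\gs)=(S^1,\dots,S^q)$ with $S^k=|\{v:\gs(v)=k\}|$ and $\mr S=S/n\in\gD_q$ (the simplex). Since both $\mu_n$ and the Glauber update law at a vertex depend on $\gs_t$ only through $S(\gs_t)$, the process $(S_t)_{t\geq0}$ is itself a Markov chain; let $\mr\mu_n$ be the law of $S$ under $\mu_n$ and $S^\star=(n/q,\dots,n/q)$. Because $S_t$ is a deterministic function of $\gs_t$ we already have $d^{\gs_0}_t(n)\geq\normTV{\bbP_{\gs_0}(S_t\in\cdot)-\mr\mu_n}$. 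For the upper bound it suffices to prove: (a) the magnetization chain mixes to $\mr\mu_n$ by time $\ga_1(\gb,q)\,n\log n+O_\gep(n)$, uniformly in the start; and (b) once $\|S_t-S^\star\|=O(\sqrt n)$, the chain from any fixed configuration can be coupled with a stationary copy so that the two full configurations agree within a further $O_\gep(n)$ steps. Combining (a) and (b) gives $t_{\mix(\gep)}(n)\leq\ga_1 n\log n+O_\gep(n)$, and a matching lower bound (below) then establishes cutoff at $\ga_1(\gb,q)\,n\log n$ with window $O_\gep(n)$.

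\textbf{Drift and the limiting ODE.} A one-step computation gives, when $\mr S_t=s$,
\[
\bbE\big[\mr S_{t+1}-\mr S_t \,\big|\, \cF_t\big]=\tfrac1n\,F(s)+O(n^{-2}),\qquad F_k(s)=\frac{\mathrm{e}^{2\gb s_k}}{\sum_{j}\mathrm{e}^{2\gb s_j}}-s_k,
\]
while the one-step conditional variance is of order $n^{-2}$. For the ODE $\dot s=F(s)$ on $\gD_q$, a point with all $s_k\in(0,1)$ is a fixed point iff $\mathrm{e}^{2\gb s_k}=s_k\sum_j\mathrm{e}^{2\gb s_j}$ for every $k$; since $t\mapsto2\gb t-\log t$ is unimodal, the coordinates of such an $s$ take at most two distinct values, and (using $\gb<\gb_s(q)\leq q/2$) one checks that the only possibility is $s^\star=(1/q,\dots,1/q)$: the ``one dominant coordinate'' family is exactly parametrised by the roots in $(1/q,1)$ of the function in~\eqref{eqn:BDDef}, absent by the definition of $\gb_s(q)$, and the remaining cases are ruled out by elementary estimates. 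The mean-field free energy is a strict Lyapunov functional for the flow, so $s^\star$ is globally attracting in $\gD_q$. Linearizing, $DF(s^\star)=-a\cdot\mathrm{Id}$ on $\{v:\sum_k v_k=0\}$, where $a:=1-2\gb/q>0$; hence the trajectory from any fixed $s(0)\neq s^\star$ enters every $\gd$-neighbourhood of $s^\star$ in time $O_\gd(1)$ and obeys $\|s(\tau)-s^\star\|=(1+o(1))\,K\,\mathrm{e}^{-a\tau}$ as $\tau\to\infty$, with $K=K(s(0))>0$ bounded uniformly over $\gD_q$. It is this exact exponential rate --- not a linearization-with-error estimate --- that produces the constant $\ga_1=\tfrac1{2a}$.

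\textbf{Upper bound.} Couple $(\mr S_t)$ with the deterministic curve $(s(t/n))$: increments are the $\tfrac1n F$-drift plus a martingale of conditional variance $O(n^{-2})$, and feeding the contraction $DF(s^\star)\preceq-a\,\mathrm{Id}$ into a Gr\"onwall/second-moment argument (the contraction damping old fluctuations) keeps $\|\mr S_t-s(t/n)\|=O_{\bbP}(n^{-1/2})$; since $\|s(t/n)-s^\star\|=O(n^{-1/2})$ at $t=\ga_1 n\log n+O(n)$, this gives $\|S_t-S^\star\|=O_{\bbP}(\sqrt n)$. On that scale $\sqrt n\,(\mr S_t-s^\star)$ is an approximate Ornstein--Uhlenbeck process with rate $a$, which reaches $\mr\mu_n$ within total variation $\gep$ after $O_\gep(n)$ further steps; a local central limit theorem upgrades this to a genuine total-variation statement, proving (a). For (b), couple the given chain with a stationary copy by always updating the same vertex and optimally coupling the two re-samplings. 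When the magnetizations differ by $M_t:=\|S_t-S'_t\|$, an agreeing vertex becomes a disagreeing one with probability $O(M_t/n)$ per step, while a disagreeing one is repaired with probability $1-o(1)$ whenever it is selected, so the number $N_t$ of disagreeing vertices satisfies $\bbE[N_{t+1}-N_t\mid\cF_t]\leq-(1-o(1))N_t/n+C\,M_t/n$. Since $M_t=O_{\bbP}(n\,\mathrm{e}^{-at/n}+\sqrt n)$ along the descent and $a<1$, this forces $N_t=O_{\bbP}(n\,\mathrm{e}^{-at/n}+\sqrt n)$ as well, so $N_t=O(\sqrt n)$ already by $t=\ga_1 n\log n$; a final $O(n)$-step clean-up, via an exchange/pairing coupling that preserves the magnetization, coalesces the remaining disagreements.

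\textbf{Lower bound, window, and the main obstacle.} Take $\gs_0\equiv1$, so $S_0=(n,0,\dots,0)$. The statistic $\Psi_t:=\|S_t-S^\star\|^2$ has increments of size $O(\sqrt n/n)$; martingale concentration together with the fluid limit give, for $t=\ga_1 n\log n-C_\gep n$, that $\Psi_t\geq c\,\mathrm{e}^{2aC_\gep}n$ with probability $\geq1-\gep$ (a constant $c>0$), whereas under $\mr\mu_n$ one has $\Psi\leq L_\gep n$ with probability $\geq1-\gep$ by standard disordered-phase ($\gb<\gb_c$) Gaussian estimates. Taking $C_\gep$ large so that $c\,\mathrm{e}^{2aC_\gep}>L_\gep$ gives $d_t(n)\geq1-2\gep$, hence $t_{\mix(1-\gep)}(n)\geq\ga_1 n\log n-C_\gep n$; together with $t_{\mix(\gep)}(n)\leq\ga_1 n\log n+C_\gep n$ from the upper-bound step this is cutoff at $\ga_1(\gb,q)\,n\log n$ with $w_\gep(n)=O_\gep(n)$. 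The main obstacle is the sharp-constant configuration bound of step (b): unlike mean-field Ising ($q=2$) there is no monotone coupling, so one must (i) control the fluid-limit error to the genuinely tight order $O(\sqrt n)$ over the full $\Theta(n\log n)$ horizon so that the $\gd$-dependent corrections never degrade the rate $a$, and (ii) show that the within-fibre (configuration-given-magnetization) equilibration together with the coalescence of the residual $O(\sqrt n)$ disagreements both fit inside the $O(n)$ window.
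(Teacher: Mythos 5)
Your overall architecture (project to the proportions chain, establish the ODE/contraction with rate $a=1-2\gb/q$ and hence the constant $\ga_1=1/(2a)$, prove a matching lower bound by comparing $\|S_t-S^\star\|^2$ against its stationary scale) matches the paper's in spirit, though you obtain the constant through the Lyapunov/linearization route rather than the paper's one-coordinate worst-case drift $D_\gb$. The Lyapunov claim is in fact correct: writing $g=g_\gb(s)$ one checks $\tfrac{d}{dt}I(s)=-\sum_k\log(g^k/s_k)(g^k-s_k)\leq 0$, and uniqueness of the fixed point $\vq$ follows from $D_\gb(x)<0$ on $(1/q,1]$ (any fixed point must have some coordinate $>1/q$, forcing $d_\gb<0$ there). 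The lower bound is essentially what the paper does (Proposition~\ref{prop:ContractionForNorm} and Lemma~\ref{lem:UniformVarBound}).

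The genuine gap is in step (b), and it is exactly the hard part of the theorem. The "always update the same vertex and optimally couple the re-samplings" coupling does drive the disagreement count down with drift $\bbE[N_{t+1}-N_t\mid\cF_t]\leq -aN_t/n$, but the per-step fluctuation of $N_t$ is $\Theta(1)$, so the process equilibrates at $N_t=\Theta(\sqrt n)$: it never reaches $0$. Having $N_t=O(\sqrt n)$ gives you nothing in total variation. You then appeal to an "exchange/pairing coupling that preserves the magnetization" for a final $O(n)$-step cleanup, but such a coupling only makes sense once the two magnetization vectors agree \emph{exactly}, which the preceding coupling does not produce (the proportion vectors also only contract to within $O(\sqrt n)$ in raw counts, never to equality). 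In other words, there is a missing phase in which one must force $S_t=\wt S_t$ exactly within $O(n)$ additional steps, starting from $\|S_t-\wt S_t\|=O(\sqrt n)$. This cannot be done with a naive optimal coupling of the updates because the drift of $\|S_t-\wt S_t\|$ near $0$ is too weak relative to its fluctuations. The paper's resolution is the coordinate-wise semi-independent coupling of Subsection~4.3 (Proposition~\ref{prop:SICProps}, Lemma~\ref{lem:SICBounds}, Corollary~\ref{cor:SICAnalysis}), which processes coordinates one at a time, forcing a diffusive crossing with uniformly positive one-step variance via Lemma~\ref{lem:DiffusionHittingTime}, followed by the synchronized coupling (Lemma~\ref{lem:CoalescenceOfS}) to contract $\|S_t-\wt S_t\|_1$ from $O(1/n)$ to $0$. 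Only then does the magnetization-preserving basket-wise coupling (Lemma~\ref{lem:BasketwiseCouplingAnalysis}) become available, and the symmetry argument at the end of Subsection~4.7 (invariance under permutations within baskets) reduces configuration TV to basket-matrix TV. Your write-up would need to supply a substitute for all of this; as it stands the passage from "$O(\sqrt n)$ disagreements" to "coalesced configurations" is asserted but not provable by the coupling you describe.

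A secondary point: your "approximate Ornstein--Uhlenbeck plus local CLT" step for TV mixing of $S_t$ is plausible but is itself a nontrivial technical programme (a local CLT uniformly over starting points and over the whole $O(n)$ window), and it only controls the marginal law of $S_t$, not of $\gs_t$, so it cannot replace step (b) either.
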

We proceed to analyze the order of the mixing time as $\gb(n) \to \gb_s(q)$ as $n\to\infty$,
\begin{equation}
\label{eqn:BetaNDef}
\gb(n) = \gb_s(q) - \xi(n)
\end{equation}
where $\xi(n) \to 0$ as $n \to \infty$. The asymptotics of the mixing time will, of course, depend on how fast $\xi$ decays, but it turns out that cutoff is observed only iff the decay is slow enough.
This is captured in the following theorem.
\begin{bigthm}
\label{thm:NearCriticalMixing}
Let $q\geq 3$ be an integer. With $\beta(n)$ given as in equation~\eqref{eqn:BetaNDef} we have:
\begin{enumerate}
\item\label{item:ThmNearCritical1}
If $\lim_{n\to \infty} n^{2/3} \xi(n) = \infty$ then the Glauber dynamics has cutoff with mixing time and cutoff window given by
\begin{align}
\label{eqn:CriticalMixingThm_OutOfCriticalWindow}
t_{\mix}(n) &= \ga_1(\gb(n), q) n \log n + \ga_2(q) n/\sqrt{\xi(n)} \, ,\nonumber\\
			w_{\gep}(n) &= O_{\gep}\big( n + \sqrt{n/\xi(n)^{5/2}} ~\big) \, ,
\end{align}
where $\ga_2(q)$ is a positive constant and   $\ga_1$ is the constant defined in Theorem~\emph{\ref{thm:SubCriticalCutoff}}.
\item\label{item:ThmNearCritical2}
If $0 \leq \liminf_{n\to\infty} n^{2/3} \xi(n) \leq \limsup_{n\to\infty} n^{2/3} \xi(n) < \infty$
then the dynamics does not exhibit cutoff and has mixing time
\begin{equation}
\label{eqn:CriticalMixingThm_InCriticalWindow}
t_{\mix(\gep)}(n) = \Theta_{\gep} \big( n^{4/3} \big)\,.
\end{equation}
\end{enumerate}
\end{bigthm}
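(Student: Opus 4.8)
The engine of the proof is the \emph{magnetization chain} $\mathbf{x}_t=(x_t^1,\dots,x_t^q)$, $x_t^i=n^{-1}\#\{v:\gs_t(v)=i\}$, which by the mean-field symmetry is itself a Markov chain on the discretized simplex: a single step moves one coordinate by $\pm1/n$, has conditional drift $\mathbb{E}[\mathbf{x}_{t+1}-\mathbf{x}_t\mid\mathbf{x}_t]=\tfrac1n\mathbf{F}(\mathbf{x}_t)$ with $F_i(\mathbf{x})=\frac{\mathrm{e}^{2\gb x^i}}{\sum_j\mathrm{e}^{2\gb x^j}}-x^i+O(1/n)$, and conditional variance $O(1/n^2)$ per coordinate. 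Hence on the macroscopic scale $\tau=t/n$ the chain shadows the flow $\dot{\mathbf{x}}=\mathbf{F}(\mathbf{x})$, and on its invariant slice where one coordinate $x$ dominates while the other $q-1$ are equal the flow reduces to $\dot x=f_\gb(x):=\big(1+(q-1)\mathrm{e}^{2\gb(1-qx)/(q-1)}\big)^{-1}-x$, precisely the function in~\eqref{eqn:BDDef}. At $\gb=\gb_s$ this is tangent to zero from below at some $x_s\in(1/q,1)$, so $f_{\gb_s}(x_s)=f_{\gb_s}'(x_s)=0$, $b:=-\tfrac12 f_{\gb_s}''(x_s)>0$, $a:=\partial_\gb f_{\gb_s}(x_s)>0$, and for $\gb(n)=\gb_s-\xi$ one gets the saddle-node unfolding $f_{\gb(n)}(x)=-\big(a\xi+b(x-x_s)^2\big)(1+o(1))$ near $x_s$, while near the uniform point $f_\gb(x)=-\gl(x-\tfrac1q)+O\big((x-\tfrac1q)^2\big)$ with $\gl=\gl(n)=1-2\gb(n)/q>0$, so that $\ga_1=1/(2\gl)$ matches the constant of Theorem~\ref{thm:SubCriticalCutoff}.

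\emph{Deterministic time.} For the one-dimensional flow the passage time from a start $x_0>x_s$ down to scale $x-\tfrac1q\asymp n^{-1/2}$ is $\int_{1/q+n^{-1/2}}^{x_0}\frac{\mathrm{d}x}{|f_\gb(x)|}$. The portion bounded away from $x_s$ and $1/q$ contributes $O(1)$ macroscopically, i.e.\ $O(n)$ real steps; the bottleneck near $x_s$ contributes $\int\frac{\mathrm{d}x}{a\xi+b(x-x_s)^2}=\pi(ab\xi)^{-1/2}(1+o(1))$, i.e.\ $\ga_2(q)\,n/\sqrt{\xi}$ with $\ga_2(q)=\pi/\sqrt{ab}$; and the neighbourhood of $1/q$ contributes $\gl^{-1}\big(\tfrac12\log n+O(1)\big)$, i.e.\ $\ga_1 n\log n$. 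One checks that the slowest trajectory of the full simplex flow — hence the worst start $\gs_0$ — is the single-majority one issued from a monochromatic configuration (for $\gb<\gb_s$ there is no metastable state and $\mathbf{F}$ points toward uniform throughout), which already yields the main term in~\eqref{eqn:CriticalMixingThm_OutOfCriticalWindow}. In the critical window $\xi=O(n^{-2/3})$ one instead rescales $x-x_s=n^{-1/3}\wt y$, $t=n^{4/3}\wt t$: the drift of $\wt y$ becomes $-\big(a(n^{2/3}\xi)+b\wt y^2\big)$ with $O(1)$ diffusivity, so the bottleneck is traversed in time $\Theta(n^{4/3})$, matching~\eqref{eqn:CriticalMixingThm_InCriticalWindow}.

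\emph{Fluctuations and the two bounds.} Since $\mathbf{F}$ contracts transversally to the reduced trajectory (near $x_s$ only the slow direction along the trajectory survives), a corridor argument — Azuma/Freedman control of the martingale part of $\mathbf{x}_t$ combined with transversal contraction — confines $\mathbf{x}_t$ to a tube of width $n^{-1/2}(\log n)^{O(1)}$ about the one-dimensional curve over the entire horizon. Propagating this tube through the near-parabolic drift region, the bottleneck-crossing time has the mean above and fluctuations of order $\sqrt{n/\xi^{5/2}}$ (by comparison with $\mathrm{d}y=-\big(a\xi+by^2\big)\mathrm{d}\tau+\sqrt{\gs^2/n}\,\mathrm{d}W$: the constriction has width $|y|\asymp\sqrt{\xi}$ and is crossed at speed $\asymp\xi$, so the first-passage variance is $\asymp(\gs^2/n)\sqrt{\xi}/\xi^{3}=1/(n\xi^{5/2})$ macroscopically, i.e.\ $n/\xi^{5/2}$ in real time); adding the $O(n)$ window of the Ornstein--Uhlenbeck relaxation near $1/q$ gives the window in~\eqref{eqn:CriticalMixingThm_OutOfCriticalWindow}. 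The \emph{upper bound} on $t_{\mix(\gep)}$ then follows by the standard two-stage coupling: within the stated time the magnetization reaches and essentially equilibrates inside the $n^{-1/2}(\log n)^{O(1)}$-ball around $1/q$ (the magnetization there being an OU chain whose own relaxation is exactly the $\ga_1 n\log n$ term), and a further $O(n)$ steps suffice to couple two full spin configurations sharing a magnetization. The \emph{lower bound} uses a linear distinguishing statistic: started monochromatic, the majority coordinate stays bounded away from $1/q$ until time $\ga_2 n/\sqrt{\xi}-O(\text{window})$ by the bottleneck estimate, and then, applying the strong Markov property at the entrance to a fixed neighbourhood of $x_s$, the relevant eigen-functional of the linearization at $1/q$ has mean decaying like $\mathrm{e}^{-\gl t/n}$ with $O(1/n)$ variance, so it cannot reach scale $n^{-1/2}$ — where $\mu_n$ places it — for another $\ga_1 n\log n-O(n)$ steps.

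\emph{Cutoff versus its absence, and the main obstacle.} In part~\ref{item:ThmNearCritical1}, $n^{2/3}\xi\to\infty$ forces the diffusive wobble in the bottleneck, of size $(n\sqrt\xi)^{-1/2}$, to be negligible against the constriction width $\sqrt{\xi}$, so the crossing is ``ballistic'' with fluctuation only $O\big(\sqrt{n/\xi^{5/2}}\big)=o\big(n/\sqrt{\xi}\big)$; together with the $O(n)$ relaxation window this is $o(t_{\mix})$, which is cutoff with the stated window. In part~\ref{item:ThmNearCritical2}, $\xi=O(n^{-2/3})$, the rescaled majority coordinate $\wt y$ converges to the diffusion $\mathrm{d}\wt y=-\big(a\wh\xi+b\wt y^2\big)\mathrm{d}\wt t+\gs\,\mathrm{d}\wt W$ with $\wh\xi=\lim_n n^{2/3}\xi\in[0,\infty)$, whose bottleneck-exit time $\mathcal T$ is genuinely non-degenerate; consequently $d_{sn^{4/3}}(n)$ converges as $n\to\infty$ to a continuous, strictly decreasing profile taking every value in $(0,1)$ on a nontrivial $s$-interval, whence $t_{\mix(\gep)}(n)=\Theta_\gep(n^{4/3})$ with $\gep$-dependent constants and cutoff fails. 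The crux throughout is the bottleneck analysis over the exceptionally long $\Theta(n/\sqrt{\xi})$ (resp.\ $\Theta(n^{4/3})$) horizon: naive ODE shadowing is only $n^{-1/2+o(1)}$-accurate over $O(n)$-step windows, far too short, so one must lean on transversal contraction to confine the chain to a thin corridor and then carry out the \emph{second-order} fluctuation estimate — the $\sqrt{n/\xi^{5/2}}$ window, and the diffusion limit at criticality — by explicit comparison with a one-dimensional diffusion through a parabolic barrier; reducing the $(q-1)$-dimensional simplex dynamics to the single majority coordinate, i.e.\ ruling out lingering near configurations with two comparably large colors, is a secondary but necessary global analysis of $\mathbf{F}$.
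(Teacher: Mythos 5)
Your proposal follows essentially the same route as the paper: projection onto the proportions chain, identification of the worst-case single-coordinate drift $D_\beta$ with a saddle-node tangency at $(\beta_s, s^*)$, passage time through the parabolic bottleneck computed as $\int dx/(a\xi+bx^2)=\pi/\sqrt{ab\xi}$ with martingale fluctuation control yielding the $\sqrt{n/\xi^{5/2}}$ window, a three-segment split with a diffusion limit at scale $n^{-1/3}$ in the no-cutoff regime, and the subcritical coupling machinery for the remaining $\alpha_1 n\log n + O(n)$ coalescence from the all-one start. The paper implements the ODE-shadowing step by Doob-decomposing $\Psi(Z_t)-t$ with $\Psi'=1/f$ (Lemma~\ref{lem:QuadraticDriftPassTime} and Propositions~\ref{prop:DoobDecompBounds}--\ref{prop:DoobDecompBoundsFC}) and bounds the transversal coordinates via $\bbE\bigl[\max_{t\le T}|S^i_t-S^j_t|\bigr]^2=O(T/n^2)$ (Proposition~\ref{prop:SharpnessOfDBound}) rather than your tighter $n^{-1/2}(\log n)^{O(1)}$ corridor, but both suffice and the overall architecture is the same.
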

\noindent Part~\eqref{item:ThmNearCritical2} of Theorem~\ref{thm:NearCriticalMixing} in particular applies at criticality $\gb = \gb_s(q)$ where the mixing time is of order $n^{4/3}$ with a scaling window of order $n^{-2/3}$ (in contrast, the mixing time for the critical Ising model is of order $n^{3/2}$ with a window of $\sqrt{n}$).

Finally, above $\gb_s(q)$ the mixing time is exponentially large in $n$, as depicted in Fig.~\ref{f:supercritical}.
\begin{bigthm}
\label{thm:SuperCriticalSlowMixing}
Let $q\geq 3$ be an integer, and fix $\gb > \gb_s(q)$. For every $0<\gep < 1$ there exist $C_1, C_2 > 0$ such that for all $n$,
\[
t_{\mix(\gep)}(n) \geq C_1 \exp (C_2 n)\,.
\]
\end{bigthm}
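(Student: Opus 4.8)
The plan is a bottleneck (conductance) argument, carried out on the lumped chain of empirical color-frequencies, whose stationary law satisfies a large-deviation principle governed precisely by the map in~\eqref{eqn:BDDef}.

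\emph{Reduction to the frequency chain.} For $\gs\in\gS_n$ write $\rho(\gs)=(\rho_1(\gs),\dots,\rho_q(\gs))$ with $\rho_k(\gs)=n^{-1}\#\{v:\gs(v)=k\}$, taking values in $\Delta_n:=\{x\in n^{-1}\bbZ^q:\ x_k\ge 0,\ \sum_kx_k=1\}$. The update at a vertex $u$ has the form $\mu_n(\gs(u)=k\mid\cdots)\propto\exp\{(2\gb/n)(n\rho_k-\one_{k=\gs(u)})\}$, which depends on $\gs$ only through $\rho(\gs)$ and $\gs(u)$; hence $\rho_t:=\rho(\gs_t)$ is itself a Markov chain, reversible with respect to $\nu_n:=\mu_n\circ\rho^{-1}$, and every $A=\rho^{-1}(B)$ satisfies $\mu_n(A)=\nu_n(B)$ with the same edge-boundary flow $Q$. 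Since total-variation distance cannot increase under $\gs\mapsto\rho(\gs)$, it suffices to bound the $\rho$-chain from below. By the standard bottleneck bound for reversible chains — started from $\nu_n(\cdot\mid B)$ the chain stays in $B$ through time $t$ with probability at least $1-t\,Q(B,B^c)/\nu_n(B)$ — any $B\subseteq\Delta_n$ with $\nu_n(B)<1-\gep$ obeys
\[
t_{\mix(\gep)}(n)\ \ge\ \bigl(1-\gep-\nu_n(B)\bigr)\,\frac{\nu_n(B)}{Q(B,B^c)}\,.
\]
Finally, Stirling's formula gives, uniformly in $x\in\Delta_n$, $\nu_n(x)=\exp\{-n\Phi_\gb(x)+O(\log n)\}$, where $\Phi_\gb(x):=\Psi_\gb+\sum_kx_k\log x_k-\gb\sum_kx_k^2\ge 0$ and $\Psi_\gb:=\max_{y\in\Delta_q}\bigl(\gb\sum_ky_k^2-\sum_ky_k\log y_k\bigr)$, with equality exactly on the set of maximizers.

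\emph{The landscape, and the main obstacle.} Fix $\gb>\gb_s(q)$. As $\gb$ exceeds the supremum in~\eqref{eqn:BDDef}, there is $a\in(1/q,1)$ with $a=\bigl(1+(q-1)e^{2\gb(1-qa)/(q-1)}\bigr)^{-1}$; let $x^\dagger:=(a,b,\dots,b)$ with $b:=(1-a)/(q-1)$, together with its $q-1$ color-permuted copies. A Lagrange-multiplier computation identifies the critical points of $\Phi_\gb$ on $\Delta_q$ with the vectors whose coordinates solve $x_k\propto e^{2\gb x_k}$ (hence with at most two distinct coordinate values); those with a single distinguished coordinate are exactly the $x^\dagger$, their defining equation being precisely the zero set of the map in~\eqref{eqn:BDDef}. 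Taking $a$ to be the largest root, $x^\dagger$ is a strict, isolated local minimum of $\Phi_\gb$: along the ray $t\mapsto(t,\tfrac{1-t}{q-1},\dots,\tfrac{1-t}{q-1})$ the derivative of $\Phi_\gb$ changes sign from $-$ to $+$ at $t=a$, while transversally the relevant Hessian eigenvalue is $1/b-2\gb$ (an inequality one must also check to cover large $\gb$). \textbf{Verifying this landscape fact — that the root born at the spinodal $\gb_s(q)$ is, for every $\gb>\gb_s(q)$, a genuine well of $\Phi_\gb$ rather than an inflection point or a saddle — is the main obstacle}; it is exactly the content encoded in~\eqref{eqn:BDDef} and is classical for the Curie--Weiss--Potts free energy (cf.~\cite{CET}). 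Everything else is soft.

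\emph{Bottleneck set and conclusion.} Fix $\eta>0$ and choose $\gd\in(0,a-b)$ small enough that the closed $\ell^1$-ball $\overline B_\gd(x^\dagger)$ lies in the interior of $\Delta_q$ and $\Phi_\gb\ge\Phi_\gb(x^\dagger)+\eta$ on $\{\normI{x-x^\dagger}=\gd\}$ (possible since $x^\dagger$ is a strict isolated local minimum). Set $B:=B_\gd(x^\dagger)\cap\Delta_n$. The $q$ color-permuted copies of $\overline B_\gd(x^\dagger)$ are pairwise disjoint (their centers are at pairwise $\ell^1$-distance $2(a-b)>2\gd$) and carry equal $\nu_n$-mass, so $\nu_n(B)\le 1/q<1-\gep$ once $\gep<1-1/q$ — in particular for $t_{\mix}=t_{\mix(1/4)}$; and whenever $x^\dagger$ is not a global minimizer of $\Phi_\gb$ (e.g.\ for $\gb\in(\gb_s(q),\gb_c(q))$, where it is the metastable ordered phase) one has $\nu_n(B)=\exp\{-n\Phi_\gb(x^\dagger)+O(\log n)\}\to 0$, so every $\gep\in(0,1)$ is admissible; in either case $1-\gep-\nu_n(B)$ is bounded away from $0$ for $n$ large. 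On the other hand $\nu_n(B)\ge\exp\{-n\Phi_\gb(x^\dagger)+O(\log n)\}$, from the lattice point of $\Delta_n$ nearest $x^\dagger$. A single step of the $\rho$-chain moves $\rho$ by at most $2/n$ in $\ell^1$, so every $x\in B$ from which $B^c$ is reachable lies in the annulus $\{\gd-2/n\le\normI{x-x^\dagger}\le\gd\}$, on which $\Phi_\gb\ge\Phi_\gb(x^\dagger)+\eta/2$ once $n$ is large; summing the uniform estimate for $\nu_n$ over its at most $n^q$ lattice points,
\[
Q(B,B^c)\ \le\ \nu_n\bigl(\{x\in\Delta_n:\ \gd-2/n\le\normI{x-x^\dagger}\le\gd\}\bigr)\ \le\ n^q\exp\{-n(\Phi_\gb(x^\dagger)+\eta/2)\}\,.
\]
Hence $\nu_n(B)/Q(B,B^c)\ge\exp\{n\eta/2-O(\log n)\}\ge e^{n\eta/3}$ for $n$ large, and the displayed bottleneck bound gives $t_{\mix(\gep)}(n)\ge c_\gep\,e^{n\eta/3}$ for all large $n$; absorbing the finitely many remaining $n$ into the constant yields $t_{\mix(\gep)}(n)\ge C_1e^{C_2n}$ for all $n$, with $C_2=\eta/3$ and $C_1=C_1(\gep,\gb,q)>0$.
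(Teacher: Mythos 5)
Your architecture---lump to the proportions chain, use the pointwise Stirling estimate for its stationary law, and run a bottleneck argument around the on-axis point $x^\dagger$ built from the largest root $a$ of the map in \eqref{eqn:BDDef}---is sound and is a genuinely different route from the paper's. The paper also uses a conductance bound (Theorem~\ref{thm:CheegersInequality}), but for $\gb_s(q)<\gb<\gb_c(q)$ it deliberately avoids any analysis of the $(q-1)$-dimensional free-energy landscape: conditionally on $S^1=s^1$ with $s^1\leq 1-\gd$ the remaining coordinates concentrate at equal values because the conditional model is a subcritical $(q-1)$-state Curie--Weiss Potts model (this is where the inequality $\gb_c(q)(1-1/q)<\gb_c(q-1)$ enters), and then strict positivity of $D_\gb$ near $s^*(\gb)$ plus reversibility of the proportions chain yields $\mu_n(S^1=s^1+\tfrac1n)\geq \gl\,\mu_n(S^1=s^1)$ across a macroscopic window, hence an exponentially small bottleneck ratio for $A=\{S^1\geq s^*(\gb)-\gd_2\}$ with no Hessian or local-minimum analysis at all; for $\gb\geq\gb_c(q)$ it takes a ball around the global minimizer $\check{s}_{\gb,q}$ and needs only the zero-set description \eqref{eqn:MinOfPottsRateFn}. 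Your route buys a single unified argument for all $\gb>\gb_s(q)$, at the price of the landscape lemma you flag; your restriction to $\gep<1-1/q$ when $x^\dagger$ is a global minimizer matches the scope of the paper's own appeal to Theorem~\ref{thm:CheegersInequality}, so I do not count that against you.

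The flagged step is, however, a genuine gap as written: you assert, with a sketch and a pointer to \cite{CET}, that $x^\dagger=(a,b,\dots,b)$ is a strict isolated local minimum of your $\Phi_\gb$ for every $\gb>\gb_s(q)$, but you do not prove it, and the sketch does not close it. Concretely: (i) a sign change of the one-dimensional derivative along the symmetric ray together with one positive transversal Hessian entry does not by itself give a strict local minimum in the simplex; since the Hessian of $\Phi_\gb$ at $x^\dagger$ is $\mathrm{diag}(1/a-2\gb,\,1/b-2\gb,\dots,1/b-2\gb)$ and $1/a-2\gb<0$ once $\gb$ is large, you must check positive definiteness of this form restricted to $\{v:\sum_k v_k=0\}$, which amounts to $1/b-2\gb>0$ together with $(q-1)(1/a-2\gb)+(1/b-2\gb)>0$; (ii) you leave $1/b-2\gb>0$ unchecked; (iii) the radial condition in (i) equals $\tfrac{q-1}{a(1-a)}\bigl(-\tfrac{d}{ds}D_\gb(a)\bigr)$, so you also need the largest root to be non-degenerate, $\tfrac{d}{ds}D_\gb(a)<0$, for every $\gb>\gb_s(q)$, which is not automatic from \eqref{eqn:BDDef} and requires the structural fact that $\tfrac{d}{ds}D_\gb$ has at most two zeros (the unimodality used in the proof of Proposition~\ref{prop:DOfBetaProperties1}) together with $D_\gb(s^*(\gb_s))>0$. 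The good news is that once (iii) is proved, (ii) follows for free: $\tfrac{d}{ds}D_\gb(a)=\tfrac{2q\gb}{q-1}a(1-a)-1<0$ and $a>1/q$ give $2\gb<\tfrac{q-1}{qa(1-a)}\leq\tfrac{q-1}{1-a}=1/b$. With this lemma supplied (or, for $\gb\geq\gb_c(q)$, bypassed via \eqref{eqn:MinOfPottsRateFn} as the paper does), the rest of your argument is correct.
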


\begin{figure}
\centering
\includegraphics[width=0.48\textwidth]{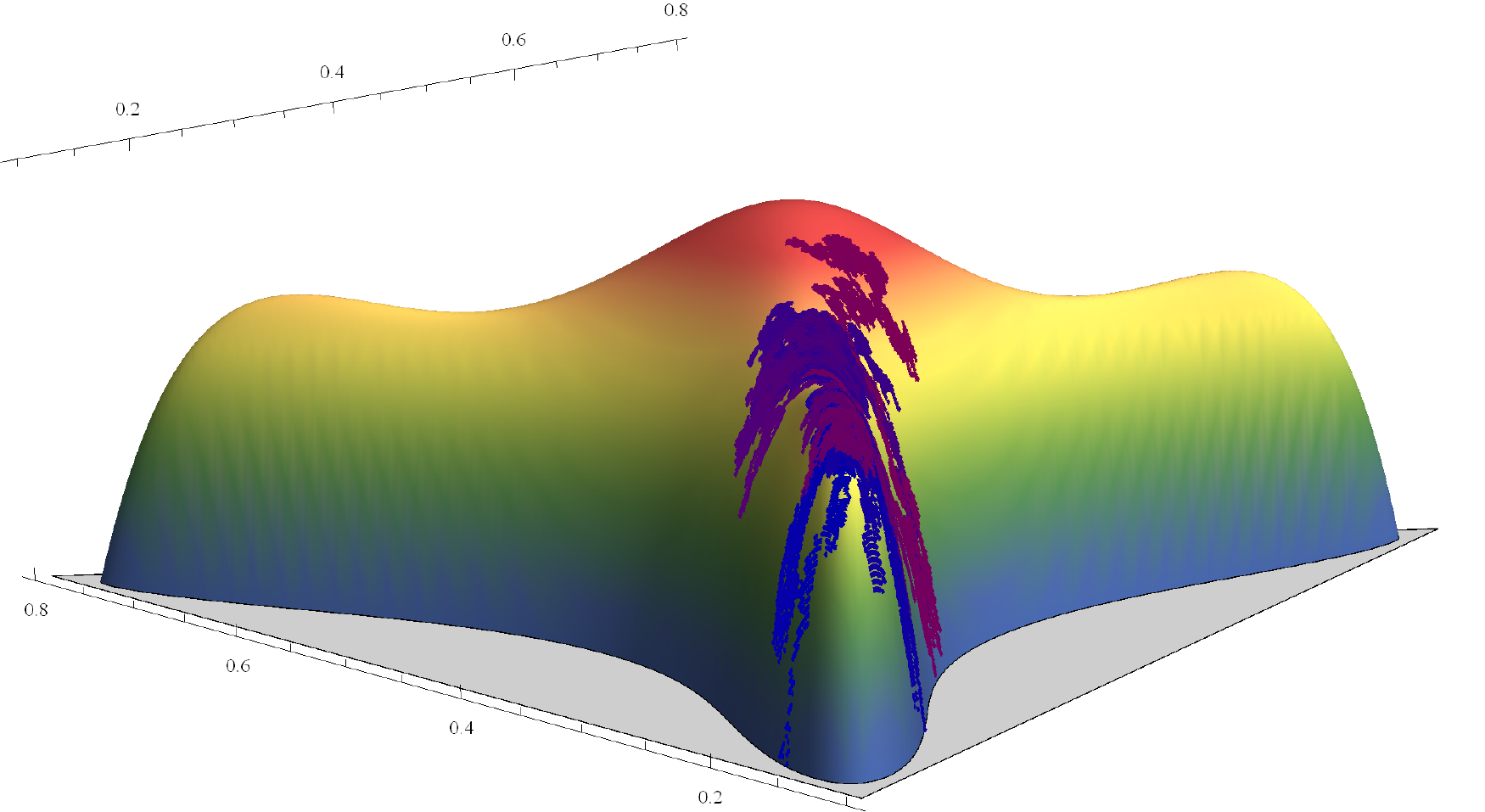}
\includegraphics[width=0.48\textwidth]{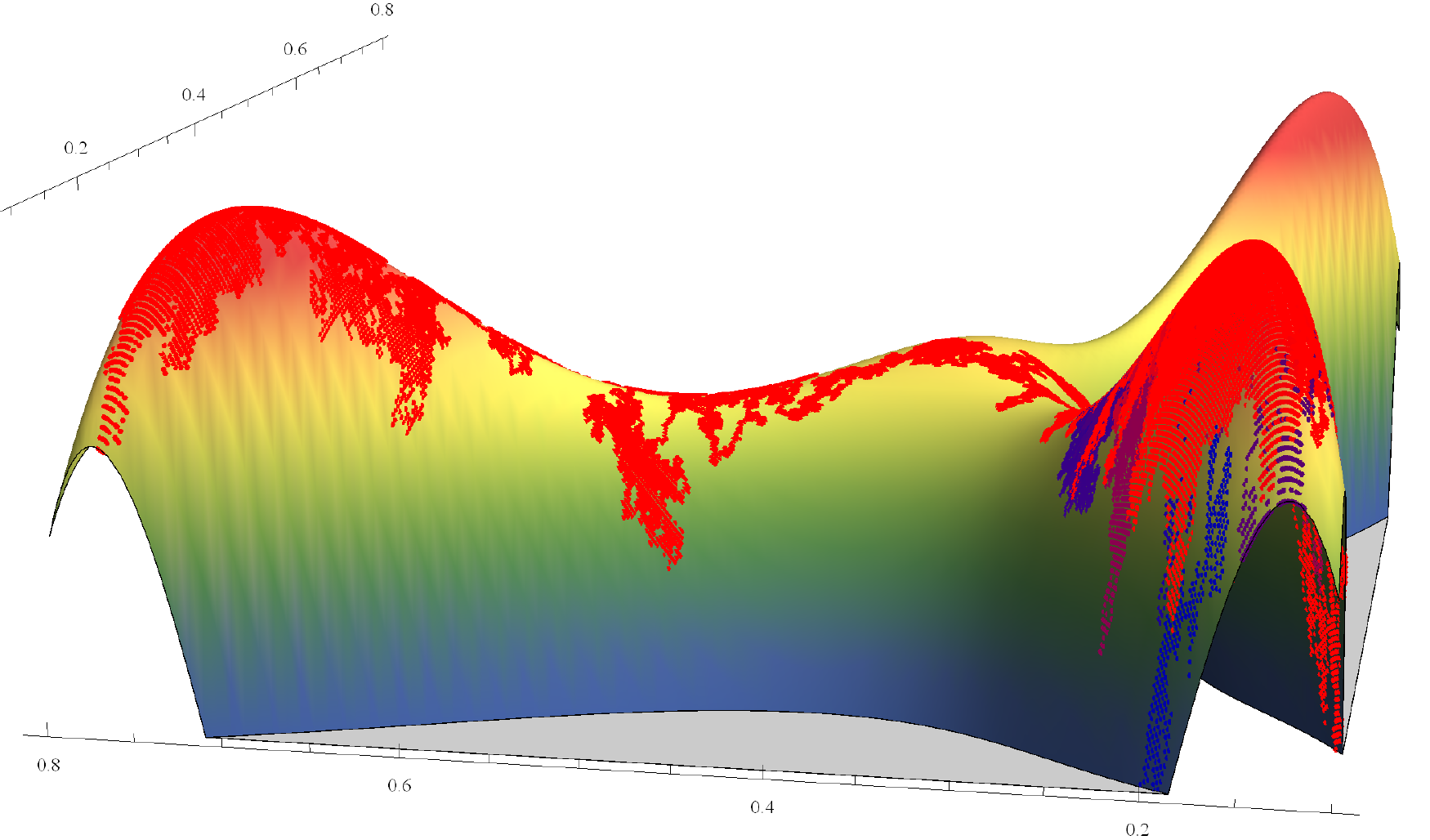}
\caption{Slow mixing without cutoff in the supercritical regime $\beta>\gb_s$ for $q=3$. On left $\gb_s<\beta<\beta_c$ and on right $\beta>\beta_c$. Curve color marks time from blue to red.}
\label{f:supercritical}
\end{figure}

Combined these results give a complete analysis of the mixing time of Glauber dynamics for the Curie-Weiss Potts model.

The slowdown in the mixing of the dynamics occurring as soon as $\gb \geq \gb_s(q)$ (be it power-law at $\gb_s(q)$ or exponential mixing above this point) is
due to the existence of states from which the Markov chain takes a long time to escape.
However, in the range $\gb \in [\gb_s(q), \gb_c(q))$ the subset of initial configurations from which mixing is slow is exponentially small in probability. One can then ask instead about the mixing time from typical starting locations, known as \emph{essential mixing}. Define the mixing time started from a subset of initial configurations $\wt{\gS}_n \subseteq \gS_n$ via
$d^{\wt{\gS}_n}_t(n) = \max_{\gs \in \wt{\gS}_n} d^{\gs}_t(n)$
as well as
\[
t^{\wt{\gS}_n}_{\mix(\gep)}(n) = \inf \{ t : \: d^{\wt{\gS}_n}_t(n) \leq \gep \}
\quad \text{and} \quad
w^{\wt{\gS}_n}_{\gep}(n) = t^{\wt{\gS}_n}_{\mix(\gep)}(n) - t^{\wt{\gS}_n}_{\mix(1-\gep)}(n).
\]
With these definitions we have the following result, showing that the subcritical mixing time behavior from Theorem~\ref{thm:SubCriticalCutoff} extends all the way to $\beta < \gb_s(q)$ once one excludes a subset of initial configurations with a total mass that is exponentially small in $n$.
\begin{bigthm}
\label{thm:EssentialMixing}
Let $q\geq 3$ be an integer and let $\gb < \gb_c(q)$.
There exist constants $C_1, C_2 > 0$ and subsets $\wt{\gS}_n \subseteq\gS_n$ such
that the Glauber dynamics has cutoff with mixing time and cutoff window given by
\[
	t^{\wt{\gS}_n}_{\mix}(n) = \ga_1(\gb, q) n \log n 	
		\quad ; \quad \quad
	w^{\wt{\gS}_n}_{\gep}(n) = O_{\gep} (n)\,,
\]
where $\mu_n (\gS_n \setminus \wt{\gS}_n) \leq C_1 e^{-C_2 n}$ and $\ga_1$ is the constant in Theorem~\emph{\ref{thm:SubCriticalCutoff}}.
\end{bigthm}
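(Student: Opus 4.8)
The plan is to reduce Theorem~\ref{thm:EssentialMixing} to (the proof of) Theorem~\ref{thm:SubCriticalCutoff} by removing from $\gS_n$ precisely those configurations whose magnetization lies in the basin of attraction of the metastable ordered states. Write $\bar s_t=(\bar s_t^1,\dots,\bar s_t^q)$, $\bar s_t^i=\tfrac1n\#\{v:\gs_t(v)=i\}$, for the magnetization chain; by the mean-field symmetry this is itself a Markov chain, with one-step conditional drift equal, up to an $O(1/n)$ error, to the smooth vector field $F(s)=p(s)-s$ on the simplex $\gD_q$, where $p_i(s)=\mathrm{e}^{2\gb s_i}/\sum_j\mathrm{e}^{2\gb s_j}$. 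The stationary law $\mu_n$ concentrates (exponentially in $n$) on the maximizers of the free energy $g(s)=\gb\sum_i s_i^2-\sum_i s_i\log s_i$, which coincide with the fixed points of $F$. For $\gb<\gb_c(q)$ the uniform point $u=(1/q,\dots,1/q)$ is the \emph{unique} global maximizer of $g$; since $\gb<\gb_c(q)<q/2$ it is a nondegenerate local maximizer, so $u$ is a linearly stable fixed point of $F$ with $DF(u)=-(1-2\gb/q)\,\mathrm{Id}$ on the tangent space of $\gD_q$. When $\gb\in(\gb_s(q),\gb_c(q))$ there are, in addition, $q$ metastable wells and $q$ saddle points $\hat s$ of $g$ separating them from $u$, all with $g(\hat s)<g(u)$.

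I would then fix $\eta\in\big(0,\,g(u)-g(\hat s)\big)$ small and let $\cA_\eta$ be the connected component of the superlevel set $\{s\in\gD_q:g(s)>g(u)-\eta\}$ containing $u$; for $\eta$ small this is a fixed neighborhood of $u$, bounded away from $\partial\gD_q$, from every saddle and from every metastable well, forward-invariant under $F$ (as $g$ strictly increases along nonstationary trajectories) and containing no fixed point of $F$ other than $u$. Set $\wt\gS_n=\{\gs\in\gS_n:\bar s(\gs)\in\cA_\eta\}$. Since $\gS_n\setminus\wt\gS_n\subseteq\{\gs:g(\bar s(\gs))\leq g(u)-\eta\}$ while $\mu_n$ concentrates on $\{\bar s\approx u\}$ with a Gaussian-type rate, a routine large-deviation (or direct multinomial) estimate gives $\mu_n(\gS_n\setminus\wt\gS_n)\leq C_1\mathrm{e}^{-C_2 n}$ with $C_2$ of order $\eta$; here $\gb<\gb_c(q)$ is used crucially, as at $\gb=\gb_c(q)$ the ordered states carry a constant fraction of the mass.

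For the upper bound I would run the chain from an arbitrary $\gs\in\wt\gS_n$ in three phases. \emph{(i) Burn-in:} since $\overline{\cA_\eta}$ is compact, forward-invariant and has $u$ as its only fixed point, every trajectory of $F$ started in $\cA_\eta$ enters a fixed small ball $B(u,\rho_0)$ within a bounded time $K=K(\gb,q,\eta)$; combining this with a Freedman/Azuma concentration bound for the martingale part of $\bar s_t$ (the $O(1/n)$ drift errors sum to $o(1)$ over the $Kn$ steps) gives $\bar s_{Kn}\in B(u,\rho_0)$ with probability $1-\mathrm{e}^{-cn}$, uniformly in $\gs\in\wt\gS_n$. \emph{(ii) Contraction:} once $\bar s_t$ is in the linear regime $B(u,\rho_0)$, $|\bar s_t-u|$ contracts at per-step rate $(1-2\gb/q)/n+O(1/n^2)$, so after a further $\ga_1(\gb,q)\,n\log n+O(n)$ steps it lies within $O(n^{-1/2})$ of $u$. \emph{(iii) Coordinate mixing:} once the magnetization is $\mu_n$-typical the full configuration couples to $\mu_n$ in $O(n)$ more steps. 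Phases (ii)--(iii) take place entirely in a fixed neighborhood of $u$, where the dynamics is insensitive to the position of $\gb$ relative to $\gb_s(q)$, so they are identical to the corresponding steps in the proof of Theorem~\ref{thm:SubCriticalCutoff} and can be quoted. This gives $d^{\wt\gS_n}_t(n)\leq\gep$ for $t=\ga_1(\gb,q)\,n\log n+C_\gep n$. For the matching lower bound I would start from $\gs$ with $\mathrm{dist}(\bar s(\gs),u)=\rho_1$ for a fixed $\rho_1$ with $B(u,\rho_1)\subseteq\cA_\eta$ and use the one-dimensional magnetization statistic (concentrated near the deterministic flow) to show $d^{\gs}_t(n)\geq 1-\gep$ for $t\leq\ga_1(\gb,q)\,n\log n-C_\gep n$, which also pins the cutoff window at $w^{\wt\gS_n}_\gep(n)=O_\gep(n)$.

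The main obstacle is the burn-in step~(i): one must verify, from the structure of the mean-field Potts free energy, that $g$ is a strict Lyapunov function for $F$ on $\cA_\eta$ with $u$ its unique critical point there (so the deterministic flow leaves no trap in $\cA_\eta$ and reaches $B(u,\rho_0)$ in time $O(1)$ uniformly), and then transfer this to the discrete chain with exponentially small failure probability and $o(1)$ cumulative drift error over $O(n)$ steps. The landscape analysis is classical and can be reduced to a two-dimensional symmetry slice --- indeed it is governed by the same one-parameter reduction $x\mapsto\big(1+(q-1)\mathrm{e}^{2\gb(1-qx)/(q-1)}\big)^{-1}$ that defines $\gb_s(q)$ in~\eqref{eqn:BDDef} --- and the quantitative ODE-to-Markov-chain comparison is of the same type already carried out for Theorem~\ref{thm:SubCriticalCutoff}, now needed only on the fixed region $\cA_\eta$ rather than on all of $\gD_q$.
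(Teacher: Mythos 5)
Your strategy---restrict to a fixed neighborhood of $\vq$ on which the drift points inward, then quote the subcritical coupling machinery---is the same as the paper's, and the key observation (that $\gb<\gb_s(q)$ enters the proof of Theorem~\ref{thm:SubCriticalCutoff} only through the initial burn-in phase, while every other lemma requires only $\gb<q/2$ or $\gb<\gb_c(q)$) is exactly what drives the paper's argument too. Where you differ is in the choice of $\wt\gS_n$ and the amount of work needed at burn-in: the paper simply takes $\wt\gS_n=\gS_n^\rho$ for a small fixed $\rho$, so the burn-in step of the overall coupling can be \emph{dropped entirely} (the chain is already where Lemma~\ref{lem:OSqrtFromCoalesence} needs it to be, and Proposition~\ref{clm:StayInRho} Parts \eqref{item:ClmStayInRho_1}--\eqref{item:ClmStayInRho_2}, which hold for all $\gb<q/2$, keep it there); you instead take $\wt\gS_n$ to be a free-energy superlevel set $\cA_\eta$ and supply a Lyapunov/ODE-comparison argument to flow from $\cA_\eta$ into a small ball in $O(n)$ steps. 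Your route is sound (the drift error of order $1/n^2$ per step sums to $o(1)$ over $O(n)$ steps, and Azuma gives the exponential failure bound), and it has the cosmetic advantage that $\cA_\eta$ is a ``natural'' set defined by the landscape, potentially much larger than $\gS_n^\rho$; but since the theorem only demands $\mu_n(\gS_n\setminus\wt\gS_n)\leq C_1e^{-C_2n}$, which holds for any fixed neighborhood of $\vq$ by the LDP in Subsection~\ref{sub:CWLDP}, the extra generality buys nothing for the statement being proved, and the deterministic-flow burn-in is machinery the paper avoids by choosing $\wt\gS_n$ to be the ball in the first place. Your lower-bound sketch (start at the boundary $\mathrm{dist}(\bar s(\gs_0),u)=\rho_1$) matches what the paper does via $\partial_{P_n}\gS_n^\rho$. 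In short: correct, essentially the same reduction, but with a more elaborate and not strictly necessary burn-in phase in place of the paper's shortcut of choosing $\wt\gS_n$ so that no burn-in is required.
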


\begin{figure}
\centering
\includegraphics[width=0.5\textwidth]{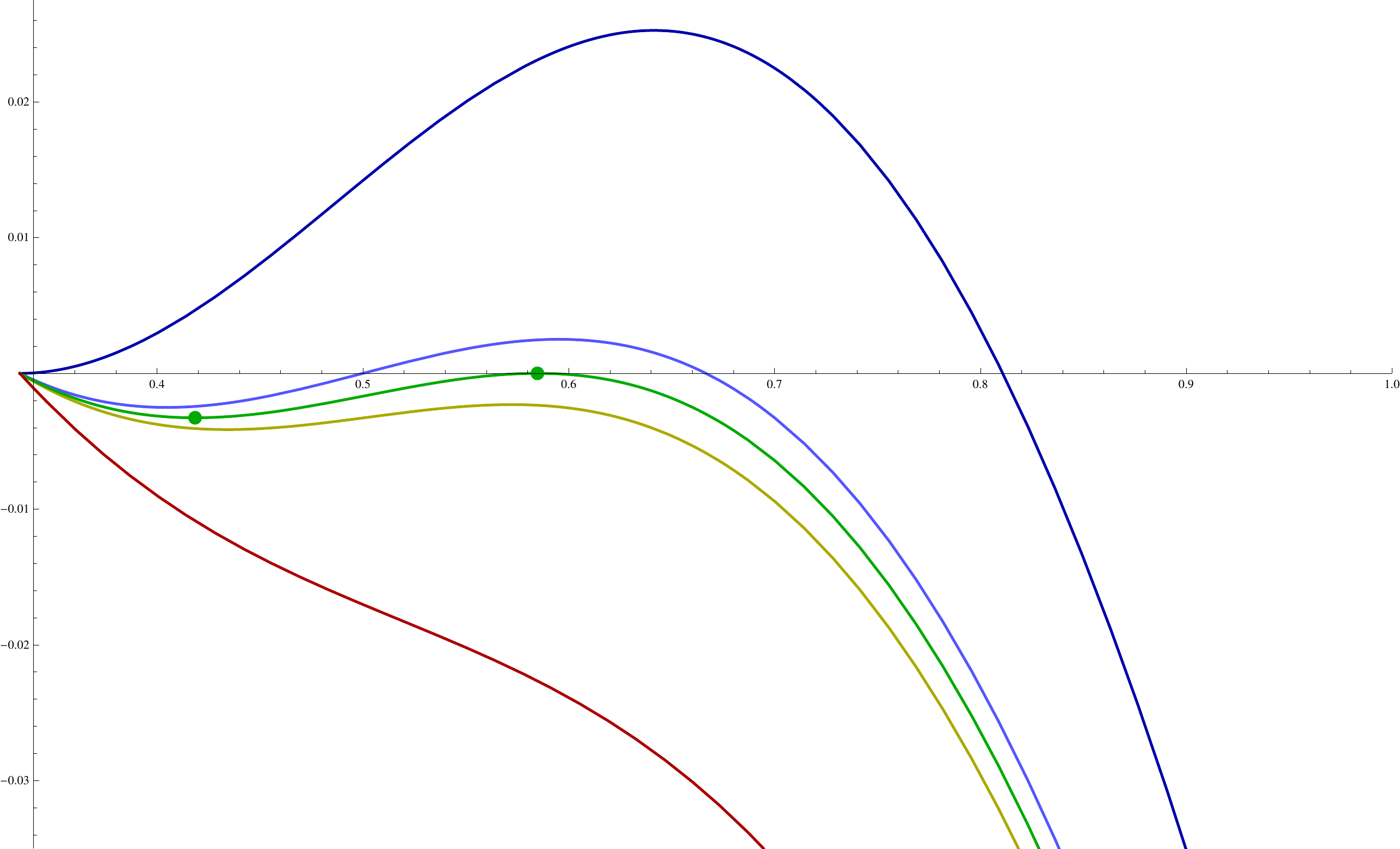}
\caption{Minimal drift towards $1/q$ of a single coordinate of $S_t$ as a function of its value for the Curie-Weiss Potts model with $q=3$ and different values of $\beta$. Two lowest curves correspond to $\beta < \beta_s$, green middle curve has $\beta=\beta_s$ (with points marking its two local extrema), second from top curve has $\beta_s < \beta < \beta_c$ and top curve has $\beta = \beta_c$.}
\label{f:drift}
\end{figure}

\subsection{Related work}

Through several decades of work by mathematicians, physicists and computer scientists a general picture of how the mixing time varies with the temperature has been developed.  It is believed that in a wide class of models and geometries the mixing time undergoes the following ``critical slowdown''. For some critical inverse-temperature $\beta_0$
and a geometric parameter $L(n)$, where $n$ is the size of the system, we should have:
\begin{itemize}[\indent$\bullet$]
  \item High temperature ($0\leq \beta< \beta_d$): mixing time of order $n \log n$ with cutoff.
  \item Critical temperature ($\beta=\beta_d$): mixing time of order $n L(n)^z$ for some fixed $z>0$.
  \item Low temperature ($\beta>\beta_d$): mixing time of order $\exp(\tau_\beta L(n))$ for some fixed $\tau_\beta>0$.
\end{itemize}
For a more comprehensive description of critical slowdown see~\cites{Martinelli97,DLPtree,LS:10}.
It should be noted that to demonstrate the above phenomenon in full, one needs to derive precise estimates on the mixing time up to the critical temperature, which can be quite challenging.

Perhaps the most studied model in this context is Ising. For the complete graph, a comprehensive treatment is given in \cites{DLP-cens,DLP,LLP}, where critical slowdown (as described above) around the uniqueness threshold $\beta_c$ is established in full. In this setting, finer statements about the asymptotics of the mixing time can be made. For instance, in~\cite{DLP} the case where $\beta$ approaches $\beta_c$ with the size of the system is analyzed (in Theorem~\ref{thm:NearCriticalMixing} here we consider this case as well).
The same picture, yet with the notable exclusion of a cutoff proof at high temperatures, is also known on the $d$-regular tree where~\cite{BKMP} established the high and low temperature regimes and recently~\cite{DLPtree} proved polynomial mixing at criticality.

From a mathematical physics point of view, the most interesting underlying graph to consider is the lattice $\mathbb{Z}^d$. For $d=2$ the full critical slowdown is now known: for a box with $n$ vertices the mixing time is $O(n\log n)$
throughout the high temperature regime~\cites{MO,MO2} whereas it is $\exp((\tau_\beta+o(1))n)$ throughout the low temperature regime~\cites{CGMS,CCS,Thomas} with $\tau_\beta$ being the surface tension. The $d=2$ picture was very recently completed by two of the authors establishing cutoff in the high temperature regime~\cite{LS:09} and polynomial mixing at the critical temperature~\cite{LS:10}.  For a more comprehensive survey of recent literature for Ising on the lattice see~\cite{LS:10}.

Turning back to the Potts model, understanding the kinetic picture here is of interest, not just as an extension of the results for Ising, but as an example of a model with a first order phase transition. Unlike in Ising, the free energy in the Potts model on various graphs and values of $q$ undergoes a first order phase transition as the temperature is varied. This is certainly true for all $q \geq 3$ in the mean-field approximation, i.e.\ on the complete graph as treated here, but also known to be the case
on $\bbZ^d$ for $d \geq 2$ and $q > Q(d)$ for some $Q(d) < \infty$ \cite{Grimmett} (although most values of $Q(d)$ are not known rigorously, it was shown that
$Q(2)=4$~\cite{Baxter} and $Q(d) < 3$ for all $d$ large enough \cite{Biskup}).

A first order phase transition has direct implications on the dynamics of the system. For one, the coexistence of phases at criticality, implies slow mixing. This is because getting from one phase to another requires passing through a large free energy barrier, i.e.\ states which are exponentially unlikely. Indeed, in \cites{BCKFTVV:99,BCT} the mixing time for Potts on a box with $n$ vertices in $\bbZ^d$ for any fixed $d \geq 2$ and sufficiently large $q$ is shown to be exponential in the surface area of the box for any $\beta$ larger or (notably) equal to the uniqueness threshold $\beta_c(d,q)$. This should be compared with the aforementioned polynomial mixing of Glauber dynamics for Ising at criticality. In fact, coexistence of the ordered and disordered phases also accounts for the slow mixing of the Swendsen-Wang dynamics at the critical temperature. This is shown in \cites{BCKFTVV:99,BCT} for $\bbZ^d$ under a similar range of $d$ and $q$ and in~\cite{GorJer:99} for the complete graph. Other dynamics also exhibit slow mixing at criticality \cite{BhaRan:04}.

First order phase transitions are expected to lead to metastability type phenomena on the lattice in some instances. There has been extensive work on this topic (see~\cite{Binder,Bovier} and the references therein)
yet the picture remains incomplete. It is expected that the transition to equilibrium will be
carried through a nucleation process, which has an $O(1)$ lifetime and therefore does not affect the order of the mixing time in contrast to the mean-field case. This is affirmed, for instance, in Ising where $O(n\log n)$ mixing time is known for low enough temperatures under an (arbitrarily small) non-zero external field, despite the first order phase transition (in the field) around $0$.
For related works see e.g.~\cites{BC,BCC,CL,SS,RTMS} as well as~\cites{Martinelli97} and the references there.

Similarly, the Potts model on the lattice should feature rapid mixing of $O(n \log n)$ throughout the sub-critical regime $\beta < \beta_c(d,q)$
due to the vanishing surface-area-to-volume ratio. Thus, contrary to the critical slowdown picture predicted for Ising, whenever there is a first order phase transition it should be accompanied by a sharp transition from fast mixing at $\beta<\beta_c$ to an exponentially slow mixing at $\beta_c$ in lieu of a critical power law.
However, fast mixing of the Potts model on $\mathbb{Z}^d$ for $\beta <\beta_c$ is not rigorously known except at very high temperatures (where it follows from standard arguments~\cite{Martinelli97}). For sufficiently high temperatures, cutoff was very recently shown in~\cite{LS:12}.

On the complete graph however, metastability is apparent. In the absence of geometry,
the order parameter sufficiently characterizes the state of the system and thus the dynamics and its stationary distribution are described effectively by the free energy of the system as a function of the order parameter. While coexistence of phases implies that at criticality the free energy is minimized at more than one value of the order parameter (corresponding to each phase), continuity entails that some of these global minima will turn into local minima just below or above criticality. These local minimizers correspond exactly to the metastable states and the value (or curve) of the thermodynamic parameter (e.g. temperature) beyond which these local minima cease to appear is called {\em spinodal}.

\begin{figure}
\vspace{-0.25cm}
\centering
\begin{tabular}{cc}
$\gb \ll \gb_c$ &
$\gb \gg \gb_c$  \\
\includegraphics[height=0.21\textheight,width=0.5\textwidth]{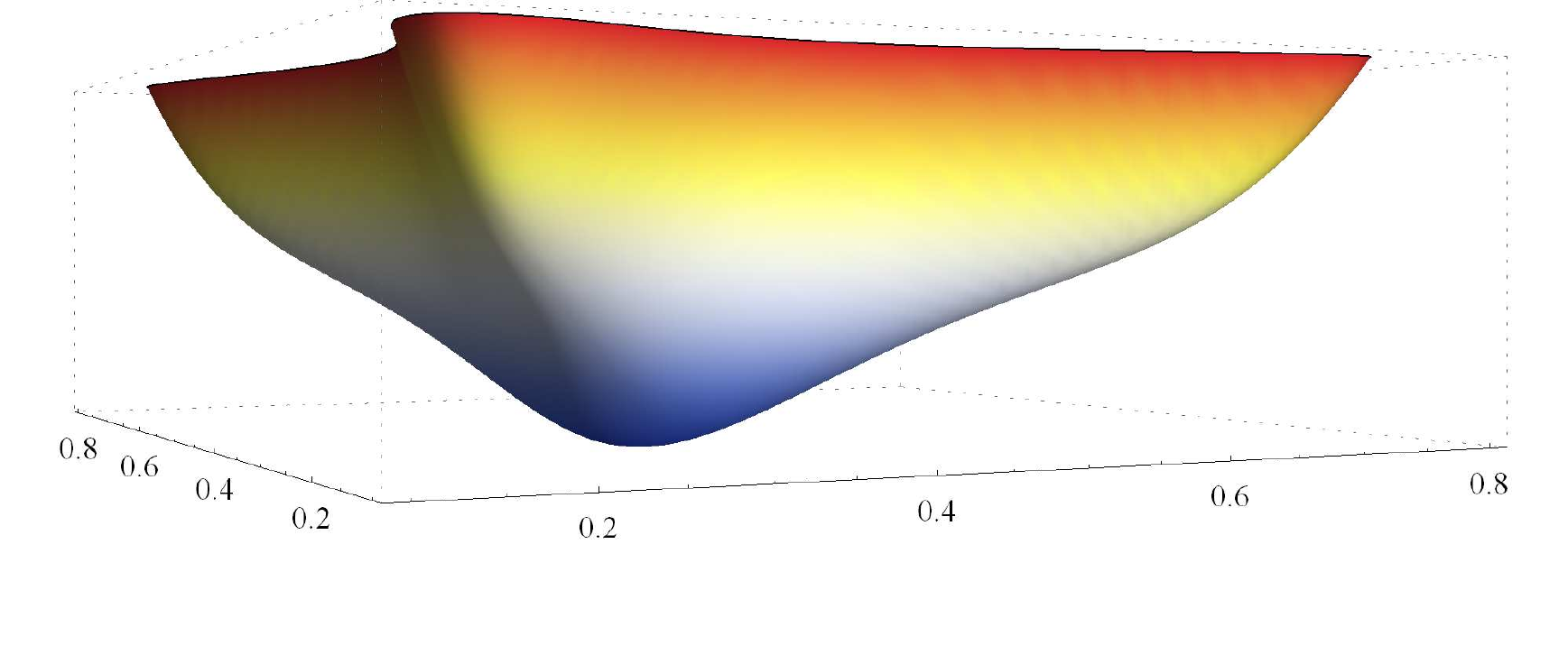} &
\includegraphics[height=0.21\textheight,width=0.5\textwidth]{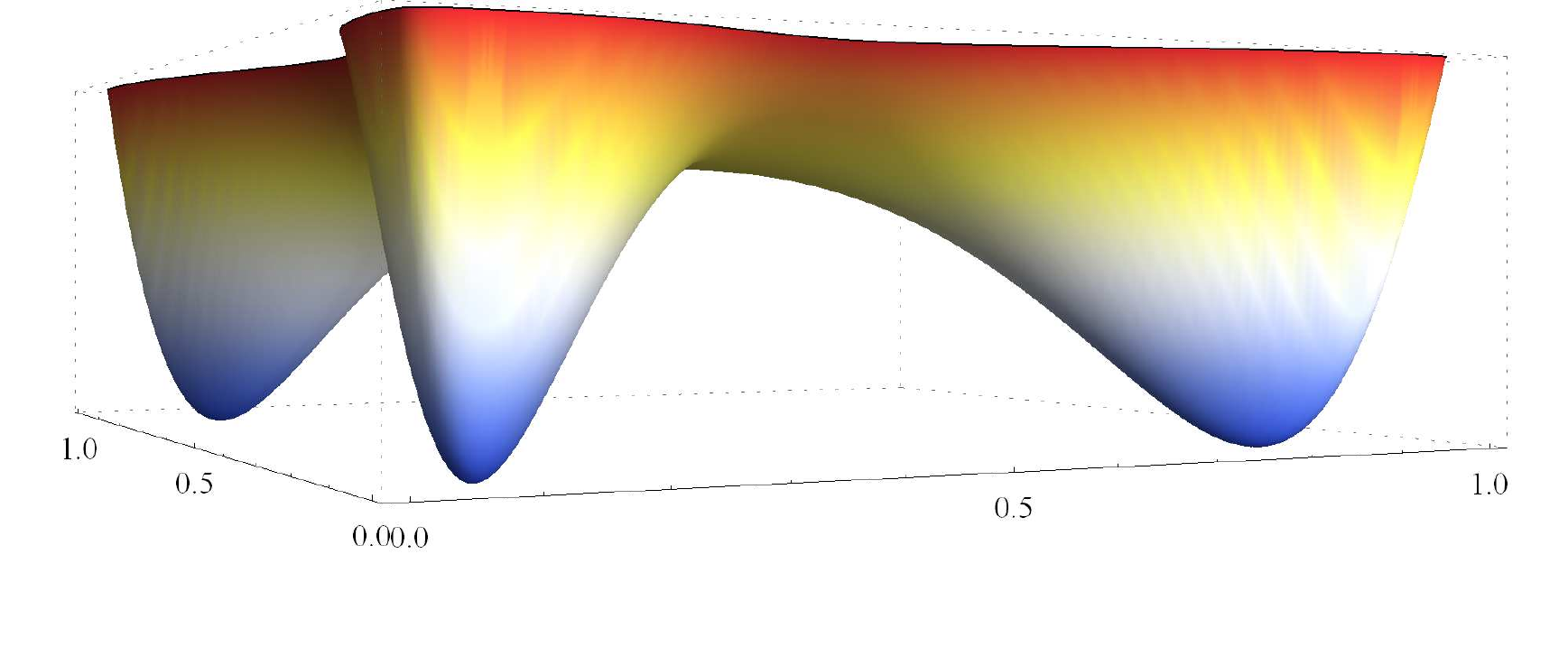} \\
\end{tabular}
%
\centering
$\gb = \gb_c$  \\
\includegraphics[height=0.21\textheight,width=0.5\textwidth]{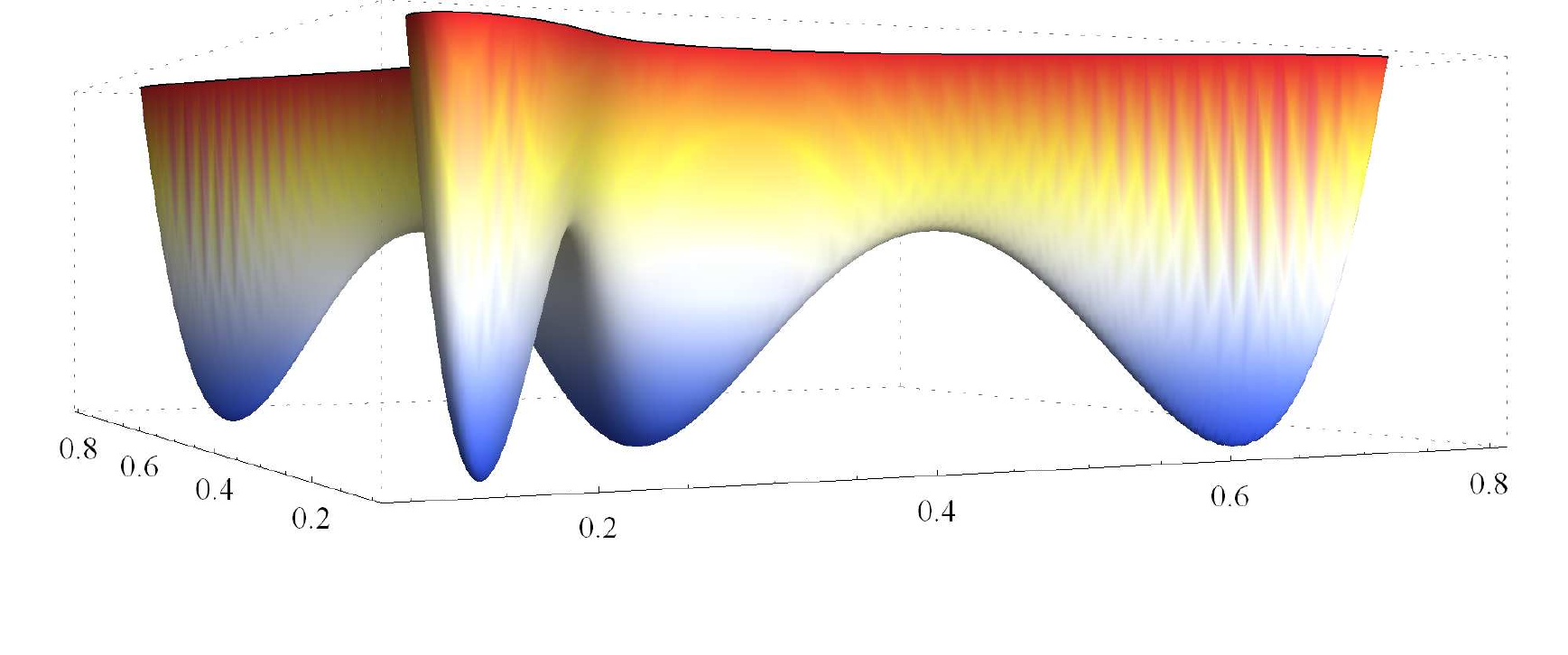}
%
\centering
\begin{tabular}{cc}
$\gb \in (\gb_s, \gb_c)$ &
$\gb \in (\gb_c, \gb_S)$ \\
\includegraphics[height=0.21\textheight,width=0.5\textwidth]{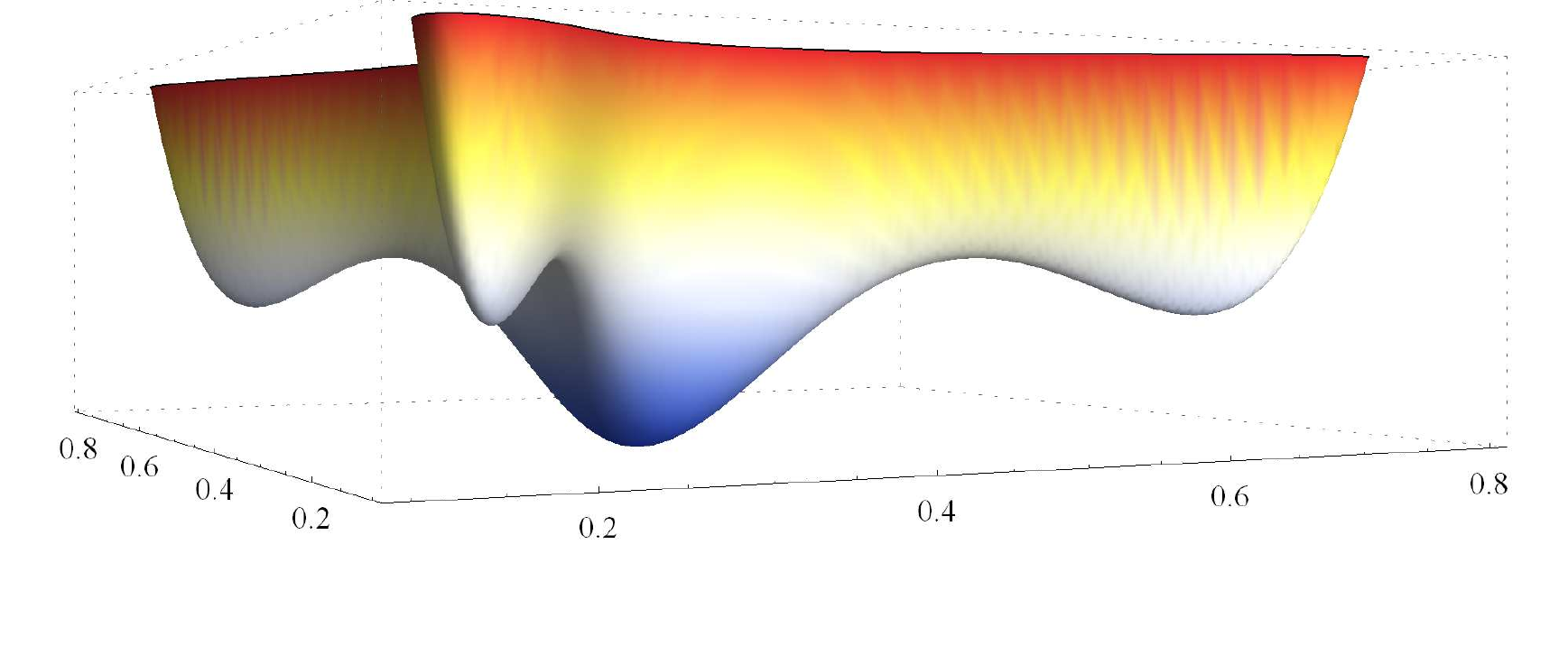} &
\includegraphics[height=0.21\textheight,width=0.5\textwidth]{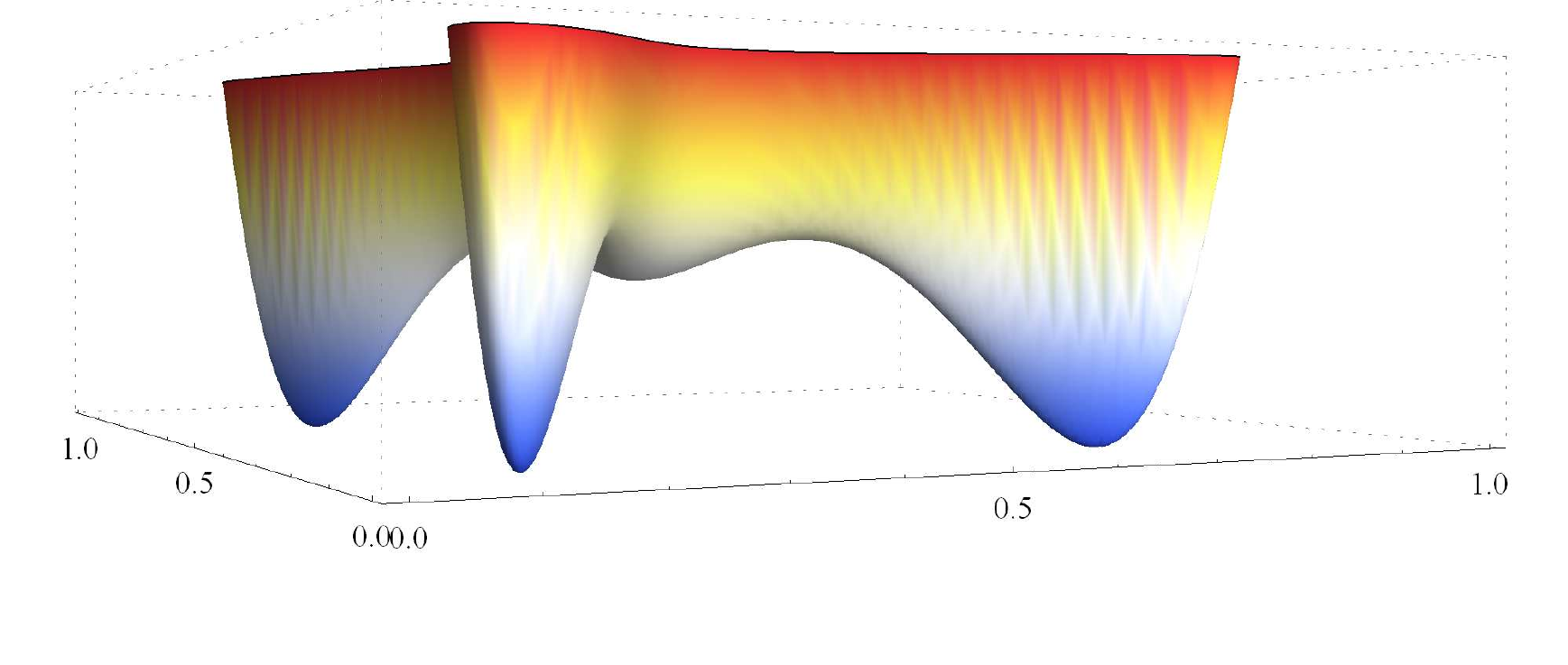}
\\
\end{tabular}
\vspace{-1cm}
\caption{The free energy as a function of the proportions vector $s$ for the Potts model on the complete graph with $q=3$ and large $n$. The simplex $\cS = \{s \in \bbR_+^3  :\, \normI{s} = 1\}$ is mapped into the $XY$ plane via $(s^1,s^2,s^3) \mapsto (s^1, s^2, 1-s^1-s^2)$.}
\vspace{-0.5cm}
\label{f:RateFunctions}
\end{figure}

Consider the model at hand, namely Potts on the complete graph
(for general background on the Potts model, see e.g.~\cites{BGJ:96,GMRZ:06,Grimmett}). The order parameter here, analogous to the magnetization in the Ising model, is the {\em vector of proportions} of each color $s \in \cS = \{x \in \bbR_+^q  :\, \normI{x} = 1\}$.
It is well known that there exists $\beta_c(q)$, below which the Potts distribution $\mu_n$ is supported almost entirely on configurations with roughly equal (about $1/q$) proportions of each color and above which $\mu_n$ is supported almost entirely on configurations where one of the $q$ colors is dominant. In the former case we say that in equilibrium the system is in the disordered phase, while in the latter case we say that in equilibrium the system is in one of the $q$ ordered phases, corresponding to the $q$-colors.

Up to relabeling of the vertices configurations are essentially described by the proportions vector $s$ and as such, on a logarithmic scale, the Potts distribution can be read from the graph of the free energy as a function of $s$. As depicted in Figure~\ref{f:RateFunctions} (showing $q=3$, the situation is qualitatively the same for all $q > 2$), when $\beta < \beta_c$ the free energy has a single global minimum at the center, corresponding to the disordered phase, while for $\beta > \beta_c$ there are $q$ ``on-axis'' global minima,
corresponding to the $q$ ordered phases obtainable from one another through a permutation of the coordinates. At $\beta_c$, coexistence of the ordered and disordered phases is evident in the presence of $q+1$ global minima of the free energy. For more details see, e.g.,~\cite{CET}.

Below $\beta_c$ but sufficiently close to it, the free energy, globally minimized only at the center, has $q$ local minima in place of the $q$ global minima
which corresponded to the ordered phases at criticality. Once $\beta$ is too small, these local minima disappear. The threshold value for the appearance of these local minima is the spinodal inverse temperature $\beta_s$ (there is a similar behavior above $\beta_c$ marked by a second spinodal temperature $\beta_S$, as illustrated by Figure~\ref{f:RateFunctions}, but we do not address this regime in the paper).	

Once the system starts from an initial configuration whose proportions vector is close to a local minimizer, the system will spend a time which is exponential in $n$ near this minimizer before escaping to the global minimizer and reaching equilibrium. This is because away from a local minimizer, energy increases locally exponentially (in $n$), i.e.\ there is an energy barrier of an exponential order to cross. Thus, as $n \to \infty$ the system will spend an unbounded amount of time at a non-equilibrium state, which will be seemingly stable. In terms of the mixing time of the dynamics, as the definition involves the worst case initial configuration, metastable states will result in exponentially slow mixing.

Our result is a rigorous affirmation of this picture. Although the definition of $\beta_s$ in~\eqref{eqn:BDDef} seems different than the one given above for the spinodal inverse temperature, it can be shown, in fact, that this is indeed the threshold value of $\beta$ for the emergence of local minima below $\beta_c$. Theorem~\ref{thm:SuperCriticalSlowMixing} then asserts that above $\beta_s$ mixing is exponentially slow while Theorem~\ref{thm:SubCriticalCutoff} shows that below $\beta_s$ mixing is still fast. The set of configurations whose exclusion in Theorem~\ref{thm:EssentialMixing} leads to fast mixing all the way up to (but below) $\beta_c$
are precisely the ones from which the process will get stuck in a metastable state. Indeed as the free energy of such initial configurations is higher than that of configurations near the globally minimizing stable state, such configurations will have a probability which is exponentially small in the size of the system.

Furthermore, the transition from fast to slow mixing passes through polynomial mixing which occurs at $\beta_s$ and in its vicinity (Theorem~\ref{thm:NearCriticalMixing}). This in fact establishes that the aforementioned critical slowdown phenomenon occurs here as well, albeit at the spinodal rather than the uniqueness threshold. We predict that this should be the case for the dynamical behavior on other mean-field geometries such as an Erd\H{o}s-R\'enyi random graph or a random regular graph.

\begin{figure}
\centering
\begin{tabular}{cc}
 \includegraphics[height=0.20\textheight,width=0.4\textwidth]{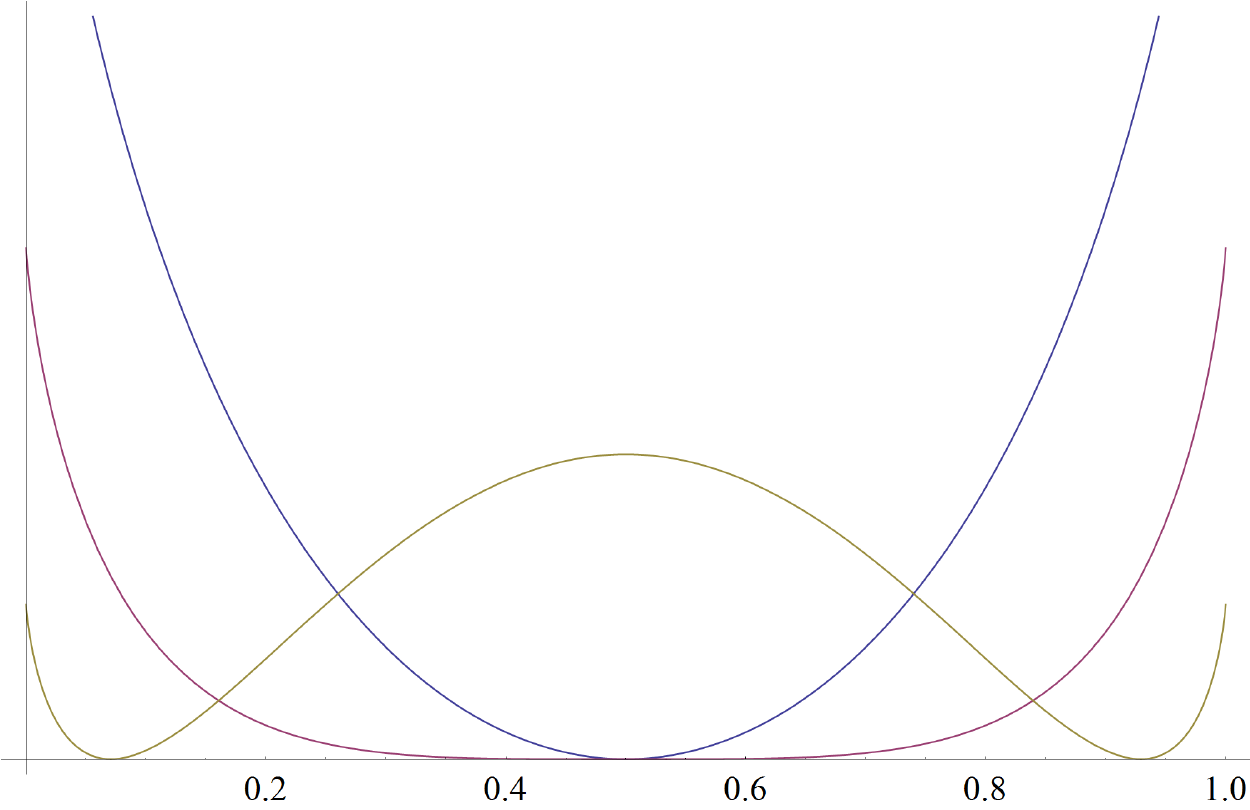} &
 \includegraphics[height=0.20\textheight,width=0.4\textwidth]{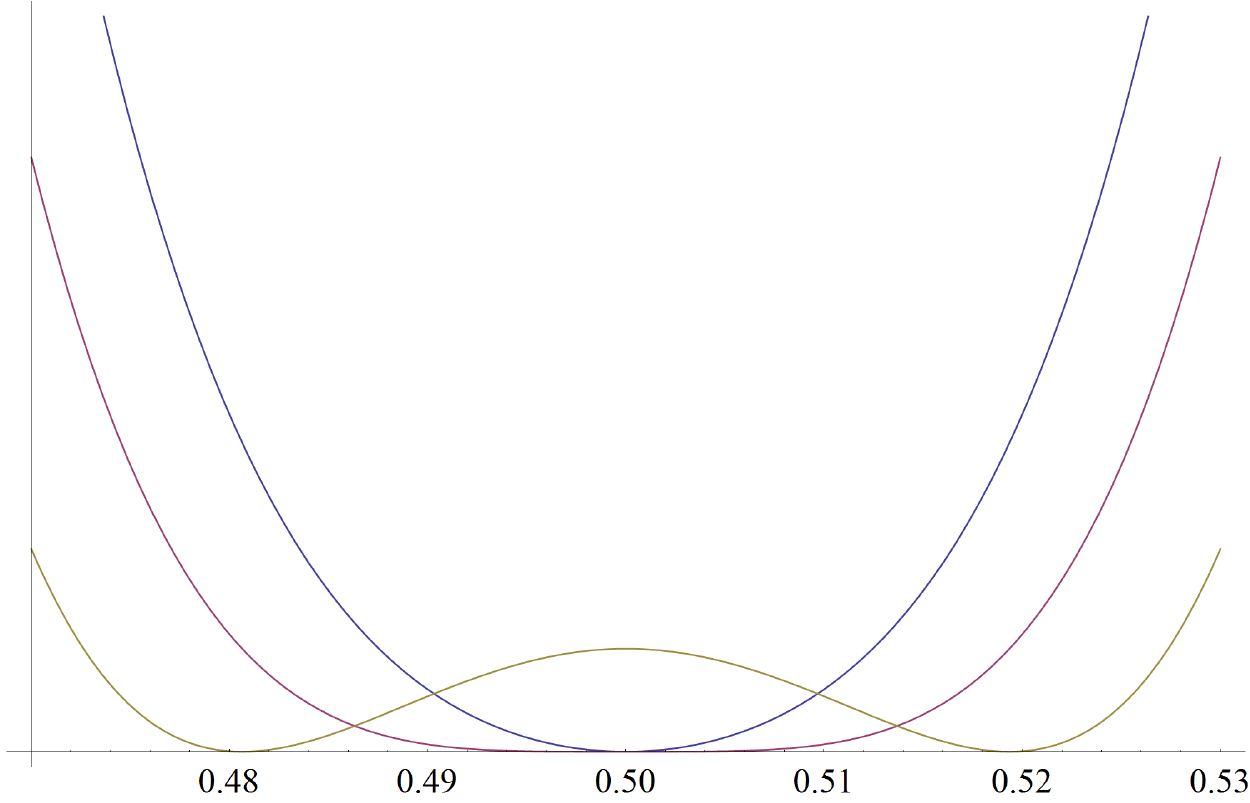} \\
 $\gb < \gb_c(2)$, $\gb = \gb_c(2)$, $\gb > \gb_c(2)$&
 $\gb < \gb_c(2)$, $\gb = \gb_c(2)$, $\gb > \gb_c(2)$
\end{tabular}
\vspace{-0.25cm}
\caption{The free energy as a function of the magnetization $m$ for the Ising model on the complete graph for large $n$. No phase coexistence at $\beta_c$ and the global maximizers for $\beta > \beta_c$ are seen to emerge continuously from $m=0.5$.}
\vspace{-0.25cm}
\label{f:RateFunctionsIsing}
\end{figure}

For a (non-rigorous) treatment of metastability and its effect on the rate of convergence to equilibrium in other mean-field models with a first order phase transition, see for example~\cites{GWL,KW}. A rigorous analysis of such a system (the Blume-Capel model), below and above criticality, was recently carried out in~\cite{KOT}.
It is illuminating to contrast the graph of the free energy as a function of the proportions vector in the Potts model to that of the free energy as a function of the magnetization in Curie-Weiss Ising, given in Figure~\ref{f:RateFunctionsIsing}. The second order phase transition and lack of phase coexistence at the critical temperature, implies the absence of a local minima at any value below or above $\beta_c$. As a consequence there is no spinodal temperature and mixing is fast throughout the whole range $\beta<\beta_c$.

\subsection{Proof Ideas}
As discussed before, up to a permutation of the vertices, configurations can be described by their proportions vector. Formally, for a configuration $\gs \in \gS_n$, we denote by $S(\gs)$ the $q$-dimensional vector $(S^1(\gs), \dots S^q(\gs))$, where
\[
	S^k(\gs) = \frac{1}{n} \sum_{v \in V} \one_{\{\gs(v)=k\}}\,.
\]
Note that $S(\gs) \in \cS$ where $\cS = \{x \in \bbR_+^q  :\, \normI{x} = 1\}$.
Now it is not difficult to see that $S_t = S(\gs_t)$ is itself a Markov process with state space $\cS$ and stationary distribution $\pi_n = \mu_n\circ S^{-1}$, the distribution of $S(\gs)$ under $\mu_n$. We shall refer to this process as the {\em proportions chain}. Figures \ref{f:subcritical} and \ref{f:supercritical} show a realization of the proportion chains superimposed on the free energy graph plotted upside down for better visibility.
3 different values of $\beta$, corresponding to 3 different regimes are exhibited. The color of the curve, representing time, shows the temporal evolution of the proportion chain.  Notice how local minima (shown as local maxima) ``trap'' the chain for a long time.

As a projection of the chain, $S_t$ mixes at least as fast as $\gs_t$. Moreover, when starting in one of the $q$ configurations where all sites have the same color, a symmetry argument reveals that the mixing time of $S_t$ is equal to that of $\gs_t$. One therefore must control the effect of starting from a initial state which is not monochrome.  Using a coupling argument we show that the difference in the mixing times is of the same order as the cutoff window for $S_t$ and can thus be absorbed into our error estimates. It will then suffice to analyze the proportions chain, which is of a lesser complexity than the original process. In particular the state space of $S_t$ has a fixed $q-1$ dimension, independently of $n$, and its transition probabilities can be easily calculated.

Next, we show that when $\gb < \gb_c(q)$ most of the mass of the stationary distribution $\pi_n$ is concentrated on {\em balanced} states whose distance from the ``equi-proportionality'' vector $(1/q, \dots, 1/q)$ is $O(1/\sqrt{n})$. A simple coupling argument then shows that the Markov chain is mixed soon after arriving at such a state.  Thus the main effort in the proof becomes finding sharp estimates on the time is takes for $S_t$ to reach a balanced state from a worst-case initial configuration, in different regimes of $\beta$.

It turns out that these hitting times are determined by the function $D_\beta(x)$, which is defined as (up to a multiplication by $1/n$) the drift of one coordinate of $S_t$ when that coordinate has value $x$ in the worst case, i.e.\ the minimum drift towards $1/q$, where the minimum is taken over all possible values for the remaining coordinates. An explicit formula for $D_\beta(x)$ can be obtained~\eqref{eqn:DOfSDef}. Its graph is plotted in Figure~\ref{f:drift} for $x \in [1/q, 1]$ and different values of $\beta$. For $\beta \ll \beta_c$, this drift is strictly negative in $(1/q, 1]$ and thus each coordinate quickly (in $O(n \log n)$ time) gets to within $1/\sqrt{n}$ of $1/q$. On the other hand, the function $D_\beta(x)$ is monotone increasing in $\beta$ and therefore for sufficiently large $\beta$, it will no longer be negative throughout $(1/q, 1]$ - there will be an interval in $(1/q, 1]$ where it is positive. Such an interval will take an exponential amount of time to traverse and this will lead to an exponential mixing time. The smallest $\beta$ for which this happens is, by definition, $\beta_s$. This $\beta_s$ in turn coincides with the inverse temperature at which local minima begin to appear in the free energy as a function of $s$. In fact, to show exponentially slow mixing, we use standard conductance arguments, which in face of local minima in the free energy give exponential mixing quite automatically.

The most delicate analysis is in the critical regime where $\beta$ is near or equal to $\beta_s$. In this case the $x$-axis is tangential to the graph of $D_\beta(x)$ at its peak (the green curve in Figure~\ref{f:drift}) and the challenge is in finding the asymptotics of the passage time through the tangential point on the $x$-axis (left green dot in the figure). As the drift there is $0$, locally around this point, a coordinate of $S_t$ behaves as a random walk and Doob's decomposition of a suitably chosen function of the coordinate yields the right passage time estimates.

\subsection{Organization}
Section~\ref{sec:prelim} sets notation and contains some useful facts on the Curie-Weiss Potts model, as well as tools (and a few non-standard variations on them) needed in the analysis of mixing time. In Section~\ref{sec:DriftAnalysis} we derive basic properties of the proportions chain that will be repeatedly used in the remainder of the proof. In Section~\ref{sec:SubcriticalRegime} we analyze the case $\gb < \gb_s(q)$ and prove Theorem~\ref{thm:SubCriticalCutoff} while Section~\ref{sec:SuperCriticalRegime} treats the case $\gb > \gb_s(q)$ and establishes Theorem~\ref{thm:SuperCriticalSlowMixing}. The near critical regime is analyzed in Section~\ref{sec:CriticalRegime}, which includes the proof of Theorem~\ref{thm:NearCriticalMixing}. The final section, Section~\ref{sec:EssentialMixing}, gives the proof of Theorem~\ref{thm:EssentialMixing}.

\section{Preliminaries}\label{sec:prelim}

\subsection{Notation}
\label{sub:Notation}
We let $[a,b]$ denote the set $\{a, \dots ,b\}$ for $a,b \in \bbZ$.
We use the same notation for vector and scalar valued variables.
For an $m$-dimensional vector $s$, we denote by $s^k$ its $k$-th component and for $I=(i_1, \dots, i_k) \subseteq [1,m]$, $s^I = (s^{i_1}, \dots, s^{i_k})$.
Matrix-valued variables will appear in bold. We let $\bW^{m,k}$ denote the $(m,k)$ element of $\bW$ and let $\bW^m$ denote is its $m$-th row.

We write $\sfe_i$ for the unit vector in the $i$-th direction and $\vq = (1/q, 1/q, \dots, 1/q) \in \bbR^q$ for the {\it equiproportionality} vector. For $s \in \bbR^{q}$,  we denote $\wh{s} = s - \wh{e}$.

Most of our vectors will live on the simplex $\cS = \{x \in \bbR_+^q  :\, \normI{x} = 1\}$ or even $\cS_n = \cS \cap \inv{n} \bbZ^q$. Occasionally we would like to further limit this set and for $\rho > 0$ we define
\[
\begin{array}{lll}
	\cS^{\rho} = \{s \in \cS : \normsup{\wh{s}} < \rho\},
	& \cS^{\rho}_n = \cS^{\rho} \cap \inv{n} \bbZ^q,
	& \gS^{\rho}_n = S^{-1}(\cS^{\rho}_n) \\
	  \cS^{\rho+} = \{s \in \cS : \: s^k < 1/q + \rho \ \  \forall k \in [1,q]\},\quad
	& \cS^{\rho+}_n = \cS^{\rho+} \cap \inv{n} \bbZ^q, \quad
	&\gS^{\rho+}_n = S^{-1}(\cS^{\rho+}_n) .
\end{array}
\]
Note that $\cS^{\rho+} \subseteq \cS^{(q-1)\rho}$ and similar relations hold for $\cS_n$ and $\gS_n$.

Vectors in $\cS$ will often be viewed also as distributions on $[1,q]$. A coupling of $\nu$, $\wt{\nu} \in \cS$ is the joint distribution of two random variables $X$, $\wt{X}$, defined on the same probability space and marginally distributed according to $\nu$, $\wt{\nu}$. If $\bbP^{\star}$ is the underlying probability measure then we always have $\normTV{\nu-\wt{\nu}} \leq \bbP^{\star}(X \neq \wt{X})$. We shall call this coupling a {\it best coupling}
if it satisfies $\normTV{\nu-\wt{\nu}} = \bbP^{\star}(X \neq \wt{X})$. Such a coupling always exists.

In the course of the proofs, we introduce various couplings of two copies of the Glauber dynamics $(\gs_t)_t$. For the second copy we shall use the notation $\wt{\gs}_t$ and $\wt{S}_t = S(\wt{\gs}_t)$. Couplings will be identified by their underlying probability measure, for which we will use the symbol $\bbP$ with a superscript that changes from coupling to coupling, e.g.\ $\bbP^{BC}$. A subscript will indicate initial state or states, e.g.\ $\bbP^{BC}_{\gs_0, \wt{\gs}_0}$. The expectation and variance, $\bbE[\cdot]$ and $\Var(\cdot)$ resp., will be decorated in the same way as the underlying measure with respect to which they are defined. The $\sigma$-algebra $\cF_t$ will always include all the randomness up-to time $t$. For example, with a single chain $(\gs_t)_t$ this is the $\gs$-algebra generated by $\{\gs_s :\: s \leq t\}$, for a coupling of two chains $(\gs_t)_t$, $(\wt{\gs}_t)_t$, it is the one generated by $\{\gs_s,\wt{\gs}_s :\: s \leq t\}$, etc.

\subsection{Large Deviations Results for the Curie-Weiss Potts Distribution}
\label{sub:CWLDP}
In this subsection we recall several results concerning the concentration of the
proportions vector measures $\pi_n$. See, e.g.,~\cites{CET,EW} for proofs of these results.

It is a consequence of Sanov's Theorem together with an application of Varadhan's Lemma that the sequence
$(\pi_n)_{n \geq 1}$ satisfies a large deviation principle (LDP) on $\cS$ with rate function
\begin{equation}
\label{eqn:CWRateFnDef}
I_{\gb, q}(s) = \sum_{k=1}^q s^k \log (q s^k) - \gb \normII{s}^2 - C,
\end{equation}
where $C$ is chosen so that $\min_{s \in \cS} I_{\gb, q}(s) = 0$.
The minimizing set $\gvep_{\gb, q} = \{s : \, I_{\gb, q}(s) = 0 \}$ which is
the support of the weak limit $\frp_{\gb, q}$ of $(\pi_n)_{n \geq 1}$ is then
\begin{equation}
\label{eqn:MinOfPottsRateFn}
\gvep_{\gb, q} = \left\{
    \begin{array}{ll}
        \lbr \vq \rbr                                                           & \textrm{if $\gb < \gb_c(q)$} \\
        \lbr \vq, \rmT^1 \check{s}_{\gb_c, q}, \rmT^2 \check{s}_{\gb_c, q},
            \dots , \rmT^q \check{s}_{\gb_c, q} \rbr                           & \textrm{if $\gb = \gb_c(q)$} \\
        \lbr \rmT^1 \check{s}_{\gb, q}, \rmT^2 \check{s}_{\gb, q},
            \dots , \rmT^q \check{s}_{\gb, q} \rbr                             & \textrm{if $\gb > \gb_c(q)$}
    \end{array}
\right.,
\end{equation}
where
\begin{equation}
\label{eqn:GbCFormula}
\gb_c(q)=\frac{(q-1)\log(q-1)}{q-2}
\end{equation}
and
\begin{equation}
\label{eqn:SCheckDef}
\check{s}_{\gb, q} = \lb \check{s}_{\gb, q}^1,
\frac{1 - \check{s}_{\gb, q}^1}{q-1}, \dots, \frac{1 - \check{s}_{\gb, q}^1}{q-1} \rb.
\end{equation}
The function $\gb \mapsto \check{s}_{\gb, q}^1$ is continuous and increasing
on $[\gb_c(q), \infty)$ and $\rmT^k:\, \cS \to \cS$ interchanges the 1-$st$ and $k$-th coordinates.
Furthermore, the value of $\check{s}_{\gb, q}$ for all $\gb, q$ is known in
implicit form and in particular for $\gb = \gb_c(q)$, we have
\begin{equation}
\label{eqn:SCheckCritDef}
\check{s}_{\gb_c(q), q} = \lb 1-\tfrac{1}{q}, \tfrac{1}{q(q-1)}, \dots, \tfrac{1}{q(q-1)} \rb.
\end{equation}
This is true for all $q \geq 3$. For $q=2$, \eqref{eqn:CWRateFnDef},\eqref{eqn:MinOfPottsRateFn}, \eqref{eqn:SCheckDef}, \eqref{eqn:SCheckCritDef} still hold, but the critical inverse-temperature is now
\begin{equation}
\label{eqn:GbCFormulaQ2}
\gb_c(2) = 1.
\end{equation}

It is here that a fundamental difference between $q=2$ and $q>2$ can be observed. If $q=2$ then $\check{s}_{\gb_c(2), 2} = \vq$, in which case $\gb \mapsto \frp_{\gb, 2}$ is
continuous for all $\gb \geq 0$. On the other hand, if $q \geq 3$ we have
$\check{s}_{\gb_c(q), q} \neq \vq $ and $\gb \mapsto \frp_{\gb, q}$
is discontinuous at $\gb_c(q)$. Thus, as it is recorded in the Physics literature, the system
exhibits a first order phase transition if $q \geq 3$, but only a second order phase transition if $q = 2$.

\subsection{Hitting Time Estimates for General Supermartingales}
We will require some standard hitting time estimates for supermartingales and related processes.
\begin{lem}
\label{lem:ImprovedAzuma}
For $x_0 \in \bbR$, let $(X_t)_{t \geq 0}$ be a discrete time process, adapted
to $(\cF_t)_{t \geq 0}$ which satisfies
\begin{enumerate}
\item
	\label{cond:ImprovedAzumaLem1}
	$\exists \gd \geq 0 \,: \quad
		\bbE_{x_0} \lsb \lpr X_{t+1} - X_t \rabs \cF_t \rsb \leq -\gd$ \textbf{on $\lbr X_t \geq 0
\rbr$} for all $t \geq 0$.
\item
	\label{cond:ImprovedAzumaLem2}
	$\exists R > 0 \,:\quad \labs X_{t+1} - X_t \rabs \leq R ,\, \forall t \geq 0$.
\item
	\label{cond:ImprovedAzumaLem3}
	$X_0 = x_0$.
\end{enumerate}
where $\bbP_{x_0}$ is the underlying probability measure.
Let $\tau_x^- = \inf \{ t : \: X_t \leq x \}$ and
$\tau_x^+ = \inf \{ t : \: X_t \geq x \}$.
The following holds.
\begin{enumerate}
\item
	\label{item:ImprovedAzuma1}
	If $\gd > 0$ then for any $t_1 \geq 0$:
	\begin{equation}
		\label{eqn:AzumaLem1}
		\bbP_{x_0} \lb \tau_0^- > t_1 \rb \leq
		  \exp \lbr -\frac{\lb \gd t_1 - x_0 \rb^2}{8 t_1 R^2} \rbr \, .
	\end{equation}
\item
	\label{item:ImprovedAzuma2}
	If $x_0 \leq 0$, then for any $x_1>0$ and $t_2\geq 0$,
	\begin{equation}
	\label{eqn:AzumaLem2}
	\bbP_{x_0} \lb \tau_{x_1}^+ \leq t_2 \rb \leq 2 \exp \lbr -\frac{(x_1-R)^2}{8t_2 R^2} \rbr .
	\end{equation}
\item
	\label{item:ImprovedAzuma3}
	If $x_0 \leq 0$, $\gd > 0$, then for any $x_1>0$ and $t_3 \geq 0$:
	\begin{equation}
	\label{eqn:ImprovedAzuma3}
	\bbP_{x_0} \lb \tau_{x_1}^+ \leq t_3 \rb \leq t_3^2 \exp \lbr
		-\frac{(x_1-R) \gd^2}{8 R^3} \rbr .
	\end{equation}
\end{enumerate}
\end{lem}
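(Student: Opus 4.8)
\emph{Proof sketch.} Each of the three bounds will be obtained by exhibiting a supermartingale with bounded increments and applying a one‑sided Azuma--Hoeffding estimate; I will repeatedly use its maximal form: if $(M_t)$ is an $(\cF_t)$-supermartingale with $M_0=0$ and $\labs M_{t+1}-M_t\rabs\le A$, then $\bbP(\max_{s\le t}M_s\ge\gl)\le\exp\lbr -\gl^2/(2tA^2)\rbr$ for all $\gl\ge 0$, which follows by stopping $M$ at the first passage to level $\gl$ and applying the ordinary bound. Note also that by~\eqref{cond:ImprovedAzumaLem1}--\eqref{cond:ImprovedAzumaLem2} the conditional increment on $\lbr X_t\ge 0\rbr$ lies in $[-R,-\gd]$, so $\gd\le R$ (outside this the claims are vacuous).

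\emph{Part~\eqref{item:ImprovedAzuma1}.} We may assume $x_0\ge 0$ (else $\tau_0^-=0$). Put $\tau=\tau_0^-$ and $M_t=X_{t\wedge\tau}+\gd\,(t\wedge\tau)-x_0$. On $\lbr t<\tau\rbr$ we have $X_t>0$, so~\eqref{cond:ImprovedAzumaLem1} gives $\bbE_{x_0}\lsb M_{t+1}-M_t\mid\cF_t\rsb=\bbE_{x_0}\lsb X_{t+1}-X_t\mid\cF_t\rsb+\gd\le 0$, while on $\lbr t\ge\tau\rbr$ the increment vanishes; hence $M$ is a supermartingale with $M_0=0$ and $\labs M_{t+1}-M_t\rabs\le R+\gd\le 2R$. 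On $\lbr\tau>t_1\rbr$ one has $M_{t_1}=X_{t_1}+\gd t_1-x_0>\gd t_1-x_0$, so the maximal bound with $A=2R$ and $\gl=\gd t_1-x_0$ (the regime of interest being $\gd t_1\ge x_0$) yields~\eqref{eqn:AzumaLem1}.

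\emph{Part~\eqref{item:ImprovedAzuma2}.} Here only $\gd\ge 0$ is used, i.e.\ that $X$ is a supermartingale on $\lbr X_t\ge 0\rbr$; we may assume $x_1>R$, as otherwise the right‑hand side exceeds $1$. On $\lbr\tau_{x_1}^+\le t_2\rbr$ let $\gr$ be the last time $s<\tau_{x_1}^+$ with $X_s\le 0$ --- it exists since $x_0\le 0$. Then $X_{\gr+1}\le X_\gr+R\le R$ and $X_s>0$ for all $s\in(\gr,\tau_{x_1}^+]$, so along this interval $X$ is a supermartingale (increments $\le R$) climbing from a value $\le R$ to a value $\ge x_1$ in $\le t_2$ steps. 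Since $\gr$ is a last‑exit time, not a stopping time, I would handle this by running, for each candidate crossing, the process stopped at the first later return to $(-\infty,0]$ or arrival at $x_1$ --- a genuine supermartingale --- invoking the maximal bound on it, and then assembling the contributions of the successive positive excursions of $X$, whose lengths sum to $\le t_2$; splitting the required increase $x_1-R$ into an upward and a downward deviation is the source of the leading factor $2$ and of the constant $8$ in place of $2$ in~\eqref{eqn:AzumaLem2}. Organizing this excursion bookkeeping so that the number of candidate crossings does not degrade the bound is the step I expect to be the main obstacle.

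\emph{Part~\eqref{item:ImprovedAzuma3}.} Now $\gd>0$ is available. Decompose $\lbr\tau_{x_1}^+\le t_3\rbr$ as the union, over pairs $0\le a<b\le t_3$, of the events that the positive excursion of $X$ containing the first hit of $x_1$ begins at $a$ and reaches $x_1$ at $b$; there are $\le t_3^2$ such pairs. On such an event $X_a\le R$, $X_b\ge x_1$ and $X_s\ge 0$ for $s\in[a,b]$, so $\lb X_t+\gd t\rb_{t\ge a}$, stopped at the first dip below $0$, is a supermartingale with increments $\le R+\gd\le 2R$ whose increase over the $b-a$ steps is at least $(x_1-R)+\gd(b-a)$. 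Since $\lb (x_1-R)+\gd(b-a)\rb^2\ge 4(x_1-R)\,\gd\,(b-a)$, the maximal bound over $b-a$ steps gives a per‑pair probability $\le\exp\lbr -(x_1-R)\gd/(2R^2)\rbr$, uniformly in $b-a$; summing over the $\le t_3^2$ pairs and using $\gd\le R$ (so that $(x_1-R)\gd/(2R^2)\ge(x_1-R)\gd^2/(8R^3)$) yields~\eqref{eqn:ImprovedAzuma3}.
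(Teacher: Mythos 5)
Parts~\eqref{item:ImprovedAzuma1} and~\eqref{item:ImprovedAzuma3} of your sketch are correct and close in spirit to the paper, with small simplifications. For Part~\eqref{item:ImprovedAzuma1} the paper appends an auxiliary independent drifting process after $\tau_0^-$ before taking the Doob decomposition; you instead apply a one-sided Azuma maximal bound directly to the stopped supermartingale $X_{t\wedge\tau_0^-}+\gd\,(t\wedge\tau_0^-)-x_0$, which is equivalent and a little leaner. For Part~\eqref{item:ImprovedAzuma3} you union-bound over pairs $(a,b)$ exactly as the paper does, with the stopped-drifted supermartingale playing the role of its $M_t(s)$; your AM--GM step $\lb(x_1-R)+\gd(b-a)\rb^2\ge 4(x_1-R)\gd(b-a)$ replaces the paper's use of $(b-a)\ge(x_1-R)/R$ on the nonvanishing summands, and in fact gives the exponent $(x_1-R)\gd/(2R^2)$, which (since $\gd\le R$) dominates the stated $(x_1-R)\gd^2/(8R^3)$, so~\eqref{eqn:ImprovedAzuma3} follows.

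The obstacle you flag in Part~\eqref{item:ImprovedAzuma2} is real, and the excursion route does not close. Union-bounding the one-excursion estimate over the at most $t_2$ candidate start times of the final positive excursion yields something of order $t_2\exp\lbr -(x_1-R)^2/(Ct_2R^2)\rbr$, i.e.\ a polynomial prefactor which~\eqref{eqn:AzumaLem2} does not have; this is exactly the cost that Part~\eqref{item:ImprovedAzuma3} incurs and tolerates (there the extra $\gd>0$ drift provides a $t$-uniform per-pair estimate), and in Part~\eqref{item:ImprovedAzuma2} no such drift is available. The paper's proof uses a genuinely different device. Set $Y_t=X_{t\wedge\tau_{x_1}^+}$ and write $Y_{t+1}-Y_t=W_{t+1}+Z_{t+1}$ with $W_{t+1}=\bbE_{x_0}\lsb Y_{t+1}-Y_t\mid\cF_t\rsb$, so that $W_{t+1}\le 0$ on $\lbr Y_t\ge 0\rbr$ and $\bbE_{x_0}\lsb Z_{t+1}\mid\cF_t\rsb=0$. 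Define the ``folded'' martingale $M_0=0$, $M_{t+1}=M_t+\hbox{sign}(M_t)\,Z_{t+1}$, and set $N_t=|M_t|$, noting $N_{t+1}=|N_t+Z_{t+1}|$. An induction shows $Y_t\le N_t+R$: when $Y_{t-1}\ge 0$ drop $W_t\le 0$ and use $a\le|a|$; when $Y_{t-1}<0$ use $Y_t\le R$ directly. Since $|M_{t+1}-M_t|=|Z_{t+1}|\le 2R$, one two-sided Azuma bound on $M_{t_2}$ gives $\bbP_{x_0}(Y_{t_2}>x_1)\le\bbP_{x_0}(|M_{t_2}|>x_1-R)\le 2\exp\lbr-(x_1-R)^2/(8t_2R^2)\rbr$, with no sum over start times. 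This reflection trick is the missing idea.
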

\begin{proof}
Starting with~\eqref{item:ImprovedAzuma1}, if $x_0 < 0$, there is nothing to prove. Otherwise, let $Z_t$ be a supermartingale independent of $X_t$, which starts from $0$, has drift $-\gd$ and steps which are bounded by $R$. Set $Y_t = X_{t \minwith \tau_0^-} + Z_{\lb t -\tau_0^-\rb^+}$ and write $Y_t = M_t + A_t$ as its Doob Decomposition, with $M_t$ a martingale, $A_t$ a predictable processes and $A_0 = x_0$. Clearly $A_t \leq x_0 -\gd t$ and $|M_{t+1} - M_t| \leq 2R$, $\bbP_{x_0}$-a.s. The Hoeffding-Azuma inequality implies:
\begin{eqnarray*}
	\bbP_{x_0} \lb \tau_0^- > t_1 \rb
		& \leq 	& \bbP_{x_0} \lb Y_{t_1} > 0 \rb \\
		& \leq	& \bbP_{x_0} \lb M_{t_1} > \gd t_1 - x_0 \rb \\
		& \leq	& \exp \lbr -\frac{\lb \gd t_1 - x_0 \rb^2}{8 t_1 R^2} \rbr
\end{eqnarray*}
as desired.

Now set $Y_t = X_{t \minwith \tau_{x_1}^+}$ and observe that $\lb Y_t \rb_{t \geq 0}$ satisfies conditions \ref{cond:ImprovedAzumaLem1}--\ref{cond:ImprovedAzumaLem3}, and consequently it is enough to prove
\eqref{eqn:AzumaLem2}, with $\bbP_{x_0} \lb Y_{t_2} > x_1 \rb$ as the LHS. Therefore, for all $t$, let $W_{t+1} = \bbE_{x_0} \lsb \lpr Y_{t+1} - Y_t \rabs \cF_t \rsb$ and $Z_{t+1} = Y_{t+1} - Y_t - W_{t+1}$. Then clearly, $W_{t+1}$, $Z_t$ are $\cF_t$-measurable,
$W_{t+1} \leq 0$ on $\lbr Y_t \geq 0 \rbr$ and $\bbE_{x_0} \lsb Z_{t+1} | \cF_t \rsb = 0$. This is a
Doob-type decomposition. Now, define $M_t$ for all $t$ inductively as follows.
\[
	M_0 = 0	\ ; \quad M_{t+1} = M_t + \hbox{sign}(M_t) Z_{t+1}
\]
and set $N_t = |M_t|$. We claim the following:
\begin{enumerate}
\item
	$\lb M_t \rb_{t \in \bbN}$ is an $\lb \cF_t \rb_{t \geq 0}$-adapted martingale.
\item
	$N_0 = 0 \ ; \quad N_{t+1} = \labs N_t + Z_{t+1} \rabs ,\, \forall t$.
\item
	$Y_t \leq N_t + R$.
\end{enumerate}
The first two assertions follow straightforwardly from the construction. The third one, can be
proven by induction, since it clearly holds for $t=0$ and assuming $Y_{t-1} \leq N_{t-1} + R$,
if $Y_{t-1} \geq 0$, then
\begin{eqnarray*}
	Y_t
		& = 	& Y_{t-1} + (Y_t - Y_{t-1})	\\
		& \leq	& N_{t-1} + R + Z_t \leq N_t + R ,
\end{eqnarray*}
and if $Y_{t-1} < 0$, then
\[
	Y_t \leq R \leq N_t + R .
\]

Finally, by the Hoeffding-Azuma inequality applied to $M_t$ we have
\begin{eqnarray*}
	\bbP_{x_0} \lb Y_{t_2} > x_1 \rb
		& \leq	& \bbP_{x_0} \lb |M_{t_2}| > x_1 - R \rb \\
		& \leq 	& 2 \exp \lb -\frac{(x_1-R)^2}{8t_2 R^2} \rb .
\end{eqnarray*}
This shows \eqref{eqn:AzumaLem2}.

For part \eqref{item:ImprovedAzuma3},
let $\tau_0^-(s) = \inf \{t \geq s: X_t \leq 0 \}$
and $Y_t(s)$, $M_t(s)$ and $A_t(s)$ defined as in the proof of part
\eqref{item:ImprovedAzuma1} only with $\tau_0^-(s)$ replacing $\tau_0^-$.
Then,
\begin{eqnarray*}
\bbP_{x_0} \lb \tau_{x_1}^+ \leq t_3 \rb
	& 	\leq	& \sum_{0\leq s_1 < s_2 \leq t_3} \bbP_{x_0} \lb
		X_{s_1} \leq R, \, X_{s_2} \geq x_1 ,\, X_t > 0 \;
			\forall s_1 \leq t \leq s_2 \rb \\
	& 	\leq	& \sum_{0 \leq s_1 < s_2 \leq t_3} \bbP_{x_0} \lb
		 Y_{s_2}(s_1) \geq x_1, Y_{s_1}(s_1) \leq R \rb	\\
	& 	\leq	& \sum_{0\leq s_1 < s_2 \leq t_3} \bbP_{x_0} \lb
		 	M_{s_2}(s_1) - M_{s_1}(s_1) \geq \gd(s_2 - s_1) + (x_1 - R)^+ \rb \\
	&  \leq & t_3^2 \exp \lbr -\frac{\gd^2 \frac{x_1 - R}{R}}{8 R^2} \rbr
\end{eqnarray*}
where the last inequality follows from Hoeffding-Azuma inequality and since the summands are non zero only when
$0 \leq s_1 < s_2 \leq t_3$ and $(s_2 - s_1)R \geq x_1 - R$.
\end{proof}

\begin{lem}
\label{lem:HittingTimeLowerBound}
Let $(X_t)_{t \in \bbN}$ be a process adapted to $(\cF_t)_{t \in \bbN}$ and
satisfying the following conditions for some $a > 2\gd \geq 0$:
\begin{enumerate}
\item
	\label{item:HittingTimeLowerBoundLem1}
	$X_{t+1} - X_t \in \{-1, 0, 1\}$.
\item
	\label{item:HittingTimeLowerBoundLem2}
	$\bbE[X_{t+1} - X_t | \cF_t] \geq -\gd$.
\item
	\label{item:HittingTimeLowerBoundLem3}
	$\Var(X_{t+1} | \cF_t) \geq a$
\item
	\label{item:HittingTimeLowerBoundLem4}
	$X_0 \geq 0$
\end{enumerate}

Let $\tau_r^{+} = \inf \{t: X_t \geq r \}$. Then
\begin{equation}
\label{eqn:HittingTimeLowerBound}
\bbP (\tau_r^{+} \leq t) \geq C_1 \exp \{-C_2 (r/\sqrt{t} + \gd \sqrt{t})^2 \} + O(t^{-1/2})
\end{equation}
where $C_1, C_2$ are positive constants which depends only on $a$.

\end{lem}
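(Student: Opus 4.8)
The plan is to first reduce \eqref{eqn:HittingTimeLowerBound} to a clean exponential lower bound with no error term, and then to establish that bound by an exponential change of measure fed by a ``base case'' estimate for nearly driftless walks. Abbreviate $K:=r/\sqrt t+\gd\sqrt t$, so $K\sqrt t=r+\gd t$, and $\xi_{s+1}:=X_{s+1}-X_s\in\{-1,0,1\}$. Since the $O(t^{-1/2})$ term in \eqref{eqn:HittingTimeLowerBound} is allowed to be negative, the bound holds trivially whenever $C_1\mathrm e^{-C_2K^2}$ is of order $t^{-1/2}$ or smaller (the left side being $\geq 0$), and also for $t$ bounded once the implicit constant is taken large; so it suffices to produce $c,C>0$ depending only on $a$ with
\[
	\bbP(\tau_r^+\leq t)\geq c\,\mathrm e^{-CK^2}
\]
in the regime $CK^2\leq\tfrac12\log t$. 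Note this regime forces $r/t\to 0$ and $\gd\to 0$ as $t\to\infty$, which we use below; and $X_0\geq 0$ will only help (on $\{X_0\geq r\}$ the bound is trivial).

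\emph{Base case.} First I would show: if $(Y_s)$ is $(\cF_s)$-adapted with $Y_{s+1}-Y_s\in\{-1,0,1\}$, $\bbE[Y_{s+1}-Y_s\,|\,\cF_s]\geq-\gd'$, $\Var(Y_{s+1}\,|\,\cF_s)\geq a'$ and $Y_0\geq 0$, then $\bbP(\tau_r^+(Y)\leq t)\geq c_0(a')$ provided $r+\gd't\leq c_1(a')\sqrt t$. Write $Y_s=M_s+A_s$ for the Doob decomposition: $A_s\geq-\gd't$ on $[0,t]$ and $M$ is a martingale with $\langle M\rangle_t=\sum_{s\leq t}\Var(\xi_s\,|\,\cF_{s-1})\geq a't$ a.s.\ and increments bounded by $2$, so $\max_{s\leq t}Y_s\geq\max_{s\leq t}M_s-\gd't$. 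The variable $Z:=\max_{s\leq t}M_s$ is nonnegative (it dominates $M_0=0$), and combining $\bbE Z\geq\bbE[M_t^+]=\tfrac12\bbE|M_t|\geq\tfrac12\bbE[M_t^2]^{3/2}\bbE[M_t^4]^{-1/2}$ (H\"older), $\bbE[M_t^2]=\bbE\langle M\rangle_t\geq a't$, $\bbE[M_t^4]\leq Ct^2$ (Burkholder--Davis--Gundy and the increment bound) and $\bbE[Z^2]\leq 4\bbE[M_t^2]\leq 4t$ (Doob's $L^2$ maximal inequality), the Paley--Zygmund inequality gives $\bbP(Z\geq\tfrac12\bbE Z)\geq c_0(a')>0$ with $\tfrac12\bbE Z\geq c_1(a')\sqrt t$. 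Since $\{\tau_r^+(Y)\leq t\}\supseteq\{Z\geq r+\gd't\}$, the claim follows.

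\emph{Change of measure and case split.} For $\gl\geq 0$ let $L_s:=\prod_{i=1}^s\mathrm e^{\gl\xi_i}/\bbE[\mathrm e^{\gl\xi_i}\,|\,\cF_{i-1}]$, a positive mean-one martingale, and let $\widetilde\bbP$ be the tilt with $\mathrm d\widetilde\bbP/\mathrm d\bbP=L_t$ on $\cF_t$. Elementary expansions of the conditional moment generating function (using $\xi\in\{-1,0,1\}$, so $\xi^3=\xi$), together with $\bbE[\xi\,|\,\cF]\geq-\gd$, $\bbE[\xi^2\,|\,\cF]\geq a$ and $\gd<a/2$, let one fix $\gl_0=\gl_0(a)>0$ so that for $0\leq\gl\leq\gl_0$: (i) $\bbE[\mathrm e^{\gl\xi}\,|\,\cF]\geq\mathrm e^{-2\gd\gl}$; (ii) $\widetilde\bbE[\xi\,|\,\cF]\geq-\gd+\tfrac a2\gl$; (iii) $\Var_{\widetilde\bbP}(\xi\,|\,\cF)\geq a/2$ (the last by continuity in $\gl$ at $0$, uniform in the underlying conditional law). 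Moreover, for $\tau:=\tau_r^+\wedge t$ one has $\bbP(\tau_r^+\leq t)=\widetilde\bbE[L_\tau^{-1}\one_{\{\tau_r^+\leq t\}}]$, where $L_\tau^{-1}=\mathrm e^{-\gl X_\tau}\prod_{i\leq\tau}\bbE[\mathrm e^{\gl\xi_i}\,|\,\cF_{i-1}]\geq\mathrm e^{-\gl X_\tau-2\gd\gl t}$ by (i) and $X_0\geq 0$, and on $\{\tau_r^+\leq t\}$, $X_\tau=X_{\tau_r^+}<r+1$. Now split on the size of $r$. If $r\leq c_1(a/2)\sqrt t$, take $\gl=4\gd/a$ (which is $\leq\gl_0$ once $t$ is large, since $\gd\to 0$): under $\widetilde\bbP$, by (ii)--(iii), the conditional drift is $\geq\gd\geq 0$ and the conditional variance is $\geq a/2$, so the base case with $(a',\gd')=(a/2,0)$ gives $\widetilde\bbP(\tau_r^+\leq t)\geq c_0(a/2)$. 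If $r>c_1(a/2)\sqrt t$, take $\gl=\tfrac2a(C_\star r/t+\gd)\leq\gl_0$ with $C_\star=C_\star(a)$ large: then by (ii) $\widetilde\bbE[X_t]\geq C_\star r$ while $\Var_{\widetilde\bbP}(X_t)\leq t$ (increments bounded by $1$), so Chebyshev together with $r^2>c_1(a/2)^2 t$ forces $\widetilde\bbP(X_t\geq r)\geq\tfrac12$ once $C_\star$ is large in terms of $a$, hence $\widetilde\bbP(\tau_r^+\leq t)\geq\tfrac12$. In either case $L_\tau^{-1}\geq\mathrm e^{-\gl(r+1+2\gd t)}\geq c'(a)\,\mathrm e^{-C'(a)K^2}$ on $\{\tau_r^+\leq t\}$, after substituting $\gl$ and using $r^2/t,\ \gd r,\ \gd^2 t\leq K^2$, $r/t\leq 1$ and $\gd\leq a/2$ (so the remaining terms add only an additive $O_a(1)$ to the exponent). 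Hence $\bbP(\tau_r^+\leq t)=\widetilde\bbE[L_\tau^{-1}\one_{\{\tau_r^+\leq t\}}]\geq c'(a)\,\mathrm e^{-C'(a)K^2}\cdot\widetilde\bbP(\tau_r^+\leq t)\geq c\,\mathrm e^{-CK^2}$, which by the reduction at the start gives \eqref{eqn:HittingTimeLowerBound}.

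The step I expect to be the main obstacle is the base case: one needs a lower bound on $\bbP(\max_{s\leq t}M_s\geq c\sqrt t)$ that is \emph{uniform} over all adapted increment sequences constrained only by a lower bound on the conditional variance. A mean-zero martingale can put almost all its mass on very negative values, so Chebyshev is useless here; the resolution is to exploit that the \emph{predictable} quadratic variation $\langle M\rangle_t$ is bounded below deterministically, and the Paley--Zygmund argument above does exactly this --- but it leans on the matching fourth-moment bound $\bbE[M_t^4]\leq Ct^2$ with an absolute constant. A secondary point needing care is the bookkeeping in the case split, where one must check that the change-of-measure cost $\gl(r+1+2\gd t)$ never exceeds $CK^2+O_a(1)$; this is precisely what confines the whole argument to the (otherwise non-vacuous) regime $r/t,\gd=o(1)$.
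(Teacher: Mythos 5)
Your argument is correct in outline and takes a genuinely different route from the paper's. The paper extracts the embedded jump chain $Y_k = X_{T_k}$ (where $T_k$ are the times of non-trivial moves) together with the waiting times $N_k = T_k - T_{k-1}$, observes from conditions \eqref{item:HittingTimeLowerBoundLem2}--\eqref{item:HittingTimeLowerBoundLem3} that $\bbP(\xi_{s+1}\neq 0\mid\cF_s)\geq a$ and $\bbP(\xi_{s+1}=-1\mid \xi_{s+1}\neq 0, \cF_s)\leq\tfrac12+\tfrac{\gd}{2a}$, stochastically dominates $(N_k, Z_k)$ by an i.i.d.\ pair $(\wt N_k, \wt Z_k)$ with $\wt N_k\sim\mathrm{Geom}(a)$ and $\wt Z_k$ a $\pm 1$ variable with bias $\gd/a$, and then applies the local CLT to $\wt Y_{ta/2}$ and Cramér to $\wt T_{ta/2}$. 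You instead perform an exponential change of measure $\mathrm d\wt\bbP/\mathrm d\bbP = L_t$ to shift the drift, reduce to a ``base-case'' estimate for nearly driftless walks, and establish that base case by Paley--Zygmund applied to $\max_{s\leq t}M_s$, using the deterministic lower bound $\langle M\rangle_t\geq a't$ together with BDG and Doob's maximal inequality. Your route is more self-contained (no appeal to the local CLT, and it makes explicit how the uniform lower bound on the conditional variance yields a uniform anti-concentration estimate via the fourth-moment matching $\bbE[M_t^4]\leq Ct^2$), while the paper's route is shorter once one is willing to cite classical walk estimates. Your reduction step at the outset --- observing that the $O(t^{-1/2})$ term is allowed to be negative and therefore the claim is vacuous outside $CK^2\leq\tfrac12\log t$, which in turn forces $r/t,\gd=o(1)$ so the tilting parameter stays within $[0,\gl_0(a)]$ --- is the key bookkeeping point, and you handle it correctly.

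There is one slip in the ``large $r$'' branch: the claim $\Var_{\wt\bbP}(X_t)\leq t$ does not follow from increments being bounded by $1$ (take $\xi_1=\pm 1$ uniformly and $\xi_s=\xi_1$ deterministically for $s\geq 2$, which gives $\Var(X_t)=t^2$). What is true, and suffices, is the martingale bound: writing $X_t = X_0 + \wt A_t + \wt M_t$ for the Doob decomposition under $\wt\bbP$, one has $\wt A_t\geq C_\star r$ from the tilted drift bound and $\wt\bbE[\wt M_t^2]=\wt\bbE\langle\wt M\rangle_t\leq t$ since each conditional variance is at most $1$, so Chebyshev applied to $\wt M_t$ gives $\wt\bbP(X_t<r)\leq t/((C_\star-1)^2 r^2)\leq 1/2$ for $C_\star=C_\star(a)$ large, using $r^2>c_1(a/2)^2 t$. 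This is a one-line fix and does not affect the rest of the argument or the final constants.
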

\begin{proof}
It is easy to verify that conditions \ref{item:HittingTimeLowerBoundLem1}--\ref{item:HittingTimeLowerBoundLem3}
imply
\begin{equation}
\label{eqn:RandP}
\bbP (X_{t+1} \neq X_t | \cF_t) \geq a	\quad ; \quad
	\bbP (X_{t+1} - X_t = -1 | \cF_t ,\, X_{t+1} \neq X_t) \leq \inv{2} + \tfrac{\gd}{2a} \,.
\end{equation}
Now, let $T_0 = 0$ and $T_k = \inf \{t > T_{k-1} : \: X_t \neq X_{T_{k-1}} \}$,
$Y_k = X_{T_k}$ for $k \geq 1$. Also, let $N_k = T_k - T_{k-1}$ and $Z_k = Y_k - Y_{k-1}$.
From \eqref{eqn:RandP}, it is not hard to see that we can couple $(N_k)_k$, $(Z_k)_k$ with
two i.i.d sequences $(\wt{N}_k)_k$,$(\wt{Z}_k)_k$ such that $\wt{N}_k \geq N_k$, $\wt{Z}_k \leq Z_k$
a.s. and
\[
\wt{N}_k \sim \mbox{Geom}(a) \quad ; \quad
	\wt{Z}_k = \lbr \begin{array}{ll}
	                 	+1 \ & \text{w.p } 1/2 - \gd/2a	\\
						-1 & \text{w.p } 1/2 + \gd/2a
	                \end{array} \rpr \,.
\]
Consequently, with $\wt{Y}_k = \sum_{m \leq k} \wt{Z}_m$ and $\wt{T}_k = \sum_{m \leq k} \wt{N}_m$
we have:
\begin{eqnarray*}
\bbP (\tau_r^{+} \leq t)
	& \geq		& \bbP (Y_{ta/2} \geq r ,\, T_{ta/2} \leq t)	\\
	& \geq 		& \bbP (\wt{Y}_{ta/2} \geq r) - \bbP(\wt{T}_{ta/2} > t)	\\
	& \geq		& C_1 \exp \lbr -C_2 \frac{(r+\gd t)^2}{t} \rbr + O(t^{-1/2}) + \exp \{-C_3 t \} \,,
\end{eqnarray*}
where the last inequality follows by the local CLT for $\wt{Y}_{k}$, which is a nearest neighbor random walk whose steps have mean $-\gd/a$ and variance $1-(\gd/a)^2$ and also by Cramer's theroem for $\wt{T}_k$. All constants depend only on $a$.
\end{proof}

Finally, we will also make use of the following result from~\cite{LPW}:
\begin{lem}[\cite{LPW}*{Proposition 17.20}]
\label{lem:DiffusionHittingTime}
Let $\lb Z_t \rb_{t \geq 0}$ be a non-negative supermartingale
adapted to $\lb \cG_t \rb_{t \geq 0}$ and $N$ a stopping time. Suppose that:
\begin{enumerate}
	\item $Z_0 = z_0$
	\item $|Z_{t+1} - Z_t| \leq B$
	\item $\exists \gs > 0$ such that $\Var \lb Z_{t+1} | \cG_t \rb > \gs^2$
		  on the event $\lbr N > t \rbr$.
\end{enumerate}
If $u > 4B^2/(3\gs^2)$, then:
\[
	\bbP_{z_0} \lb N > u \rb \leq \frac{4 z_0}{\gs \sqrt{u}}.
\]
\end{lem}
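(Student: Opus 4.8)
This is Proposition~17.20 of \cite{LPW}, so I will only outline the argument I would use. The guiding heuristic is that a non-negative supermartingale whose one-step conditional variance is at least $\gs^2$ as long as $N>t$ behaves, on the relevant scale, like Brownian motion run at speed $\gs^2$; for such a motion started at $z_0>0$ the chance of staying positive up to time $u$ equals $\bbP\big(|\cN(0,\gs^2u)|<z_0\big)\leq z_0/(\gs\sqrt u)$, which is the asserted estimate up to the constant. The plan is to realise this bound directly, via an explicit space--time supermartingale, with no passage to a diffusion limit.

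First, two reductions. We may assume $B\geq\gs$, since otherwise the bounded-increment hypothesis gives $\Var(Z_{t+1}\mid\cG_t)\leq B^2<\gs^2$ and $\{N>t\}$ is empty. Second, I would record that $\{N>t\}\subseteq\{Z_t\geq\gs^2/(2B)\}$: writing $\xi=Z_{t+1}-Z_t$, if $Z_t<\gs^2/(2B)$ then $\xi\geq -Z_t$, so $\bbE[\xi^+\mid\cG_t]\leq\bbE[\xi^-\mid\cG_t]\leq Z_t$ and hence $\bbE[\xi^2\mid\cG_t]\leq Z_t\,\bbE[\xi^-\mid\cG_t]+B\,\bbE[\xi^+\mid\cG_t]\leq (Z_t+B)Z_t<\gs^2$ (using $B\geq\gs$), contradicting the variance hypothesis; thus $Z$ stays bounded away from $0$ while it ``survives''.

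Now, for $0\leq t<u$ and $z\geq 0$ put
\[
	F(z,t)=\int_0^{\,z/(\gs\sqrt{u-t})} e^{-y^2/2}\dd y ,
\]
the space--time harmonic function for $\partial_t F+\tfrac12\gs^2\partial_{zz}F=0$; it is increasing and concave in $z$, obeys $0\leq F\leq\sqrt{\pi/2}$, $F(z,0)\leq z/(\gs\sqrt u)$, and $F(z,t)\to\sqrt{\pi/2}$ as $t\uparrow u$ for every $z>0$. The crux of the proof is to verify that $t\mapsto F(Z_{t\wedge N},\,t\wedge N)$ is a supermartingale. On $\{N>t\}$, a second-order Taylor expansion gives
\[
	\bbE\big[F(Z_{t+1},t+1)\mid\cG_t\big]-F(Z_t,t)=\partial_z F\cdot\bbE[\xi\mid\cG_t]+\tfrac12\partial_{zz}F\cdot\bbE[\xi^2\mid\cG_t]+\partial_t F+\mathrm{Rem},
\]
and here the first summand is $\leq 0$ since $\partial_z F\geq 0$ and $\bbE[\xi\mid\cG_t]\leq 0$, while $\tfrac12\partial_{zz}F\cdot\bbE[\xi^2\mid\cG_t]+\partial_t F\leq\tfrac12\gs^2\partial_{zz}F+\partial_t F=0$ because $\partial_{zz}F\leq 0$ and $\bbE[\xi^2\mid\cG_t]\geq\gs^2$; it then remains to dominate $\mathrm{Rem}=O\big(|\partial_{zzz}F|\,B^3\big)$ by this second-order gain, and since $|\partial_{zzz}F|$ is of order $(u-t)^{-3/2}$ near the deadline this is exactly what the hypothesis $u>4B^2/(3\gs^2)$ secures. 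Granting the supermartingale property, optional stopping together with the second reduction (which forces $Z_{u\wedge N}>0$ on $\{N>u\}$, so that $F$ attains its limit $\sqrt{\pi/2}$ there) yields
\[
	\frac{z_0}{\gs\sqrt u}\;\geq\;F(z_0,0)\;\geq\;\bbE\big[F(Z_{u\wedge N},\,u\wedge N)\big]\;\geq\;\sqrt{\tfrac{\pi}{2}}\;\bbP_{z_0}(N>u),
\]
and folding the slack from bounding $\mathrm{Rem}$ (and from the last steps $t\uparrow u$, where $F$ and its derivatives degenerate) into the constant gives $\bbP_{z_0}(N>u)\leq 4z_0/(\gs\sqrt u)$.

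The main obstacle is precisely the supermartingale verification above: turning ``$F$ is superharmonic for $Z$'' into a discrete statement valid for increments that are bounded but not infinitesimal, uniformly up to the deadline $t\to u$ where $|\partial_z F|,|\partial_{zz}F|,|\partial_{zzz}F|\to\infty$ --- this is where $u>4B^2/(3\gs^2)$ is needed, and why the clean heuristic constant $\sqrt{2/\pi}$ is not attained. I would also remark that the more naive route --- stopping $Z$ at the first time it reaches a high level $k$, invoking the maximal inequality $\bbP(\sup_t Z_t\geq k)\leq z_0/k$ together with a second-moment bound on the truncated process, and optimising over $k$ --- only yields $\bbP_{z_0}(N>u)=O\big((z_0^2/(\gs^2 u))^{1/3}\big)$; the sharp $u^{-1/2}$ rate really does seem to require the space--time harmonic function (or some equivalent comparison).
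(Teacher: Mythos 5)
Your route via the space--time harmonic function $F(z,t)=\int_0^{z/(\gs\sqrt{u-t})}e^{-y^2/2}\,\dd y$ is genuinely different from the proof in \cite{LPW} that the paper cites, and there is a real gap in the supermartingale verification as you have sketched it. The ``second-order gain'' you plan to use to absorb the Taylor remainder is $\frac12\partial_{zz}F\cdot\bbE[\xi^2\mid\cG_t]+\partial_t F=\frac12\partial_{zz}F\cdot\big(\bbE[\xi^2\mid\cG_t]-\gs^2\big)$, which can be exactly zero: the hypothesis only gives $\bbE[\xi^2\mid\cG_t]\geq\gs^2$, with equality possible. Likewise $\partial_z F\cdot\bbE[\xi\mid\cG_t]$ vanishes whenever the drift is zero (a supermartingale may well be a martingale). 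In that worst case there is nothing left to dominate the genuine third-order remainder of size $O(|\partial_{zzz}F|\,B^3)$, nor the mixed terms $\xi\,\partial_{zt}F$ and $\frac12\partial_{tt}F$ that your expansion drops. The hypothesis $u>4B^2/(3\gs^2)$ cannot rescue this --- it controls the ratio $B/(\gs\sqrt u)$ at $t=0$, but the derivatives of $F$ degenerate like powers of $(u-t)^{-1/2}$ as $t\uparrow u$, so the discretization error near the deadline is not something one can ``fold into the constant'' without an additional idea (e.g.\ cutting off at a time $u-\epsilon u$ and handling the tail separately, or replacing $F$ with a strictly superharmonic comparison function).

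Your concluding remark is also off the mark, and in an instructive way: the proof in \cite{LPW} \emph{is} the ``naive route'' you dismiss, with one crucial twist. Rather than trying to control second moments of $Z$ itself (where the negative supermartingale drift wrecks any lower bound on $\bbE[Z_t^2]$), one fixes a level $h>z_0$, sets $\tau_h=\inf\{t:Z_{t\wedge N}\geq h\}$, and observes that $h-Z_{t\wedge N\wedge\tau_h}$ is a non-negative submartingale, so that $\big(h-Z_{t\wedge N\wedge\tau_h}\big)^2-\gs^2\,(t\wedge N\wedge\tau_h)$ is a submartingale: the drift of $Z$ now works \emph{for} you, and the conditional-variance hypothesis supplies the $\gs^2$ per step. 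Optional stopping at time $u$ together with $(h-Z_{\cdot})^2\leq h^2$ (valid once $h\geq B$) yields $\gs^2\,\bbE[u\wedge N\wedge\tau_h]\leq 2hz_0$, hence $\bbP(N>u,\ \tau_h>u)\leq 2hz_0/(\gs^2 u)$; combining with the maximal inequality $\bbP(\tau_h<\infty)\leq z_0/h$ and choosing $h$ of order $\gs\sqrt u$ gives $\bbP_{z_0}(N>u)=O\big(z_0/(\gs\sqrt u)\big)$, not the $O\big((z_0^2/(\gs^2u))^{1/3}\big)$ rate you predicted. So the cited argument is both more elementary and sharper than your remark suggests, and is the one you should reproduce here.
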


\subsection{Variance Lemma}
The following is a straight-forward extension of \cite{LLP}*{Lemma 2.6} to vector valued Markov processes. We include a proof here for completeness.
\begin{lem}
\label{lem:VarBoundFromContraction}
Let $\lb Z_t \rb$ be a Markov chain taking values in $\bbR^d$ and with transition matrix
$P$. Write $\bbP_{z_0}$, $\bbE_{z_0}$ for its probability measure
and expectation respectively, when $Z_0 = z_0$.
Suppose that there is some $0 < \eta < 1$ such that for all pairs of starting states
$\lb z_0, \wt{z}_0 \rb$,
\begin{eqnarray}
\label{eqn:VarBoundContractionCond}
\normII{\bbE_{z_0} Z_t - \bbE_{\wt{z}_0} Z_t} \leq \eta^t \normII{z_0 - \wt{z}_0}
\end{eqnarray}
Then $v_t \bydef \sup_{z_0} \Var_{z_0} \lb Z_t \rb = \sup_{z_0} \bbE_{z_0} \normII{Z_t - \bbE_{z_0} Z_t}^2$ satisfies:
\begin{equation}
\label{eqn:VarBoundContractionResult}
v_t \leq v_1 \min \lbr t, \lb 1-\eta^2 \rb^{-1} \rbr\,.
\end{equation}
\end{lem}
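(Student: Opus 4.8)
The statement is a variance-decay lemma: a one-step variance bound $v_1$ plus a contraction-in-expectation hypothesis yields control of $v_t$ for all $t$, with the bound saturating at $v_1/(1-\eta^2)$. Let me think about the standard approach — this is the "one-step coupling + telescoping" argument from \cite{LLP}, adapted to vectors. The key identity to exploit is a decomposition of $\Var_{z_0}(Z_t)$ into a sum over the last step of the chain: condition on $Z_{t-1}$, peel off the variance contributed at the final transition (which is at most $v_1$ by the Markov property and the definition of $v_1$ applied at the state $Z_{t-1}$), and the remaining term is the variance of the conditional expectation $\bbE[Z_t \mid Z_{t-1}] = (P^{t-1}f)(Z_{t-1})$ where $f(z) = \bbE_z Z_1$... wait, more cleanly: $\bbE_{z_0}[Z_t \mid \cF_1] = \bbE_{Z_1} Z_{t-1}$, so by the law of total variance, $\Var_{z_0}(Z_t) = \bbE_{z_0}\Var(Z_t \mid \cF_1) + \Var_{z_0}(\bbE_{Z_1} Z_{t-1})$.

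First I would set $g_t(z) \bydef \bbE_z Z_t$ and observe that the contraction hypothesis~\eqref{eqn:VarBoundContractionCond} says $g_t$ is Lipschitz with constant $\eta^t$ in the $\ell^2$ norm. Then by the law of total variance conditioned on the first step, $v_t = \sup_{z_0}\bigl[\bbE_{z_0}\Var_{z_0}(Z_t \mid \cF_1) + \Var_{z_0}(g_{t-1}(Z_1))\bigr]$. The first term: given $Z_1 = z_1$, $Z_t$ has the law of $Z_{t-1}$ started from $z_1$, so $\Var_{z_0}(Z_t \mid Z_1 = z_1) = \Var_{z_1}(Z_{t-1}) \leq v_{t-1}$. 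For the second term, since $g_{t-1}$ is $\eta^{t-1}$-Lipschitz, $\Var_{z_0}(g_{t-1}(Z_1)) = \tfrac12 \bbE_{z_0,z_0'}\normII{g_{t-1}(Z_1) - g_{t-1}(Z_1')}^2 \leq \eta^{2(t-1)}\cdot\tfrac12\bbE\normII{Z_1 - Z_1'}^2 = \eta^{2(t-1)} v_1$, using the symmetric form $\Var(W) = \tfrac12\bbE\normII{W - W'}^2$ for an i.i.d.\ copy $W'$. This gives the recursion $v_t \leq v_{t-1} + \eta^{2(t-1)} v_1$, and unrolling from $v_1$ (with $v_0 = 0$) yields $v_t \leq v_1\sum_{j=0}^{t-1}\eta^{2j} \leq v_1\min\{t, (1-\eta^2)^{-1}\}$, which is exactly~\eqref{eqn:VarBoundContractionResult}.

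The only genuinely delicate point is making the conditioning rigorous: one must justify that $\Var_{z_0}(Z_t \mid \cF_1)$, as a function of $Z_1$, is bounded by $v_{t-1}$ pointwise (Markov property) and that the variance-of-a-Lipschitz-image bound uses the right coupling — specifically that $\Var_{z_0}(g_{t-1}(Z_1))$ is controlled by running two independent copies from $z_0$ and comparing $Z_1, Z_1'$, for which one invokes $v_1 = \sup_{z_0}\bbE_{z_0}\normII{Z_1 - \bbE_{z_0}Z_1}^2$ together with $\Var_{z_0}(g_{t-1}(Z_1)) \le \eta^{2(t-1)}\,\Var_{z_0}(Z_1)$ by the Lipschitz property applied along the independent coupling. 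I expect no serious obstacle here; the proof is a half-page once the law of total variance is set up. One subtlety worth a remark: $v_1$ itself must be finite for the bound to be non-vacuous, which holds here since $Z_t$ lives in a bounded region (the simplex), but the lemma is stated abstractly so I would simply carry $v_1$ through symbolically.
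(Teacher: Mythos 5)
Your proof is correct and is essentially the paper's own argument: both condition on the first step, apply the law of total variance, bound $\Var_{z_0}\lb \bbE_{z_0}[Z_t \mid Z_1]\rb$ by $\eta^{2(t-1)} v_1$ via the contraction hypothesis together with the symmetrization identity over two independent copies of $Z_1$, and unroll the resulting recursion $v_t \leq v_{t-1} + \eta^{2(t-1)} v_1$. No substantive differences to report.
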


\begin{proof}
Let $Z_t$ and $Z_t^{*}$ be independent copies of the chain with the same starting state $z_0$. By
the assumption \eqref{eqn:VarBoundContractionCond}, we obtain that
\[\normII{\bbE_{z_0}[Z_t \mid Z_1 = z_1] - \bbE_{z_0}[Z_t^{*} \mid Z_1^* = z_1^*]} = \normII{\bbE_{z_1}[Z_{t-1}] - \bbE_{z_1^*} [Z_{t-1}^*]\|_2 \leq \eta^{t-1} \|z_1 - z_1^*}\,.\]
Hence, we see that
\[\Var_{z_0} (\bbE_{z_0}[Z_t \mid Z_1]) = \frac{1}{2}\bbE_{z_0}\normII{(\bbE_{Z_1}[Z_{t-1}] - \bbE_{Z_1^*}[Z_{t-1}^*])}^2 \leq \frac{\eta^{2(t-1)}}{2}\bbE_{z_0}\normII{Z_1 - Z_1^*}^2 \leq \eta^{2(t-1)}v_1.\]
Combined with the total variance formula, it follows that
\[v_t \leq \sup_{z_0}\{\bbE_{z_0}[\Var_{z_0}(Z_t \mid Z_1)] + \Var_{z_0}(\bbE_{z_0}[Z_t \mid Z_1])\} \leq v_{t-1} + \eta^{2(t-1)} v_1\,,\]
which then gives that $v_t = \sum_{i=1}^{t}(v_{i} - v_{i-1}) \leq \sum_{i=1}^{t} \eta^{2(t-1)}v_1$, implying the desired upper bound immediately.
\end{proof}

\subsection{Bottleneck Ratio}
Let $P$ be an irreducible, aperiodic transition kernel for a Markov chain on $S$ with stationary measure $\pi$. The {\it bottleneck ratio} of a set $A \subseteq S$ is:
\[
\Phi(A) = \frac{\sum_{x \in A, y \notin A} \pi(x) P(x,y)}{\sum_{x \in A} \pi(x)}
	\leq \frac{\pi(\partial_{P} A)}{\pi(A)},
\]
where $\partial_{P} A = \{x \in A \;:\ P(x,y) > 0 \text{ for some } y \notin A\}$.
The {\it bottleneck ratio} of the chain is
\begin{equation}
  \label{eq-bottleneck-ratio-def}
  \Phi_* = \min_{A : \pi(A) \leq \inv{2}} \Phi(A) \,.
\end{equation}
The following result, due to~\cites{AM,LawlerSokal,SJ} in several similar forms (see, e.g.,~\cite{LPW}*{Theorem 7.3}) relates the bottleneck ratio with the mixing time of the chain.
\begin{thm}
\label{thm:CheegersInequality}
If $\Phi_*$ is the bottleneck ratio defined in~\eqref{eq-bottleneck-ratio-def} then $
	t_{\mix(1/4)} \geq \frac{1}{4\Phi_*} $.
\end{thm}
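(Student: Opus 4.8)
The plan is to use the standard bottleneck (conductance) lower bound on the mixing time: pick a set $A$ nearly realizing the bottleneck ratio, start the chain from $\pi$ restricted and renormalized to $A$, and show that probability mass leaks out of $A$ at a rate of at most $\Phi_*$ per step, so that the chain stays at total‑variation distance more than $1/4$ from $\pi$ until time of order $1/\Phi_*$. Concretely, fix $\gep>0$ and a set $A$ with $\pi(A)\leq\tfrac12$ and $\Phi(A)\leq\Phi_*+\gep$, start from $\nu_0 \bydef \pi(\,\cdot\cap A)/\pi(A)$, and set $\nu_t\bydef\nu_0 P^t$.

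The one structural fact I would isolate is the pointwise domination $\nu_t(x)\leq \pi(x)/\pi(A)$ for every state $x$ and every $t$: this holds at $t=0$ by definition, and it is preserved under application of $P^t$ since $P^t$ has nonnegative entries and $\pi P^t=\pi$. Using this I would bound the leaked mass $m_t\bydef\nu_t(S\setminus A)$ by a one‑step induction: writing $m_{t+1}=\sum_{x\in A}\nu_t(x)P(x,S\setminus A)+\sum_{x\notin A}\nu_t(x)P(x,S\setminus A)$, the first sum is at most $\pi(A)^{-1}\sum_{x\in A}\pi(x)P(x,S\setminus A)=\Phi(A)\leq\Phi_*+\gep$ by domination, while the second sum is at most $\nu_t(S\setminus A)=m_t$; since $m_0=0$ this yields $m_t\leq t(\Phi_*+\gep)$.

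To conclude I would test against the set $A$: $\normTV{\nu_t-\pi}\geq \nu_t(A)-\pi(A)\geq (1-t(\Phi_*+\gep))-\tfrac12=\tfrac12-t(\Phi_*+\gep)$, and since $\nu_t$ is a convex combination of the distributions $P^t(x,\cdot)$ over $x\in A$, convexity of the total‑variation norm gives that the worst‑case distance $d_t$ is at least $\normTV{\nu_t-\pi}$. Hence $d_t>\tfrac14$ for every $t<[4(\Phi_*+\gep)]^{-1}$, i.e. $t_{\mix(1/4)}\geq[4(\Phi_*+\gep)]^{-1}$, and letting $\gep\downarrow 0$ gives the claim. The argument is entirely routine; the only point requiring care is the pointwise domination $\nu_t\leq\pi/\pi(A)$, which is precisely what upgrades the crude per‑step leakage bound to the sharp constant $\Phi_*$.
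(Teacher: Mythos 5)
Your proof is correct, and it is essentially the standard argument: the paper does not give a proof of Theorem~\ref{thm:CheegersInequality} but cites it from~\cite{LPW}, where the same route is followed --- pointwise domination $\nu_t(x)\le\pi(x)/\pi(A)$ preserved because $P$ is nonnegative and fixes $\pi$, a one-step leakage bound giving $\nu_t(S\setminus A)\le t\,\Phi(A)$, and then testing total variation against the set $A$ together with convexity to pass to the worst-case starting state. (Since the state space is finite the minimum in~\eqref{eq-bottleneck-ratio-def} is attained, so the $\gep$ hedge is cosmetic and you could take $A$ realizing $\Phi_*$ exactly.)
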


\section{Drift Analysis for the Proportions Chain}
\label{sec:DriftAnalysis}
In this section we prove various results concerning the drift of the process $S_t$. We analyze both the one coordinate process $S^1_t$ and the distance-to-equiproportionality $\normII{S_t - \vq}$. In the course of this analysis, we also define two couplings which will be of independent use later on and prove a uniform bound on the variance of $S_t$.

\subsection{The Drift of One Proportion Coordinate}
\label{sub:OneCoordinateDrift}
From symmetry, it is enough to analyze the drift of $S^1_t$.
For $\gb \geq 0$, define $g_{\gb}:\cS \to \cS$ as
\[
g_{\gb}(s) = (g^1_{\gb}(s), \dots, g^q_{\gb}(s))
\quad \text{;} \quad
g_{\gb}^k(s) = \frac{e^{2\gb s^k}}{\sum_{j=1}^q e^{2\gb s^j}}
\,.
\]
We can express the drift of $S_t^1$ as follows:
\begin{eqnarray}
\nonumber
\bbE \lsb \left. S^1_{t+1} - S^1_t \rabs \cF_t \rsb
    & =	& \frac{1}{n} \Big[ -S_t^1 + \sum_{k=1}^q g_{\gb}^1 \lb S_t - \nonumber
\tfrac{1}{n} \sfe_k \rb S_t^k \Big] \\
\nonumber
    & =	& \frac{1}{n} \lsb -S_t^1 + g_{\gb}^1 \lb S_t \rb \rsb + O \lb n^{-2} \rb \\
\label{eqn:OneCoordinateDrift}
    & =	& \frac{1}{n} d_{\gb}(S_t) + O \lb n^{-2} \rb
\end{eqnarray}
with
\begin{equation}
\label{eqn:DOfSDef}
d_{\gb}(s) \bydef -s^1 + g_{\gb}^1(s).
\end{equation}
The function $d_\gb: \cS \to \bbR$ thus describes (up
to a constant factor of $n^{-1}$ and an error term) the {\it drift} of the first coordinate
given the current proportions vector. It turns out the
rapid mixing hinges on whether $d_{\gb}(s)$ is strictly negative whenever $s^1 > 1/q$
(and for any values for the remaining coordinates of $s$). Accordingly we define $D_{\gb}:[0,1] \to \bbR$ as
\begin{align}
\label{eqn:LittleDOfBetaDef}
D_{\gb}(x) &\bydef    \maxtwo{s \in \cS}{s^1 = x} d_{\gb}(s)
= d_{\gb}\big( x, \tfrac{1-x}{q-1}, \dots, \tfrac{1-x}{q-1} \big)\\
&  =    -x + \frac{\exp(2\gb x)}{\exp(2\gb x) + (q-1) \exp(2\gb \tfrac{1-x}{q-1})}
\end{align}
and check when $D_{\gb}(x)$ is strictly negative for all $x \in (1/q, 1]$.
We will see in Proposition~\ref{prop:DOfBetaProperties1} below that this happens
if and only if $\gb < \gb_s(q)$, where $\gb_s(q) > 0$ is defined in
\eqref{eqn:BDDef}.

For $\gb \geq 0$, define $s^*(\gb)$ and $s^\sharp(\gb)$ as:
\begin{align}
\nonumber
s^*(\gb) &\bydef \sup \lbr s \in \lsb 1/q,1 \rb : \: \tfrac{d}{ds} D_{\gb}(s) = 0 \rbr\\
\nonumber
s^\sharp(\gb) &\bydef \inf \lbr s \in \lb 1/q, 1 \rb : \: D_{\gb}(s) \geq 0 \rbr ,
\end{align}
with the $\inf$ or $\sup$ being $1$, if the respective sets are empty. We may now state:
\begin{prop}
\label{prop:DOfBetaProperties1}
For all $q \geq 3$ the following holds:
\begin{enumerate}
\item
	\label{item:DOfBetaProp1:1}
	We have that $D_{\gb}(s)$ is increasing in $\gb$ if $s \in \lsb 1/q, 1 \rsb$
	and for all $\gb \geq 0$, that $D_{\gb} (1/q) = 0$ and that $D_{\gb}(1) < 0$.
\item
	\label{item:DOfBetaProp1:2}
	That $s^\sharp(\gb) > \inv{q}$ if $\gb < q/2$.
\item
	\label{item:DOfBetaProp1:3}
	The following statements are equivalent if $\gb < q/2$:
	\begin{enumerate}
	\item\label{item-i}
		$s^\sharp(\gb) = 1$
	\item\label{item-ii}
		$D_{\gb}(s)$ has no roots in $(1/q, 1]$.
	\item\label{item-iii}
		$D_{\gb}(s^*(\gb)) < 0$.
	\end{enumerate}
\item
	\label{item:DOfBetaProp1:4}
	All the statements in part \eqref{item:DOfBetaProp1:3} hold
	if and only if $\gb < \gb_s(q)$.	
\end{enumerate}
\end{prop}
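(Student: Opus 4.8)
Throughout I would work from the explicit formula, writing $D_\gb(x) = f_\gb(x) - x$ with $f_\gb(x) \bydef \big(1 + (q-1)\rme^{2\gb h(x)}\big)^{-1}$ and $h(x)\bydef \tfrac{1-qx}{q-1}$, which is affine and strictly decreasing on $[1/q,1]$, with $h(1/q)=0$ and $h\le 0$ on $[1/q,1]$; note that $f_\gb - x$ is precisely the function occurring in the definition~\eqref{eqn:BDDef} of $\gb_s(q)$. Part~\eqref{item:DOfBetaProp1:1} is then substitution: $f_\gb(1/q)=1/q$ gives $D_\gb(1/q)=0$, $f_\gb(1)=\rme^{2\gb}/(\rme^{2\gb}+q-1)<1$ gives $D_\gb(1)<0$, and writing $f_\gb(x)=\big(1+(q-1)\rme^{-2\gb|h(x)|}\big)^{-1}$ on $[1/q,1]$ shows $f_\gb(x)$, hence $D_\gb(x)$, is non-decreasing in $\gb$ there. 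For Part~\eqref{item:DOfBetaProp1:2}, a one-line differentiation gives $\tfrac{\rmd}{\rmd x}D_\gb(x)\big|_{x=1/q} = -1 + 2\gb/q$, strictly negative when $\gb<q/2$; since $D_\gb(1/q)=0$ this forces $D_\gb<0$ just to the right of $1/q$, so $s^\sharp(\gb)>1/q$.

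Part~\eqref{item:DOfBetaProp1:3} is the structural core. The key fact is that $f_\gb'$ is \emph{unimodal} on $[1/q,1]$: with $v\bydef (q-1)\rme^{2\gb h(x)}$, a positive strictly decreasing function of $x$, one computes $f_\gb'(x)=\tfrac{2\gb q}{q-1}\cdot\tfrac{v}{(1+v)^2}$, and $v\mapsto v/(1+v)^2$ increases then decreases. Hence $\tfrac{\rmd}{\rmd x}D_\gb=-1+f_\gb'$ is itself ``increasing-then-decreasing'' and changes sign at most twice on $(1/q,1)$; combined with Part~\eqref{item:DOfBetaProp1:2} (it starts strictly negative at $1/q$ when $\gb<q/2$) the only possible sign patterns of $D_\gb'$ on $(1/q,1)$ are $(-)$, $(-,+)$, and $(-,+,-)$. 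Correspondingly $D_\gb$, starting from $0$ at $1/q$ and ending at $D_\gb(1)<0$, either is strictly decreasing, or decreases to a local minimum and then increases, or decreases, increases to a local maximum at $s^*(\gb)$, then decreases; in each case $s^*(\gb)$ is the largest critical point (or $1$). Reading off these three graphs, $D_\gb$ has a root in $(1/q,1]$ if and only if $D_\gb(s^*(\gb))\ge 0$: if $D_\gb(s^*(\gb))<0$ then $D_\gb<0$ throughout $(1/q,1]$, while if $D_\gb(s^*(\gb))\ge 0$ the intermediate value theorem together with $D_\gb(1)<0$ produces a root. This is \eqref{item-ii}$\Leftrightarrow$\eqref{item-iii}; and \eqref{item-i}$\Leftrightarrow$\eqref{item-ii} is elementary, since $s^\sharp(\gb)=1$ says exactly that $D_\gb<0$ on $(1/q,1)$ (using $D_\gb(1)<0$ to rule out $\{D_\gb\ge 0\}$ accumulating at $1$), which, given $D_\gb(1)<0$, is the same as having no root in $(1/q,1]$.

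For Part~\eqref{item:DOfBetaProp1:4}, since the function in~\eqref{eqn:BDDef} is $D_\gb$ itself and $D_\gb(1)<0$, we have $\gb_s(q)=\sup\{\gb\ge 0: D_\gb \text{ has no root in }(1/q,1]\}$. When $\gb\ge q/2$ the derivative $\tfrac{\rmd}{\rmd x}D_\gb(1/q)=-1+2\gb/q\ge 0$ forces $D_\gb\ge 0$ just above $1/q$, hence a root; so the supremum is taken over $\gb<q/2$, where Part~\eqref{item:DOfBetaProp1:3} applies. For ``$\Leftarrow$'': if $\gb<\gb_s(q)$, pick a root-free $\gb'$ with $\gb<\gb'\le\gb_s(q)$; by Part~\eqref{item:DOfBetaProp1:3} $D_{\gb'}<0$ on $(1/q,1]$, so by monotonicity (Part~\eqref{item:DOfBetaProp1:1}) $D_\gb\le D_{\gb'}<0$ on $(1/q,1]$, i.e.\ the statements hold. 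For ``$\Rightarrow$'': if the statements hold at $\gb$ then $D_\gb<0$ on $(1/q,1]$; on the compact set $[1/q+\gd,1]$ this gives $\max D_\gb<0$, hence $\max D_{\gb'}<0$ for $\gb'$ near $\gb$ by joint continuity, while near $(1/q,\gb)$ one has $\tfrac{\rmd}{\rmd x}D_{\gb'}(x)<0$, so for suitable $\gd$ and $\gb'$ close to $\gb$ also $D_{\gb'}<0$ on $(1/q,1/q+\gd]$; thus $D_{\gb'}<0$ on $(1/q,1]$ for a whole neighbourhood of $\gb$, whence $\gb<\gb_s(q)$. The same openness argument at $\gb=0$ (where $D_0(x)=1/q-x<0$ on $(1/q,1]$) shows $\gb_s(q)>0$.

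The main obstacle is the shape analysis in Part~\eqref{item:DOfBetaProp1:3}: spotting the right ``master function'' whose unimodality is transparent (here $v/(1+v)^2$, after a change of variables) and then checking that in each admissible sign pattern the existence of a root is governed by the single number $D_\gb(s^*(\gb))$. A secondary, more technical point is the openness argument in Part~\eqref{item:DOfBetaProp1:4}: one cannot just invoke compactness of $[1/q,1]$, since $D_\gb$ vanishes at the endpoint $1/q$ for every $\gb$, so the behaviour there must be controlled via the sign of $\tfrac{\rmd}{\rmd x}D_\gb$ at $1/q$, uniformly in $\gb$.
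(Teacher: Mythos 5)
Your proof is correct and rests on the same key technical lemma as the paper's: under the substitution $v = (q-1)\mathrm{e}^{2\gb(1-qs)/(q-1)}$ one has $\frac{d}{ds}D_\gb = -1 + \frac{2q\gb}{q-1}\cdot\frac{v}{(1+v)^2}$, and unimodality of $v\mapsto v/(1+v)^2$ gives at most two zeros of $D_\gb'$ on $(1/q,1]$. The differences are organisational: you prove Part~\eqref{item:DOfBetaProp1:3} by exhausting the admissible sign-patterns of $D_\gb'$ and reading off the shape of $D_\gb$ in each case, whereas the paper proves \eqref{item-i}$\Rightarrow$\eqref{item-ii}$\Rightarrow$\eqref{item-iii} directly and \eqref{item-iii}$\Rightarrow$\eqref{item-i} by contradiction; and your openness argument in Part~\eqref{item:DOfBetaProp1:4} is more explicit than the paper's terse ``continuity'' claim, correctly controlling the endpoint $s=1/q$ (where $D_\gb$ vanishes for every $\gb$) via the uniformly negative slope $\frac{d}{ds}D_\gb(1/q)=-1+2\gb/q$. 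One small gap: your assertion that $\gb=q/2$ already yields a root is not justified by the first derivative alone, which vanishes there; it needs the second-derivative computation $\frac{d^2}{ds^2}D_\gb(1/q)=\frac{4\gb^2(q-2)}{q(q-1)}>0$ (carried out in the paper's proof of Proposition~\ref{prop:DOfBetaProperties2}). This endpoint case does not affect the overall logic, since Part~\eqref{item:DOfBetaProp1:3} is only invoked for $\gb<q/2$.
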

\begin{proof}
We start by proving part \eqref{item:DOfBetaProp1:1}. It is clear that
\[D_\gb(s) = -s + \frac{1}{1 + (q-1) \exp(2\gb \tfrac{1 - qs}{q-1})}\,,\]
and hence is strictly increasing in $\gb$ if $1/q<  s \leq
1$. Furthermore, we have that
\[D_\gb(1/q) = -\frac{1}{q} + \frac{\mathrm{e}^{2\gb/q}}{ \mathrm{e}^{2\gb/q} + (q-1) \mathrm{e}^{2\gb/q}} = -\frac{1}{q} + \frac{1}{q} = 0\,,\]
as well as
\[D_\gb(1) = -1 + \frac{1}{1 + (q-1)\mathrm{e}^{-\gb}} < 0\,.\]

For part \eqref{item:DOfBetaProp1:2}, taking the derivative of $D_{\gb}(s)$ with respect to $s$ evaluated at $1/q$, one obtains $\lpr \frac{d}{ds} D_{\gb}(s) \rabs_{s=1/q} = -1 + \frac{2\gb}{q}$ which is negative if $\gb < q/2$. Together with $D_{\gb}(1/q) = 0$, this completes the proof.

For part \ref{item:DOfBetaProp1:3}, we first show that there exists at most two points in
$[1/q, 1]$ such that $\frac{d}{d s}D_\gb(s) = 0$. To see
this, we compute the first derivative and obtain that
\[\frac{d}{ds}D_\gb(s)  = -1+ \frac{2q\gb \exp(2\gb\frac{1 - qs}{q-1})}{[1+ (q-1) \exp(2\gb\frac{1-qs}{q-1})]^2} = -1 + \frac{2q\gb}{q-1}h\big((q-1) \mathrm{e}^{2\gb\frac{1-qs}{q-1}}\big)\,,\]
where $h(x) =
\frac{x}{(1+x)^2}$. Obviously, there are at most two zeros for $-1 +
\frac{2q\gb}{q-1}h(x)$ and since $(q-1)
\exp(2\gb\frac{1-qs}{q-1})$ is a strictly monotone in $s$,
we conclude that there are at most two points such that
$\frac{d}{ds}D_\gb(s)$ vanishes.

Notice also that $\frac{d}{ds}D_\gb(1/q) <0$ provided that $\gb
< q/2$ and hence $D_\gb(1/q + \xi) < 0$ for all $\xi \leq \xi_0$,
where $\xi_0$ is a sufficiently small positive number. We are now
ready to derive the equivalence stated in the proposition. Observing
that $D_\gb(s)$ is a smooth function and $D_\gb(1) < 0$, we
deduce that $\eqref{item-i} \Rightarrow \eqref{item-ii} \Rightarrow
\eqref{item-iii}$. It remains to prove that $\eqref{item-iii}
\Rightarrow \eqref{item-i}$. Suppose now that \eqref{item-iii} holds
and there exists $s_0 \in ( 1/q,1]$ such that $D_\gb(s_0)\geq 0$.
Recalling that $D_\gb(1/q + \xi)<0$ and $D_\gb(1) < 0$, we
deduce the following:
\begin{itemize}
\item If $s_0 < s^*$, we will then have at least two zeros in $(1/q,
s^*)$ for $\frac{d}{ds}D_\gb(s)$.
\item If $s_0 > s^*$, we will then have at least one zero in $(s^*,
1)$ for $\frac{d}{ds}D_\gb(s)$.
\end{itemize}
We see that the first case contradicts with the fact that there can
be at most two zeros for $\frac{d}{ds}D_\gb (s)$ and the second case
contradicts our definition of $s^*$. Altogether, we established
that $\eqref{item-iii} \Rightarrow \eqref{item-i}$.

As for the last part, continuity and part \eqref{item:DOfBetaProp1:1} imply
that \eqref{eqn:BDDef} is equivalent to
\[
	\gb_s = \sup \{\gb \geq 0 : \: D_\gb(s) < 0 \mbox{ for all } s\in (1/q, 1]\}\,.
\]
Since $D_\gb(s)$ is increasing in $\gb$ for
all $s\in [1/q, 1]$, it follows that  for all $\gb < \gb_s(q)$, we
indeed have $D_\gb(s) < 0$ for all $s\in (1/q, 1]$. On the other
hand, by the continuity of the function $D_\gb(s)$ and our
definition of $\gb_s$, we know that there exists $s_M\in (1/q, 1]$
such that $D_{\gb_s}(s_M) =0$. Now, using the result of part
\eqref{item:DOfBetaProp1:1}
we conclude that $D_{\gb}(s_M) >0$ for any $\gb > \gb_s$,
completing the proof.
\end{proof}

In the following proposition we discuss the relation between $\gb_s(q)$ and $\gb_c(q)$.
\begin{prop}\label{l:betaMlessThanBetaC}
For $q \geq 3$ we have that $0 < \gb_s(q) < \gb_c(q) < q$ while $\gb_s(2) = \gb_c(2)=1$.
\end{prop}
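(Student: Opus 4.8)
The plan is to carry out everything through the explicit one–variable function
\[
D_\gb(x) \;=\; -x + \frac{1}{1 + (q-1)\mathrm{e}^{2\gb\frac{1-qx}{q-1}}}
\]
from Proposition~\ref{prop:DOfBetaProperties1}, using three facts recorded there: $D_\gb(x)$ is continuous with $D_\gb(1/q)=0$ and $D_\gb(1)<0$ for every $\gb$; $D_\gb(x)$ is strictly increasing in $\gb$ for $x\in(1/q,1]$; and (as obtained in the proof of that proposition, combining~\eqref{eqn:BDDef} with this monotonicity)
\[
\gb_s(q)=\sup\{\gb\ge 0:\ D_\gb(x)<0 \text{ for all } x\in(1/q,1]\}.
\]
I will also use the derivative formula $\tfrac{d}{dx}D_\gb(x)=-1+\tfrac{2q\gb}{q-1}\,h\big((q-1)\mathrm{e}^{2\gb\frac{1-qx}{q-1}}\big)$ with $h(y)=y/(1+y)^2$, together with the elementary bound $h\le 1/4$.

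I would first dispose of the two outer inequalities. For $\gb_c(q)<q$ I would actually prove the sharper $\gb_c(q)<q/2$, which is needed below: with $u=q-1\ge2$ this reduces to $u\log u<u^2-1$, i.e. $f(u):=u-\tfrac1u-2\log u>0$, which holds since $f(2)=\tfrac32-2\log2>0$ and $f'(u)=(1-1/u)^2\ge0$. For $\gb_s(q)>0$ (in fact for all $q\ge2$): when $\gb<2(q-1)/q$ the derivative formula and $h\le1/4$ give $\tfrac{d}{dx}D_\gb(x)<0$ on $[1/q,1]$, so $D_\gb$ is strictly decreasing there and hence $D_\gb<0$ on $(1/q,1]$ because $D_\gb(1/q)=0$; thus $\gb_s(q)\ge 2(q-1)/q>0$. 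For $q=2$ the same computation gives $\tfrac{d}{dx}D_\gb(x)\le-1+\gb$, so $D_\gb$ is strictly decreasing on $[1/2,1]$ whenever $\gb<1$, giving $\gb_s(2)\ge1$; conversely for $\gb>1$ one has $\tfrac{d}{dx}D_\gb(1/2)=-1+\gb>0$ while $D_\gb(1/2)=0$, so $D_\gb>0$ just to the right of $1/2$ and $\gb$ fails to lie in the set defining $\gb_s(2)$, whence $\gb_s(2)\le1$. Together with $\gb_c(2)=1$ this settles the case $q=2$.

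The crux is $\gb_s(q)<\gb_c(q)$ for $q\ge3$, and the idea is to exhibit a point where $D_{\gb_c(q)}$ is strictly positive. The natural candidate is $x_0=(q-1)/q$, the first coordinate of the ``on-axis'' equilibrium $\check s_{\gb_c,q}$. There $\tfrac{1-qx_0}{q-1}=-\tfrac{q-2}{q-1}$, so the explicit formula $\gb_c(q)=\tfrac{(q-1)\log(q-1)}{q-2}$ gives $2\gb_c(q)\tfrac{1-qx_0}{q-1}=-2\log(q-1)$, hence $(q-1)\mathrm{e}^{2\gb_c(q)\frac{1-qx_0}{q-1}}=1/(q-1)$ and $D_{\gb_c(q)}(x_0)=-\tfrac{q-1}{q}+\tfrac{1}{1+1/(q-1)}=0$. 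Feeding the same value $y=1/(q-1)$ into the derivative formula yields $\tfrac{d}{dx}D_{\gb_c(q)}(x_0)=-1+\tfrac{2\gb_c(q)}{q}$, which is strictly negative since $\gb_c(q)<q/2$. As $x_0\in(1/q,1)$ for $q\ge3$ and $D_{\gb_c(q)}(x_0)=0$ with strictly negative derivative there, some $x_1\in(1/q,x_0)$ satisfies $D_{\gb_c(q)}(x_1)>0$. Now $\gb\mapsto D_\gb(x_1)$ is continuous and strictly increasing, negative at $\gb=0$ (where $D_0(x_1)=\tfrac1q-x_1<0$) and positive at $\gb_c(q)$, so it has a unique zero $\gb_0\in(0,\gb_c(q))$ and $D_\gb(x_1)\ge0$ for all $\gb\ge\gb_0$; hence no such $\gb$ lies in the set defining $\gb_s(q)$, so $\gb_s(q)\le\gb_0<\gb_c(q)$.

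The only delicate step is the non-degeneracy of $D_{\gb_c(q)}$ at $x_0=(q-1)/q$: without $\tfrac{d}{dx}D_{\gb_c(q)}(x_0)\ne0$ one could not conclude that $D_{\gb_c(q)}$ is positive just to one side of $x_0$, and this is exactly where the bound $\gb_c(q)<q/2$ is used. Everything else is substitution into the closed forms already available, plus the continuity and $\gb$-monotonicity of $D_\gb$ from Proposition~\ref{prop:DOfBetaProperties1}.
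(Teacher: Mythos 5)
Correct, and essentially the paper's own argument: the crux is the identical computation that $D_{\gb_c(q)}$ vanishes at $x_0=(q-1)/q$ with strictly negative $x$-derivative there --- your inequality $f(u)=u-\tfrac1u-2\log u>0$ is exactly the statement $2(q-1)\log(q-1)<q(q-2)$, i.e.\ $\gb_c(q)<q/2$, which is the paper's $\phi(q)=2(q-1)\log(q-1)-q(q-2)<0$ in disguise --- followed by positivity of $D_{\gb_c}$ just to the left of $x_0$ and continuity/monotonicity in $\gb$; you simply make explicit the pieces the paper asserts or proves elsewhere ($\gb_c(q)<q/2$, $\gb_s(q)>0$, and the $q=2$ case). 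One typo to fix: the intermediate reduction should read $2u\log u<u^2-1$ rather than $u\log u<u^2-1$ (your $f(u)>0$ is the correct equivalent of the former, which is what the negativity of $-1+2\gb_c/q$ requires, so the argument itself is unaffected).
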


\begin{proof}
As recalled in the Subsection~\ref{sub:CWLDP},
\[
\gb_c(2) = 1
	\qquad \text{and} \qquad
\gb_c(q)=\frac{(q-1)\log(q-1)}{q-2} \, ;\; q \geq 3 .
\]
In the $q=2$ case, it is easy to verify that  $\frac{d}{ds} D_{\gb}(s) < 0$ for all $s \in [\frac12, 1]$ if $\gb < 1$ and
$\frac{d}{ds} D_{\gb}(\frac12) > 0$ if $\gb > 1$. Since in addition $D_{\gb}(\frac12) = 0$, $D_{\gb}(1) < 0$, we obtain $\gb_s(2) = 1$.

For $q \geq 3$ we have $\gb_c(q) < q/2$ and therefore
\[
D_{\gb_c}\lb\frac{q-1}{q}\rb = - \frac{q-1}{q} + \frac{1}{1 + (q-1) \exp\big(2\gb_c \frac{1 - q\frac{q-1}{q}}{q-1}\big)} =0
\]
and
\begin{align*}
\left. \frac{d}{ds} D_{\gb_c}(s) \right|_{s=\frac{q-1}{q}} &= -1+ \frac{2\gb_c q \exp\big(2\gb_c \frac{1 - q\frac{q-1}{q}}{q-1}\big)}{\lb 1 + (q-1) \exp\big(2\gb_c \frac{1 - q\frac{q-1}{q}}{q-1}\big)\rb^2}\\
&=\frac{2(q-1)\log(q-1)-q(q-2)}{q(q-2)}\,.
\end{align*}
Now if $\phi(q)=2(q-1)\log(q-1)-q(q-2)$ then $\phi(2)=\phi'(2)=0$ and $\phi''(s)=-2+\frac2{q-1}$ which is negative when $q>2$.  It follows that $\phi(q)$ is negative when $q>2$ and hence for $q\geq 3$,
\[
\left. \frac{d}{ds} D_{\gb_c}(s) \right|_{s=\frac{q-1}{q}}<0.
\]
So for small enough $\epsilon>0$ and $s\in(\frac{q-1}{q}-\epsilon,\frac{q-1}{q})$ we have that $D_{\gb_c}(s)>0$ and hence $\sup_s  D_{\gb_c}(s)>0$.  By the smoothness of $D_\gb(s)$ this implies that there exists $\gb<\gb_c$ such that $\sup_s  D_{\gb}(s)>0$ which establishes that $\gb_s<\gb_c$.
\end{proof}

We will make repeated use of the the following proposition throughout the paper.
\begin{prop}
\label{clm:StayInRho}
\forcenewline
\begin{enumerate}
\item
	\label{item:ClmStayInRho_1}
	Assume $\gb < q/2$. For all $0 < \rho_0 < \rho$ small enough,
	there exists $\gga>0$ and $C, c>0$ such that for all $n$ with
	$t = e^{\gga n}$ we have
	\begin{equation}
	\label{eqn:ClmStayInRho_1}
		\bbP_{\gs_0} \lb \exists 0\leq s \leq t : \: \gs_s \notin \gS_n^{\rho+}
			\rb \leq C e^{-c n}
	\end{equation}
	for all $\gs_0 \in \gS_n^{\rho_0^+}$.
\item
	\label{item:ClmStayInRho_2}
   Assume $\gb < q/2$.
	For all $r_0 > 0$, $\gga > 0$ there exists $C,c>0$ such that
	for all $n$ and $r > r_0$
	with $t = \gga n$, $\rho_0 = \frac{r_0}{\sqrt{n}}$ and
	$\rho = \frac{r}{\sqrt{n}}$
	we have
	\begin{equation}
	\label{eqn:ClmStayInRho_2}
		\bbP_{\gs_0} \lb \exists 0 \leq s \leq t : \: \gs_s \notin 		
			\gS_n^{\rho+}  \rb \leq C e^{-c r^2}
	\end{equation}
	for all $\gs_0 \in \gS_n^{\rho_0+}$.
\item
	\label{item:ClmStayInRho_3}
	Assume $\gb < \gb_s(q)$.
	For all $\rho>0$ there exists $\gga > 0$ and $C, c>0$, such that
	for all $n$, with $t = \gga n$ we have
	\begin{equation}
	\label{eqn:ClmStayInRho_3}
		\bbP_{\gs_0} \lb \gs_t \notin \gS_n^{\rho+} \rb \leq C e^{-c n}
	\end{equation}
	for all $\gs_0 \in \gS_n$.
\end{enumerate}
\end{prop}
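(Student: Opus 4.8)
The plan is to reduce all three parts to one-dimensional estimates on a single color proportion $(S^k_t)_t$ and then invoke the appropriate clause of Lemma~\ref{lem:ImprovedAzuma}. The common input is that, for each fixed $k$, the process $(S^k_t)_t$ has increments in $\{-\tfrac1n,0,\tfrac1n\}$ (one vertex is re-sampled per step), and that, combining \eqref{eqn:OneCoordinateDrift}--\eqref{eqn:DOfSDef} with the fact that by symmetry $D_\gb(x)=\max_{s^k=x}\big(-s^k+g^k_\gb(s)\big)$ (cf.\ \eqref{eqn:LittleDOfBetaDef}), one has the \emph{uniform} drift bound
\[
\bbE\big[S^k_{t+1}-S^k_t\mid\cF_t\big]\;\leq\;\tfrac1n D_\gb\!\big(S^k_t\big)+O(n^{-2})
\]
\emph{irrespective of the remaining coordinates}. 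Hence, for any threshold $c$, the rescaled process $X_t:=n(S^k_t-c)$ has unit-bounded steps and drift at most $D_\gb(S^k_t)+O(n^{-1})$. Since $\gs_s\notin\gS_n^{\rho+}$ precisely when $S^k_s\geq 1/q+\rho$ for some $k$, a union bound over the $q$ colors reduces the confinement statements \eqref{item:ClmStayInRho_1}, \eqref{item:ClmStayInRho_2} to a single-coordinate escape estimate.

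For \eqref{item:ClmStayInRho_1}, since $\gb<q/2$ we have $s^\sharp(\gb)>1/q$ by Proposition~\ref{prop:DOfBetaProperties1}\eqref{item:DOfBetaProp1:2}, so taking $\rho<s^\sharp(\gb)-1/q$ makes $D_\gb<0$ on $(1/q,1/q+\rho]$. Fix $\rho_1\in(\rho_0,\rho)$ and let $\gd:=-\max_{[1/q+\rho_1,\,1/q+\rho]}D_\gb>0$. For fixed $k$ set $\nu:=\inf\{s:S^k_s\geq 1/q+\rho\}$; the process $X_s:=n(S^k_s-1/q-\rho_1)$ has drift $\leq-\gd/2$ on $\{X_s\geq 0\}\cap\{s<\nu\}$ once $n$ is large, and extending it past $\nu$ by an independent supermartingale of drift $-\gd/2$ and unit steps (as in the proof of Lemma~\ref{lem:ImprovedAzuma}\eqref{item:ImprovedAzuma1}) yields $\wt X$ with unit steps, drift $\leq-\gd/2$ on $\{\wt X\geq 0\}$, $\wt X_0<0$, and $\{\nu\leq t\}\subseteq\{\tau^+_{n(\rho-\rho_1)}(\wt X)\leq t\}$. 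Lemma~\ref{lem:ImprovedAzuma}\eqref{item:ImprovedAzuma3} then gives $\bbP_{\gs_0}(\nu\leq t)\leq t^2 e^{-c_0 n}$; choosing $\gga:=c_0/4$, $t=e^{\gga n}$, and summing over $k$ gives \eqref{eqn:ClmStayInRho_1}. Part \eqref{item:ClmStayInRho_2} is the same construction on the scale $\rho_0=r_0/\sqrt n$, $\rho=r/\sqrt n$, but uses no drift at all: with $\rho_1:=\rho/2$ apply Lemma~\ref{lem:ImprovedAzuma}\eqref{item:ImprovedAzuma2} (which needs only unit-bounded steps and $X_0\leq 0$) to $X_s=n(S^k_s-1/q-\rho_1)$ with $x_1=n(\rho-\rho_1)=r\sqrt n/2$ and $t_2=\gga n$; since $x_1^2/t_2$ is of order $r^2$ this gives $\bbP_{\gs_0}(\exists s\leq\gga n:S^k_s\geq 1/q+\rho)\leq 2e^{-cr^2}$, and summing over $k$ gives \eqref{eqn:ClmStayInRho_2} (the range $r\leq 2r_0$, where $e^{-cr^2}$ is bounded below, and finitely many small $n$ being absorbed into $C$).

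For \eqref{item:ClmStayInRho_3}, assume $\gb<\gb_s(q)$; then $\gb<\gb_c(q)<q/2$ by Proposition~\ref{l:betaMlessThanBetaC}, and $s^\sharp(\gb)=1$, i.e.\ $D_\gb<0$ on all of $(1/q,1]$, by Proposition~\ref{prop:DOfBetaProperties1}\eqref{item:DOfBetaProp1:3}--\eqref{item:DOfBetaProp1:4} together with \eqref{item:DOfBetaProp1:1}. The claim is vacuous unless $\rho<1-1/q$; put $\rho':=\rho/2$ and $\gd:=-\max_{[1/q+\rho',\,1]}D_\gb>0$. First (entering a smaller region in linear time): from an arbitrary $\gs_0$, the process $X_t:=n(S^k_t-1/q-\rho')$ has unit steps, drift $\leq-\gd/2$ on $\{X_t\geq 0\}$, and $X_0\leq n$, so Lemma~\ref{lem:ImprovedAzuma}\eqref{item:ImprovedAzuma1} with $t_1=\gga_1 n$, $\gga_1:=8/\gd$, gives $\bbP_{\gs_0}(\theta_k>\gga_1 n)\leq e^{-c_1 n}$ where $\theta_k:=\inf\{s:S^k_s\leq 1/q+\rho'\}$; a union bound shows that with probability $\geq 1-qe^{-c_1 n}$ every coordinate has dipped to $\leq 1/q+\rho'$ by time $\gga_1 n$. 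Second (staying afterwards): the single-coordinate estimate established in the proof of \eqref{item:ClmStayInRho_1}, applied with thresholds $\rho'<\rho$ (permissible since $\rho<1-1/q=s^\sharp(\gb)-1/q$), gives that from any state with $S^k_0\leq 1/q+\rho'$ one has $\bbP(\exists s\leq e^{\gga_2 n}:S^k_s\geq 1/q+\rho)\leq e^{-c_2 n}$; applying it at $\theta_k$ via the strong Markov property and combining with the first step, with probability $\geq 1-Ce^{-cn}$ one has, for every $k$, $S^k_s<1/q+\rho$ throughout $s\in[\gga_1 n,\,e^{\gga_2 n/2}]$. Taking $\gga:=2\gga_1$, so $\gga n$ lies in this interval for $n$ large, gives $\gs_{\gga n}\in\gS_n^{\rho+}$ with probability $\geq 1-Ce^{-cn}$, which is \eqref{eqn:ClmStayInRho_3}.

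I expect the main obstacle to be bookkeeping rather than any single estimate: the natural process $n(S^k_t-c)$ has the required negative drift only while it stays inside the relevant band (before $S^k$ either exceeds $1/q+\rho$ or falls below the intermediate level), so one must truncate it at the appropriate stopping time and splice in an auxiliary supermartingale to meet the hypotheses of Lemma~\ref{lem:ImprovedAzuma}\eqref{item:ImprovedAzuma3}, a routine but delicate step that must be done with care about which events are being compared. The other point needing attention is in \eqref{item:ClmStayInRho_3}: passing from the pair of bounds ``enter $\gS_n^{\rho'+}$ within $O(n)$ steps'' and ``remain in $\gS_n^{\rho+}$ for exponentially long thereafter'', each concerning one coordinate at an a priori random time, to the uniform-in-$k$, fixed-time conclusion $\gs_{\gga n}\in\gS_n^{\rho+}$; this is exactly where the strong Markov property and the exponential-versus-linear separation of time scales enter.
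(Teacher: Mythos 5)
Your proof is correct and takes essentially the same route as the paper's own (quite terse) argument: reduce to a one-coordinate drift estimate via $\bbE[S^k_{t+1}-S^k_t\mid\cF_t]\leq\tfrac1n D_\gb(S^k_t)+O(n^{-2})$, apply Lemma~\ref{lem:ImprovedAzuma} parts~\eqref{item:ImprovedAzuma3}, \eqref{item:ImprovedAzuma2}, and~\eqref{item:ImprovedAzuma1}+\eqref{item:ImprovedAzuma3} respectively, and union-bound over coordinates, with your version being somewhat more careful about choosing the shift level strictly above $1/q$ and about the strong-Markov splice in part~\eqref{item:ClmStayInRho_3}. One small correction worth making: Lemma~\ref{lem:ImprovedAzuma}\eqref{item:ImprovedAzuma2} does \emph{not} drop the drift hypothesis; condition~\eqref{cond:ImprovedAzumaLem1} with $\gd=0$ still requires nonpositive drift on $\{X_t\geq 0\}$, which your choice $\rho_1=\rho/2>\rho_0$ (so that $S^k_s>1/q$ and hence $D_\gb(S^k_s)<0$ there) supplies for $n$ large, so the argument is unaffected, but the parenthetical ``needs only unit-bounded steps and $X_0\leq 0$'' is inaccurate.
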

\begin{proof}
Consider the process $(S^1_t - s^1_0 : \: t \geq 0)$ until the first time it is to the right of $1/q + \rho$. If $n$ is large enough and, in Case~\eqref{item:ClmStayInRho_1}, if $\rho$ is small enough, Proposition~\ref{prop:DOfBetaProperties1} Part~\eqref{item:DOfBetaProp1:1} and Eq.~\eqref{eqn:OneCoordinateDrift}
imply that this process satisfies the conditions of Lemma \ref{lem:ImprovedAzuma}.
Parts~\eqref{item:ClmStayInRho_1} and~\eqref{item:ClmStayInRho_2} of the proposition then follow from Parts~\eqref{item:ImprovedAzuma3} and~\eqref{item:ImprovedAzuma2} of the lemma and summing over all coordinates.

As for part \eqref{item:ImprovedAzuma3}, consider this time
$(S^1_t - 1/q+\rho/2 :\: t \geq 0)$. Since $\gb < \gb_s(q)$, it follows from
Proposition \ref{prop:DOfBetaProperties1} (Parts~\eqref{item:DOfBetaProp1:1} and~\eqref{item:DOfBetaProp1:3}) that this process also satisfies the conditions of Lemma \ref{lem:ImprovedAzuma}, with $\gd > C' n^{-1}$, for some positive constant $C'$. Now set $\gga = 2/C'$ and apply 
part \eqref{item:ImprovedAzuma1} of the Lemma to conclude that except with probability exponentially small in $n$, $S^1_t \leq 1/q+\rho/2$ for some $t \leq \gga n$. Once this happens, by Lemma~\ref{lem:ImprovedAzuma} Part~\eqref{item:ImprovedAzuma3}, as in the proof of
\eqref{eqn:ClmStayInRho_1}, we have $S^1_{\gga n} \leq 1/q+\rho$ again except with probability tending exponentially fast to zero with $n$. It remains to use union bound to complete the proof.
\end{proof}

\subsection{Bounded Dynamics}
The {\it bounded dynamics} is a process that evolves like $\gs_t$, only that $S(\gs_t)$ is forced to stay close to $\vq$ by rejecting
transitions which violate this condition.
Formally, fix $\rho>0$ and let $\lb \gs_t \rb_{t \geq 0}$ be a
Markov chain on $\gS_n^{\rho+}$, which evolves as follows. Start from some
$\gs_0 \in \gS_n^{\rho+}$ and at step $t+1$:
\begin{itemize}
\item
	Draw $\wt{\gs}_{t+1}$ according to $P_n(\gs_t, \cdot)$, where $P_n$ is the original transition kernel.
\item
 If $\wt{\gs}_{t+1} \in \gS_n^{\rho+}$ set $\gs_{t+1} = \wt{\gs}_{t+1}$ and
	otherwise set $\gs_{t+1} = \gs_t$.
\end{itemize}
We shall denote by $\bbP^{\rho}$ the underlying probability measure.

The unbounded and $\rho$-bounded dynamics admit a natural coupling, under which the two processes start from the same configuration and evolve together until time $\tau = \inf \{t \geq 0 :\: S_t \notin \cS^{\rho+}\}$, where $S_t$ is the unbounded process. This leads to the following two immediate observations which will be useful later.
\begin{enumerate}
\item
For any integer $t$ and bounded function $f:(\gS_n)^{t+1} \mapsto \bbR$:
\begin{equation}
\label{eqn:BoundedUnBoundedProp1}
|\bbE f(\gs_{[0,t]}) - \bbE^{\rho} f(\gs_{[0,t]})| \leq 2 \normsup{f} \bbP(\tau \leq t).
\end{equation}
\item
In particular for any set $A \subseteq (\gS_n)^{t+1}$:
\begin{equation}
\label{eqn:BoundedUnBoundedProp2}
|\bbP (\gs_{[0,t]} \in A) - \bbP^{\rho} (\gs_{[0,t]} \in A)| \leq 2 \bbP(\tau \leq t)
\end{equation}
\end{enumerate}

\subsection{Synchronized Coupling}
\label{sub:SynchronizedCoupling}
The {\it synchronized coupling} is a (Markov) coupling of two $\rho$-bounded dynamics in which the two chains ``synchronize'' their steps as much as possible.
Formally, define $(\gs_t)_{t\geq 0}$, $(\wt{\gs}_t)_{t\geq 0}$ on the same probability space such that starting from $\gs_0$, $\wt{\gs}_0$, at time $t+1$:
\begin{enumerate}
	\item Choose colors ${I_{t+1}}$, $\wt{I}_{t+1}$ according to an optimal coupling of $S_t$, $\wt{S}_t$.
	\item Choose colors ${J_{t+1}}$, $\wt{J}_{t+1}$, according to an optimal coupling of $g_{\gb} \lb S_t - n^{-1} \sfe_{I_{t+1}} \rb$,
						$g_{\gb} \lb \wt{S}_t - n^{-1} \sfe_{\wt{I}_{t+1}}\rb$.
	\item Change a uniformly chosen vertex of color $I_{t+1}$ in $\gs_t$
		  to have color $J_{t+1}$ in $\gs_{t+1}$, but only if $\gs_{t+1} \in \gS_n^{\rho+}$.
	\item Change a uniformly chosen vertex of color $\wt{I}_{t+1}$ in 	
				$\wt{\gs}_t$ to have color $\wt{J}_{t+1}$ in $\wt{\gs}_{t+1}$,
                    but only if $\wt{\gs}_{t+1} \in \gS_n^{\rho+}$.
\end{enumerate}
We shall write $\bbP^{SC,\rho}_{\gs_0, \wt{\gs}_0}$ for the underlying measure and omit $\rho$ if it is large enough for the dynamics not to be bounded.

\smallskip
The following shows that this coupling contracts the $\normI{\cdot}$ distance of the proportions vector.
\begin{lem}
\label{lem:SyncCouplingCoalesence}
There exists $C(\gb,q)>0$ such that for any $\rho > 0$, uniformly in $\gs_0, \wt{\gs}_0 \in \gS_n^{\rho+}$
as $n \to \infty$
\begin{equation}
\label{eqn:SyncCouplingCoalesence}
\bbE^{SC, \rho}_{\gs_0, \wt{\gs}_0} \normI{S_t - \wt{S}_t} \leq
    \lb  p + \frac{C\rho}{n}  \rb^t \normI{s_0 - \wt{s}_0}\,,
\end{equation}
where
\begin{equation}
\label{eqn:RhoDefinition}
p = p(n, \gb, q) = 1 - \frac{1-2\gb/q}{n}\,.
\end{equation}
\end{lem}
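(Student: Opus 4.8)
The plan is to derive \eqref{eqn:SyncCouplingCoalesence} from a one‑step contraction estimate, which is then iterated. Write $\Delta_t := S_t-\wt S_t$. It suffices to find $C=C(\gb,q)>0$ such that, on $\{\gs_t,\wt\gs_t\in\gS_n^{\rho+}\}$,
\begin{equation}
\label{eqn:SyncOneStep}
\bbE^{SC,\rho}\big[\normI{\Delta_{t+1}}\,\big|\,\cF_t\big] \leq \Big(p+\tfrac{C\rho}{n}\Big)\normI{\Delta_t}\,.
\end{equation}
Indeed, the $\rho$-bounded dynamics never leaves $\gS_n^{\rho+}$ once started there, so taking expectations in \eqref{eqn:SyncOneStep} and inducting on $t$ gives $\bbE^{SC,\rho}_{\gs_0,\wt\gs_0}\normI{\Delta_t}\leq(p+C\rho/n)^t\normI{s_0-\wt s_0}$, which is \eqref{eqn:SyncCouplingCoalesence}.

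For \eqref{eqn:SyncOneStep}, condition additionally on the first‑stage colours $I_{t+1},\wt I_{t+1}$, sampled from an optimal coupling of $S_t,\wt S_t$ regarded as laws on $[1,q]$. Then $\bbP^{SC,\rho}(I_{t+1}\ne\wt I_{t+1}\mid\cF_t)=\normTV{S_t-\wt S_t}=\tfrac12\normI{\Delta_t}$, and on $\{I_{t+1}\ne\wt I_{t+1}\}$ the optimal coupling forces $\Delta_t^{I_{t+1}}\geq\tfrac1n$ and $\Delta_t^{\wt I_{t+1}}\leq-\tfrac1n$. Since a step replaces one vertex of colour $I_{t+1}$ in $\gs_t$ by colour $J_{t+1}$ (so $n(S^k_{t+1}-S^k_t)=\one_{\{J_{t+1}=k\}}-\one_{\{I_{t+1}=k\}}$ when accepted, and likewise for $\wt\gs_t$), a short case check — exploiting the sign information above on the two removal coordinates — yields, regardless of whether the moves are accepted or rejected by the bounded dynamics,
\begin{equation}
\label{eqn:SyncIncrementCases}
\normI{\Delta_{t+1}} \leq
\begin{cases}
\normI{\Delta_t} + \tfrac2n\,\one_{\{J_{t+1}\ne\wt J_{t+1}\}} & \text{on }\{I_{t+1}=\wt I_{t+1}\},\\[2pt]
\normI{\Delta_t} - \tfrac2n + \tfrac4n\,\one_{\{J_{t+1}\ne\wt J_{t+1}\}} & \text{on }\{I_{t+1}\ne\wt I_{t+1}\}.
\end{cases}
\end{equation}
It remains to estimate $\bbP^{SC,\rho}(J_{t+1}\ne\wt J_{t+1}\mid\cdot)$, where $J_{t+1},\wt J_{t+1}$ are sampled in the second stage from an optimal coupling of $g_\gb(S_t-n^{-1}\sfe_{I_{t+1}})$ and $g_\gb(\wt S_t-n^{-1}\sfe_{\wt I_{t+1}})$; this probability equals $\tfrac12\normI{g_\gb(S_t-n^{-1}\sfe_{I_{t+1}})-g_\gb(\wt S_t-n^{-1}\sfe_{\wt I_{t+1}})}$. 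One invokes a local Lipschitz estimate for the softmax: its Jacobian $2\gb(\mathrm{diag}(g_\gb)-g_\gb g_\gb^{\top})$ acts on the mean‑zero subspace as $\tfrac{2\gb}{q}\mathrm{Id}+O(\rho)$ throughout $\gS_n^{\rho+}$, so $\normI{g_\gb(u)-g_\gb(v)}\leq(\tfrac{2\gb}{q}+C\rho)\normI{u-v}$ there. On $\{I_{t+1}=\wt I_{t+1}\}$ the two arguments of $g_\gb$ differ by exactly $\Delta_t$, giving $\bbP^{SC,\rho}(J_{t+1}\ne\wt J_{t+1}\mid\cF_t,I_{t+1})\leq(\tfrac{2\gb}{q}+C\rho)\tfrac12\normI{\Delta_t}$; on $\{I_{t+1}\ne\wt I_{t+1}\}$ they differ by $\Delta_t-n^{-1}(\sfe_{I_{t+1}}-\sfe_{\wt I_{t+1}})$ while $\normI{\Delta_t}\geq 2/n$, giving $\bbP^{SC,\rho}(J_{t+1}\ne\wt J_{t+1}\mid\cF_t,I_{t+1},\wt I_{t+1})\leq(\tfrac{2\gb}{q}+C\rho)\normI{\Delta_t}$.

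Putting these into \eqref{eqn:SyncIncrementCases} and weighting the two cases by $\bbP^{SC,\rho}(I_{t+1}=\wt I_{t+1}\mid\cF_t)\leq1$ and $\tfrac12\normI{\Delta_t}$ respectively, the dominant terms are the removal gain $-\tfrac1n\normI{\Delta_t}$ from the second case and the insertion cost $\tfrac2n\cdot(\tfrac{2\gb}{q}+C\rho)\tfrac12\normI{\Delta_t}$ from the first case, while every remaining contribution is $O(\rho\normI{\Delta_t}/n)$ — for the quadratic ones this uses $\normI{\Delta_t}=O(\rho)$ on $\gS_n^{\rho+}$. Altogether $\bbE^{SC,\rho}[\normI{\Delta_{t+1}}\mid\cF_t]\leq(1-\tfrac1n+\tfrac{2\gb/q}{n}+\tfrac{C\rho}{n})\normI{\Delta_t}$, which is \eqref{eqn:SyncOneStep} with $p=1-(1-2\gb/q)/n$.

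The point requiring the most care is the validity of \eqref{eqn:SyncIncrementCases} in the presence of the rejection rule, which suppresses a move precisely when it would raise some colour's proportion to $\geq1/q+\rho$. If both copies are rejected then $\Delta_{t+1}=\Delta_t$; if both are accepted, the case check is the straightforward one. For a one‑sided rejection, the rejecting copy was attempting to increase a colour $j^\ast$ whose proportion in that copy already exceeds $1/q+\rho-1/n$, whereas the other copy's corresponding (accepted) move did not exit — which forces that other copy to have strictly smaller $j^\ast$‑proportion, i.e.\ $\Delta_t^{j^\ast}$ has the sign for which \emph{omitting} the rejected move can only decrease $|\Delta_{t+1}^{j^\ast}|$. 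A short case analysis then confirms that both inequalities in \eqref{eqn:SyncIncrementCases} continue to hold. Apart from this bookkeeping, the sign analysis for $(I_{t+1},\wt I_{t+1})$ and the Jacobian estimate for $g_\gb$ are routine.
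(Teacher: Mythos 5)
Your strategy mirrors the paper's — a one-step drift estimate for $\normI{S_t-\wt S_t}$ built from the first-stage disagreement probability $\tfrac12\normI{\Delta_t}$ and the local Lipschitz bound for $g_\gb$ on $\gS_n^{\rho+}$, then iteration — and in one respect your bookkeeping is cleaner: by attaching $\normI{\Delta_t}$ to every error term and using $\normI{\Delta_t}=O(\rho)$ on $\gS_n^{\rho+}$, you avoid the dangling additive $O(n^{-2})$ in the paper's chain of inequalities.

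There is, however, a gap precisely at the point you flag as delicate. In the sub-case $I_{t+1}\neq\wt I_{t+1}$, $J_{t+1}=\wt J_{t+1}=j$ with both proposed moves rejected, neither copy changes, so $\normI{\Delta_{t+1}}=\normI{\Delta_t}>0$, whereas the second line of \eqref{eqn:SyncIncrementCases} asserts $\normI{\Delta_{t+1}}\leq\normI{\Delta_t}-\tfrac2n$; your own remark that a double rejection gives $\Delta_{t+1}=\Delta_t$ already contradicts this case, and your one-sided-rejection argument (which presumes the non-rejecting copy inserts the same colour and hence has strictly smaller $j$-proportion) does not touch it. This scenario is not negligible: both copies rejecting at colour $j$ forces $S_t^j$ and $\wt S_t^j$ to lie in the threshold window $[\tfrac1q+\rho-\tfrac1n,\,\tfrac1q+\rho)$, hence to coincide, and once $q\geq 3$ this is compatible with $I_{t+1}\neq\wt I_{t+1}$ and $j\notin\{I_{t+1},\wt I_{t+1}\}$; conditionally on such a configuration, the event $\{I_{t+1}\neq\wt I_{t+1},\,J_{t+1}=\wt J_{t+1}=j\}$ has probability of order $\tfrac1q\cdot\tfrac12\normI{\Delta_t}$, not $O(\rho\normI{\Delta_t})$, so the lost $-\tfrac2n$ is of the same order as the contraction itself and cannot be absorbed into the $C\rho/n$ correction when $\rho$ is small. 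In fairness, the paper's own proof writes the increment as an exact equality $-\tfrac2n\big[\one_{\{I_{t+1}\neq\wt I_{t+1}\}}-\one_{\{J_{t+1}\neq\wt J_{t+1}\}}\big]$ that silently omits this same scenario, so you have inherited the paper's imprecision; but since you explicitly claim to have verified \eqref{eqn:SyncIncrementCases} against the rejection rule, the step as written does not close, and a complete argument would need to rule out or separately control these near-threshold double rejections.
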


\begin{proof}
For $s, \wt{s} \in \cS_n$
by a Taylor expansion of $g_{\gb}$ around $s$, then another expansion for $\grad g_{\gb}$ around $\vq$
one has:
\[
\normIs{g_{\gb}(s) - g_{\gb}(\wt{s})} = \frac{2\gb}{q} \normI{s - \wt{s}} \lb 1 + O \lb \normI{s - \vq} + \normI{\wt{s} - \vq} \rb \rb
\,,\]
where we use the easily verified:
\begin{eqnarray}
\partial_j g_{\gb}^k = \lbr
    \begin{array}{ll}
        -2\gb g_{\gb}^j g_{\gb}^k                        & k \neq j \,,\\
        -2\gb \lb g_{\gb}^k \rb^2 + 2\gb g_{\gb}^k        & k = j \,.
    \end{array}
\right.
\end{eqnarray}

Now under the bounded dynamics, $I_{t+1} \neq \wt{I}_{t+1}$ implies that
$S_t^{I_{t+1}} > \wt{S}_t^{I_{t+1}}$ and $\wt{S}_t^{\wt{I}_{t+1}} > S_t^{\wt{I}_{t+1}}$ while  $J_{t+1} \neq \wt{J}_{t+1}$ implies that $ \lb S_t - n^{-1} \sfe_{I_{t+1}} \rb^{J_{t+1}} >  \lb \wt{S}_t - n^{-1} \sfe_{\wt{I}_{t+1}} \rb^{J_{t+1}}$ and $\lb \wt{S}_t - n^{-1} \sfe_{\wt{I}_{t+1}} \rb^{\wt{J}_{t+1}} >  \lb S_t - n^{-1} \sfe_{I_{t+1}} \rb^{\wt{J}_{t+1}}$.  It follows by the definition of the coupling that
\[
\normI{S_{t+1} - \wt{S}_{t+1}} -\normI{S_t - \wt{S}_t}  = -\frac2n \left[ \mathbb{I}_{\{ I_{t+1} \neq \wt{I}_{t+1} \}} -  \mathbb{I}_{\{ J_{t+1} \neq \wt{J}_{t+1} \}}\right].
\]
Recalling that for $s \in \cS_n$, $\normTV{s} = \inv{2} \normI{s}$ and
that under the best coupling of distributions $s$, $\wt{s}$ the
probability of disagreement is $\normTV{s-\wt{s}}$, we have:
\begin{eqnarray*}
\lefteqn{\bbE^{SC, \rho}_{\gs_0, \wt{\gs}_0} \normI{S_1 - \wt{S}_1}} \\
    & \leq  &
        \normI{s_0 - \wt{s}_0}
        + \frac{2}{n} \lb -\inv{2}\normI{s_0 - \wt{s}_0} + \inv{2} \normIs{g_{\gb}(s_0) -
                g_{\gb}(\wt{s}_0)} + O(n^{-1}) \rb   \\
    & \leq  &
        \normI{s_0 - \wt{s}_0} \lb
            1 - \frac{1 - 2\gb/q + O(\rho)}{n} + O(n^{-2}) \rb\,.
\end{eqnarray*}
The result follows by iteration.
\end{proof}

\subsection{Uniform Variance Bound}

\begin{lem}
\label{lem:UniformVarBound}
Assume $\gb < q/2$. There exists $\rho_0 = \rho_0 (\gb, q)$ such that if $\rho \leq \rho_0$
\begin{equation}
\label{eqn:UniformVarBound}
\Var_{\gs_0}^{\rho} \lb S_t \rb = O \lb n^{-1} \rb\,,
\end{equation}
uniformly in $\gs_0 \in \gS_n^{\rho+}$ and $t \geq 0$, and there exists
$\gga_0 > 0$ such that
\begin{equation}
\label{eqn:AlmostUniformVarBound}
\Var_{\gs_0} \lb S_t \rb = O \lb n^{-1} \rb\,,
\end{equation}
uniformly in $\gs_0 \in \gS_n^{\rho_0+}$ and
$t \leq e^{\gga_0 n}$.
\end{lem}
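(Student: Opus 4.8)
The plan is to derive both bounds from the variance lemma (Lemma~\ref{lem:VarBoundFromContraction}) applied to the relevant chain, once we verify the contraction hypothesis \eqref{eqn:VarBoundContractionCond}. First I would handle the bounded dynamics \eqref{eqn:UniformVarBound}. By Lemma~\ref{lem:SyncCouplingCoalesence}, running the synchronized coupling from $\gs_0$ and $\wt{\gs}_0$ gives $\bbE^{SC,\rho}_{\gs_0,\wt{\gs}_0}\normI{S_t - \wt{S}_t} \leq (p + C\rho/n)^t \normI{s_0 - \wt{s}_0}$ with $p = 1 - (1-2\gb/q)/n$. Since $\gb < q/2$ we have $1 - 2\gb/q > 0$, so by taking $\rho_0 = \rho_0(\gb,q)$ small enough that $C\rho_0 < \tfrac12(1-2\gb/q)$, we get $p + C\rho/n \leq 1 - \tfrac{1}{2}(1-2\gb/q)/n =: \eta < 1$ for all $\rho \leq \rho_0$. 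This synchronized-coupling bound controls $\bbE^\rho_{\gs_0} S_t$ (take $\wt{\gs}_0$ arbitrary and use that the marginal of either coordinate in the coupling is the bounded chain started from the respective state): since $\normII{\,\cdot\,} \leq \normI{\,\cdot\,}$ and $\normII{\bbE^\rho_{\gs_0} S_t - \bbE^\rho_{\wt{\gs}_0} S_t} \leq \bbE^{SC,\rho}_{\gs_0,\wt{\gs}_0}\normI{S_t - \wt S_t} \leq \eta^t \normI{s_0 - \wt s_0} \leq 2\eta^t \normII{s_0 - \wt s_0}$ (the last step using that on the simplex $\normI{\cdot} = O(\normII{\cdot})$ in fixed dimension $q$), the hypothesis of Lemma~\ref{lem:VarBoundFromContraction} holds with $\eta$ as above (after absorbing the factor $2$ into a slightly larger $\eta < 1$, or by noting the lemma's proof only uses geometric decay). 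Then \eqref{eqn:VarBoundContractionResult} gives $v_t \leq v_1 (1-\eta^2)^{-1}$, and since $1-\eta^2 = \Theta(1/n)$ while $v_1 = \sup_{\gs_0}\Var^\rho_{\gs_0}(S_1) = O(1/n^2)$ (a single Glauber step changes $S$ by $O(1/n)$ in one coordinate, so the one-step variance is $O(1/n^2)$), we conclude $v_t = O(n^{-2}) \cdot O(n) = O(n^{-1})$, uniformly in $t$ and in $\gs_0 \in \gS_n^{\rho+}$, which is \eqref{eqn:UniformVarBound}.

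For the unbounded bound \eqref{eqn:AlmostUniformVarBound} I would transfer the bounded-dynamics estimate using Proposition~\ref{clm:StayInRho}. Fix $\rho_0$ as above (further shrinking if needed so that Proposition~\ref{clm:StayInRho}\eqref{item:ClmStayInRho_1} applies for some $\rho$ with $\rho_0 < \rho \leq \rho_0$, i.e. take an intermediate pair $\rho_0' < \rho_0$). By Proposition~\ref{clm:StayInRho}\eqref{item:ClmStayInRho_1} there are $\gga_0, C, c > 0$ such that for $\gs_0 \in \gS_n^{\rho_0'+}$ the unbounded process stays inside $\gS_n^{\rho_0+}$ up to time $t = e^{\gga_0 n}$ except with probability at most $Ce^{-cn}$. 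Let $\tau = \inf\{t : S_t \notin \cS^{\rho_0+}\}$. On $\{\tau > t\}$ the unbounded and $\rho_0$-bounded chains (under their natural coupling) agree, so using \eqref{eqn:BoundedUnBoundedProp1} with $f$ a coordinate of $S_t$ (and its square) and the fact that $\normsup{S_t} \leq 1$, the difference between $\bbE_{\gs_0}$-moments and $\bbE^{\rho_0}_{\gs_0}$-moments of $S_t$ is $O(\bbP(\tau \leq t)) = O(e^{-cn})$ for $t \leq e^{\gga_0 n}$; hence $\Var_{\gs_0}(S_t) \leq \Var^{\rho_0}_{\gs_0}(S_t) + O(e^{-cn}) = O(n^{-1})$, uniformly in $\gs_0 \in \gS_n^{\rho_0'+}$ and $t \leq e^{\gga_0 n}$. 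Relabeling $\rho_0' \to \rho_0$ (so the hypothesis reads $\gs_0 \in \gS_n^{\rho_0+}$ as in the statement) completes the proof.

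The routine parts — the one-step variance bound $v_1 = O(n^{-2})$, the norm-equivalence constant between $\normI{\cdot}$ and $\normII{\cdot}$ on $\cS$, and the $O(e^{-cn})$ bookkeeping — are standard. The one point requiring care is the application of Lemma~\ref{lem:VarBoundFromContraction}: that lemma assumes a contraction in $\normII{\cdot}$ of the \emph{conditional expectation} map $z_0 \mapsto \bbE_{z_0}Z_t$, whereas Lemma~\ref{lem:SyncCouplingCoalesence} gives an $\normI{\cdot}$-Wasserstein contraction at the level of the coupled chains. The bridge is that a Wasserstein-contraction coupling automatically yields $\normI{\bbE_{z_0}Z_t - \bbE_{\wt z_0}Z_t} \leq \bbE\normI{Z_t - \wt Z_t} \leq (\text{rate})^t\,\normI{z_0-\wt z_0}$, and then one passes between $\normI{\cdot}$ and $\normII{\cdot}$ at the (finite, $n$-independent) cost of the dimension $q$; since this constant is absorbed into $\eta$ while keeping $\eta < 1$ for all large $n$, the geometric-decay hypothesis of Lemma~\ref{lem:VarBoundFromContraction} is met. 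I expect this identification of the contraction hypothesis — and verifying it survives the $\normI$–$\normII$ conversion while still giving $1 - \eta^2 = \Theta(1/n)$ — to be the only nontrivial step.
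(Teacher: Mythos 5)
Your proposal follows essentially the same route as the paper: verify the contraction hypothesis of Lemma~\ref{lem:VarBoundFromContraction} via the synchronized-coupling estimate of Lemma~\ref{lem:SyncCouplingCoalesence} (with $\rho_0$ small so $\eta \leq 1-\tfrac{1-2\gb/q}{2n}$), combine $v_1 = O(n^{-2})$ with $1-\eta^2 = \Theta(1/n)$, and then transfer to the unbounded chain for $t \leq e^{\gga_0 n}$ using Proposition~\ref{clm:StayInRho}\eqref{item:ClmStayInRho_1} together with \eqref{eqn:BoundedUnBoundedProp1}. Your explicit handling of the $\normI{\cdot}$--$\normII{\cdot}$ conversion (correctly noting the constant prefactor is harmless since the proof of Lemma~\ref{lem:VarBoundFromContraction} only needs geometric decay) just spells out what the paper leaves implicit, so the argument is correct and matches the paper's.
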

\begin{proof}
Equation \eqref{eqn:UniformVarBound} will follow directly from Lemma~\ref{lem:VarBoundFromContraction} applied to $S_t$ under the $\rho$-bounded dynamics. Indeed, Lemma~\ref{lem:SyncCouplingCoalesence} gives a stronger version of Condition~\ref{eqn:VarBoundContractionCond} with
$\eta = p + \rho O(n^{-2})$. Now, if $\gb < q/2$ and $\rho$ is small enough, we have $\eta \leq 1 - \frac{1 - \frac{2\gb}{q}}{2n}$ for large enough $n$. Then \eqref{eqn:UniformVarBound} follows from
\eqref{eqn:VarBoundContractionResult} since $\Var^{\rho}_{\gs_0} S_1 = O \lb n^{-2} \rb$.

For \eqref{eqn:AlmostUniformVarBound}, find $\rho^{\prime} < \rho$ and use
\eqref{eqn:ClmStayInRho_1} and \eqref{eqn:BoundedUnBoundedProp1} to conclude
that for all $\gs_0 \in \gS_n^{\rho^{\prime}+}$ and $t \leq e^{\gga n}$ for some $\gga = \gga(\rho, \rho^{\prime})$:
\[
\Var_{\gs_0} \lb S_t \rb = \Var^{\rho}_{\gs_0} \lb S_t \rb + o(n^{-1})
 = O \lb n^{-1} \rb\,. \qedhere
\]
\end{proof}
\begin{cor}
\label{cor:VarOFMuN}
For $\gb < \gb_c(q)$, we have $\Var_{\mu_n} (S) = O(n^{-1})$.
\end{cor}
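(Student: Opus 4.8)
The plan is to derive the bound from the $t$‑uniform variance estimate~\eqref{eqn:UniformVarBound} for the $\rho$‑bounded dynamics, after first replacing $\mu_n$ by its restriction to a small neighbourhood of $\vq$. Note first that $\gb<q/2$ (since $\gb<\gb_c(q)$ and $\gb_c(q)<q/2$ for $q\geq3$, while $\gb_c(2)=1=q/2$), so Lemma~\ref{lem:UniformVarBound} is in force; fix $\rho\leq\rho_0(\gb,q)$ from that lemma, small and independent of $n$, and set $\mu_n^{\rho}\bydef\mu_n(\,\cdot\mid\gS_n^{\rho+})$. Because $\mu_n$ is invariant under permutations of the $q$ colours, $\bbE_{\mu_n}[S]=\vq$ and hence $\Var_{\mu_n}(S)=\bbE_{\mu_n}\normII{S-\vq}^2$; the same identity holds for $\mu_n^{\rho}$, since $\gS_n^{\rho+}$ is permutation‑invariant and therefore so is $\mu_n^{\rho}$.

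The first step is the transfer from $\mu_n$ to $\mu_n^{\rho}$. For $\gb<\gb_c(q)$ the rate function $I_{\gb,q}$ of~\eqref{eqn:CWRateFnDef} has its unique zero at $\vq$, which lies in the (relatively) open set $\cS^{\rho+}$; the large‑deviation upper bound recalled in Subsection~\ref{sub:CWLDP} then yields $\mu_n(\gS_n\setminus\gS_n^{\rho+})\leq C_1 e^{-c_1 n}$ for some $C_1,c_1>0$. Splitting the expectation $\bbE_{\mu_n}\normII{S-\vq}^2$ over $\gS_n^{\rho+}$ and its complement, bounding $\mu_n(\gS_n^{\rho+})\leq1$ on the first piece and using the crude estimate $\normII{S-\vq}^2=O(1)$ on the second, one obtains
\[
\Var_{\mu_n}(S)\;=\;\bbE_{\mu_n}\normII{S-\vq}^2\;\leq\;\bbE_{\mu_n^{\rho}}\normII{S-\vq}^2+O(e^{-c_1 n})\;=\;\Var_{\mu_n^{\rho}}(S)+O(e^{-c_1 n}),
\]
so it suffices to show $\Var_{\mu_n^{\rho}}(S)=O(n^{-1})$.

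For this I would use the $\rho$‑bounded dynamics directly. It is obtained from the heat‑bath Glauber dynamics --- which is reversible with respect to $\mu_n$ --- by rejecting all transitions leaving $\gS_n^{\rho+}$; rejection preserves detailed balance on $\gS_n^{\rho+}$, so the chain is stationary (indeed reversible) for $\mu_n^{\rho}$, and it is irreducible and aperiodic on $\gS_n^{\rho+}$ (it has a positive holding probability at every state, and from any configuration one can reach a near‑balanced one without leaving $\gS_n^{\rho+}$). Consequently, for any fixed $\gs_0\in\gS_n^{\rho+}$ the law of $S_t$ under $\bbP^{\rho}_{\gs_0}$ converges to $\mu_n^{\rho}\circ S^{-1}$, and since $S$ is bounded this forces $\Var^{\rho}_{\gs_0}(S_t)\to\Var_{\mu_n^{\rho}}(S)$ as $t\to\infty$. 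But~\eqref{eqn:UniformVarBound} bounds $\Var^{\rho}_{\gs_0}(S_t)$ by $O(n^{-1})$ \emph{uniformly} over all $t\geq0$, so the limit is $O(n^{-1})$ as well; combined with the display above this proves the corollary.

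The one point requiring care --- and the reason one cannot simply apply the unbounded estimate~\eqref{eqn:AlmostUniformVarBound} and let $t\to\infty$ --- is that~\eqref{eqn:AlmostUniformVarBound} is valid only for $t\leq e^{\gga_0 n}$, whereas for $\gb\in[\gb_s(q),\gb_c(q))$ the worst‑case relaxation of the unbounded chain is exponentially slow: started from a balanced configuration, the law of $S_t$ at time $e^{\gga_0 n}$ need not be close to $\pi_n$, since it has not yet placed the exponentially small equilibrium mass near the ordered states. Passing to the bounded dynamics removes this difficulty, because there~\eqref{eqn:UniformVarBound} holds for \emph{all} $t$ while the stationary measure $\mu_n^{\rho}$ differs from $\pi_n$ by only $C_1 e^{-c_1 n}$ in total variation. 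All remaining steps are routine.
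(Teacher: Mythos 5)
Your proof is correct and follows essentially the same route as the paper's: restrict to $\mu_n^{\rho}=\mu_n(\cdot\mid\gS_n^{\rho+})$, control the difference via the large-deviation bound, and transfer the $t$-uniform estimate~\eqref{eqn:UniformVarBound} to $\Var_{\mu_n^{\rho}}(S)$ using that the $\rho$-bounded dynamics is reversible for $\mu_n^{\rho}$ and converges to it. You merely spell out details the paper leaves implicit (the color-symmetry giving mean $\vq$, irreducibility of the bounded chain, and why the unbounded bound~\eqref{eqn:AlmostUniformVarBound} alone would not suffice), which is fine.
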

\begin{proof}
Fix $\rho < \rho_0$, where $\rho_0$ is given in Lemma \ref{lem:UniformVarBound}
and notice that the bounded dynamics is reversible with respect to the Potts measure $\mu_n$ restricted to $\gS_n^{\rho+}$. Therefore the bounded dynamics has
$\mu^{\rho}_n(\cdot) = \mu_n(\cdot | \gs \in \gS_n^{\rho+})$ as its stationary measure. From the large deviation analysis in Subsection~\ref{sub:CWLDP} it is straightforward to conclude that if $\gb < \gb_c(q)$
\[\mu_n(S \not \in \cS^{\rho+}) \leq \mathrm{e}^{-C n}\,,\]
for some $C > 0$ and $n$ large enough, depending on $\rho$ and $\gb$. Therefore, we have $\normTV{\mu_n - \mu^{\rho}_n} \leq \mathrm{e}^{-Cn}$. Since $\mu_n$, $\mu^{\rho}_n$ live on a compact space, this gives $\Var_{\mu_n}(S) \leq \Var_{\mu^{\rho}_n}(S) + \mathrm{e}^{-Cn}$. Since  $\bbP^{\rho}_{\gs_0} (\gs_t \in \cdot)$ converges to $\mu_n^{\rho}$ as $t \to \infty$ for any fixed $\gs_0 \in \gS^{\rho+}_n$, Lemma \ref{lem:UniformVarBound} can be extended to $\gs_0$ chosen from $\mu_n^{\rho}$. This completes the proof.
\end{proof}

\subsection{The Drift of the Distance to Equiproportionality}
Here we show that $\whS_t$ has drift towards 0.  Write $S_{t+1} = S_t + \xi_{t+1}$ where we have that for $i , \,j =1,\,...,\,q$,
\begin{align*}
\bbP\left(\xi_{t+1} = \tfrac{1}{n} \lb \sfe_j - \sfe_i \rb \right)
    = S_t^i g_{\gb}^j \lb S_t - \tfrac{1}{n} \sfe_i \rb =  S_t^i g_{\gb}^j \lb S_t \rb + O \lb n^{-1} \rb\,.
\end{align*}
Then,
\begin{eqnarray*}
\bbE \lsb \normII{S_{t+1} - \vq}^2  \,| \, S_t \rsb
    & =     &   \bbE \lsb \normII{S_t - \vq + \xi_{t+1}}^2  \,| \, S_t \rsb \\
    & =     &   \normII{S_t - \vq}^2 + \bbE \lsb \normII{\xi_{t+1}}^2  | \, S_t \rsb +
                2 \lan \bbE \lsb \xi_{t+1} | \, S_t \rsb , S_t \ran \\
    & =     & \normII{\whS_t}^2 + \frac{2}{n^2} \lb 1-h(S_t) \rb +
                \frac{2}{n} \sum_{j=1}^q \frac{e^{2\gb S_t^j}}{\sum_{k=1}^q e^{2\gb S_t^k}} S_t^j -
                \frac{2}{n} \normII{S_t}^2 + O \lb n^{-2} \rb \\
    & =     & \normII{\whS_t}^2 \lb 1-\tfrac{2}{n} \rb + \lb 2h(S_t) - \tfrac{2}{q} \rb n^{-1} +
                \lb 2-2h(S_t) \rb n^{-2} + O \lb n^{-2} \rb\,,
\end{eqnarray*}
where $h(s) \bydef \sum_{k=1}^q g_{\gb}^k(s) s^k$.
Notice that since $g^k_\beta(s)=g^k_\beta(s-\vq)$ we have that $h(s) = 1/q + h(\whs)$ and its gradient and Hessian are:
\[
\rmD_1 h(0) = \frac{1}{q} \quad \mbox{ and }\qquad \rmD_2 h(0) = \frac{4 \gb}{q} P\,.
\]
where $P$ is a projection matrix onto $\lb \vq \rb^{\perp}$.
Therefore, we may write
\[
h(s) = \frac{1}{q} + \frac{2\gb}{q} \normII{\whs}^2 + O \lb \normII{\whs}^3 \rb\,.
\]
This gives
\begin{eqnarray}
\nonumber
\bbE \lsb \normII{\whS_{t+1}}^2 | \cF_t \rsb
    & =     & \normII{\whS_t}^2 \lb 1 - \frac{2 \lb 1 - 2\gb/q \rb}{n} \rb +
                n^{-1} O \lb \normII{\whS_t}^3\rb + O \lb n^{-2} \rb \\
\label{eqn:RecursionForSHat}
    & =     & \normII{\whS_t}^2 p^2 +
                n^{-1} O \lb \normII{\whS_t}^3\rb + O \lb n^{-2} \rb\,.
\end{eqnarray}
where $p$ is defined in \eqref{eqn:RhoDefinition}.

\subsection{Contraction for the Distance to Equiproportionality}
\label{sub:DistanceContraction}
Fix $\gb < q/2$ and $\rho < \rho_0$ where $\rho_0$ is given in Lemma~\ref{lem:UniformVarBound}. For what follows, assume that $\gs_0 \in \gS^{\rho+}_n$ and $t \leq e^{\gga_0 n}$ where $\gga_0$ is also given.
Then, taking expectation in equation~\eqref{eqn:RecursionForSHat},
we get:
\begin{equation}
\label{eqn:RecursionForMeanOfBoundedSHat}
\bbE_{\gs_0} \normII{\wh{S}_{t+1}} =
	p^2 \bbE_{\gs_0} \normII{\wh{S}_t}^2 +
   \lb \bbE_{\gs_0} \normII{\wh{S}_t}^3 \rb O \lb n^{-1} \rb +                     	
	O \lb n^{-2} \rb\,.
\end{equation}
Now by Taylor expansion of $s \mapsto \normII{s}^3$ around $\bbE_{\gs_0} \wh{S}_t$
in view of \eqref{eqn:AlmostUniformVarBound},
\begin{eqnarray*}
\bbE_{\gs_0} \normII{\wh{S}_t}^3
    & =     &   \normII{\bbE_{\gs_0} \wh{S}_t}^3 + \bbE_{\gs_0} \lan
                    \rmD_1 \normII{\cdot}^3 \lb \bbE_{\gs_0} \wh{S}_t \rb\,,\,
                    \wh{S}_t - \bbE_{\gs_0} \wh{S}_t
                \ran + O \lb \bbE_{\gs_0} \normII{\wh{S}_t - \bbE_{\gs_0} \wh{S}_t}^2 \rb   \\
    & =     &   \normII{\bbE_{\gs_0} \wh{S}_t}^3 + O \lb n^{-1} \rb
    = \lb \bbE_{\gs_0} \normII{\wh{S}_t}^2 + O \lb n^{-1} \rb \rb^{3/2} + O \lb n^{-1} \rb \\
    & =     & \lb \bbE_{\gs_0} \normII{\wh{S}_t}^2 \rb^{3/2} + O \lb n^{-1} \rb \,.
\end{eqnarray*}
Then
\begin{eqnarray}
\label{eqn:RecursionForMeanOfBoundedSHat_NiceForm}
\bbE_{\gs_0}  \lsb \normII{\wh{S}_{t+1}}^2 \rsb =
    p^2 \bbE_{\gs_0} \normII{\wh{S}_t}^2 +
    \lb \bbE_{\gs_0} \normII{\wh{S}_t}^2 \rb^{3/2} O \lb n^{-1} \rb +
    O \lb n^{-2} \rb\,.
\end{eqnarray}
This will in turn imply:
\begin{prop}
\label{prop:ContractionForNorm}
Fix $\gb < q/2$. There exist $\rho_0 = \rho_0(\gb, q)>0$ and $C=C(\gb, q) > 0$ such that
if $\rho \leq \rho_0$ there exists $\gga(\rho) > 0$
such that:
\begin{eqnarray}
\label{eqn:ExplicitContractionForSHat}
\bbE_{\gs_0} \normII{\wh{S}_t}^2
    = p^{2 t} (\normII{\wh{s}_0}^2 + C\rho^3)+
        O \lb n^{-1} \rb,
\end{eqnarray}
uniformly in $\gs_0 \in \gS_n^{\rho+}$ and $t \leq e^{\gga(\rho) n}$,
where $p = p(n, \gb, q)$ is defined in \ref{eqn:RhoDefinition}.
\end{prop}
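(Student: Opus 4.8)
The plan is to iterate the one-step recursion~\eqref{eqn:RecursionForMeanOfBoundedSHat_NiceForm}, which under the $\rho$-bounded dynamics reads
\[
a_{t+1} = p^2 a_t + a_t^{3/2} O(n^{-1}) + O(n^{-2})\,,
\qquad a_t \bydef \bbE^{\rho}_{\gs_0} \normII{\wh S_t}^2\,.
\]
First I would work entirely under the bounded dynamics $\bbP^\rho$, where the derivation of~\eqref{eqn:RecursionForMeanOfBoundedSHat_NiceForm} is valid for all $t$ (the uniform variance bound~\eqref{eqn:UniformVarBound} holds for all $t$ under $\bbP^\rho$), and only at the very end transfer to the unbounded dynamics for $t \leq e^{\gga(\rho) n}$ via~\eqref{eqn:ClmStayInRho_1} and~\eqref{eqn:BoundedUnBoundedProp1}, exactly as in the proof of~\eqref{eqn:AlmostUniformVarBound}. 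Since $\gs_0 \in \gS_n^{\rho+}$ we have $a_0 = \normII{\wh s_0}^2 \leq (q-1)\rho^2$, and the a~priori bound $a_t = O(\rho^2) + O(n^{-1})$ follows from~\eqref{eqn:UniformVarBound} together with $\normII{\bbE^\rho_{\gs_0}\wh S_t} \leq \normII{\wh s_0}$ (contraction of the mean from Lemma~\ref{lem:SyncCouplingCoalesence}, since $p + \rho O(n^{-2}) < 1$). So on the relevant range $a_t^{3/2} = O(\rho) \cdot a_t$, and absorbing this into the leading term, if $\rho$ is small enough (depending on $\gb,q$) we get
\[
a_{t+1} \leq p^2(1 + C_0\rho/n)\, a_t + O(n^{-2})
\qquad\text{and}\qquad
a_{t+1} \geq p^2(1 - C_0\rho/n)\, a_t - O(n^{-2})
\]
for an absolute constant $C_0 = C_0(\gb,q)$.

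Next I would iterate both inequalities. Writing $q_\pm = p^2(1 \pm C_0\rho/n)$ and noting $p^2 = 1 - \tfrac{2(1-2\gb/q)}{n} + O(n^{-2})$, for $\rho$ small enough we still have $q_\pm \leq 1 - \tfrac{1-2\gb/q}{n} < 1$, so the geometric series converges and $\sum_{k<t} q_\pm^k \cdot O(n^{-2}) = O(n^{-1})$. Thus
\[
q_-^t\, a_0 - O(n^{-1}) \leq a_t \leq q_+^t\, a_0 + O(n^{-1})\,.
\]
It remains to replace $q_\pm^t$ by $p^{2t}$ up to the claimed form. Here I would use that $q_\pm^t = p^{2t}(1 \pm C_0\rho/n)^t$ and that $(1+C_0\rho/n)^t \leq e^{C_0\rho t/n}$; restricting to $t \leq e^{\gga(\rho)n}$ is not yet enough — the point is rather that when $a_0$ is already as small as $O(1/n)$ the factor is irrelevant, and when $a_0 \asymp \rho^2$ one checks $(1 \pm C_0\rho/n)^t a_0 = p^{2t} a_0 \pm C\rho^3 p^{2t} + O(n^{-1})$ by bounding $|(1\pm C_0\rho/n)^t - 1| \leq C_0\rho t/n \cdot e^{C_0\rho t/n}$ and using $p^{2t} t/n = O(1)$ uniformly (since $p^{2t}$ decays like $e^{-ct/n}$ and $x e^{-cx}$ is bounded); the surplus is at most $C\rho \cdot p^{2t} a_0 \leq C\rho^3 p^{2t}$ plus $O(n^{-1})$ from the range where $p^{2t} a_0 = O(n^{-1})$. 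Collecting the two-sided bounds yields
\[
\bbE^\rho_{\gs_0}\normII{\wh S_t}^2 = p^{2t}\big(\normII{\wh s_0}^2 + C\rho^3\big) + O(n^{-1})\,,
\]
where the $C\rho^3$ error term is kept with a sign-free constant so the identity holds as a two-sided estimate (the lower bound uses $q_-^t a_0 \geq p^{2t}a_0 - C\rho^3 p^{2t}$ similarly, and $\normII{\wh s_0}^2 + C\rho^3 \geq 0$). Finally I transfer to the unbounded dynamics on $t \leq e^{\gga(\rho)n}$ as described.

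The main obstacle I expect is the bookkeeping in the last step: showing that the multiplicative perturbation $(1 \pm C_0\rho/n)^t$ of the geometric factor $p^{2t}$ can be absorbed into an additive $C\rho^3 p^{2t}$ term (and not merely into $O(n^{-1})$ or into something of order $\rho^2 p^{2t}$, which would be too crude for the intended applications), uniformly over the whole range $0 \leq t \leq e^{\gga(\rho)n}$. The key quantitative inputs are that $p^{2t}$ decays exponentially on the scale $t \sim n$ so that $p^{2t}\cdot(t/n)$ is uniformly bounded, and that $a_0 = O(\rho^2)$, so that the extra factor contributes $O(\rho)\cdot p^{2t} a_0 = O(\rho^3 p^{2t})$; choosing $\rho_0$ small then makes all the "$O(\rho)$ times leading term" corrections genuinely lower order. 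Everything else — the one-step recursion, the a~priori variance bound, and the bounded-to-unbounded transfer — is already established in the excerpt and only needs to be invoked.
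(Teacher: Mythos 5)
Your overall skeleton (one-step recursion, a priori bound, transfer from bounded to unbounded dynamics) matches the paper, but the key step --- converting the compounded multiplicative perturbation into the additive $C\rho^3 p^{2t}$ term --- has a genuine gap, and it is exactly the step you flagged as delicate. Absorbing $a_t^{3/2}O(n^{-1})$ as $O(\rho)a_t/n$ gives $a_t \leq q_+^t a_0 + O(n^{-1})$ with $q_+ = p^2(1+C_0\rho/n)$, and you then claim the surplus $p^{2t}a_0\big[(1+C_0\rho/n)^t-1\big]$ is either $\leq C\rho\, p^{2t}a_0$ or absorbed into $O(n^{-1})$. This dichotomy fails on the whole intermediate window $n \ll t \ll n\log n$: there $(1+C_0\rho/n)^t - 1$ is no longer $O(\rho)$ (at $t=\theta n$ it is $e^{C_0\rho\theta}-1$, unbounded in $\theta$), while $q_+^t a_0$ is still far above $n^{-1}$ (at $t = \tfrac12\ga_1 n\log n$ with $a_0\asymp\rho^2$ one gets $q_+^t a_0 \asymp \rho^2 n^{-1/2+O(\rho)}$ versus the claimed $\rho^2 n^{-1/2}(1+C\rho)+O(n^{-1})$, a discrepancy of a factor $n^{O(\rho)}\to\infty$). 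The bound $p^{2t}\,t/n = O(1)$ only yields a surplus $O(\rho)a_0 = O(\rho^3)$ \emph{not} multiplied by $p^{2t}$, which is far too weak: the proposition is used precisely at $t\asymp \ga_1 n\log n$ (Lemma~\ref{lem:OSqrtFromCoalesence} and the cutoff lower bound in Subsection~\ref{sub:ProofOfLowerBound}), where an un-damped $O(\rho^3)$ or a $q_\pm^t$ in place of $p^{2t}$ destroys the $O(n^{-1})$ accuracy; the lower-bound direction ($q_-^t a_0 \geq p^{2t}a_0 - C\rho^3 p^{2t}-O(n^{-1})$) fails in the same window for the same reason.

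The reason the multiplicative route is too lossy is that the crude bound $a_t^{3/2}\leq C\rho\,a_t$ overestimates the cubic term at late times (when in truth $\sqrt{a_t}\asymp p^t\rho$, not $\rho$), and this overestimate compounds into the unbounded factor $(1+C_0\rho/n)^t$. The paper's proof avoids this with a bootstrap: first obtain the a priori decay $\gl_t \leq C\ol\rho^2(p+\ol\rho O(n^{-1}))^{2t}+O(n^{-1})$ (which you essentially have), then feed it back into \eqref{eqn:RecursionForMeanOfBoundedSHat_NiceForm} so the cubic term becomes an \emph{additive} forcing $\ol\rho^3(p+\ol\rho O(n^{-1}))^{3t}O(n^{-1})$ that decays strictly faster than $p^{2t}$ (choosing $\ol\rho$ small so that $p^2-(p+\ol\rho O(n^{-1}))^3 \geq c/n$), and finally solve the linear recursion exactly via \eqref{eqn:RecursionSolution_SHat}; the accumulated forcing is $\frac{p^{2t}-(p+\ol\rho O(n^{-1}))^{3t}}{p^2-(p+\ol\rho O(n^{-1}))^3}\,\ol\rho^3 O(n^{-1}) = O(\ol\rho^3)p^{2t}$, which is exactly the damped $C\rho^3 p^{2t}$ term you need, with the $O(n^{-2})$ term contributing the $O(n^{-1})$. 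Two smaller points: the paper derives the recursion for the \emph{unbounded} dynamics and restricts to $t\leq e^{\gga n}$ via Proposition~\ref{clm:StayInRho}\eqref{item:ClmStayInRho_1}; your claim that \eqref{eqn:RecursionForMeanOfBoundedSHat_NiceForm} holds verbatim for all $t$ under $\bbP^\rho$ needs justification, since rejected moves near the boundary of $\gS_n^{\rho+}$ alter the one-step drift computation. Also, the appeal to Lemma~\ref{lem:SyncCouplingCoalesence} for $\normII{\bbE^\rho_{\gs_0}\wh S_t}\leq\normII{\wh s_0}$ is both incomplete (it compares two copies, not a copy with $\vq$, so a symmetry argument is needed) and unnecessary: under the bounded dynamics $\normsup{\wh S_t}\leq (q-1)\rho$ deterministically, so $a_t=O(\rho^2)$ is immediate.
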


\begin{proof}
Set $\gl_t \bydef \bbE_{\gs_0} \normII{\wh{S}_t}^2$. It follows from
\eqref{eqn:RecursionForMeanOfBoundedSHat_NiceForm} and
\eqref{eqn:ClmStayInRho_1} that
for any $\rho < \ol{\rho} < \rho_0$, where $\rho_0$ is given in Lemma
\ref{lem:UniformVarBound}, there exists $\gga = \gga(\rho, \ol{\rho}) > 0$ such that uniformly in $\gs_0 \in \gS_n^{\rho+}$ and $t \leq e^{\gga n}$:
 \[
\gl_{t+1} \leq \gl_t \lb p^2 + \ol{\rho} O \lb n^{-1} \rb \rb + O \lb n^{-2} \rb
	= \gl_t \lb p + \ol{\rho} O \lb n^{-1} \rb \rb^2 + O \lb n^{-2} \rb\,.
\]

We now use the following fact, which can be easily verified.
If $\lb \gl_t \rb_{t \geq 0}$ is a sequence
satisfying:
\[
\gl_{t+1} = p \gl_t + a r^t + b,
\]
for some $p \neq r$, $p \neq 1$, $a$ and $b$, then:
\begin{equation}
\label{eqn:RecursionSolution_SHat}
\gl_{t} = \gl_0 p^t + a \frac{p^t - r^t}{p-r} + \frac{b}{1-p} \lb 1 - p^t \rb \,.
\end{equation}
Apply this (with $a=0$) and use the monotonicity in $\gl_t$ of the
right hand side above (at least if $n$ is large enough), to conclude:
\[
\gl_t \leq \gl_0 \lb p + \ol{\rho} O(n^{-1}) \rb^{2t} + \inv{1-\lb p + \ol{\rho} O(n^{-1}) \rb^2} O \lb n^{-2} \rb	
	\leq C \ol{\rho}^2 (p + \ol{\rho} O(n^{-1}))^{2t} + O \lb n^{-1} \rb\,.
\]
Plugging this a priori bound back into
\eqref{eqn:RecursionForMeanOfBoundedSHat_NiceForm} we see that:
\[
\gl_{t+1} = p^2 \gl_t + \ol{\rho}^3 \lb p + \ol{\rho} O(n^{-1}) \rb^{3t} O \lb n^{-1} \rb + O \lb n^{-2} \rb\,.
\]
Using \eqref{eqn:RecursionSolution_SHat} again
and choose $\ol{\rho}$ small enough to obtain
\begin{eqnarray*}
\gl_t
    & =     & \gl_0 p^{2t} +
	\frac{p^{2t} - \lb p + \ol{\rho} O(n^{-1}) \rb^{3t}}
		{p^2-\lb p + \ol{\rho} O(n^{-1}) \rb^3} \ol{\rho}^3 O \lb n^{-1} \rb +
                    \inv{1-p^2} O \lb n^{-2} \rb     \\
    & =     & (\gl_0 + O(\ol{\rho}^3)) p^{2t} + O \lb n^{-1} \rb\,.
\end{eqnarray*}
as desired.
\end{proof}

\section{Mixing in the Subcritical Regime}
\label{sec:SubcriticalRegime}
In this section we prove Theorem \ref{thm:SubCriticalCutoff}. Recall that
$\ga_1 = \ga_1(\gb, q) = \frac{1}{2 \lb 1 - 2\gb/q \rb}$ and set
\begin{equation}
\label{eqn:TOfGammaDef}
t^{\ga_1}(n) = \ga_1 n \log n
\quad \text{;} \quad
t^{\ga_1}_{\gga}(n) = \ga_1 n \log n + \gga n.
\end{equation}

\subsection{Proof of Lower Bound in Theorem \ref{thm:SubCriticalCutoff}}
\begin{proof}
\label{sub:ProofOfLowerBound}
The analysis in this subsection pertains to all $\gb < \gb_c(q)$.
Fix $0 < \rho_2 < \rho_1 < \rho_0$, where $\rho_0$ is given in Proposition~\ref{prop:ContractionForNorm} and let $\gs_0 \in \gS_n$ be such that
$\rho_2 < \normII{\wh{s}_0} < \rho_1$. Then if $t < t^{\ga_1}_{\gga}(n)$ and $\rho_1$ is small enough, Proposition \ref{prop:ContractionForNorm} implies
\begin{align*}
\bbE_{\gs_0} \normII{\wh{S}_t}^2
	 \geq     \frac{\rho_2^2}{2}
					\big( 1 - \tfrac{1-2\gb/q}{n}\big)^{2 t^{\ga_1}_{\gamma}(n)} +
					O \lb n^{-1} \rb  \geq     \frac{1}{n} e^{-(1 - 2\gb/q)\gamma}\,,
\end{align*}
for sufficiently large $-\gamma$ depending on $\rho_2$ and large enough $n$. Combined with the uniform variance bound given in Lemma \ref{lem:UniformVarBound}, it follows that for large enough $n$
\[\bbE_{\gs_0} \normII{\wh{S}_{t}} \geq \frac{\mathrm{e}^{-(1 - 2\gb/q) \gamma /2}}{\sqrt{n}}\,.\]
Applying Chebyshev's inequality and using Lemma \ref{lem:UniformVarBound} again, we conclude that uniformly in all $r > 0$, $t \leq t^{\ga_1}_{\gga}(n)$ and $\gs_0 \in \gS_n^{\rho_1+} \setminus \gS_n^{\rho_2+}$
\begin{align}
\label{eqn:UpperBoundOnSqrtNCloseToEquilibrium}
\bbP_{\gs_0} \lb \normII{\wh{S}_{t}} < \frac{r}{\sqrt{n}} \rb
	& \leq\frac{\Var_{\gs_0} ( \wh{S}_{t} )} {\big(\frac{\mathrm{e}^{-(1 - 2\gb/q) \gamma /2}}{\sqrt{n}} - \frac{r}{\sqrt{n}}
					\big)^2} = O\Big((\mathrm{e}^{-(1 - 2\gb/q) \gamma /2} - r)^{-2}\Big)\,.
\end{align}
In particular, this implies
\begin{equation}
\label{eqn:UpperBoundOnSqrtNCloseToEquilibrium1}
	\lim_{\gga \to -\infty} \limsup_{n \to \infty} \bbP_{\gs_0} \lb \normII{\wh{S}_{t^{\ga_1}_{\gga}(n)}} < \frac{r}{\sqrt{n}} \rb = 0 .
\end{equation}

On the other hand,
$\bbE_{\mu_n} S_t = \vq$ and from Corollary \ref{cor:VarOFMuN} it follows that $\Var_{\mu_n} S_t = O \lb n^{-1} \rb$ for $\gb < \gb_c(q)$.
Therefore another application of  Chebyshev's inequality yields that
\begin{equation}
\label{eqn:StationaryMeasureConcentration}
\mu_n \lb \normII{\wh{s}_t} < \frac{r}{\sqrt{n}} \rb \geq 1 - \frac{O(1)}{r^2}\,,
\end{equation}
for all $t \geq 0$. Altogether, we have that for any $r > 0$,
\[
\lim_{\gga \to -\infty} \liminf_{n \to \infty} d_{t^{\ga_1}_{\gga}(n)}(n) \geq 1 - \frac{O(1)}{r^2}\
\]
and it remains to send $r \to \infty$.
\end{proof}

In the remainder of the section we prove the upper bound on the mixing time when $\gb <\gb_s(q)$. The proof is based on upper bounding the coalescence time of two coupled dynamics, one starting from any configuration in $\gS_n$ and the other starting from the stationary distribution $\mu_n$. This coupling will be done in several stages with different couplings from one stage to the next. In what follows, $(\gs_t)_{t \geq 0}$ and $(\wt{\gs})_{t \geq 0}$ will denote the two coupled processes.

\subsection{\texorpdfstring{$O(n^{-1/2})$ from Coalescence}{O(1/n\^(\textonehalf)) from Coalescence}}
\label{sub:OSqrtFromCoalesence}
We now show that with arbitrarily high probability, $S_t$ gets $O(n^{-1/2})$-close to $\vq$ in $O(n \log n)$ steps, if initially its distance is at most $\rho$, where $\rho$ is small enough. More precisely,
\begin{lem}
\label{lem:OSqrtFromCoalesence}
Fix $\gb < q/2$. Then for all $r > 0$:
\[
\bbP_{\gs_0} \lb S_{t^{\ga_1}(n)} \notin \cS^{\frac{r}{\sqrt{n}}} \rb
	= O(r^{-1})\,,
\]
uniformly in $\gs_0 \in \gS_n^{\rho_0+}$ where $\rho_0 = \rho_0(\gb, q)$ is defined in Proposition \ref{prop:ContractionForNorm} and $t^{\ga_1}(n)$ is defined in \eqref{eqn:TOfGammaDef}.
\end{lem}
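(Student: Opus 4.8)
The plan is to derive the estimate directly from the second-moment bound of Proposition~\ref{prop:ContractionForNorm} together with Markov's inequality; the only genuine computation is checking that the iterated contraction rate produces exactly the factor $n^{-1}$, which is precisely what pins down the constant $\ga_1$. First I would note that since $\normsup{v} \leq \normII{v}$ for every $v \in \bbR^q$, and $S_t$ always lies in $\cS$, we have the inclusion of events
\[
\big\{ S_{t^{\ga_1}(n)} \notin \cS^{r/\sqrt{n}} \big\}
= \big\{ \normsup{\wh{S}_{t^{\ga_1}(n)}} \geq \tfrac{r}{\sqrt{n}} \big\}
\subseteq \big\{ \normII{\wh{S}_{t^{\ga_1}(n)}} \geq \tfrac{r}{\sqrt{n}} \big\},
\]
so it suffices to bound the probability of the last event by $O(r^{-2})$, which is in particular $O(r^{-1})$.

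Next I would apply Proposition~\ref{prop:ContractionForNorm} with $\rho = \rho_0$ and $t = t^{\ga_1}(n) = \ga_1 n\log n$; this is legitimate for large $n$ since $\gs_0 \in \gS_n^{\rho_0+}$ and $t^{\ga_1}(n) \leq e^{\gga(\rho_0)n}$. The key step is to evaluate $p^{2 t^{\ga_1}(n)}$, where $p = 1 - \frac{1-2\gb/q}{n}$ as in~\eqref{eqn:RhoDefinition}. Using $\log p = -\frac{1-2\gb/q}{n} + O(n^{-2})$ one gets
\[
2\, t^{\ga_1}(n)\, \log p = -2\ga_1\big(1 - \tfrac{2\gb}{q}\big)\log n + O\big(n^{-1}\log n\big) = -\log n + o(1),
\]
where the final identity is exactly the definition $\ga_1 = [2(1-2\gb/q)]^{-1}$. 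Hence $p^{2 t^{\ga_1}(n)} = (1+o(1))\,n^{-1}$, and since $\normII{\wh{s}_0}^2 + C\rho_0^3$ is bounded over $\gs_0 \in \gS_n^{\rho_0+}$, Proposition~\ref{prop:ContractionForNorm} gives
\[
\bbE_{\gs_0} \normII{\wh{S}_{t^{\ga_1}(n)}}^2 = O(n^{-1}) \qquad \text{uniformly in } \gs_0 \in \gS_n^{\rho_0+}.
\]

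Finally, Markov's inequality applied to the nonnegative random variable $\normII{\wh{S}_{t^{\ga_1}(n)}}^2$ yields
\[
\bbP_{\gs_0}\big( \normII{\wh{S}_{t^{\ga_1}(n)}} \geq \tfrac{r}{\sqrt{n}} \big)
\leq \frac{n\, \bbE_{\gs_0} \normII{\wh{S}_{t^{\ga_1}(n)}}^2}{r^2} = O(r^{-2}),
\]
uniformly in $\gs_0 \in \gS_n^{\rho_0+}$, which combined with the first paragraph finishes the proof. I do not expect any real obstacle: all the analytic content is already packaged in Proposition~\ref{prop:ContractionForNorm}, and the lemma is essentially the statement that running the chain for the ``critical'' amount of time $\ga_1 n\log n$ contracts the squared distance to equiproportionality from its $O(1)$ worst-case value down to the equilibrium scale $O(n^{-1})$. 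As an alternative bookkeeping one could split $\bbE_{\gs_0}\normII{\wh{S}_t}^2 = \normII{\bbE_{\gs_0}\wh{S}_t}^2 + \Var_{\gs_0}(S_t)$ and bound the second term by Lemma~\ref{lem:UniformVarBound}, but the second-moment form of Proposition~\ref{prop:ContractionForNorm} makes this unnecessary.
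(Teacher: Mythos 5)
Your proof is correct and follows essentially the same route as the paper: both reduce the lemma to Proposition~\ref{prop:ContractionForNorm} combined with a Markov-type tail bound after checking that $p^{2t^{\ga_1}(n)}=\Theta(n^{-1})$. The only cosmetic difference is that you apply Markov directly to $\normII{\wh{S}_{t^{\ga_1}(n)}}^2$ (giving $O(r^{-2})$, which you then weaken to $O(r^{-1})$), while the paper bounds $\bbE_{\gs_0}\normII{\wh{S}_{t^{\ga_1}(n)}}$ via Cauchy--Schwarz and applies Markov to the first moment, obtaining $O(r^{-1})$ directly; the substance is identical.
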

\begin{proof}
This follows immediately from Proposition~\ref{prop:ContractionForNorm} and a first moment argument:
\begin{align*}
\bbP_{\gs_0} \big( S_{t^{\ga_1}(n)} \notin \cS^{\frac{r}{\sqrt{n}}}\big) &\leq
\bbP_{\gs_0} \big( \normII{\wh{S}_{t^{\ga_1}(n)}} \geq r n^{-\inv{2}}\big) \\
&\leq
\frac{\bbE_{\gs_0} \normII{\wh{S}_{t^{\ga_1}(n)}}}{r n^{-1/2}}\leq \frac{\big(\bbE_{\gs_0}
\normII{\wh{S}_{t^{\ga_1}(n)}}^2\big)^{1/2}}{r n^{-1/2}} = O\big(\tfrac{1}{r}\big) \,.\qedhere
\end{align*}
\end{proof}

\subsection{\texorpdfstring{$O(n^{-1})$ from Coalescence}{O(1/n) from Coalescence}}

To get the correct order of the mixing time it is not sufficient to simply use the drift to couple the chains as the drift is very weak when $S_t$ is close to $\vq$.  As such, in this section we define a different coupling of the dynamics which will bring $\gs_t$ and $\wt{\gs}_t$ to distance $O(n^{-1})$ apart in linear time. This will be achieved one coordinate after the other. We begin by giving a general definition of, what we call, a semi-independent coupling and then use it to define the coupling of the dynamics.

Let $\nu$, $\wt{\nu}$ be two positive distributions on $\gO_m = [1,m]$ and fix  a non-empty $A \subseteq \gO_m$, where $m$ is some positive integer. We shall write $\nu|_A$ for the conditional distribution given $A$, i.e.\
$\nu|_A(x) = \nu(x)/\nu(A)$, for $x \in A$.
The {\it $A$-semi-independent coupling} of $\nu$, $\wt{\nu}$ is a coupling of two random variables $X$ and $\wt{X}$ with underlying measure
$\Pst$, constructed according to the following procedure:
\begin{enumerate}
 	\item
		Choose $U \in [0,1]$ uniformly.
	\item
		If $U \leq \min \lbr \nu(A),\, \wt{\nu}(A) \rbr$, draw $X$ and $\wt{X}$
		using a best coupling of $\lb \nu|_A,\, \wt{\nu}|_A \rb$.
	\item Otherwise, independently:
		\begin{enumerate}
			\item Draw $X$ according to $\nu|_A$  if $U < \nu(A)$ and
					according to $\nu|_{A^c}$ if $U \geq \nu(A)$.
			\item Draw $\wt{X}$ according to $\wt{\nu}|_A$ if $U < \wt{\nu}(A)$ and
					according to $\wt{\nu}|_{A^c}$ if $U \geq \wt{\nu}(A)$.
		\end{enumerate}
\end{enumerate}
Clearly a $\gO_m$-semi-independent coupling is a best coupling and
for $A = \mbox{\O{}}$, we define $\mbox{\O{}}$-semi-independent coupling to be
the standard independent coupling.
The following proposition states a few properties of this coupling, which will be useful for the sequel.
\begin{prop}
\label{prop:SICProps}
The following holds for the $A$-semi-independent coupling of $\lb \nu,\wt{\nu} \rb$:
\begin{enumerate}
	\item
		$X$, $\wt{X}$ are distributed according to $\nu$, $\wt{\nu}$ respectively.
	\item
		\label{item:SICProximityPreservation}
$		\Pst \lb \cup_{x \in A} \{ X = x \} \symdiff \{ \wt{X} = x \} \rb
			\leq \tfrac{3}{2} \sum_{x \in A} | \nu(x) - \wt{\nu}(x) |
$
	\item
		\label{item:SICSemiIndependence1}
$		\forall x \notin A,\  \Pst \lb X = x,\,\wt{X} \neq x \rb
			\geq \nu(x)\wt{\nu}(A^c \setminus \{x\})
$
		and \\
$		\forall x \notin A,\  \Pst \lb \wt{X} = x,\,X \neq x \rb
			\geq \wt{\nu}(x)\nu(A^c \setminus \{x\}).
$
\end{enumerate}
\end{prop}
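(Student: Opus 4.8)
The plan is to read all three properties off the construction directly, organising every computation around the value of the uniform random variable $U$. Write $c=\nu(A)$ and $\wt c=\wt\nu(A)$. All three assertions are invariant under swapping $\nu\leftrightarrow\wt\nu$ (together with $X\leftrightarrow\wt X$), so in parts~(1)--(2) I may assume $c\leq\wt c$, whence $\min\{\nu(A),\wt\nu(A)\}=c$ and $\max\{\nu(A),\wt\nu(A)\}=\wt c$. The one step I expect to require genuine care is extracting the sharp constant $\tfrac32$ in part~(2).

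For the marginals, the key observation is that in \emph{both} branches of the construction $X$ is drawn from $\nu|_A$ on $\{U<c\}$ and from $\nu|_{A^c}$ on $\{U\geq c\}$: in the best-coupling branch one automatically has $U\leq\min\{c,\wt c\}\leq c$, and in the other branch this is exactly the prescribed rule. Since $\Pst(U<c)=c=\nu(A)$ and $\Pst(U\geq c)=\nu(A^c)$, conditioning on $U$ gives $\Pst(X=x)=\nu(A)\,\nu|_A(x)=\nu(x)$ for $x\in A$ and $\Pst(X=x)=\nu(A^c)\,\nu|_{A^c}(x)=\nu(x)$ for $x\notin A$, so $X\sim\nu$; the identical computation with $c$ replaced by $\wt c$ gives $\wt X\sim\wt\nu$. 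In particular $\{X\in A\}=\{U<c\}$ and $\{\wt X\in A\}=\{U<\wt c\}$, which I will reuse below.

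For part~(2) I would split the probability space by $U$ into three regimes and bound the contribution of the event $\bigcup_{x\in A}\bigl(\{X=x\}\symdiff\{\wt X=x\}\bigr)$ in each. On $\{U\leq c\}$ the pair $(X,\wt X)$ is a best coupling of $\nu|_A$ and $\wt\nu|_A$, so both variables lie in $A$ and the event reduces to $\{X\neq\wt X\}$, contributing $c\,\normTV{\nu|_A-\wt\nu|_A}$. On $\{c<U<\wt c\}$ one has $X\sim\nu|_{A^c}$ and $\wt X\sim\wt\nu|_A$, so $X\in A^c$ while $\wt X\in A$ and the event holds surely, contributing $\wt c-c$. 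On $\{U\geq\wt c\}$ both variables lie in $A^c$, so the event fails and this regime contributes nothing. Hence the probability in question is at most $c\,\normTV{\nu|_A-\wt\nu|_A}+(\wt c-c)$, and it remains to bound this by $\tfrac32\sum_{x\in A}\labs\nu(x)-\wt\nu(x)\rabs$. For this I would use $\wt c-c=\sum_{x\in A}(\wt\nu(x)-\nu(x))\leq\sum_{x\in A}\labs\nu(x)-\wt\nu(x)\rabs$ together with the identity $c\,\normTV{\nu|_A-\wt\nu|_A}=\sum_{x\in A}\bigl(\nu(x)-\tfrac{c}{\wt c}\wt\nu(x)\bigr)^{+}$ and the splitting $\nu(x)-\tfrac{c}{\wt c}\wt\nu(x)=\bigl(\nu(x)-\wt\nu(x)\bigr)+\bigl(1-\tfrac{c}{\wt c}\bigr)\wt\nu(x)$, and then combine the resulting estimates. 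Verifying that this bookkeeping produces the constant $\tfrac32$, rather than some cruder absolute constant, is the main obstacle of the proposition.

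For part~(3), fix $x\notin A$ and restrict attention to the sub-event $E:=\{U\geq\max\{\nu(A),\wt\nu(A)\}\}$, on which $X\sim\nu|_{A^c}$ and $\wt X\sim\wt\nu|_{A^c}$ are drawn independently and which has probability $\Pst(E)=1-\max\{\nu(A),\wt\nu(A)\}=\min\{\nu(A^c),\wt\nu(A^c)\}$. Then
\[
\Pst\lb X=x,\ \wt X\neq x\rb
\ \geq\ \Pst(E)\,\nu|_{A^c}(x)\,\wt\nu|_{A^c}\bigl(A^c\setminus\{x\}\bigr)
\ =\ \frac{\min\{\nu(A^c),\wt\nu(A^c)\}}{\nu(A^c)\,\wt\nu(A^c)}\;\nu(x)\,\wt\nu\bigl(A^c\setminus\{x\}\bigr)
\ \geq\ \nu(x)\,\wt\nu\bigl(A^c\setminus\{x\}\bigr),
\]
where the last step uses $\min\{a,b\}/(ab)=1/\max\{a,b\}\geq1$ for $a,b\in(0,1]$. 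The second inequality of part~(3) follows by the same argument with $\nu$ and $\wt\nu$ (and $X$ and $\wt X$) interchanged.
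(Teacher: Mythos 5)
You follow essentially the paper's own route --- conditioning on the auxiliary uniform $U$ and splitting into the three regimes $\{U\leq\nu(A)\minwith\wt\nu(A)\}$, $\{\nu(A)\minwith\wt\nu(A)<U<\nu(A)\maxwith\wt\nu(A)\}$, $\{U\geq\nu(A)\maxwith\wt\nu(A)\}$ --- and parts~(1) and~(3) are complete and correct. For part~(2) you arrive at exactly the paper's intermediate expression, namely that the probability in question equals $(\nu(A)\minwith\wt\nu(A))\,\normTV{\nu|_A-\wt\nu|_A}+|\nu(A)-\wt\nu(A)|$, and you honestly flag that you have not checked the bookkeeping delivering the constant $\tfrac32$.

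Your caution was warranted: that final inequality, which the paper asserts without further argument, actually fails. Assume WLOG $c:=\nu(A)\leq\wt\nu(A)=:\wt c$ and set $P=\sum_{x\in A}(\nu(x)-\wt\nu(x))^+$, $Q=\sum_{x\in A}(\wt\nu(x)-\nu(x))^+$, so that $\wt c-c=Q-P$ and $\sum_{x\in A}|\nu(x)-\wt\nu(x)|=P+Q$. Since $c/\wt c\leq 1$,
\[
c\,\normTV{\nu|_A-\wt\nu|_A}=\sum_{x\in A}\Big(\tfrac{c}{\wt c}\,\wt\nu(x)-\nu(x)\Big)^{+}\;\leq\;\sum_{x\in A}\big(\wt\nu(x)-\nu(x)\big)^{+}=Q\,,
\]
so the probability is at most $Q+(Q-P)=2Q-P\leq 2(P+Q)$, i.e.\ the correct constant is $2$, not $\tfrac32$, and $2$ is sharp. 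For instance, with $A=\{1,2\}$, $\nu(2)=\wt\nu(2)$, $\wt\nu(1)=\nu(1)+\gep$, and $\nu(1)$, $\gep$, $\nu(A^c)$ all small, one has $P=0$, $Q=\wt c-c=\gep$ and $c\,\normTV{\nu|_A-\wt\nu|_A}\to\gep$, so the ratio to $\sum_{x\in A}|\nu(x)-\wt\nu(x)|=\gep$ approaches $2$. Part~(2) should therefore carry the constant $2$; this is harmless downstream, since Lemma~\ref{lem:SICBounds} only uses the bound up to an absolute constant, but the decomposition you proposed cannot (and no correct argument can) recover $\tfrac32$.
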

\begin{proof}
Part one of the Lemma is immediate. Part \eqref{item:SICProximityPreservation}
follows from a straight forward calculation:
\begin{eqnarray*}
\Pst \lb \cup_{x \in A} \{ X = x \} \symdiff \{ \wt{X} = x \} \rb
	& \leq	& \Pst \lb U \leq \nu(A) \minwith \wt{\nu}(A) \rb \Pst \lb X \neq \wt{X} \labs
					U \leq \nu(A) \minwith \wt{\nu}(A) \rpr \rb + \\
	& 		& 	\quad \Pst \lb \nu(A) \minwith \wt{\nu}(A) < U < \nu(A) \maxwith \wt{\nu}(A) \rb \\
	& \leq 	& (\nu(A) \minwith \wt{\nu}(A)) \inv{2} \sum_{x \in A} \labs \lb \nu|_A(x) - \wt{\nu}|_A(x) \rb \rabs
		+ \labs \nu(A) - \wt{\nu}(A) \rabs	\\
	& \leq	& \tfrac{3}{2} \sum_{x \in A} \labs \nu(x) - \wt{\nu}(x) \rabs.
\end{eqnarray*}
As for part \eqref{item:SICSemiIndependence1}, we have:
\begin{eqnarray*}
\Pst \lb \{ X = x \} \setminus \{ \wt{X} = x \} \rb
	& \geq	& \Pst \lb U > \nu(A) \maxwith \wt{\nu}(A) \rb \nu|_{A^c}(x)(1-\wt{\nu}|_{A^c}(x)) \\
	& \geq 	& \nu(x) \wt{\nu}(A^c \setminus \{x\})
\end{eqnarray*}
and similarly for $\Pst(\wt{X}=x, X \neq x)$.
\end{proof}

We are now ready to define the coupling of $\gs_t$, $\wt{\gs}_t$ for this section. Fix $\gs_0, \wt{\gs}_0 \in \gS_n$ and $y_1, \dots y_{q-1} > 0$.
The {\it coordinate-wise coupling} with parameters $y_1, \dots, y_{q-1}$ and starting configurations $\gs_0$, $\wt{\gs}_0$ is defined as follows.
\begin{enumerate}
	\item Set $T^{(0)} = 0$, $k=1$.
		\label{item:CCStep1}
	\item As long as $k \leq q-1$:
		\begin{enumerate}						
		\item As long as $\labs S^k_t - \wt{S}^k_t \rabs > \frac{y_k}{n}$:
			\begin{enumerate}
				\item Draw ${I_{t+1}}$, $\wt{I}_{t+1}$, using
						a $\{1, \dots, k-1\}$-semi-independent coupling of
						$S_t$, $\wt{S}_t$.
				\item Draw ${J_{t+1}}$, $\wt{J}_{t+1}$, using
						a $\{1, \dots, k-1\}$-semi-independent coupling of 							
						$g_{\gb} \lb S_t - \frac{1}{n} \sfe_{I_{t+1}} \rb$,
						$g_{\gb} \lb \wt{S}_t - \frac{1}{n} \sfe_{\wt{I}_{t+1}} \rb$.
         	\item Change a uniformly chosen vertex of color $I_{t+1}$ in
                 $\gs_t$ to have color $J_{t+1}$ in $\gs_{t+1}$.
            \item Change a uniformly chosen vertex of color $\wt{I}_{t+1}$ in
			    $\wt{\gs}_t$ to have color $\wt{J}_{t+1}$ in $\wt{\gs}_{t+1}$.
				\item Set $t = t+1$.
			\end{enumerate}
		\item When $\labs S^k_t - \wt{S}^k_t \rabs \leq \frac{y_k}{n}$ set $T^{(k)} = t - T^{({k-1})}$ and $k = k+1$.
		\end{enumerate}
	\item Set $T^{CC} = \sum_{k=1}^{q-1} T^{(k)}$.
\end{enumerate}
We shall use $\bbP^{CC}_{\gs_0, \wt{\gs}_0}$ to denote the probability measure for this coupling and $\bbP^{CC(m)}_{\gs_0, \wt{\gs}_0}$ for the same coupling, only with $k=m$ instead of $k=1$ in step \eqref{item:CCStep1}, i.e.\ starting from the $m$-th stage. Notice that, in principle, the stopping condition at stage $k$, may never get satisfied,
in which case we stay at that stage forever and $T^{(k)} = T^{CC} = \infty$.

For $u,r > 0$, define
\[
\cH^k_{u,r} = \lbr (\gs,\wt{\gs}) \in \gS_n \times \gS_n : \:
	\normI{s^{[1,k]} - \wt{s}^{[1,k]}} < \frac{u}{n},	\\
	\max \lb \normII{s - \vq}, \normII{\wt{s} - \vq} \rb < \frac{r}{\sqrt{n}}
\rbr\,.
\]
where above $(s,\wt{s}) = (S(\gs), S(\wt{\gs}))$. Finally, set $\cH_{u,r} \bydef \cH_{u,r}^q$.
The following lemma will be the main ingredient in an inductive proof for an upper bound on $T^{CC}$:
\begin{lem}
\label{lem:SICBounds}
Fix $\gb < q/2$. Let $k \in [1,q-1]$. For all $u_{k-1}, r_{k-1}, \gep > 0$,
there exist $y_k, u_k, r_k, \gga_k > 0$, such that if
$(\gs_0, \wt{\gs}_0) \in \cH^k_{u_{k-1}, r_{k-1}}$
then
\begin{equation}
\label{eqn:SICLemResult}
\bbP^{CC(k)}_{\gs_0, \wt{\gs}_0} \lb T^{(k)} < \gga_k n ,\, (\gs_{T^{(k)}}, \wt{\gs}_{T^{(k)}}) \in \cH^k_{u_k, r_k} \rb
		\geq 1-\gep\,.
\end{equation}
\end{lem}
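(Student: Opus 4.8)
The plan is to follow the coordinate‑$k$ discrepancy $D_t\bydef S^k_t-\wt S^k_t$ under $\bbP^{CC(k)}_{\gs_0,\wt\gs_0}$ and show that it is driven to within $y_k/n$ of $0$ in a linear number of steps, by combining a drift of $D_t$ toward $0$ with a genuinely diffusive fluctuation coming from the semi‑independent part of the coupling, while keeping both chains $O(1/\sqrt n)$‑close to $\vq$ and keeping the already‑matched coordinates $[1,k-1]$ from drifting apart. All estimates are made on the event $E_1$ that both $\gs_t$ and $\wt\gs_t$ stay in $\gS_n^{(r_k/\sqrt n)+}$ for every $t\le\gga_k n$. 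Since the marginal law of each of $\gs_t,\wt\gs_t$ is that of ordinary Glauber dynamics (Proposition~\ref{prop:SICProps}, part (1)) and $\gb<q/2$, Proposition~\ref{clm:StayInRho}, part~\eqref{item:ClmStayInRho_2}, lets us first fix a constant $\gga_k$ and then take $r_k$ large enough that $\bbP(E_1^c)\le\gep/3$ and, on $E_1$, $\normsup{\whS_t},\normsup{\wh{\wt S}_t}=O(r_k/\sqrt n)$, close enough to $\vq$ for the linearizations below to be valid.

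On $E_1$ and while $|D_t|>y_k/n$, I would establish two one‑step estimates. For the drift, the coordinate‑$k$ analogue of \eqref{eqn:OneCoordinateDrift} gives $\bbE[D_{t+1}-D_t\mid\cF_t]=\tfrac1n\bigl(-D_t+g^k_\gb(S_t)-g^k_\gb(\wt S_t)\bigr)+O(n^{-2})$; expanding $g_\gb$ about $\vq$ and using the identity $\sum_j(S^j_t-\wt S^j_t)=0$ yields $g^k_\gb(S_t)-g^k_\gb(\wt S_t)=\tfrac{2\gb}{q}D_t+O\bigl((\normsup{\whS_t}+\normsup{\wh{\wt S}_t})\normI{S_t-\wt S_t}\bigr)$, hence $\bbE[D_{t+1}-D_t\mid\cF_t]=-\tfrac{1-2\gb/q}{n}D_t+O(n^{-2})$ with $1-2\gb/q>0$. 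Taking $y_k$ to be a large enough constant — large enough that the sign of $D_t$ cannot change in one step (as $|D_{t+1}-D_t|\le2/n$) and that $\tfrac{1-2\gb/q}{n}\cdot\tfrac{y_k}{n}$ dominates the $O(n^{-2})$ error — it follows that $Z_t\bydef|D_{t\minwith T^{(k)}}|$ is a non‑negative supermartingale. For the lower bound on the fluctuation, Proposition~\ref{prop:SICProps}, part~\eqref{item:SICSemiIndependence1}, applied to the $\{1,\dots,k-1\}$‑semi‑independent coupling of $S_t,\wt S_t$ gives $\Pst(I_{t+1}=k,\wt I_{t+1}\neq k)\ge S^k_t\sum_{j>k}\wt S^j_t$, which on $E_1$ is bounded below by a positive constant; conditionally on that event, the further semi‑independent draw keeps $J_{t+1},\wt J_{t+1}$ both different from $k$ with probability at least $1-2/q-\smallo(1)>0$ (this is where $q\ge3$ enters), and on the resulting event $D_{t+1}=D_t-1/n$, while on the mirror event $D_{t+1}=D_t+1/n$, so $\Var(D_{t+1}\mid\cF_t)\ge c_1/n^2$ while $|D_t|>y_k/n$. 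Applying Lemma~\ref{lem:DiffusionHittingTime} to $Z_t$ with $N=T^{(k)}$, $B=2/n$, $\gs^2=c_1/n^2$ and $z_0=|D_0|=O(n^{-1/2})$ gives $\bbP(T^{(k)}>u)\le 4z_0/(\gs\sqrt u)$ once $u$ exceeds an absolute constant, so enlarging $\gga_k$ (still a constant) we obtain $\bbP(T^{(k)}>\gga_k n\mid E_1)\le\gep/3$.

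It remains to check that coordinates $[1,k-1]$ stay matched to $O(1/n)$. Let $V_t\bydef\normI{(S_t-\wt S_t)^{[1,k-1]}}$, so $V_0<u_{k-1}/n$. These coordinates evolve identically in the two chains at any step at which neither of the two semi‑independent draws fails on $\{1,\dots,k-1\}$, and by Proposition~\ref{prop:SICProps}, part~\eqref{item:SICProximityPreservation}, the conditional probability of such a failure at step $t+1$ is at most $\tfrac32V_t+\tfrac32\normI{(g_\gb(S_t-\tfrac1n\sfe_{I_{t+1}})-g_\gb(\wt S_t-\tfrac1n\sfe_{\wt I_{t+1}}))^{[1,k-1]}}$; the same cancellation $\sum_j(S^j_t-\wt S^j_t)=0$ makes the second term $\tfrac{2\gb}{q}V_t+O(1/n)$. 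Thus $V_{t+1}=V_t$ off an event of conditional probability $\le CV_t+O(1/n)$, on which $V_{t+1}-V_t\le4/n$, so $\bbE[V_{t+1}-V_t\mid\cF_t]\le\tfrac{C'}{n}V_t+O(n^{-2})$; a Doob maximal‑inequality argument applied to $e^{-C't/n}V_t$ then gives, for $n$ large, $\bbP\bigl(\max_{t\le\gga_k n}V_t>u_k'/n\bigr)\le\gep/3$ for a suitable constant $u_k'$ depending on $u_{k-1},r_{k-1},\gep,\gb,q$. Setting $u_k\bydef u_k'+y_k+1$ and intersecting the three good events (total probability $\ge1-\gep$): on the intersection $T^{(k)}<\gga_k n$, the stopping rule forces $|D_{T^{(k)}}|\le y_k/n$, hence $\normI{(S_{T^{(k)}}-\wt S_{T^{(k)}})^{[1,k]}}\le V_{T^{(k)}}+|D_{T^{(k)}}|<u_k/n$, while $E_1$ gives $\normsup{\whS_{T^{(k)}}},\normsup{\wh{\wt S}_{T^{(k)}}}<r_k/\sqrt n$; that is, $(\gs_{T^{(k)}},\wt\gs_{T^{(k)}})\in\cH^k_{u_k,r_k}$, as required.

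I expect the delicate point to be this last step. The coordinate $k$ being equalized is itself only $O(n^{-1/2})$‑close at the start of the stage, so a crude estimate would have the matched coordinates $[1,k-1]$ leaking apart at a per‑step rate $\gtrsim|D_t|=O(n^{-1/2})$, which over $\gga_k n$ steps would let $V_t$ grow to $O(n^{-1/2})$ and destroy the inductive $O(1/n)$ control. The exact identity $\sum_j(S^j_t-\wt S^j_t)=0$ is what rescues the argument: it forces the first‑order variation of $g^x_\gb$ for $x\in[1,k-1]$ to depend only on $S^x_t-\wt S^x_t$ and not on the large discrepancy $D_t$, so the leakage rate is proportional to $V_t$ up to genuinely negligible $O(1/n)$ corrections. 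A secondary nuisance is that near the stopping threshold the drift of $D_t$ is only $O(n^{-2})$, which is precisely why $y_k$ must be a sufficiently large constant for $|D_{t\minwith T^{(k)}}|$ to stay a supermartingale all the way down.
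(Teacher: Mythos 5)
Your proposal tracks the paper's own proof essentially step for step: the $O(n^{-2})$-accurate drift estimate for $W^k_t=S^k_t-\wt S^k_t$ coming from the cancellation $\sum_j W^j_t=0$, the supermartingale $|W^k|$ combined with the variance lower bound from Proposition~\ref{prop:SICProps}~\eqref{item:SICSemiIndependence1} fed into Lemma~\ref{lem:DiffusionHittingTime}, Proposition~\ref{clm:StayInRho}~\eqref{item:ClmStayInRho_2} to keep both chains in a $O(r_k/\sqrt n)$ ball, and the leakage control for coordinates $[1,k-1]$ via Proposition~\ref{prop:SICProps}~\eqref{item:SICProximityPreservation}. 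The one thing to tidy up is your handling of the good event: rather than working ``on $E_1$'' (a non-adapted, whole-trajectory event) and enlarging $\gga_k$ after $r_k$ is fixed, do as the paper does and stop the process at $\tau^{(k)}\bydef\inf\{t:\normII{\whS_t}\vee\normII{\wh{\wt S}_t}\ge r_k/\sqrt n\}$, so that $|W^k_{t\wedge\tau^{(k)}\wedge T^{(k)}}|$ is a bona fide nonnegative supermartingale and Lemma~\ref{lem:DiffusionHittingTime} applies cleanly with $N=T^{(k)}\wedge\tau^{(k)}$ — and since the resulting variance constant is independent of $r_k,y_k$, choose $\gga_k=\gga_k(r_{k-1},\gep)$ first, then $r_k$, then $y_k$, avoiding the circularity.
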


\begin{proof}
Recall the expression for the drift of one coordinate \eqref{eqn:OneCoordinateDrift}.
Near $\vq$, this becomes by Taylor expansion for any $i \in [1,q]$:
\[
\bbE^{CC(k)}_{\gs_0, \wt{\gs}_0} \lsb \lpr S^i_{t+1} - S^i_t \rabs \cF_t \rsb
 = 	\inv{n} \lbr - \lb 1 - \tfrac{2\gb}{q} \rb \wh{S}^i_t + O \lb \normII{\wh{S}_t}^2 \rb \rbr +
			O \lb n^{-2} \rb\,.
\]
and if $W_t = S_t - \wt{S}_t$ then
\begin{equation}
\label{eqn:OneCoordDiffDrift}
\bbE^{CC(k)}_{\gs_0, \wt{\gs}_0} [ W^k_{t+1} - W^k_t \mid S_t, \wt{S}_t ]	
	= \inv{n} \lbr - \lb 1 - \tfrac{2\gb}{q} \rb W^k_t +
				O \lb \normII{S_t - \vq}^2 + \normII{\wt{S}_t - \vq}^2 \rb \rbr +
				O \lb \tfrac{1}{n^2} \rb\,.
\end{equation}
Now, for some $r_k > 0$ to be chosen later,
let $\tau^{(k)} = \inf \lbr t :\: \normII{S_t - \vq} \maxwith \normII{\wt{S}_t - \vq} \geq \frac{r_k}{\sqrt{n}} \rbr$.
Then, from \eqref{eqn:OneCoordDiffDrift} it follows that there exists $y_k>0$ such that
$\ol{W}^k_t \bydef \labs W^k_{t \minwith \tau^{(k)} \minwith T^{(k)}} \rabs$
is a supermartingale.
Clearly $\labs \ol{W}^k_{t+1} - \ol{W}^k_{t} \rabs <
\frac{2}{n}$. Also, from Proposition \ref{prop:SICProps}, if $t < \tau^{(k)} \minwith T^{(k)}$:
\begin{eqnarray*}
	\bbP^{CC(k)}_{\gs_0, \wt{\gs}_0} \lb \lpr \ol{W}^k_{t+1} \neq \ol{W}^k_t \rabs \cF_t \rb
		& \geq	& \bbP^{CC(k)}_{\gs_0, \wt{\gs}_0} \lb \lpr {I_{t+1}} = k,\, {J_{t+1}} \neq k ,\, \wt{I}_{t+1} \neq k
					\rabs \cF_t \rb \\
		& \geq	& S^k_t \wt{S}^{k+1}_t \lb 1- g_{\gb}^k \lb S_t - \tfrac{1}{n} \sfe_k \rb \rb \\
		& = 	& \frac{q-1}{q^3} + O \lb n^{-\inv{2}} \rb\,,
\end{eqnarray*}
which implies that
$\bbE^{CC(k)}_{\gs_0, \wt{\gs}_0} \lsb \lpr \lb \ol{W}^k_{t+1} - \ol{W}^k_t \rb^2 \rabs \cF_t
 \rsb \geq \inv{2} q^{-2}n^{-2} + O \lb n^{-5/2} \rb$.
On the other hand, in view of~\eqref{eqn:OneCoordDiffDrift},
$\bbE^{CC(k)}_{\gs_0, \wt{\gs}_0} \lsb \lpr \ol{W}^k_{t+1} - \ol{W}^k_t \rabs \cF_t \rsb =
	O(n^{-3/2})$.
Combining the two bounds, we infer that there exists $C > 0$, which doesn't
depend on $r_k$ or $y_k$, such that
on $\lbr t < \tau^{(k)} \minwith T^{(k)} \rbr$
\begin{equation}
\label{eqn:SICVarLowerBound}
\Var^{CC(k)}_{\gs_0, \wt{\gs}_0} \lb \lpr \ol{W}^k_{t+1} \rabs \cF_t \rb \geq C n^{-2},
\end{equation}
for $n$ sufficiently large.
We now apply Lemma \ref{lem:DiffusionHittingTime} with
$Z_t = \ol{W}^k_t$, $z_0 = \frac{2 r_{k-1}}{\sqrt{n}}$ and $N = \tau^{(k)} \minwith T^{(k)}$. This gives for $\gga_k > 0$:
\begin{eqnarray*}
\bbP^{CC(k)}_{\gs_0, \wt{\gs}_0} \lb T^{(k)} \minwith \tau^{(k)} > \gga_k n \rb
	& \leq	& \frac{C r_{k-1}}{\sqrt{\gga_k}}\,,
\end{eqnarray*}
whence we may choose $\gga_k = \gga_k(r_{k-1}, \gep)$
independently of $r_k$, $y_k$ but sufficiently large, such that
\begin{equation}
\label{eqn:SICLemBound1}
\bbP^{CC(k)}_{\gs_0, \wt{\gs}_0} \lb T^{(k)} \minwith \tau^{(k)} > \gga_k n \rb
	\leq	\frac{\gep}{3} .
\end{equation}
This gives an upper bound on $T^{(k)}$, since by Proposition~\ref{clm:StayInRho} Part~\eqref{item:ClmStayInRho_2}
we may choose $r_k$ large enough such that:
\begin{equation}
\label{eqn:SICLemBound2}
\bbP^{CC(k)}_{\gs_0, \wt{\gs}_0} \lb \tau^{(k)} < \gga_k n \rb \leq \frac{\gep}{3}\,.
\end{equation}

It remains to ensure that we do not increase the distances in the first
$k-1$ coordinates, by too much. Proposition~\ref{prop:SICProps} implies that for any
$t$:
\begin{eqnarray*}
\lefteqn{\bbP^{CC(k)}_{\gs_0, \wt{\gs}_0} \lpr \lb
			\ol{W}^{[1,k-1]}_{t+1} \neq \ol{W}^{[1,k-1]}_t \rabs \cF_t \rb} \\
	& 	\leq & 	
\sum_{l \leq k-1}
\bbP^{CC(k)}_{\gs_0, \wt{\gs}_0} \lb \lpr
					\lb \lbr {I_{t+1}}=l \rbr \symdiff \lbr \wt{I}_{t+1}=l \rbr \rb \cup
					\lb \lbr {J_{t+1}}=l \rbr \symdiff \lbr \wt{J}_{t+1}=l \rbr \rb
				\rabs \cF_t \rb \\
	& \leq	& 	\frac{3}{2} \sum_{l \leq k-1} \lb
					|\ol{W}_t^l| +
					|g_{\gb}^l (S_t) - g_{\gb}^l (\wt{S}_t)| \rb + O (n^{-1}) \\
    & \leq  &   C \normI{\ol{W}_t^{[1, k-1]}} + r_k^2 O \lb  n^{-1} \rb .\\
\end{eqnarray*}
It follows that
\begin{eqnarray*}\
\bbE^{CC(k)}_{\gs_0, \wt{\gs}_0} \lsb \lpr \normI{\ol{W}_{t+1}^{[1, k-1]}} \rabs \cF_t \rsb
	& \leq		& \normI{\ol{W}_t^{[1, k-1]}} + \frac{C_1}{n} \normI{\ol{W}_t^{[1, k-1]}} + r_k^2 O \lb  n^{-2} \rb \\
	& \leq 		& \normI{\ol{W}_t^{[1, k-1]}} \lb 1 + \frac{C_1}{n} \rb + r_k^2 O \lb n^{-2} \rb.
\end{eqnarray*}
Taking expectation of both sides and using the assumption on $\normI{\ol{W}_0^{[1, k-1]}}$,
we have
\[
\bbE^{CC(k)}_{\gs_0, \wt{\gs}_0} \normI{\ol{W}_{\gga_k n}^{[1, k-1]}}
	\leq	\normI{\ol{W}_0^{[1, k-1]}} \lb 1 + \frac{C_1}{n} \rb^{\gga_k n}
	\leq 	C_1 \frac{u_{k-1}}{n} e^{C_2 \gga_k}.
\]
Hence by Markov's inequality, there exists $u_k > 0$ such that
\begin{equation*}
\bbP^{CC(k)}_{\gs_0, \wt{\gs}_0} \lb \normI{\ol{W}_{\gga_k n}^{[1, k-1]}} > \frac{u_k-y_k}{n} \rb
	\leq \frac{\gep}{3} .
\end{equation*}

Combined with \eqref{eqn:SICLemBound1} and \eqref{eqn:SICLemBound2}, the proof is complete.
\end{proof}

\begin{cor}
\label{cor:SICAnalysis}
Fix $\gb < q/2$. For any $\gep, r > 0$, there
exist $\gga, u, r^{\prime} > 0$ and $y_1, \dots y_{q-1} > 0$ such that
for $\gs_0, \wt{\gs}_0 \in \gS^{\frac{r}{\sqrt{n}}}_n$.
\begin{equation*}
\bbP^{CC}_{\gs_0, \wt{\gs}_0} \lb
			T^{CC} < \gga n ,\, (\gs_{T^{CC}},\wt{\gs}_{T^{CC}}) \in \cH_{u, r^{\prime}}
	\rb \geq 1-\gep.
\end{equation*}
\end{cor}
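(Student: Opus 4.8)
The plan is to derive Corollary~\ref{cor:SICAnalysis} by running the $q-1$ stages of the coordinate-wise coupling one after another, applying Lemma~\ref{lem:SICBounds} to each stage and then chaining the per-stage estimates by the Markov property and a union bound. Fix $\gep,r>0$ and set $\gep':=\gep/(q-1)$. For the base case note that if $\gs_0,\wt\gs_0\in\gS^{r/\sqrt n}_n$ then $\normsup{\wh s_0}\vee\normsup{\wh{\wt s}_0}<r/\sqrt n$, so $\normII{s_0-\vq}\vee\normII{\wt s_0-\vq}<\sqrt q\,r/\sqrt n=:r_0/\sqrt n$, and the constraint on the first zero coordinates is vacuous; thus the pair lies in $\cH^0_{u_0,r_0}$ for any $u_0>0$, i.e.\ in the input set of the first stage.

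Next I would iterate: for $k=1,\dots,q-1$ in order, apply Lemma~\ref{lem:SICBounds} with $\gep'$ in place of $\gep$ and with the parameters $(u_{k-1},r_{k-1})$ handed down by the previous step (using $r_0$ and any $u_0$ at $k=1$), obtaining $y_k,u_k,r_k,\gga_k>0$ such that from a pair in the stage-$k$ input set one reaches $\cH^k_{u_k,r_k}$ within $\gga_k n$ steps except with probability at most $\gep'$. The output set $\cH^k_{u_k,r_k}$ of stage $k$ is, after at most an $n$-independent inflation of the coordinate parameter, contained in the input set of stage $k+1$: the $(k+1)$-st coordinate gap of a pair in $\cH^k_{u_k,r_k}$ is at most $2r_k/\sqrt n$, which on the $1/n$ scale is absorbed by enlarging $u_k$, and this is harmless precisely because the diffusion-hitting estimate inside Lemma~\ref{lem:SICBounds} keeps each $u_k$ of order $O(1)$. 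After the last stage $\normI{s^{[1,q-1]}-\wt s^{[1,q-1]}}<u_{q-1}/n$, and since $s^q=1-\sum_{i<q}s^i$ this forces $|s^q-\wt s^q|<u_{q-1}/n$ as well, so the pair lies in $\cH_{2u_{q-1},r_{q-1}}=\cH^q_{2u_{q-1},r_{q-1}}$. I would then set $u:=2u_{q-1}$, $r':=r_{q-1}$, $\gga:=\sum_{k=1}^{q-1}\gga_k$, keep $y_1,\dots,y_{q-1}$ from the lemma, and observe that because $T^{CC}=\sum_{k=1}^{q-1}T^{(k)}$, on the intersection of the $q-1$ stage-events we have $T^{CC}<\gga n$ and $(\gs_{T^{CC}},\wt\gs_{T^{CC}})\in\cH_{u,r'}$.

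To make the composition rigorous I would condition successively on $\cF_{T^{(1)}+\cdots+T^{(k-1)}}$: on the event that stages $1,\dots,k-1$ all succeeded, the pair at that time lies in the stage-$k$ input set, the continuation has law $\bbP^{CC(k)}$ started from that pair by the Markov property, and Lemma~\ref{lem:SICBounds} bounds the stage-$k$ failure by $\gep'$; a union bound over $k=1,\dots,q-1$ then gives the claimed $1-\gep$. The main point to watch is the bookkeeping rather than any genuine difficulty: all the analytic content — steering a single coordinate from distance $O(n^{-1/2})$ down to $O(n^{-1})$ through a bounded supermartingale with $\Theta(n^{-2})$ conditional variance, while the already-equalized coordinates drift apart by only $O(n^{-1})$ over linear time — is packaged inside Lemma~\ref{lem:SICBounds}, which we take as given, and the only care needed in the corollary is the order of the quantifier choices (each of $y_k,u_k,r_k,\gga_k$ may depend on $r_{k-1},u_{k-1},\gep$ but not on later stages) together with the verification that passing between the $\cH^{k-1}$- and $\cH^k$-families never costs an $n$-dependent factor.
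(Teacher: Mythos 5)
Your overall route is the paper's: apply Lemma~\ref{lem:SICBounds} stage by stage, compose the stages via the Markov property at the times $T^{(1)}+\cdots+T^{(k)}$, take a union bound with $\gep/(q-1)$ per stage, and conclude with $\gga=\sum_{k}\gga_k$, $r'=r_{q-1}$ and $u=2u_{q-1}$ (the factor $2$ coming from $s^q=1-\sum_{i<q}s^i$), which is exactly how the paper argues. The one step that is wrong as written is your bridge between consecutive stages: you claim that the $(k+1)$-st coordinate gap of a pair in $\cH^k_{u_k,r_k}$, which is only bounded by $2r_k/\sqrt{n}$, ``is absorbed by enlarging $u_k$'' by an $n$-independent inflation. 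It cannot be: $2r_k/\sqrt{n}$ lives at scale $n^{-1/2}$ while the coordinate constraint in $\cH^{k+1}_{u,r}$ lives at scale $u/n$, so absorbing it would force $u\geq 2r_k\sqrt{n}$, an $n$-dependent choice that would destroy the linear-time estimate (the bound from Lemma~\ref{lem:DiffusionHittingTime} degrades with the starting distance, and the whole point of the corollary is that $u$ stays of order one).

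The reason the induction nevertheless closes --- and the way the paper uses its own lemma --- is that at stage $k+1$ the hypothesis actually exploited in the proof of Lemma~\ref{lem:SICBounds} does not require the $(k+1)$-st coordinate to be close at scale $1/n$: the proof only uses $\normI{\ol{W}_0^{[1,k]}}\leq u_k/n$ for the already-coupled coordinates together with the $\ell^2$-ball condition, which controls the new coordinate's gap by $2r_k/\sqrt{n}$ and enters Lemma~\ref{lem:DiffusionHittingTime} as the starting value $z_0$. In other words, the effective input set for stage $k+1$ is $\cH^{k}_{u_k,r_k}$ itself (the superscript in the lemma's hypothesis should be read one lower, consistently with your own base case, where you correctly treat the coordinate constraint as vacuous), so the output of stage $k$ is verbatim the input of stage $k+1$ and no absorption of a $\sqrt{n}$-scale quantity is needed or possible. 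With that correction the rest of your bookkeeping (conditioning at the stage boundaries, uniformity of the lemma over starting pairs, the final inclusion $\cH^{q-1}$-control plus the determined last coordinate giving $\cH_{2u_{q-1},r_{q-1}}$) is sound and coincides with the paper's proof.
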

\begin{proof}
Starting from $r_0 = u_0 = r$ and applying Lemma~\ref{lem:SICBounds} inductively, we obtain for some $(y_k, u_k, r_k, \gga_k)_{k \in [1,q-1]}$:
\[
\bbP^{CC}_{\gs_0, \wt{\gs}_0} (T^{(k)} < \gga_k n, (\gs_{T^{(k)}},\wt{\gs}_{T^{(k)}}) \in \cH^k_{u_k, r_k} \ \forall k \in [1,q-1])
    \geq 1- \gep\,.
\]
It remains to set
$\gga = \sum_{k=1}^{q-1} \gga_k$, $r^{\prime} = r_{q-1}$
and $u = 2 u_{q-1}$.
\end{proof}

\subsection{Coalescence of Proportions Vector Chains}

The next lemma completes the coupling of the proportions chains.
\begin{lem}
\label{lem:CoalescenceOfS}
Fix $\gb < q/2$. For all $r,u,\gep > 0$ there exists $\gga > 0$ such that
if $\gs_0, \wt{\gs}_0 \in \gS_n$ satisfy $(\gs_0, \wt{\gs}_0) \in \cH_{u, r}$
and $t \geq \gga n$, then
\[
\bbP^{SC}_{\gs_0, \wt{\gs}_0} \lb S_{t} = \wt{S}_{t} \rb \geq 1-\gep\,.
\]
where under $\bbP^{SC}_{\gs_0, \wt{\gs}_0}$, the processes $(\gs_t)_{t \geq 0}$, $(\wt{\gs}_t)_{t \geq 0}$ evolve according to the synchronized coupling, as defined in Subsection \ref{sub:SynchronizedCoupling}.
\end{lem}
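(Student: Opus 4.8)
The plan is to run the synchronized coupling and track the $\ell_1$-distance $\normI{S_t-\wt S_t}$ of the two proportions vectors: by Lemma~\ref{lem:SyncCouplingCoalesence} this distance contracts multiplicatively at rate $1-\Theta(1/n)$ per step, so over linear time it drops below $1/n$, which is the resolution of the lattice $\cS_n$, forcing $S_t=\wt S_t$. The one nuisance is that Lemma~\ref{lem:SyncCouplingCoalesence} is stated for the $\rho$-bounded synchronized coupling, so I first confine the chains to a small neighborhood of $\vq$ using Proposition~\ref{clm:StayInRho}.

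\emph{Confinement.} Since $(\gs_0,\wt\gs_0)\in\cH_{u,r}$ we have $\normsup{\wh s_0}\leq\normII{\wh s_0}<r/\sqrt n$ and likewise for $\wt s_0$, so $\gs_0,\wt\gs_0\in\gS_n^{\rho_0+}$ with $\rho_0=r/\sqrt n$. Fix $\gga>0$, to be chosen at the very end. Each marginal of the synchronized coupling is an ordinary (unbounded) Glauber chain started in $\gS_n^{\rho_0+}$, so Proposition~\ref{clm:StayInRho} Part~\eqref{item:ClmStayInRho_2}, applied with $r_0=r$ and this $\gga$, yields constants $C,c>0$ for which, upon choosing $r'>r$ large enough, each of the two chains stays in $\gS_n^{\rho'+}$ with $\rho'=r'/\sqrt n$ throughout $[0,\gga n]$ except with probability $\gep/3$. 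Let $E$ be the event that both chains do so; then $\bbP^{SC}_{\gs_0,\wt\gs_0}(E^{c})\leq 2\gep/3$.

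\emph{Contraction on $E$.} Couple the unbounded synchronized coupling with the $\rho'$-bounded one so that the two trajectories agree until the first time either unbounded chain exits $\cS^{\rho'+}$; on $E$ they coincide up to time $\gga n$. On the bounded side, Lemma~\ref{lem:SyncCouplingCoalesence} gives, with $p=1-(1-2\gb/q)/n$,
\[
\bbE^{SC,\rho'}_{\gs_0,\wt\gs_0}\normI{S_t-\wt S_t}\leq\Big(p+\tfrac{C\rho'}{n}\Big)^{t}\normI{s_0-\wt s_0}\leq\Big(1-\tfrac{1-2\gb/q}{2n}\Big)^{t}\,\tfrac un
\]
for $n$ large, using $\beta<q/2$ so that $1-2\gb/q>0$, that $C\rho'/n=O(n^{-3/2})$ is negligible against $(1-2\gb/q)/n$, and that $\normI{s_0-\wt s_0}<u/n$ on $\cH_{u,r}$. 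At $t=\gga n$ the right side is at most $e^{-(1-2\gb/q)\gga/2}\,u/n$. Since $S_t,\wt S_t\in\cS_n$ both have coordinate sum $1$, whenever $S_t\neq\wt S_t$ they differ in at least two coordinates, so $\normI{S_t-\wt S_t}\geq 2/n$; Markov's inequality then gives
\[
\bbP^{SC,\rho'}_{\gs_0,\wt\gs_0}\big(S_{\gga n}\neq\wt S_{\gga n}\big)\leq\tfrac u2\,e^{-(1-2\gb/q)\gga/2}.
\]

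\emph{Conclusion.} Now choose $\gga=\gga(u,\gb,q,\gep)$ large enough that this last bound is below $\gep/3$; this fixes $\gga$, hence the constants $C,c$ and the choice of $r'$ above. Combining with $\bbP^{SC}_{\gs_0,\wt\gs_0}(E^{c})\leq 2\gep/3$ and the fact that the two couplings agree on $E$ up to $\gga n$ gives $\bbP^{SC}_{\gs_0,\wt\gs_0}(S_{\gga n}\neq\wt S_{\gga n})<\gep$. Finally, coalescence of the proportions vectors is absorbing under the synchronized coupling: if $S_t=\wt S_t$, the optimal couplings in steps (1)--(2) of Subsection~\ref{sub:SynchronizedCoupling} are the identity, so $I_{t+1}=\wt I_{t+1}$, $J_{t+1}=\wt J_{t+1}$, and hence $S_{t+1}=\wt S_{t+1}$; thus $\bbP^{SC}_{\gs_0,\wt\gs_0}(S_t=\wt S_t)\geq\bbP^{SC}_{\gs_0,\wt\gs_0}(S_{\gga n}=\wt S_{\gga n})\geq 1-\gep$ for all $t\geq\gga n$ (rounding $\gga n$ up if needed). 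I do not expect a genuine obstacle: the only point requiring care is the order of quantifiers --- $\gga$ must be fixed before invoking Proposition~\ref{clm:StayInRho}, and $r'$ chosen afterwards depending on the resulting $C,c$ --- and there is no circularity, since shrinking $\bbP(E^c)$ is achieved purely by enlarging $r'$ while shrinking $\bbP(S_{\gga n}\neq\wt S_{\gga n})$ is achieved purely by enlarging $\gga$.
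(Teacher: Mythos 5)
Your proof is correct and follows essentially the same route as the paper: both run the synchronized coupling, invoke Lemma~\ref{lem:SyncCouplingCoalesence} to contract $\normI{S_t-\wt S_t}$ geometrically, transfer from the bounded to the unbounded dynamics via Proposition~\ref{clm:StayInRho}, and finish with Markov's inequality. The only cosmetic differences are that the paper invokes Part~\eqref{item:ClmStayInRho_1} of Proposition~\ref{clm:StayInRho} (fixed small $\rho$, confinement on an exponential time horizon) rather than Part~\eqref{item:ClmStayInRho_2} as you do, that the paper bounds by $\normI{S_t-\wt S_t}>1/n$ rather than your slightly sharper $\geq 2/n$, and that you make explicit the absorbingness of $\{S_t=\wt S_t\}$ to extend from $t=\gga n$ to all $t\geq\gga n$ (the paper handles this by letting the contraction bound continue to decrease in $t$).
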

\begin{proof}
If $\rho$ is small enough, it follows from Lemma~\ref{lem:SyncCouplingCoalesence} that
\[
\Escx{\rho}_{\gs_0, \wt{\gs}_0} \normI{S_t - \wt{S}_t}
	\leq \lb 1 - \frac{1- 2\gb/q}{2n} \rb^t \frac{u}{n}\,,
\]
Combined with Proposition~\ref{clm:StayInRho} Part~\eqref{item:ClmStayInRho_1}, this implies that there exists
$\gga= \gga(u)$ such that
$\bbE^{SC}_{\gs_0, \wt{\gs}_0} \normI{S_{t} - \wt{S}_{t}} \leq \frac{\gep}{n}$
for $t \geq \gga n$.
Then by Markov's Inequality:
\begin{equation*}
\bbP^{SC}_{\gs_0, \wt{\gs}_0} \big( S_{t} \neq \wt{S}_{t} \big)
	= 	\bbP^{SC}_{\gs_0, \wt{\gs}_0} \big( \normI{S_{t} - \wt{S}_{t}} > \tfrac{1}{n} \big)
	\leq \gep\,.
\qedhere
\end{equation*}
\end{proof}

\subsection{Basket-wise Proportions Coalescence}

The next coupling will allow us to turn a well-mixed proportions chains into a well-mixed configurations chain.
Let $\frB = (\cB_m)_{m=1}^q$ be a partition of $[1,n]$. We shall refer to $\cB_m$ as a {\it basket} and call $\frB$ a $\gl$-partition
if $|\cB_m| > \gl n$ for all $m$.
Given $\gs \in \gS_n$, let $\bS(\gs)$ denote a $q \times q$ matrix whose $(m,k)$ entry is equal to the proportion in $\gs$ of color $k$ in basket $m$, namely
\[
\bS^{m,k}(\gs) = \frac{1}{|\cB_m|} \sum_{v \in \cB_m} \one_{\{\gs(v)=k\}}\,.
\]
$\bS$ is an element of $\bbS \bydef \prod_{m=1}^q \cS$ and we define
$\bbS^{\rho} = \prod_{m=1}^q \cS^{\rho}$ and $\bbS^{\rho+} = \prod_{m=1}^q \cS^{\rho+}$. We also let $\cB_{[m_0, m_1]} = \cup_{m_0 \leq m \leq m_1} \cB_m$ and as before use $\bS_t$ as a shorthand for $\bS(\gs_t)$.
The following is an analogue of Lemma~ \ref{lem:OSqrtFromCoalesence} for the basket proportions matrix.
\begin{lem}
\label{lem:BasketwiseOSqrtNFromEquilibrium}
Let $\frB$ be a $\gl$-partition for some $\gl > 0$.
If either of the following holds:
\begin{enumerate}
\item
\label{item:LemBasketwiseContraction1}
	$\gs_0 \in \gS_n^{\rho_0}$ and $t^{\ga_1}(n) \leq t \leq e^{\gga(\rho_0) n}$ where $\rho_0$, $\gga(\rho_0)$ are given in Proposition \ref{prop:ContractionForNorm} and $t^{\ga_1}(n)$ is defined in \eqref{eqn:TOfGammaDef}.
\item
\label{item:LemBasketwiseContraction2}
	$\bS_0 \in \bbS^{\frac{r_0}{\sqrt{n}}}$ and $t \leq \gga_0 n$ for some $r_0, \gga_0 > 0$,
\end{enumerate}
\smallskip
then
\[
\bbP_{\gs_0} \lb \bS_t \notin \bbS^{\frac{r}{\sqrt{n}}} \rb = O(r^{-2})\,.
\]
where the $O(r^{-2})$ term is as $r \to \infty$, uniformly in $n$.
\end{lem}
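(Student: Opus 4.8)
The plan is to work directly with the basket–proportions chain $\bS_t=\bS(\gs_t)$. First note that $\bS_t$ is itself Markov, since the global proportions satisfy $S_t=\sum_m\tfrac{|\cB_m|}{n}\bS^m_t$ and the Glauber update of a vertex depends on the configuration only through $S_t$ (up to $O(n^{-1})$). A direct computation gives, for each basket $m$ and colour $k$, the drift
\[
\bbE_{\gs_0}\bigl[\bS^{m,k}_{t+1}-\bS^{m,k}_t\,\big|\,\cF_t\bigr]=\tfrac1n\bigl(g^k_\gb(S_t)-\bS^{m,k}_t\bigr)+O(n^{-2})
\]
(uniformly), while the step $\zeta^m_{t+1}:=\bS^m_{t+1}-\bS^m_t$, which is nonzero only when the chosen vertex lies in $\cB_m$, satisfies $\normII{\zeta^m_{t+1}}\le 2/|\cB_m|$ and $\bbE_{\gs_0}[\normII{\zeta^m_{t+1}}^2\mid\cF_t]\le 2/(|\cB_m|n)=O(n^{-2})$ (using $|\cB_m|>\gl n$). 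Writing $\wh{\bS^m_t}=\bS^m_t-\vq$, it suffices by Chebyshev's inequality, a union bound over $m\in[1,q]$, and $\normsup{v}\le\normII{v}$ to prove $\bbE_{\gs_0}\normII{\wh{\bS^m_t}}^2=O(n^{-1})$ for each $m$, and we bound separately the two pieces of $\bbE_{\gs_0}\normII{\wh{\bS^m_t}}^2=\normII{\bbE_{\gs_0}\wh{\bS^m_t}}^2+\Var_{\gs_0}(\bS^m_t)$.

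For the \emph{bias}, take expectations in the drift identity and iterate (using $\sum_{s<t}(1-\tfrac1n)^{t-1-s}O(n^{-2})=O(n^{-1})$):
\[
\bbE_{\gs_0}\wh{\bS^m_t}=(1-\tfrac1n)^t\wh{\bS^m_0}+\tfrac1n\sum_{s=0}^{t-1}(1-\tfrac1n)^{t-1-s}\bigl(\bbE_{\gs_0}g_\gb(S_s)-\vq\bigr)+O(n^{-1}).
\]
Since $g_\gb$ is globally Lipschitz and $g_\gb(\vq)=\vq$, $\normII{\bbE_{\gs_0}g_\gb(S_s)-\vq}\le L_\gb(\bbE_{\gs_0}\normII{\wh S_s}^2)^{1/2}$, and in case~\eqref{item:LemBasketwiseContraction1} Proposition~\ref{prop:ContractionForNorm} gives $\bbE_{\gs_0}\normII{\wh S_s}^2=p^{2s}O(1)+O(n^{-1})$ for $s\le t\le e^{\gga(\rho_0)n}$ with $p$ as in~\eqref{eqn:RhoDefinition}, hence $\bbE_{\gs_0}\normII{\wh S_s}\le Cp^s+O(n^{-1/2})$. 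Feeding this in, the $O(n^{-1/2})$ part contributes $\tfrac1n\cdot n\cdot O(n^{-1/2})=O(n^{-1/2})$; the $Cp^s$ part contributes $\tfrac{CL_\gb}{n}\sum_{s<t}(1-\tfrac1n)^{t-1-s}p^s$, and \emph{here lies the one delicate point}. Because $\gb\ge 0$ forces $1-2\gb/q\le 1$, we have $p\ge 1-\tfrac1n$, so $\sum_{s<t}(1-\tfrac1n)^{t-1-s}p^s=p^{t-1}\sum_{j<t}\bigl(\tfrac{1-1/n}{p}\bigr)^j\le\tfrac{qn}{2\gb}\,p^{t-1}$ when $\gb>0$ (the term vanishing outright when $\gb=0$, as then $g_\gb\equiv\vq$); combined with $t\ge t^{\ga_1}(n)$ and the identity $2\ga_1(1-2\gb/q)=1$ (giving $p^{t-1}=O(n^{-1/2})$) this is $O(n^{-1/2})$ — a cruder bound here would produce a spurious $\log n$ via a resonance of $(1-\tfrac1n)^{t-1-s}$ with a forcing of exact order $p^s$. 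Finally $(1-\tfrac1n)^t\normII{\wh{\bS^m_0}}\le(1-\tfrac1n)^{t^{\ga_1}(n)}O(1)=O(n^{-\ga_1})=O(n^{-1/2})$ since $\ga_1\ge\tfrac12$. Thus $\normII{\bbE_{\gs_0}\wh{\bS^m_t}}=O(n^{-1/2})$.

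For the \emph{variance}, the total–variance formula, $\bbE_{\gs_0}[\bS^m_{t+1}\mid\cF_t]=(1-\tfrac1n)\bS^m_t+\tfrac1n g_\gb(S_t)+O(n^{-2})$, $\bbE_{\gs_0}\Var_{\gs_0}(\bS^m_{t+1}\mid\cF_t)=\bbE_{\gs_0}\Var_{\gs_0}(\zeta^m_{t+1}\mid\cF_t)=O(n^{-2})$, and $n^{-2}\Var_{\gs_0}(g_\gb(S_t))=O(n^{-3})$ give, with $v_t:=\Var_{\gs_0}(\bS^m_t)$,
\[
v_{t+1}\le(1-\tfrac1n)^2v_t+\tfrac2n\sqrt{v_t}\,\sqrt{\Var_{\gs_0}(g_\gb(S_t))}+O(n^{-2})\le(1-\tfrac1n)v_t+O(n^{-2}),
\]
using Cauchy–Schwarz, $\Var_{\gs_0}(g_\gb(S_t))\le L_\gb^2\Var_{\gs_0}(S_t)=O(n^{-1})$ from Lemma~\ref{lem:UniformVarBound}, and $ab\le\tfrac1{2n}a^2+\tfrac n2 b^2$. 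Since $\gs_0$ is deterministic, $v_0=0$, so $v_t=O(n^{-2})\sum_{s<t}(1-\tfrac1n)^{t-1-s}=O(n^{-1})$; combined with the bias bound this proves case~\eqref{item:LemBasketwiseContraction1}. Case~\eqref{item:LemBasketwiseContraction2} follows by the same scheme and is easier: $\bS_0$ already lies $O(n^{-1/2})$–close to $(\vq,\dots,\vq)$ and only $t\le\gga_0 n$ steps elapse, so after conditioning on the event—of probability $\ge 1-O(e^{-cr^2})$ by Proposition~\ref{clm:StayInRho} Part~\eqref{item:ClmStayInRho_2}, absorbed via~\eqref{eqn:BoundedUnBoundedProp2}—that the global chain stays in $\gS_n^{\rho+}$ with $\rho$ a fixed multiple of $n^{-1/2}$ (so that $\normII{\wh S_s}=O(n^{-1/2})$ for all $s\le\gga_0 n$), the martingale part of $\wh{\bS^m_t}$ is $O(rn^{-1/2})$ by the Hoeffding–Azuma bound of Lemma~\ref{lem:ImprovedAzuma} Part~\eqref{item:ImprovedAzuma2} while the inward drift keeps $\normII{\wh{\bS^m_t}}$ at scale $O(n^{-1/2})$; a union bound over the $q$ baskets gives $\bbP_{\gs_0}(\bS_t\notin\bbS^{r/\sqrt n})=O(e^{-cr^2})=O(r^{-2})$. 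The one step requiring real care is the bias estimate in case~\eqref{item:LemBasketwiseContraction1} — ruling out a logarithmic correction even though each basket starts $O(1)$–far from equiproportionality and is driven by a global field that itself needs $\Theta(n\log n)$ steps to equilibrate — which succeeds precisely because that field's $L^2$–relaxation rate $p=1-(1-2\gb/q)/n$ is strictly faster than a basket's intrinsic rate $1-1/n$.
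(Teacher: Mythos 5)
Your proof is correct in outline but takes a genuinely different route from the paper's. The paper works with the difference $\bS^{m,k}_t - S^k_t$ between each basket proportion and the global one: in the drift of this difference the forcing term $g^k_\gb(S_t)$ cancels exactly, and one obtains the clean second-moment recursion of Proposition~\ref{lem:BasketToGlobalProportions2edMomentComparison}, $\bbE[(\bS^{m,k}_{t+1}-S^k_{t+1})^2\mid\cF_t]=(1-\tfrac2n)(\bS^{m,k}_t-S^k_t)^2+O(n^{-2})$, with pure geometric decay and no coupling to the global field; iterating and combining with the known concentration of $S_t$ around $\vq$ then handles both cases in a line or two. You instead track $\wh{\bS^m_t}$ directly and split its second moment into $\normII{\bbE\wh{\bS^m_t}}^2$ plus a variance. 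This forces the bias recursion to retain the forcing $\tfrac1n\bigl(\bbE g_\gb(S_s)-\vq\bigr)$, which relaxes at the slower rate $p$ rather than $1-\tfrac1n$, and you correctly identify and resolve the near-resonance of these two rates by evaluating the geometric sum exactly, using $1-\tfrac1n<p<1$ (for $\gb>0$). What the paper's decomposition buys is precisely the avoidance of this resonance analysis and of the Lipschitz bookkeeping for $g_\gb$; both routes need the time offset $t\ge t^{\ga_1}(n)$ in case~\eqref{item:LemBasketwiseContraction1} for the same reason. Your argument is a valid and instructive alternative — it isolates explicitly why the basket's intrinsic relaxation ($1-\tfrac1n$) being strictly faster than the global field's ($p$) is what prevents a logarithmic correction.

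One point needs fixing in case~\eqref{item:LemBasketwiseContraction2}: you confine the global chain to $\gS_n^{\rho+}$ with $\rho$ ``a fixed multiple of $n^{-1/2}$'' (so $\normII{\wh S_s}=O(n^{-1/2})$). But Proposition~\ref{clm:StayInRho} Part~\eqref{item:ClmStayInRho_2} with $\rho=c_0 n^{-1/2}$, $c_0$ fixed, only gives an escape probability $O(e^{-cc_0^2})$, a constant not decaying in $r$; routed through~\eqref{eqn:BoundedUnBoundedProp2} this leaves a constant error, not $O(r^{-2})$. You need $\rho$ to scale with $r$, say $\rho=\tfrac{r}{4C'\sqrt n}$ with $C'$ the local Lipschitz constant of $g_\gb$, so that the escape probability is $O(e^{-cr^2})$ and the drift barrier for each $\bS^{m,k}$ sits strictly below $1/q+\tfrac r{2\sqrt n}$; the Azuma step then delivers the claimed $O(e^{-cr^2})$. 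This is a small but necessary adjustment and does not affect the soundness of the overall strategy.
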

In order to prove Lemma~\ref{lem:BasketwiseOSqrtNFromEquilibrium}, we use the following proposition to bound the second moment of the basket proportions matrix.
\begin{prop}
\label{lem:BasketToGlobalProportions2edMomentComparison}
If $\frB$ is a $\gl$-partition for $\gl>0$ and $m,k \in [1,q]$, then
\begin{equation}
\label{eqn:BasketToGlobalProportions2edMomentComparison}
\bbE\left [(\bS^{m,k}_{t+1} - S^k_{t+1})^2 \mid \cF_t \right] = \left(1- \frac{2}{n}\right)(\bS^{m,k}_t - S^k_t)^2+ O \left(\frac{1}{n^2}\right)~.
\end{equation}
\end{prop}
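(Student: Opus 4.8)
The plan is to compute the conditional second moment of $\bS^{m,k}_{t+1}-S^k_{t+1}$ by tracking the (few) ways in which a single Glauber update changes either $\bS^{m,k}$ or $S^k$. Write $\Delta_{t+1}^{m,k} = \bS^{m,k}_{t+1}-S^k_{t+1} - (\bS^{m,k}_t - S^k_t)$. A step consists of choosing a vertex $v$ uniformly and recoloring it; the quantity $\bS^{m,k}_t - S^k_t$ changes only when $v\in\cB_m$ \emph{or} when $v$ moves into/out of color $k$ (globally). If $v\notin\cB_m$: then $\bS^{m,k}$ is unchanged, and $S^k$ changes by $\pm 1/n$ precisely when $v$ enters or leaves color $k$, so $\Delta^{m,k}_{t+1}=\mp 1/n$. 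If $v\in\cB_m$: then $S^k$ changes by $\pm1/n$ and $\bS^{m,k}$ by $\pm1/|\cB_m|$ in the \emph{same} direction (both increase if $v$ becomes color $k$, both decrease if $v$ leaves color $k$), so $\Delta^{m,k}_{t+1} = \pm(1/|\cB_m| - 1/n)$, which is $O(1/n)$ since $\frB$ is a $\gl$-partition. In every case $|\Delta^{m,k}_{t+1}| = O(1/n)$, and $\Delta^{m,k}_{t+1}\ne 0$ only on events of probability $O(1/n)$ (choosing a vertex in $\cB_m$ has probability $|\cB_m|/n = O(1)$, but the additional recoloring-across-$k$ event has probability $O(1/n)$; choosing $v\notin\cB_m$ and recoloring across $k$ also has probability $O(1/n)$). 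Hence $\bbE[(\Delta^{m,k}_{t+1})^2\mid\cF_t] = O(n^{-2})$ from the cases where $v\notin\cB_m$ plus the $v\in\cB_m$ cases, and the whole point is to extract the $-\frac2n(\bS^{m,k}_t-S^k_t)^2$ term from the cross term $2(\bS^{m,k}_t-S^k_t)\,\bbE[\Delta^{m,k}_{t+1}\mid\cF_t]$.

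Next I would compute $\bbE[\Delta^{m,k}_{t+1}\mid\cF_t]$ to leading order. Conditioning on $\gs_t$, the probability that the chosen vertex currently has color $k$ and is recolored to something else is $S^k_t(1-g^k_\gb(S_t)) + O(n^{-1})$, and the probability it has some other color and is recolored to $k$ is $(1-S^k_t)g^k_\gb(S_t) + O(n^{-1})$ (using the Glauber update probabilities, exactly as in the derivation of \eqref{eqn:OneCoordinateDrift}); restricting to $v\in\cB_m$ replaces $S^k_t$ by $\bS^{m,k}_t$ in the ``leaves color $k$'' part (the vertex must be in $\cB_m$ \emph{and} color $k$, probability $\frac{|\cB_m|}{n}\bS^{m,k}_t$) and the ``enters color $k$'' rate is $\frac{|\cB_m|}{n}(1-\bS^{m,k}_t)g^k_\gb(S_t)$. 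Putting the pieces together, the contribution of $v\in\cB_m$ to $\bbE[\Delta^{m,k}_{t+1}\mid\cF_t]$ is $\big(\tfrac1{|\cB_m|}-\tfrac1n\big)\cdot\tfrac{|\cB_m|}{n}\big[(1-\bS^{m,k}_t)g^k_\gb(S_t) - \bS^{m,k}_t(1-g^k_\gb(S_t))\big] + O(n^{-2}) = \tfrac1n(1-\tfrac{|\cB_m|}{n})\big[g^k_\gb(S_t)-\bS^{m,k}_t\big]+O(n^{-2})$, while the contribution of $v\notin\cB_m$ is $-\tfrac1n\cdot\big(1-\tfrac{|\cB_m|}{n}\big)^{-1}$... more carefully, it is $-\tfrac1n\big[(1-S^k_t)g^k_\gb(S_t) - S^k_t(1-g^k_\gb(S_t))\big]\cdot(\text{correction for }v\notin\cB_m) = -\tfrac1n(g^k_\gb(S_t)-S^k_t) + O(n^{-1})\cdot O(\tfrac{|\cB_m|}{n})$. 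Summing and noting the $g^k_\gb(S_t)$ terms are weighted to cancel up to $O(n^{-2})$, the net drift is $\bbE[\Delta^{m,k}_{t+1}\mid\cF_t] = -\tfrac1n(\bS^{m,k}_t - S^k_t) + O(n^{-2})$. Then
\[
\bbE[(\bS^{m,k}_{t+1}-S^k_{t+1})^2\mid\cF_t] = (\bS^{m,k}_t-S^k_t)^2 + 2(\bS^{m,k}_t-S^k_t)\Big(-\tfrac1n(\bS^{m,k}_t-S^k_t)\Big) + O(n^{-2}),
\]
which is exactly \eqref{eqn:BasketToGlobalProportions2edMomentComparison}, using that $(\bS^{m,k}_t-S^k_t)=O(1)$ absorbs the $O(n^{-2})$ error from the drift.

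\textbf{The main obstacle} is the bookkeeping in the cross term: one must verify that the $g^k_\gb(S_t)$-dependent pieces coming from the $v\in\cB_m$ and $v\notin\cB_m$ branches combine to contribute only $O(n^{-2})$ to $\bbE[\Delta^{m,k}_{t+1}\mid\cF_t]$, leaving a clean $-\tfrac1n(\bS^{m,k}_t-S^k_t)$. The key structural fact making this work is that the recoloring probability $g^k_\gb(S_t)$ depends on the \emph{global} proportions vector $S_t$ only (not on which basket $v$ lies in), so the ``enters color $k$'' rate is proportional to the number of non-$k$ vertices in the relevant set — $\tfrac{|\cB_m|}{n}(1-\bS^{m,k}_t)$ for $\cB_m$ versus $(1-S^k_t)$ globally — and the difference telescopes against the ``leaves color $k$'' rates. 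One should also check the $v\in\cB_m$ step sizes: $|1/|\cB_m| - 1/n| \leq 1/(\gl n)$, so they are genuinely $O(n^{-1})$ with a constant depending on $\gl$, which is harmless. The remaining estimates — bounding $\bbE[(\Delta^{m,k}_{t+1})^2\mid\cF_t]$ by $O(n^{-2})$ and the Taylor/$O(n^{-1})$ corrections to the Glauber transition probabilities — are routine and parallel to the computations already carried out for \eqref{eqn:OneCoordinateDrift} and \eqref{eqn:RecursionForSHat}.
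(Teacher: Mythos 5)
Your proposal is correct and takes essentially the same route as the paper's proof: both decompose the update according to whether the chosen vertex lies in $\cB_m$ and whether it moves into or out of color $k$, use the same transition probabilities ($p_1,\dots,p_4$ in the paper), and extract the $\left(1-\frac2n\right)$ factor from the cross term $2(\bS^{m,k}_t-S^k_t)\,\bbE[\Delta^{m,k}_{t+1}\mid\cF_t]$ after the $g^k_\gb(S_t)$-terms cancel, with the squared increments contributing $O(n^{-2})$. One parenthetical in your write-up is false — the event of recoloring into or out of color $k$ has probability $\Theta(1)$, not $O(1/n)$ — but this is harmless, since $|\Delta^{m,k}_{t+1}|=O(1/n)$ holds deterministically (using the $\gl$-partition bound $|1/|\cB_m|-1/n|\leq 1/(\gl n)$) and already yields $\bbE[(\Delta^{m,k}_{t+1})^2\mid\cF_t]=O(n^{-2})$, which is all your argument uses.
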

\begin{proof}
Let $\gl_0 = |\cB_m|/n \geq \gl > 0$ and set
$\bQ^{m,k}_t = \bS^{m,k}_t - S^k_t$. Then:
\begin{eqnarray}
\nonumber
\lefteqn{
\bbE \left [(\bQ^{m,k}_{t+1})^2 - (\bQ^{m,k}_t)^2)
	\mid \cF_t \right]} \\
\nonumber
& = 	&
	p_1 ((\bQ^{m,k}_t + \tfrac{1}{n})^2 - (\bQ^{m,k}_t)^2) +
	p_2 ((\bQ^{m,k}_t - \tfrac{1}{n})^2 - (\bQ^{m,k}_t)^2) + \\
\nonumber
&&	\qquad p_3 ((\bQ^{m,k}_t - \tfrac{1}{\gl_0 n} + \tfrac{1}{n})^2 - (\bQ^{m,k}_t)^2) +
p_4 ((\bQ^{m,k}_t + \tfrac{1}{\gl_0 n} - \tfrac{1}{n})^2 - (\bQ^{m,k}_t)^2)	\\
\label{eqn:BasketRecursion}
& = 	&
	\frac{2}{n}\bQ^{m,k}_t \lb (p_1 - p_2) + \lb 1 - \inv{\gl_0} \rb (p_3 - p_4) \rb + O \lb \inv{n^2} \rb\,,
\end{eqnarray}
where (denoting by $V_{t+1}$ the chosen vertex at step $t+1$):
\begin{eqnarray*}
p_1 & = & \bbP (V_{t+1} \notin \cB_m,\, \gs_{t}(V_{t+1})=k,\, \gs_{t+1}(V_{t+1}) \neq k | \cF_t) = (S_t^1 - \gl_0 \bS^{m,k}_t)(1-g_{\gb}^k(S_t)) + O(n^{-1}), \\
p_2 &=& \bbP (V_{t+1} \notin \cB_m,\, \gs_{t}(V_{t+1}) \neq k,\, \gs_{t+1}(V_{t+1}) = k | \cF_t)
= (1-S_t^1 - \gl_0 + \gl_0 \bS^{m,k}_t) g_{\gb}^k(S_t)  + O(n^{-1}), \\
p_3 &=& \bbP (V_{t+1} \in \cB_m,\, \gs_{t}(V_{t+1})=k,\, \gs_{t+1}(V_{t+1}) \neq k | \cF_t) = \gl_0 \bS^{m,k}_t (1-g_{\gb}^k(S_t)
 + O(n^{-1}),\\
p_4 &=& \bbP (V_{t+1} \in \cB_m,\, \gs_{t}(V_{t+1}) \neq k,\, \gs_{t+1}(V_{t+1}) = k | \cF_t) = \gl_0 (1-\bS_t^{m,k}) g_{\gb}^k(S_t)  + O(n^{-1}).
\end{eqnarray*}
Plugging these into \eqref{eqn:BasketRecursion}, we obtain
\[\bbE [(\bS^{m,k}_{t+1} -
S^k_{t+1})^2 \mid \cF_t ] = (1- \tfrac{2}{n})(\bS^{m,k}_t - S^k_t)^2+
O(\tfrac{1}{n^2})
\]
as required.
\end{proof}

\begin{proof}[Proof of Lemma~\ref{lem:BasketwiseOSqrtNFromEquilibrium}]
Taking expectation in
\eqref{eqn:BasketToGlobalProportions2edMomentComparison}
and applying \eqref{eqn:RecursionSolution_SHat} one gets:
\[
\bbE_{\gs_0} \lsb \bS^{m,k}_{t} - S_{t}^k \rsb^2
	\leq 	\lb 1-\frac{2}{n} \rb^t \bbE_{\gs_0} \lsb \bS^{m,k}_0 - S_0^k \rsb^2 + O(n^{-1})\,.	
\]
In both cases (note that $\ga_1(\gb, q) > \inv{2}$ for all $\gb<q/2$), it implies
$\bbE_{\gs_0} \lsb \bS^{m,k}_t - S_t^k \rsb^2 = O(n^{-1})$. Summing over all $m$ and $k$ and using Markov's Inequality we get
\[
\bbP_{\gs_0} \lb \sum_{m \leq q} \normII{\bS^{m}_t - S_t} > r/(2\sqrt{n}) \rb = O(r^{-2}).
\]
Now, by Proposition~\ref{prop:ContractionForNorm} Case~\eqref{item:LemBasketwiseContraction1} and Proposition~\ref{clm:StayInRho} Case~\eqref{item:ClmStayInRho_2} we also have that
$$\bbP_{\gs_0} \lb \normII{S_t - \vq} > r/(2\sqrt{n}) \rb = O(r^{-2}).$$
Combining the two, we complete the proof.
\end{proof}

\medskip
Suppose now that you have two initial configurations $\gs_0, \wt{\gs}_0$, such that $s_0 = \wt{s}_0$. The following is a coupling under which eventually (with probability 1) also $\bS_{t} = \wt{\bS}_{t}$. Equality is achieved one basket at a time, indexed below by $m$ and once the proportions in a basket are equated they remain so. We shall call this coupling {\it Basket-wise Coupling} and denote by $\bbP^{BC}$ the underlying probability measure.
\begin{enumerate}
\item
	Set $t=0$, $m=1$.
\item
	As long as $m \leq q$:
	\begin{enumerate}
	\item
		As long as $\bS_t^{m} \neq \wt{\bS}_t^{m}$:
		\begin{enumerate}
		\item
			Choose ``old'' color $I_{t+1}$ according to distribution $S_t =
			\wt{S}_t$.
		\item
			Choose ``new'' color $J_{t+1}$ according to distribution
			$g_{\gb} \lb S_t - \frac{1}{n} \sfe_{I_{t+1}} \rb =
			g_{\gb} \lb \wt{S}_t - \frac{1}{n} \sfe_{I_{t+1}} \rb$.
		\item
			Choose a vertex $V_{t+1}$ uniformly among all vertices in
			$[1,n]$ having color $I_{t+1}$ under $\gs_t$.
		\item
		\label{item:BC_ChoosingVTilde}
			Choose $\wt{V}_{t+1}$:
			\begin{enumerate}
			\item \label{case-A}
				If $V_{t+1} \in \cB_{m_0}$ for $m_0 <m$, choose $\wt{V}_{t+1}$
				uniformly among all vertices in  $\cB_{m_0}$ having color
				$I_{t+1}$ under $\wt{\gs}_t$.
			\item \label{case-B}
				Otherwise, if $\bS_t^{m, I_{t+1}} \neq \wt{\bS}_t^{m, I_{t+1}}$ and
				$\bS_t^{m, J_{t+1}} \neq \wt{\bS}_t^{m, J_{t+1}}$, choose $\wt{V}_{t+1}$
				uniformly among all vertices in $\cB_{[m, q]}$
				having color $I_{t+1}$ under $\wt{\gs}_t$.
			\item \label{case-C}
				Otherwise, let $v_1, v_2, \dots$ be an enumeration of the vertices
				in $\cB_{[m, q]}$ having color $I_{t+1}$ under
				$\gs_t$ ordered first by the index of the basket they belong to and 			
				then by their index in $V$ and let $\wt{v}_1, \wt{v}_2, \dots$ be
				the same for $\wt{\gs}_t$. Then set $\wt{V}_{t+1}=\wt{v}_i$ where $i$ is such that $V_{t+1}=v_i$.
			\end{enumerate}
		\item
			Set $\gs_{t+1}(V_{t+1}) = \wt{\gs}_{t+1}(\wt{V}_{t+1}) = J_{t+1}$
			and $t=t+1$.
		\end{enumerate}
		\item		
			Set $m=m+1$.
	\end{enumerate}
\end{enumerate}

The following lemma gives an upper bound for the time of basket-wise proportions coalescence.
\begin{lem}
\label{lem:BasketwiseCouplingAnalysis}
Fix $\gb < q/2$. For any $\gl >0$, $r > 0$, $\gep > 0$, there exists $\gga = \gga(\gl, r, \gep)$, such that for any $\gl$-partition and any $\gs_0, \wt{\gs_0}$ such that $S_0 = \wt{S}_0$ and $\bS_0, \wt{\bS}_0 \in \bbS^{\frac{r}{\sqrt{n}}}$,
\begin{equation}
\label{eqn:BasketwiseCouplingAnalysis}
\bbP^{BC}_{\gs_0, \wt{\gs}_0} \lb \bS_{\gamma n} = \wt{\bS}_{\gamma n} \rb \geq 1 - \gep\,.
\end{equation}
\end{lem}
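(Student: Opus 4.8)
The plan is to reduce the claim, one stage at a time, to a one-dimensional diffusive hitting-time estimate and then invoke Lemma~\ref{lem:DiffusionHittingTime}. First I would record the invariants of the basket-wise coupling. Since the old colour $I_{t+1}$ and the new colour $J_{t+1}$ are drawn once and used by both chains, $S_t=\wt{S}_t$ for all $t$; the rule in Case~\ref{case-A} recolours a vertex in a basket $\cB_{m_0}$ with $m_0<m$ in $\gs_t$ together with a vertex of the same colour in the same basket of $\wt{\gs}_t$, so once $\bS^{m_0}=\wt{\bS}^{m_0}$ this stays true, and hence at the start of stage $m$ the first $m-1$ rows of $\bS$ and $\wt{\bS}$ already agree. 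Combined with $S_t=\wt{S}_t$, the number of colour-$I_{t+1}$ vertices inside $\cB_{[m,q]}$ is the same in $\gs_t$ and $\wt{\gs}_t$, so the enumeration in Case~\ref{case-C} is well defined, and each of $(\gs_t)$, $(\wt{\gs}_t)$ is marginally the original Glauber dynamics. It therefore suffices to bound, for each fixed $m$, the duration $T^{(m)}$ of stage $m$ by $\gga_m n$ with probability at least $1-\gep/q$, and then take $\gga=\sum_{m=1}^q\gga_m$.

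Fix $m$, write $\cB=\cB_m$, and set $W_t=\bS^m_t-\wt{\bS}^m_t\in\tfrac1{|\cB|}\bbZ^q$, which lies in $\{w:\sum_k w^k=0\}$; stage $m$ ends at $T^{(m)}=\inf\{t:W_t=0\}$. Using Lemma~\ref{lem:BasketwiseOSqrtNFromEquilibrium}\eqref{item:LemBasketwiseContraction2} for each of $\gs_t,\wt{\gs}_t$ (each marginally Glauber dynamics started with $\bS_0,\wt{\bS}_0\in\bbS^{r/\sqrt n}$), together with Proposition~\ref{clm:StayInRho}\eqref{item:ClmStayInRho_2} and a maximal inequality, I would fix $r'$ and $\gga_0$ so that, outside an event of probability at most $\gep/(2q)$, both $\bS^m_t$ and $\wt{\bS}^m_t$ stay in $\cS^{r'/\sqrt n}$ for all $t\le\gga_0 n$; on that event $|\cB|\,\|W_t\|_1=O(\sqrt n)$ throughout.

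The heart of the proof is to show that $\Psi_t\bydef\sum_k(W^k_t)^+=\tfrac12\|W_t\|_1$ is a supermartingale as long as $W_t\neq 0$, with increments of size $O(1/n)$ and with conditional variance at least $c\,n^{-2}$ (for a fixed $c=c(\gb,q,\gl)>0$) on the confined event. A recolouring step changes exactly two coordinates of $W_t$, one by $+1/|\cB|$ and one by $-1/|\cB|$. In Case~\ref{case-A} the vector $W_t$ is unchanged, and in Case~\ref{case-C} the basket-by-basket matching of the enumeration forces that whenever $\cB$ is touched asymmetrically it is a surplus colour (a coordinate with $W^k_t>0$) that is removed from $\gs_t$'s copy of $\cB$, or a deficit colour that is removed from $\wt{\gs}_t$'s copy (and, since Case~\ref{case-C} fires, the affected new colour is already balanced), so $\Psi_t$ is pathwise non-increasing. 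In Case~\ref{case-B} the matched vertex $\wt{V}_{t+1}$ is drawn independently and uniformly among colour-$I_{t+1}$ vertices of $\cB_{[m,q]}$, so $\cB$ is touched in the two copies, independently, with probabilities proportional to $\bS^{m,I_{t+1}}_t$ and $\wt{\bS}^{m,I_{t+1}}_t$; the difference of these probabilities is proportional to $W^{I_{t+1}}_t$, and combining this with the drift identity behind Proposition~\ref{lem:BasketToGlobalProportions2edMomentComparison}, namely $\bbE[W^k_{t+1}-W^k_t\mid\cF_t]=-W^k_t/n+O(n^{-2})$ and hence $\bbE[\Psi_{t+1}-\Psi_t\mid\cF_t]\le-\Psi_t/n+O(n^{-2})$, gives the supermartingale property as long as $\Psi_t\ge C/n$; in the residual regime $0<\Psi_t<C/n$ the rescaled vector $|\cB|\,W_t$ lives in a bounded lattice region around $0$ with non-positive drift and uniformly positive variance, so it is absorbed at $0$ within $O(1)$ expected steps, and the two regimes combine. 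For the variance bound, on $\{W_t\neq0\}$ I would choose a colour $I$ with $W^I_t\neq0$, picked as the old colour with probability $S^I_t=\tfrac1q+O(n^{-1/2})$, which is then (using confinement) followed with probability bounded below by a step touching $\cB$ in exactly one of the two copies, changing $\Psi_t$ by $\pm 1/|\cB|$.

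Granting this, I would set $Z_t=|\cB|\,\Psi_{t\wedge N}$, where $N$ is the minimum of $T^{(m)}$, $\gga_0 n$, and the first time confinement fails; then $Z_t$ is a non-negative supermartingale with $Z_0=O(\sqrt n)$, $|Z_{t+1}-Z_t|\le 2$ and $\Var(Z_{t+1}\mid\cF_t)\ge c$ on $\{N>t\}$, so Lemma~\ref{lem:DiffusionHittingTime} gives $\bbP^{BC}(N>\gga_m n)\le 4Z_0/(\sqrt c\,\sqrt{\gga_m n})=O(\gga_m^{-1/2})<\gep/(2q)$ once $\gga_m$ is large enough; with the confinement estimate this yields $\bbP^{BC}(T^{(m)}>\gga_m n)<\gep/q$, and summing over $m$ completes the proof. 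The main obstacle is the previous paragraph: proving that $\Psi_t$ really is a supermartingale --- in particular ruling out an upward drift in Case~\ref{case-B}, where a one-sided recolouring can move a coordinate away from $0$, and handling the regime $\Psi_t=\Theta(1/n)$ in which the $O(n^{-2})$ drift error competes with the genuine drift --- and extracting a uniform positive lower bound on the conditional variance from the three-case rule for choosing $\wt{V}_{t+1}$.
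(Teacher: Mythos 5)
Your plan follows the paper's proof very closely: reduce to one basket $\cB_m$ at a time (using the invariants that $S_t=\wt{S}_t$ and that earlier baskets stay coalesced), show that the $\ell^1$ basket discrepancy $W^m_t=\|\bS^m_t-\wt{\bS}^m_t\|_1$ (your $\Psi_t$ is $\tfrac12 W^m_t$) is a non-negative supermartingale with uniformly positive conditional variance on the confined event, invoke Lemma~\ref{lem:DiffusionHittingTime}, control confinement via Lemma~\ref{lem:BasketwiseOSqrtNFromEquilibrium}, and sum over $m$. Two remarks on the place you flag as the main obstacle. First, the ``move a coordinate away from $0$'' phenomenon does not occur in Case~\eqref{case-B}: there the only two coordinates that can change are $I_{t+1}$ and $J_{t+1}$, both of which are nonzero by the defining condition of Case~\eqref{case-B}; a zero coordinate can only pop up in Case~\eqref{case-C}, where, as you argue, it is pathwise cancelled by the $I$-coordinate decrease. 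Second, your route to the Case~\eqref{case-B} drift via the \emph{unconditional} identity $\bbE[W^k_{t+1}-W^k_t\mid\cF_t]=-W^k_t/n+O(n^{-2})$ does not close cleanly, both because one needs the drift conditioned on Case~\eqref{case-B} and because the $O(n^{-2})$ error competes with $-\Psi_t/n$ once $\Psi_t=\Theta(1/n)$. The paper sidesteps this with an exact conditional computation: in Case~\eqref{case-B} the coordinates $\bW^{m,I}_t$ and $\bW^{m,J}_t$ cannot change sign in one step, so $\bbE[|\bW^{m,k}_{t+1}|\mid\cF_t,\text{B}]=|\bbE[\bW^{m,k}_{t+1}\mid\cF_t,\text{B}]|$; writing $D=\sum_{m'\ge m}|\cB_{m'}|\bS^{m',I_{t+1}}_t$ one gets $\bbE[\bW^{m,I_{t+1}}_{t+1}\mid\cF_t,\text{B}]=\bW^{m,I_{t+1}}_t(1-\tfrac1D)$ and $\bbE[\bW^{m,J_{t+1}}_{t+1}\mid\cF_t,\text{B}]=\bW^{m,J_{t+1}}_t+\tfrac{\bW^{m,I_{t+1}}_t}{D}$, whence $\bbE[W^m_{t+1}-W^m_t\mid\cF_t,\text{B}]\le -\tfrac{|\bW^{m,I_{t+1}}_t|}{D}+\tfrac{|\bW^{m,I_{t+1}}_t|}{D}=0$ with no error term. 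With that replacement your sketch matches the paper's argument in every essential respect, including the first-moment confinement bound and the inductive use of Lemma~\ref{lem:DiffusionHittingTime}.
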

\begin{proof}
From the definition of the coupling, once the proportions of basket $m$ have coalesced they will remain equal forever. It suffices, therefore, to analyze the coalescence time of each basket separately. Note also that the coupling preserves the equality $S_t = \wt{S}_t$ for all $t \geq 0$.

Define $\bW_t = \bS_t - \wt{\bS}_t$,
$W^m_t = \normI{\bW^{m}_t}$ and let $\tau^{(0)} = 0$ and
$\tau^{(m)} = \min\{t \geq \tau^{(m-1)}: W^m_t = 0\}$ for $m \in [1,q]$.
Also set
\[
\tau_* = \inf \{t : \: \bS_t \notin \bbS^\rho \ \text{or} \
	\wt{\bS}_t \notin \bbS^\rho \}
\]
for $\rho>0$ sufficiently small and $\tau^{(m)}_* = \tau^{(m)} \minwith \tau_*$.
We claim that for all $m$,
$(W^m_t)_{t \geq 0}$ is a supermartingale between $\tau^{(m-1)}$ and
$\tau^{(m)}$ as long as $\tau_*$ is not reached. In order to see this, fix $m$, $t$ and assume $\{\tau^{(m-1)} \leq t < \tau^{(m)} \; ,\, \tau_* > t\}$. Then at step
\eqref{item:BC_ChoosingVTilde}, according to the coupling, there are 3 cases:
\begin{enumerate}[(A)]
\item
Clearly $\bS^{m}_{t+1} = \bS^{m}_{t}$ and
$\wt{\bS}^{m}_{t+1} = \wt{\bS}^{m}_{t}$ and hence
$W^m_{t+1} = W^m_t$.
\item  Notice that in this case, we have
$\bW_t^{m, I_{t+1}} \bW_{t+1}^{m, I_{t+1}} \geq 0$ and
$\bW_t^{m, J_{t+1}} \bW_{t+1}^{m, J_{t+1}} \geq 0$.
Therefore,
\begin{eqnarray*}
\lefteqn{\bbE_{\gs_0,\wt{\gs}_0}^{BC}[W^m_{t+1} - W^m_t | \cF_t]} \\
& = & \big| \bbE_{\gs_0,\wt{\gs}_0}^{BC} \lsb \bW^{m,I_{t+1}}_{t+1} | \cF_t \rsb \big| - \big|\bW^{m,I_{t+1}}_t \big|
			+ \big|\bbE_{\gs_0,\wt{\gs}_0}^{BC} \lsb \bW^{m,J_{t+1}}_{t+1} | \cF_t \rsb\big| -
				\big| \bW^{m,J_{t+1}}_t \big| \\
& \leq &  \frac{-|\bW^{m, I_{t+1}}_t|}{\sum_{m_0 \geq m} \bS^{m_0, I_{t+1}}_t |\cB_{m_0|}} + \frac{|\bW^{m, I_{t+1}}_t|}{\sum_{m_0 \geq m} \bS^{m_0, I_{t+1}}_t |\cB_{m_0}|}
	= 0\,.
\end{eqnarray*}
\item
If $V_{t+1}, \wt{V}_{t+1} \in \cB_m$ or $V_{t+1}, \wt{V}_{t+1} \notin \cB_m$, then
$W^m_{t+1} = W^m_t$, otherwise from the construction we must have:
\[
|\bW^{m, I_{t+1}}_{t+1}| - |\bW^{m, I_{t+1}}_{t}| = -\inv{|\cB_m|} \,,
\]
as well as
\[
|\bW^{m, J_{t+1}}_{t+1}| - |\bW^{m, J_{t+1}}_{t}| \leq \inv{|\cB_m|} \,.
\]
Summing these two, we obtain a non-positive drift for $W^m_t$.
\end{enumerate}

Observe that as long as $\tau_*$ is not reached, both $\Var^{BC}(W^m_{t+1} | \cF_t)$ under case (B) and the probability that this case happens are bounded below uniformly in $n$ and $t$. This gives a uniform lower bound on the variance
$\Var^{BC}(W^m_{t+1} | \cF_t)$.
Furthermore, if for some $t$, we have $\bS_t$, $\wt{\bS}_t \in \bbS^{\frac{r^{\prime}}{\sqrt{n}}}$, then in view of Lemma \ref{lem:BasketwiseOSqrtNFromEquilibrium} after $\gga^{\prime} n$ time,
we have $\bS_{t+\gga^{\prime} n}$, $\wt{\bS}_{t+\gga^{\prime} n} \in \bbS^{\frac{r^{\prime\prime}}{\sqrt{n}}}$ with probability $1-O((r^{\prime\prime})^{-2})$.
Therefore, using Lemma \ref{lem:DiffusionHittingTime} we may find $\gga_1, \dots, \gga_{q-1}$ such that inductively, conditioned on $\tau^{(m-1)}_* \leq \gga_{m-1} n$ with probability at least $1-\gep/(2q)$ we have $\tau^{(m)}_* \leq \gga_m n$. This in turn implies that $\tau^{(q-1)}_* \leq \gga n$ with probability at least $1-\gep/2$, where $\gga \bydef \gga_{q-1}$.

It remains to bound $\tau_*$ below with high probability.
Let $B^{m,j} = \bigcup_{t = 1}^{\gga n}\{ |\bS_t^{m, j} - 1/q | \geq \rho\}$ and
\[
Y^{m,j} = \labs \{t :\:\: \big|\bS_t^{m, j} - 1/q \big| \geq \rho/2, \, 1 \leq t \leq \gamma n \}\,\rabs.
\]
Using Lemma \ref{lem:BasketwiseOSqrtNFromEquilibrium}
we obtain that
\[\bbE_{\gs_0,\wt{\gs}_0}^{BC} [Y^{m,j}] \leq \gga n O(\tfrac{1}{n}) = O(\gga)\,.\]
Then as $B^{m,j}$ implies that $Y^{m,j} > \frac{n\lambda \rho}{2}$,
\[\bbP_{\gs_0,\wt{\gs}_0}^{BC}(B^{m,j}) \leq \bbP_{\gs_0,\wt{\gs}_0}^{BC}\Big(Y^{m,j} \geq \frac{n \gl \rho}{2}\Big) \leq \frac{2\bbE_{\gs_0,\wt{\gs}_0}^{BC}[Y^{m,j}]}{n \gl \rho} = O(n^{-1}) \,.\]
Summing over all $m$, $j$ and arguing the same for $\wt{\bS}_t$ we obtain
\[
\bbP_{\gs_0,\wt{\gs}_0}^{BC} (\tau_* < \gamma n) = O(n^{-1})\,,
\]
Finally by a union bound we have
$\bbP_{\gs_0,\wt{\gs}_0}^{BC} (\tau^{(q)} \leq \gamma n) \geq 1-\frac{\gep}{2} + O(n^{-1})$
as desired.
\end{proof}

\subsection{The Overall Coupling}
\label{sub:FullCoupling}

We now describe precisely how the previous couplings are combined together to create the {\it overall coupling}. This coupling will be the main tool in proving the upper bound. Formally, let $\gga_1, \gga_3, \gga_4, \gga_5$ and $y_1, \dots, y_{q-1}$ be positive numbers and $\gs_0 \in \gS_n$. The overall coupling with parameters $\gga_1, \dots, \gga_5$, $y_1, \dots y_{q-1}$ and initial configuration $\gs_0$ is a coupling of two chains $(\gs_t)_t$, $(\wt{\gs}_t)_t$ under measure $\bbP^{OC}_{\gs_0}$.
The initial configuration for $(\gs_t)_t$ is $\gs_0$, while $\wt{\gs}_0$ is chosen according to $\mu_n$. Then, the two processes evolve as follows.
\begin{enumerate}
\item
\label{item:FC_Burn}	
    Run $\gs_t$ and $\wt{\gs}_{t}$ independently until time $t^{(1)}(n) = \gga_1 n$.
    \begin{enumerate}
        \item[(\ref{item:FC_Burn}A)]
            Partition the vertex set $[1, n]$ into baskets $\frB = (\cB_1, \dots \cB_q)$ such that
            $\cB_k = \{v : \: \gs_{t^{(1)}(n)}(v) = k\}$ for $k\in [1, q]$.
    \end{enumerate}
\item
	Run $\gs_{t}$ and $\wt{\gs}_{t}$ independently (again) until time $t^{(2)}(n) = t^{(1)} + t^{\ga_1}(n)$ time where $t^{\ga_1}(n)$ is defined in \eqref{eqn:TOfGammaDef}.
\item
\label{item:FC_SIC}
	Run $\gs_{t}$ and $\wt{\gs}_{t}$ according to the coordinate-wise coupling with parameters $y_1, \dots, y_{q-1}$ until time
		$t^{(3)}(n) = t^{(2)}(n) + \gga_3 n$ (unless stopped before).
\item
\label{item:FC_Sync}
	Run $\gs_{t}$ and $\wt{\gs}_{t}$ according to the synchronized coupling until time $t^{(4)}(n) = t^{(3)}(n) + \gga_4 n$ time.
\item
	Run $\gs_{t}$ and $\wt{\gs}_{t}$ according to the basket-wise coupling for $t^{(5)}(n) = t^{(4)}(n) + \gga_5 n$ time with the baskets above.
\end{enumerate}

\subsection{Proof of Upper Bound in Theorem \ref{thm:SubCriticalCutoff}}
\label{sub:ProofOfUpperBoundA}
We will now use the overall coupling with appropriate parameters to establish the upper bound of the mixing time.
Recall that $\gb < \gb_s(q)$. Fix $\gep > 0$, pick $\rho > 0$ small enough and let $\gs_0$ be any initial configuration.
By Proposition~\ref{clm:StayInRho} Part~\eqref{item:ClmStayInRho_3}, we can choose $\gga_1$ large enough such that
\begin{equation}
\label{eqn:FCFirstStepAnalysis}
\bbP_{\gs_0}^{OC} \lb S_{t^{(1)}(n)} \in \cS^{\rho} \rb \geq 1-\gep\,.
\end{equation}

Assuming that this event indeed occurred, $\frB$ is a $(\inv{q} - \rho)$-partition and provided that $\rho$ is small enough, the conditions in Lemma~\ref{lem:OSqrtFromCoalesence} are
 satisfied. From the latter we conclude that for some $r > 0$, with probability at least $1-\gep$,
 $S_{t^{(2)}(n)} \in \cS^{\frac{r}{\sqrt{n}}}$. On the other hand, as in \eqref{eqn:StationaryMeasureConcentration}
  with probability at least $1-2\gep$ we also have $\wt{S}_{t^{(2)}(n)} \in \cS^{\frac{r}{\sqrt{n}}}$ if $r$ is large enough.
   Then Corollary \ref{cor:SICAnalysis} and
Lemma \ref{lem:CoalescenceOfS} ensure that there exist $y_1, \dots y_{q-1}$ and $\gga_3$, $\gga_4$, such that
$S_{t^{(4)}(n)} = \wt{S}_{t^{(4)}(n)}$
with probability at least $1-3\gep$. From Lemma
\ref{lem:BasketwiseOSqrtNFromEquilibrium} we have
that
$\bS_{t^{(4)}(n)}, \wt{\bS}_{t^{(4)}(n)} \in \bbS^{\frac{r^{\prime}}{\sqrt{n}}}$ with probability at least $1-4\gep$ for some $r^{\prime}>0$.  Then, by Lemma \ref{lem:BasketwiseCouplingAnalysis} we may choose $\gga_5$ such that $\bS_{t^{(5)}(n)} = \wt{\bS}_{t^{(5)}(n)}$ with
probability at least $1-5\gep$.

Now, by symmetry, for any $t \geq t^{(1)}(n)$ the distribution of $\sigma_t$, given $\cF_{t^{(1)}(n)}$, is invariant under permutations of the vertices in each basket of $\cB$ and the same is clearly true for $\mu_n$. Therefore we conclude that
\begin{eqnarray*}
\lefteqn{\normTV{\bbP_{\gs_0}^{OC} \lb \lpr \gs_{t^{(5)}(n)} \in \cdot \rabs \cF_{t^{(1)}(n)}, S_{t^{(1)}(n)} \in \cS^{\rho} \rb - \mu_n}} 	 \\
& = &
	\normTV{\bbP_{\gs_0}^{OC} \lb \lpr \bS_{t^{(5)}(n)} \in \cdot \rabs \cF_{t^{(1)}(n)}, S_{t^{(1)}(n)} \in \cS^{\rho} \rb - \mu_n \circ \bS^{-1}} \\
& \leq &
\bbP_{\gs_0}^{OC} \lb \lpr \bS_{t^{(5)}(n)} \neq \wt{\bS}_{t^{(5)}(n)} \rabs
	\cF_{t^{(1)}(n)}, S_{t^{(1)}(n)} \in \cS^{\rho} \rb
	\leq 5\gep\,.
\end{eqnarray*}
Then from Jensen's inequality we obtain
\begin{eqnarray}
\label{eqn:JensenForTV}
\lefteqn{\normTV{\bbP_{\gs_0} \lb \gs_{t^{(5)}(n)} \in \cdot \rb - \mu_n}}\\
\nonumber
	& \leq 	& \bbE_{\gs_0}^{OC} \lsb \lpr
					\normTV{\bbP_{\gs_0}^{OC} \lb \lpr \gs_{t^{(5)}(n)} \in \cdot \rabs 			
					\cF_{t^{(1)}(n)} \rb - \mu_n} \rabs S_{t^{(1)}(n)} \in \cS^{\rho} \rsb + 	
					\bbP_{\gs_0}^{OC} \lb S_{t^{(1)}(n)} \notin \cS^{\rho} \rb	\\
\nonumber
	& \leq 	& 5\gep + \gep = 6\gep.
\end{eqnarray}
Now $t^{(5)}(n)=t^{\ga_1}_{\gga}(n)$ (as defined in \eqref{eqn:TOfGammaDef}) with $\gga =\gga_1 + \gga_3 + \gga_4 + \gga_5$ and since $\gs_0$ is arbitrary and $\gep$ can be made arbitrarily small, by choosing $\gga$ large enough, this establishes the upper bound for the cutoff.
\qed

\section{Mixing in the Supercritical Regime}
\label{sec:SuperCriticalRegime}
\subsection{Proof of Theorem~\ref{thm:SuperCriticalSlowMixing}}
We first give the proof for the case $\gb_s < \gb < \gb_c$.
Recall (Subsection~\ref{sub:CWLDP})
that $\gb_c(q) = \frac{(q-1)}{q-2}\log (q-1)$ for $q\geq
3$ and $\gb_c(2) = 1$. We claim that for $q\geq 3$
\begin{equation}\label{eq-beta-c-q-q-1}
\gb_c(q) \big(1- 1/q\big) < \gb_c(q-1)\,.\end{equation}
It can be checked for $q=3$ and for $q \geq 4$, it suffices to prove that
$f(x) := \frac{x-1}{x (x-2)} \log (x-1)$ is decreasing in $x$ on $[3,
\infty)$. We compute the derivative and obtain that
\[f'(x) = -\frac{1}{x(x-2)}\Big(\frac{x^2 -2x+ 2}{x(x-2)} \log(x-1) -1\Big)\,,\]
which is negative for $x \geq 3$.

Now fix $\gd > 0$ and notice that if $s^1 \in [1/q, 1-\gd]$, conditional on $\{S^1 = s^1\}$,
$(S^i/(1-s^1):\: 2 \leq i \leq q)$ is distributed as the
proportions vector for the $(q-1)$-states Curie-Weiss Potts model on $(1-s^1) n$
vertices with $\gb' = \gb (1-s^1) \leq (1-1/q) \gb \leq (1 - 1/q)\gb_c(q) <
\gb_c(q-1)$. Therefore, for all $\gd_1 >0$
\begin{equation}
\label{eq-S-1-other-equal}
\mu_n \Big(\exists 2\leq i \leq q \mbox{ such that }
\big|S^i - \frac{1-s^1}{q-1}\big| \geq \gd_1 \mid S^1 = s^1\Big)
\to 0\,.\
\end{equation}
as $n \to \infty$ uniformly in $s^1 \in [1/q, 1-\gd]$.
Also uniformly in $s \in \cS_n$, recall that:
\[
\bbE \lsb \left. S^1_{t+1} - S^1_t \rabs S_t=s \rsb = \frac{1}{n} d_{\gb}(s) + O \lb n^{-2} \rb\,,
\]
where
$d_{\gb}(s) = -s^1 + g_{\gb}^1(s)$.
It now follows from the uniform continuity of $d_{\gb}(s)$
and \eqref{eq-S-1-other-equal} that
uniformly in $s^1 \in [1/q, 1-\gd]$,
\[
\bbE_{\mu_n}[S_{t+1}^1 - S_t^1 \mid S_t^1 = s^1] = \inv{n} (D_\gb(s^1)+ o(1))\,,
\]
where $D_{\gb}(s^1) = d_{\gb}\lb s^1, \frac{1-s^1}{q-1}, \dots, \frac{1-s^1}{q-1} \rb$.

Now if $\gb > \gb_s(q)$, from Proposition \ref{prop:DOfBetaProperties1},
there exists $\gd_2$, such that $D_{\gb}(s^1)$ is uniformly positive in a $\gd_2$-neighborhood of $s^*(\gb)$. All together we infer that there exists $\gep > 0$ such that uniformly in $s^1 \in (s^*(\gb) - \gd_2, s^*(\gb) + \gd_2)$ for all $n$ large enough:
\[
\bbE_{\mu_n}[S_{t+1}^1 - S_t^1 \mid S_t^1 = s^1] \geq \frac{\gep}{n}\,,
\]
and also
\[\bbP_{\mu_n}(S_{t+1}^1 = s_1^1 + \frac{j}{n} \mid S_t^1 = s_1^1) - \bbP_{\mu_n}(S_{t+1}^1 = s_1^1 + \tfrac{1}{n} + \frac{j}{n} \mid S_t^1 = s_1^1 + \tfrac{1}{n}) = o(1)\,,\]
for $j\in{-1,0,1}$ where the last inequality follows from the concentration of the conditioned measure as well as the continuity of the probability to stay put. These two formulas together imply that
\[
\bbP_{\mu_n}(S_{t+1}^1 = s_1^1 + \frac{1}{n} \mid S_t^1 = s_1^1) \geq \gl \bbP_{\mu_n}(S_{t+1}^1 = s_1^1  \mid S_t^1 = s_1^1 + \frac{1}{n})\,,
\]
for some fixed constant $\gl > 1$ for all $s^1 \in (s^*(\gb) - \gd_2, s^*(\gb)+\gd_2)$ when $n$ is sufficiently large. Since $(S_t)_{t \geq 0}$ is a reversible Markov chain, with $\mu_n$ its stationary measure,
\[
\bbP_{\mu_n}(S_{t+1}^1 = s_1^1 + \frac{1}{n}, S_t^1 = s_1^1) = \bbP_{\mu_n}(S_{t+1}^1 = s_1^1, S_t^1 = s_1^1 + \frac{1}{n})\,,
\]
and therefore, for all $s^1 \in
(s^*(\gb) - \gd_2, s^*(\gb)+\gd_2)$
\[
\mu_n(S^1 = s^1 + \frac{1}{n}) \geq \gl \mu_n(S^1 = s^1)\,,
\]
and hence
\begin{equation}
\label{eqn:FarMuComparison}
\mu_n(S^1 = s^*(\gb)+ \delta_2) \geq \gl^{2 \delta_2 n} \mu_n(S^1 = s^*(\gb) - \delta_2)\,.
\end{equation}

Now select the set $A = \{S_1 \geq s^*(\gb) - \delta_2\}$. By \eqref{eqn:FarMuComparison},
$\frac{\mu_n(\partial_{P_n} A)}{\mu_n(A)} \leq \gl^{-\delta_2 n}$, where
\[
\partial_{P_n} A = \{x \in A \;:\ P_n(x,y) > 0 \text{ for some } y \notin A \}
\]
and $P_n$ is the transition kernel of the Glauber dynamics. Since
$\gb < \gb_c(q)$ and $s^*(\gb) - \gd_2 > 1/q$ we also have
$\mu_n(A) = o(1)$ as $n \to \infty$. Therefore Cheeger's inequality (Theorem~\ref{thm:CheegersInequality}) immediately implies
an exponential lower bound on the mixing time.

The case $\gb \geq \gb_c(q)$ is simpler. As the large deviations analysis in Subsection~\ref{sub:CWLDP} shows, we may find $A = \{\normII{S - \check{s}_{\gb, q}} < \gd\}$, where $\check{s}_{\gb, q}$ is defined in \eqref{eqn:SCheckDef} and $\gd > 0$ is small enough such that
$\limsup_{n \to \infty} n^{-1} \log \mu_n(\partial_{P_n} A)<0$ and
$\liminf_{n \to \infty} n^{-1} \log \mu_n(A) = 0$.
Since symmetry implies $\mu_n(A) \leq 1/q$ (if $\gd$ is sufficiently small), exponential mixing time follows immediately from another application of Cheeger's inequality (Theorem~\ref{thm:CheegersInequality}).
\qed

\section{Mixing Near Criticality}
\label{sec:CriticalRegime}
We now assume $\gb(n) = \gb_s(q) - \xi(n)$, with $\xi(n) \to 0$ as $n \to \infty$. Once $\gb(n)$ approaches $\gb_s$ with $n$, we no longer have a uniform negative upper bound on the drift to the right of $1/q$ for each coordinate. Instead, near $s^*(\gb)$, the drift will be of order $\xi(n)$, possibly even positive and hence it will take longer than linear time to get close to $\vq$ and this may have an effect on the order of the mixing time and cutoff window. Accordingly, in addition to the coalescence time analysis near $\vq$, one has to obtain sharp asymptotics for the passage time near $s^*(\gb)$. This is achieved using several propositions which we state in Subsection~\ref{sub:PassgeTimeEstimates}. Their proofs will be deferred until the end of the section in favor of first showing how they are used along with the previous coalescence analysis to find the mixing time near criticality which gives the proof of Theorem~\ref{thm:NearCriticalMixing}.

Both the analysis and the results in Theorem~\ref{thm:NearCriticalMixing} are qualitatively different, depending on whether $\xi(n)$ decays faster or slower than some threshold rate. Accordingly, we shall distinguish between two regimes and write:
\begin{equation}
\label{eqn:DefRegimes}
\begin{array}{ll}
\xi \in \text{[CR]}	&
	\quad \text{if } \lim_{n \to \infty} n^{2/3} \xi(n) = \infty,\quad \xi(n)=o(1) \\
\xi \in \text{[NCR]}  &
	\quad \text{if } 0 \leq \liminf_{n\to\infty} n^{2/3} \xi(n) \leq 	
	\limsup_{n\to\infty} n^{2/3} \xi(n) < \infty
\end{array}
\end{equation}
([CR] stands for Cutoff Regime and [NCR] stands for No-Cutoff Regime). For $a > 0$, define also
\begin{equation}
\label{eqn:QuadDriftTotalPassTime}
t^{\xi, a}
_{\gga}(n) =
		\lbr
			\begin{array}{ll}		
				\frac{\pi}{\sqrt{a}} \frac{n}{\sqrt{|\xi(n)|}}
					+ \gga \lb \frac{n^{1/2}}{|\xi(n)|^{5/4}} \maxwith n \rb 		
							& \quad \text{if } \xi \in \text{[CR]} \\
				e^{\gga} n^{4/3}	
				 			& \quad \text{if } \xi \in \text{[NCR]}.
			\end{array}
		\rpr
\end{equation}
Both \eqref{eqn:DefRegimes} and \eqref{eqn:QuadDriftTotalPassTime} will be used for sequences other than $\xi$ as well.
We shall also employ the following notation for hitting times. Given a real-valued process $(X_t)_{t \geq 0}$ and a number $x \in \bbR$ we shall write
\[
	\tau_x^+ = \inf \{t :\: X_t \geq x \}
		\quad \text{and} \quad
	\tau_x^- = \inf \{t :\: X_t \leq x \}
\]
for the right and left hitting time of $X$ at $x$. Notice that this notation does not carry an indication for the process for which $\tau_x^+$ is a hitting time and in case this is not clear from the context, it will be mentioned explicitly.

\subsection{\texorpdfstring{Drift Analysis Near $s^*(\gb)$}{Drift Analysis Near s^*(\textbeta)}}
\label{sub:PassgeTimeEstimates}

The following proposition states several properties of the function $D_{\gb}$ near $s^*(\gb)$.
\begin{prop}
\label{prop:DOfBetaProperties2}
For all $q \geq 3$ the following holds:
\begin{enumerate}
\item
\label{item:DOfBetaProperties2-1}
The point $s^*(\gb_s)$ is the unique $s\in(\frac1q,1]$ such that $D_{\gb_s}(s)=0$.
\item
\label{item:DOfBetaProperties2-2}
    For $k=0,1, \dots$, the functions $D^*_k(\gb) \bydef \frac{d^k}{ds^k} D_{\gb}(s^*(\gb))$ are $C^{\infty}$
    in a neighborhood of $\gb_s$.  Furthermore:
    \begin{itemize}
    \item
        $\frac{d}{d \gb} D^*_0(\gb_s) > 0$.
    \item
        \label{eqn:FirstOrderMaxOfD}
        $ D^*_2(\gb_s) < 0$.
    \end{itemize}

\item
\label{item:DOfBetaProperties2-3}
    For all $\rho > 0$, there exists $\gd > 0$ such that:
    \begin{equation}
    \label{eqn:StrictNegativeRegionOfD}
		\sup \lbr D_{\gb}(s) : \: s \in \lb \inv{q} + \rho, 1 \rsb, |s - s^*(\gb)| > \rho,\, |\gb-\gb_s| < \gd \rbr < 0\,.
    \end{equation}
\end{enumerate}

\end{prop}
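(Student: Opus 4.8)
The plan is to prove the three parts in turn, relying on Proposition~\ref{prop:DOfBetaProperties1} and the formula derived in its proof, $\tfrac{d}{ds}D_\gb(s) = -1 + \tfrac{2q\gb}{q-1}h\big((q-1)\mathrm{e}^{2\gb(1-qs)/(q-1)}\big)$ with $h(x) = x/(1+x)^2$, as well as the fact, valid for every $\gb > 0$, that $\tfrac{d}{ds}D_\gb$ has at most two zeros on $[1/q,1]$. Since $\gb_s(q) < \gb_c(q) < q/2$ by Proposition~\ref{l:betaMlessThanBetaC}, the conclusions of Proposition~\ref{prop:DOfBetaProperties1} apply at $\gb = \gb_s$; in particular $\tfrac{d}{ds}D_{\gb_s}(1/q) = -1 + 2\gb_s/q < 0$. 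For Part~\eqref{item:DOfBetaProperties2-1}: the definition of $\gb_s$ and continuity give $D_{\gb_s} \leq 0$ on $(1/q,1]$, while the last paragraph of the proof of Proposition~\ref{prop:DOfBetaProperties1} exhibits some $s_M \in (1/q,1)$ (not $1$, since $D_{\gb_s}(1) < 0$) with $D_{\gb_s}(s_M) = 0$, necessarily an interior maximum and hence a critical point. Two distinct roots in $(1/q,1]$ would force $D_{\gb_s}$ to dip strictly below $0$ in between --- it cannot be constant on a subinterval, else $\tfrac{d}{ds}D_{\gb_s}$ would have infinitely many zeros --- producing a third critical point, contradicting the at-most-two bound; so the root is unique. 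And since $\tfrac{d}{ds}D_{\gb_s}(1/q) < 0$ while $D_{\gb_s}(1/q) = D_{\gb_s}(s_M) = 0$, the function is not monotone on $[1/q,s_M]$, hence has a critical point in $(1/q,s_M)$; this is the second (and last) zero of $\tfrac{d}{ds}D_{\gb_s}$, so $s_M$ is the largest critical point in $[1/q,1)$, i.e.\ $s_M = s^*(\gb_s)$. Along the way this shows $D_{\gb_s}$ has exactly two critical points $a < b = s^*(\gb_s)$ in $(1/q,1)$, a local minimum at $a$ and a local maximum at $b$.

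For Part~\eqref{item:DOfBetaProperties2-2}, write $\tfrac{d}{ds}D_\gb(s) = -1 + \tfrac{2q\gb}{q-1}h(u_\gb(s))$ where $u_\gb(s) = (q-1)\mathrm{e}^{2\gb(1-qs)/(q-1)}$ is a smooth, strictly decreasing diffeomorphism in $s$, and recall $h$ attains its maximum $1/4$ only at $x=1$, is strictly increasing on $(0,1)$ and strictly decreasing on $(1,\infty)$. The two critical points $a<b$ from Part~\eqref{item:DOfBetaProperties2-1} correspond under $u_{\gb_s}$ to two distinct solutions of $h(x) = c$ with $c := \tfrac{q-1}{2q\gb_s}$; hence $c \in (0,1/4)$ and the solutions are $x_1 < 1 < x_2$, with $h'(x_1) > 0$ and $h'(x_2) < 0$. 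Since $b$ is a local maximum of $D_{\gb_s}$, $u_{\gb_s}' < 0$, and $h'(x_1) > 0 > h'(x_2)$, matching signs forces $u_{\gb_s}(b) = x_1$, so $D^*_2(\gb_s) = \tfrac{d^2}{ds^2}D_{\gb_s}(b) = \tfrac{2q\gb_s}{q-1}h'(x_1)\,u_{\gb_s}'(b) < 0$; likewise $\tfrac{d^2}{ds^2}D_{\gb_s}(a) > 0$. In particular $\tfrac{d^2}{ds^2}D_{\gb_s}$ is nonzero at $a$ and $b$, so the implicit function theorem applied to $\tfrac{d}{ds}D_\gb(s) = 0$ yields two $C^\infty$ branches of critical points near $\gb_s$; since $\tfrac{d}{ds}D_\gb$ has at most two zeros and $1/q$ is never one of them (as $\gb < q/2$ nearby), these branches are all the critical points, the larger being $s^*(\gb)$, which is therefore $C^\infty$, and consequently so is each $D^*_k(\gb) = \tfrac{d^k}{ds^k}D_\gb(s)\big|_{s=s^*(\gb)}$. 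Finally, differentiating $D^*_0(\gb) = D_\gb(s^*(\gb))$ and using $\tfrac{d}{ds}D_\gb(s^*(\gb)) = 0$ gives $\tfrac{d}{d\gb}D^*_0(\gb) = \partial_\gb D_\gb(s^*(\gb))$, which is positive at $\gb = \gb_s$ because $\partial_\gb D_\gb(s) = 2(qs-1)\mathrm{e}^{2\gb(1-qs)/(q-1)}\big(1+(q-1)\mathrm{e}^{2\gb(1-qs)/(q-1)}\big)^{-2} > 0$ for $s \in (1/q,1]$ and $s^*(\gb_s) \in (1/q,1)$.

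For Part~\eqref{item:DOfBetaProperties2-3}: the two-critical-point structure from Part~\eqref{item:DOfBetaProperties2-1} shows $D_{\gb_s}$ is strictly decreasing on $[1/q,a]$, strictly increasing on $[a,b]$ and strictly decreasing on $[b,1]$, with $D_{\gb_s}(1/q) = D_{\gb_s}(b) = 0$; hence $D_{\gb_s}(s) < 0$ for every $s \in (1/q,1] \setminus \{s^*(\gb_s)\}$. Fix $\rho > 0$. By continuity of $\gb \mapsto s^*(\gb)$ from Part~\eqref{item:DOfBetaProperties2-2} there is $\gd_0 > 0$ with $|s^*(\gb) - s^*(\gb_s)| < \rho/2$ whenever $|\gb - \gb_s| < \gd_0$, so every admissible $s$ (i.e.\ $s \in (1/q+\rho,1]$ with $|s - s^*(\gb)| > \rho$) lies in the fixed compact set $L := \{s \in [1/q+\rho,1] : |s - s^*(\gb_s)| \geq \rho/2\}$. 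If $L = \emptyset$ the supremum is $-\infty$; otherwise $D_{\gb_s}$ is continuous and strictly negative on $L$, hence attains a maximum $-c < 0$ there, and by joint continuity of $(\gb,s) \mapsto D_\gb(s)$ we can shrink $\gd_0$ to some $\gd \in (0,\gd_0]$ with $\sup\{D_\gb(s) : s \in L,\ |\gb-\gb_s| < \gd\} \leq -c/2 < 0$, which is~\eqref{eqn:StrictNegativeRegionOfD}.

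The main obstacle is Part~\eqref{item:DOfBetaProperties2-2}: establishing the precise sign $D^*_2(\gb_s) < 0$ (rather than merely ``$b$ is a local maximum'') requires the bookkeeping with $h$ and $u_\gb$ together with the identification $u_{\gb_s}(b) = x_1$, and one must then check that the implicit-function branch genuinely coincides with the supremum in the definition of $s^*(\gb)$ --- which relies crucially on the global at-most-two-critical-points bound.
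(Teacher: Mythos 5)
Your proof is correct and follows essentially the same route as the paper: explicit calculus on $D_\gb$, the at-most-two-zeros property of $\frac{d}{ds}D_\gb$ on $[1/q,1]$, the implicit function theorem for the smoothness of $s^*(\gb)$, positivity of $\partial_\gb D_\gb$ for $s>1/q$, and a compactness/continuity argument for part (3). The only notable variation is local: you pin down $D^*_2(\gb_s)<0$ via the unimodality of $h(x)=x/(1+x)^2$ under the monotone substitution $u_\gb$ (and you are somewhat more explicit than the paper in checking that the implicit-function branch really is the supremum defining $s^*(\gb)$), whereas the paper gets the strict sign by applying Rolle's theorem to $\frac{d}{ds}D_{\gb_s}$ between the two critical points together with the fact that $\frac{d^2}{ds^2}D_{\gb_s}$ has at most one zero --- both arguments are equally valid.
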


\bigskip
The next lemma gives sharp asymptotics for the passage time near $0$ for a  process with certain drift assumptions near $0$ (given by \eqref{eqn:QuadDriftRequirement} below). The one coordinate process will fall into this category if we analyze it near $s^*(\gb)$.

Formally, let $\lb (Z_t)_{t \geq 0}^n \ ; \; n \geq 0 \rb$ be a sequence of discrete time processes.
For all $n$, suppose that $\lb Z_t \rb_{t \geq 0} = \lb Z_t \rb_{t \geq 0}^n$ is
adapted to $\lb \cF_t \rb_{t \geq 0}^n$, satisfies $n|Z_{t+1}-Z_t| \in \{-1, 0,1\} $ with probability 1, and
\begin{equation}
\label{eqn:QuadDriftRequirement}
\bbE[Z_{t+1}-Z_t | \cF_t] =
\frac{1}{n} \lb \gz(n) + a Z_t^2 + b Z_t^3 + O \lb \gz(n) Z_t^2 + Z_t^4 \rb \rb
\end{equation}
where $a > 0$, $b \in \bbR$ and
$\gz(n)$ is a sequence satisfying  $\gz(n) \to 0$ as $n \to \infty$. We allow both $\gz \in \text{[CR]}$ and $\gz \in \text{[NCR]}$, but in the latter case, we assume in addition the existence of $d > 0$ such that for all $n$
\begin{equation}
\label{eqn:PosVarianceRequirement}
\Var[n(Z_{t+1} - Z_t)| \cF_t] \geq d.
\end{equation}
Write $\bbP_{z_0}$ for the probability measure under which this process is defined and starts from $z_0$.
\begin{lem}
\label{lem:QuadraticDriftPassTime}
Fix $\rho>0$ sufficiently small. Then for $z_0 = -\rho$
there exist functions $L^*,U^*:(-\infty,\infty) \to [0,1]$
satisfying $\lim_{\gga \to -\infty} L^*(\gga) = \lim_{\gga \to \infty} U^*(\gga) = 0$ such that for all $\gga$,
\begin{eqnarray}
	\label{eqn:QuadDriftPassTimeUpperBound}
	\limsup_{n \to \infty} \bbP_{z_0} \left( \tau_{\rho}^+ > t^{\gz, a}_{\gga}(n) \right) & \leq & U^*(\gga), \\
	\label{eqn:QuadDriftPassTimeLowerBound}
	\limsup_{n \to \infty} \bbP_{z_0} \left( \tau_{\rho}^+ < t^{\gz, a}_{\gga}(n) \right) & \leq & L^*(\gga),
\end{eqnarray}
where $\tau_{\rho}^+$ is a hitting time for $Z$.
Moreover, if  $\gz \in \text{\rm [NCR]}$ we can chose $U^*$ such that for all $\gga$ we have
\begin{equation}
\label{eqn:FastPassCond}
U^*(\gga) < 1.
\end{equation}
\end{lem}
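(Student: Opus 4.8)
The strategy is to reduce the question to the behaviour of $Z_t$ in a fixed small neighbourhood of $0$ and there compare it with the deterministic flow $\dot z=\gz(n)+az^2$, whose passage time from $-\rho$ to $\rho$ equals $\frac{2}{\sqrt{a\gz(n)}}\arctan\!\lb\rho\sqrt{a/\gz(n)}\rb=\frac{\pi}{\sqrt{a\gz(n)}}+O(1)$; this is the source of the leading constant $\pi/\sqrt a$ in $t^{\gz,a}_\gga(n)$. First I would fix $\gvep\in(0,\rho)$ and split $[-\rho,\rho]$ into the two outer zones $[-\rho,-\gvep]$, $[\gvep,\rho]$ and the critical zone $[-\gvep,\gvep]$. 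On the outer zones \eqref{eqn:QuadDriftRequirement} gives (for $\rho$ small, $n$ large, since $\gz(n)\to0$) a drift bounded below by $\frac{a\gvep^2}{2n}>0$, so by Lemma~\ref{lem:ImprovedAzuma} Part~\eqref{item:ImprovedAzuma1} the time to cross each is at most $C(\gvep)n$ except with probability $e^{-cn}$; this contributes only an additive $O(n)$ — which is always at most $\lb\frac{n^{1/2}}{|\gz(n)|^{5/4}}\maxwith n\rb$ — to the passage time, to be absorbed into the $\gga$-term. So everything hinges on the passage of $Z_t$ through $[-\gvep,\gvep]$.

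For the critical zone, and first in the regime $\gz\in\text{[CR]}$ (where $\gz(n)>0$ for $n$ large), I would introduce the Lyapunov function $\phi(z)=\int_0^z\frac{\dd u}{\gz(n)+au^2+bu^3}$, which is well defined on $[-\gvep,\gvep]$ for $\rho$ small since the denominator is $\asymp\gz(n)+au^2>0$ there, and is tailored so that a one-step Taylor expansion of $\phi$ together with \eqref{eqn:QuadDriftRequirement} yields, on $\{|Z_t|\le\gvep\}$,
\[
\bbE\lsb\phi(Z_{t+1})-\phi(Z_t)\mid\cF_t\rsb=\tfrac1n\lb1+\mathrm{err}(Z_t)\rb+\tfrac12\phi''(Z_t)\bbE\lsb(Z_{t+1}-Z_t)^2\mid\cF_t\rsb+O\lb\tfrac1{n^3}\rb,
\]
where $\mathrm{err}(z)=O(\gz(n))$ when $|z|\le\sqrt{\gz(n)}$ and $\mathrm{err}(z)=O(z^2)$ when $|z|>\sqrt{\gz(n)}$. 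Then $M_t\bydef\phi(Z_{t\minwith\tau_\rho^+})-\tfrac1n(t\minwith\tau_\rho^+)$ is a martingale up to the two correction terms, both controllable: the crucial bookkeeping is that the region $\{|Z_t|\in[\sqrt{\gz(n)},\gvep]\}$, where the relative error is only $O(z^2)=O(\gvep^2)$, is occupied for merely $O(n/\gvep)$ steps (the drift there is of order $z^2/n$), so its total contribution is $O(\gvep n)=O(n)$; while on $\{|Z_t|\le\sqrt{\gz(n)}\}$, where $\Theta(n/\sqrt{\gz(n)})$ steps are spent, the relative error is $O(\gz(n))\to0$; and the It\^o term, being $O\big(|Z_t|/(n^2(\gz(n)+aZ_t^2)^2)\big)$, contributes in expectation $O(1/\gz(n)^2)$ to $n\big(\phi(Z_{\tau_\rho^+})-\phi(z_0)\big)$, i.e.\ $O(1/\gz(n)^2)=o\big(n^{1/2}/\gz(n)^{5/4}\big)$ to $\tau_\rho^+$ precisely because $n^{2/3}\gz(n)\to\infty$. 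After a standard truncation/compensation argument (using that $\tau_\rho^+$ has exponential tails by the outer-zone estimate and a crude drift bound) optional stopping then gives $\bbE_{z_0}[\tau_\rho^+]=\frac{\pi n}{\sqrt{a\,\gz(n)}}+O\big(\frac{n^{1/2}}{\gz(n)^{5/4}}\maxwith n\big)$, and a parallel second-moment computation — bounding $\langle M\rangle_{\tau_\rho^+}=\sum_{t<\tau_\rho^+}(\phi'(Z_t))^2\Var(Z_{t+1}-Z_t\mid\cF_t)$ via $\Var(Z_{t+1}-Z_t\mid\cF_t)\le n^{-2}$ and the occupation estimate $\bbE\sum_{t<\tau_\rho^+}(\phi'(Z_t))^2=O(n/\gz(n)^{5/2})$ — yields $\Var_{z_0}[\tau_\rho^+]=O\big((n^{1/2}/\gz(n)^{5/4})^2\maxwith n^2\big)$. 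Chebyshev's inequality applied to $\tau_\rho^+$ then gives \eqref{eqn:QuadDriftPassTimeUpperBound} and \eqref{eqn:QuadDriftPassTimeLowerBound} with $L^*(\gga),U^*(\gga)=O(\gga^{-2})\to0$.

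For $\gz\in\text{[NCR]}$ the function $\phi$ degenerates, since the drift no longer dominates the fluctuations on the critical scale $|z|\sim n^{-1/3}$, so instead I would establish a diffusion approximation: rescaling $W^{(n)}_s\bydef n^{1/3}Z^{(n)}_{\lfloor n^{4/3}s\rfloor}$, hypotheses \eqref{eqn:QuadDriftRequirement}, $\limsup n^{2/3}\gz(n)<\infty$ and the nondegeneracy \eqref{eqn:PosVarianceRequirement} show $W^{(n)}$ is tight and every weak subsequential limit solves $\dd W_s=(c+aW_s^2)\dd s+\gs(W_s)\,\dd B_s$ with $c\ge0$, $\gs^2\in[d,1]$ — here \eqref{eqn:PosVarianceRequirement} is exactly what keeps the limiting diffusion coefficient from vanishing. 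For such a diffusion the passage time across any fixed finite window $[-K,K]$ is a.s.\ finite with law positive on $(0,\infty)$, while (comparing the drift $c+aW^2\ge aW^2$ with the ODE $\dot w=aw^2$) the two unbounded pieces contribute at most $C/K$ uniformly in the starting point; hence the full rescaled passage time has a limit $T$ satisfying $\bbP(T\le t)>0$ for every $t>0$ and $\bbP(T<\infty)=1$. Consequently $\bbP_{z_0}(\tau_\rho^+\le e^\gga n^{4/3})\to\bbP(T\le c_\gga)$ for a positive constant $c_\gga$, which lies strictly between $0$ and $1$ for every $\gga$, tends to $1$ as $\gga\to\infty$ and to $0$ as $\gga\to-\infty$; taking $U^*(\gga)=1-\bbP(T\le c_\gga)$ and $L^*(\gga)=\bbP(T<c_\gga)$ gives \eqref{eqn:QuadDriftPassTimeUpperBound}, \eqref{eqn:QuadDriftPassTimeLowerBound} and, in particular, \eqref{eqn:FastPassCond}.

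I expect the main obstacle to be the sharp leading constant $\pi/\sqrt a$ in the [CR] regime: one cannot afford any fixed-size relative error in converting drift into elapsed time, so the whole argument rests on the region-by-region accounting above showing that every fixed-$\gvep$ error is genuinely additive of size $O(n)$ (hence swallowed by the $\gga$-correction) rather than multiplicative on the $n/\sqrt{\gz(n)}$ main term, together with verifying that the It\^o correction and the higher-order Taylor remainders are $o\big(n^{1/2}/\gz(n)^{5/4}\big)$ — which is precisely where the hypothesis $n^{2/3}\gz(n)\to\infty$ enters.
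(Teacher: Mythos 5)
Your \([\textrm{CR}]\) argument is essentially the paper's. The function \(\phi(z)=\int_0^z\frac{\dd u}{\gz(n)+au^2+bu^3}\) is (up to the missing \(cz^4\) term and an \(n\)-scaling) exactly the paper's \(\Psi(z)=\int_0^z\frac{\dd x}{f(x)}\); your martingale \(M_t=\phi(Z_{t\wedge\tau})-\tfrac1n(t\wedge\tau)\) corresponds to the paper's Doob decomposition \(Y_t=\Psi(Z_t)-\Psi(Z_0)-t=M_t+A_t\); and your bookkeeping for the second-derivative correction, the occupation-time argument, and the second-moment bound \(\Var M=O(n/\gz^{5/2})\) reproduce Proposition~\ref{prop:DoobDecompBounds}, items \eqref{item:DoobDecompBoundPsi}--\eqref{item:DoobDecompBoundA}. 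The only real difference is organizational: you apply Chebyshev to \(\tau_\rho^+\) itself via optional stopping (which forces you to verify uniform integrability of \(\tau_\rho^+\)), whereas the paper bounds \(Y_t\) at the deterministic time \(t^{\gz,a}_\gga(n)\) via Chebyshev and Doob's maximal inequality, avoiding that justification. Both buy the same sharp constant \(\pi/\sqrt a\).

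In the \([\textrm{NCR}]\) regime you take a genuinely different route, and this is where there is a gap. You invoke a diffusion approximation: rescale \(W^{(n)}_s=n^{1/3}Z^{(n)}_{\lfloor n^{4/3}s\rfloor}\), claim tightness and that every subsequential weak limit solves \(\dd W_s=(c+aW_s^2)\dd s+\gs(W_s)\dd B_s\). But the hypotheses of the lemma give you only the two-sided bound \(d\leq\Var[n(Z_{t+1}-Z_t)\mid\cF_t]\leq 1\); they say nothing about the conditional variance being a function of \(Z_t\) alone, let alone one that converges after rescaling to some fixed \(\gs^2(w)\). In the intended application the increment variance of \(S^1_t\) depends on all the coordinates \(S^2_t,\dots,S^q_t\), not just \(S^1_t\), so the subsequential limits need not be Markov, and the limiting object need not be a diffusion of the form you write. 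Consequently the step ``\(\bbP_{z_0}(\tau_\rho^+\le e^\gga n^{4/3})\to\bbP(T\le c_\gga)\)'' is not justified from the stated hypotheses. The paper sidesteps this entirely: it splits \([-\rho,\rho]\) into \([-\rho,-rn^{-1/3}]\cup[-rn^{-1/3},rn^{-1/3}]\cup[rn^{-1/3},\rho]\); handles the outer pieces with the same \(\Psi\)-machinery as in \([\textrm{CR}]\) (Proposition~\ref{prop:DoobDecompBoundsFC}, with the prefactor \(k(r)\to0\) as \(r\to\infty\) doing the work); and treats the inner window by elementary hitting-time lemmas --- Lemma~\ref{lem:DiffusionHittingTime} for the upper bound (this is where the variance lower bound \eqref{eqn:PosVarianceRequirement} enters, through the supermartingale \(U_t\)), Lemma~\ref{lem:HittingTimeLowerBound} to show \(U^*(\gga)<1\) for every \(\gga\), and Lemma~\ref{lem:ImprovedAzuma} Part~\eqref{item:ImprovedAzuma2} for the lower bound --- none of which require any convergence of the conditional variance, only the stated one-sided bounds. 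If you want to retain a scaling-limit picture you would need either to add a hypothesis forcing the variance to be an asymptotic function of \(Z_t\), or replace the diffusion limit with a two-sided coupling/comparison against diffusions with constant coefficients \(d\) and \(1\) --- but passage times are not monotone in the diffusion coefficient, so that comparison would itself have to be argued carefully. The paper's elementary estimates are the safer route.

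One small additional remark on the outer zones: you estimate their traversal time as \(O(n)\) using only that the drift is bounded below by \(a\gvep^2/(2n)\) on \([\gvep,\rho]\). That is correct qualitatively, but in \([\textrm{NCR}]\) the paper's Proposition~\ref{prop:DoobDecompBoundsFC} replaces the fixed \(\gvep\) by the \(n\)-dependent boundary \(rn^{-1/3}\) precisely so that the outer passage time becomes \(k(r)n^{4/3}\) with \(k(r)\to0\); a fixed-\(\gvep\) split does not directly give the tunable-in-\(r\) control that one needs when stitching the three segments together to conclude \eqref{eqn:QuadDriftPassTimeUpperBound} for every \(\gga\).
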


\begin{rem}
\label{rem:ImprovedQuadraticDriftLem}
The upper (lower) bound in the lemma still holds
if $(Z_t)_{t \geq 0}$ satisfies \eqref{eqn:QuadDriftRequirement} with $\geq$ ($\leq$)
in place of the equality sign or if in place of $z_0 = -\rho$ we have
$z_0 \geq -\rho$ ($z_0 \leq -\rho$). Since $(Z_t)_{t \geq 0}$ has
$0, \pm \inv{n}$ steps this can be shown by a simple coupling argument.
\end{rem}

\medskip
The next proposition shows that the drift of one coordinate stays close to its upper bound
$D_{\gb}(\cdot)$ for sufficiently long time. More precisely,
for $\gs_0 \in \gS_n$, $t > 0$, $\gd > 0$, $y \in [0,1]$
let
\begin{equation}
\label{eqn:EOfNDefinition}
K_n(\gs_0, t, y, \gd) = \bbP_{\gs_0}
\lb \max_{0\leq \gt\leq \min\{t,\tau_y^- \}}   D_\gb(S_{\gt}^1)  - n \bbE_{\gs_0}
    \lsb S_{\gt+1}^1- S_{\gt}^1\mid\mathcal{F}_{\gt}  \rsb > \gd \rb,
\end{equation}
where $\tau_y^-$ is a hitting time for $S^1_t$.
Then,
\begin{prop}
\label{prop:SharpnessOfDBound}
Suppose that $\gb\leq q/2$ and set $\gs_0 \equiv 1$.
Then for any $y >\inv{q}$:
\begin{enumerate}
\item
\label{eqn:SharpnessOfDBoundOutOfWindow}
If $t(n)=o(n^2)$ and $\gd(n)n^{2}t(n)^{-1}\to\infty$ then $\lim_{n \to \infty}  K_n(\gs_0, t(n), y, \gd(n)) = 0$.
\item
\label{eqn:SharpnessOfDBoundInWindow}
If $t(n) = \gga n^{4/3}$ then for all $\gd>0$ we have
\[\lim_{\gga \to 0} \limsup_{n \to \infty}
	K_n(\gs_0, t(n), y, \gd n^{-2/3}) = 0.\]
\end{enumerate}
\end{prop}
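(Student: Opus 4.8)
The plan is to reduce the drift gap $D_{\gb}(S_\tau^1) - n\bbE_{\gs_0}\lsb S^1_{\tau+1}-S^1_\tau \mid \cF_\tau\rsb$ to the \emph{internal spread} of the minority coordinates, and then control that spread by a supermartingale argument in the spirit of Section~\ref{sec:DriftAnalysis}. Write $\mathrm{bal}(s) = \big(s^1, \tfrac{1-s^1}{q-1},\dots,\tfrac{1-s^1}{q-1}\big)$, so that by definition $D_{\gb}(x)=d_{\gb}(\mathrm{bal}(s))$ whenever $s^1=x$, and set $U_\tau \bydef \sum_{j=2}^q\big(S_\tau^j-\tfrac{1-S_\tau^1}{q-1}\big)^2$ (this is $\normII{\Pi S_\tau}^2$ for $\Pi$ the projection onto $\{v:\ v^1=0,\ \sum_{j\ge 2}v^j=0\}$, and $U_0=0$ since $\gs_0\equiv 1$). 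First I would record, from the computation behind~\eqref{eqn:OneCoordinateDrift}, that $n\bbE_{\gs_0}\lsb S^1_{\tau+1}-S^1_\tau\mid\cF_\tau\rsb=d_{\gb}(S_\tau)+O(n^{-1})$ uniformly, so that the gap equals $g_{\gb}^1(\mathrm{bal}(S_\tau))-g_{\gb}^1(S_\tau)+O(n^{-1})$ (the $-s^1$ parts cancel). Writing $g_{\gb}^1(s)=e^{2\gb s^1}/\big(e^{2\gb s^1}+\sum_{j\ge 2}e^{2\gb s^j}\big)$, the numerator is fixed once $s^1$ is, while $\sum_{j\ge 2}e^{2\gb s^j}\ge (q-1)e^{2\gb(1-s^1)/(q-1)}$ with second-order defect of order $U_\tau$ (the first-order term vanishes because the minority coordinates sum to $1-s^1$); since $t\mapsto e^{2\gb s^1}/(e^{2\gb s^1}+t)$ is Lipschitz, this yields the key estimate
\[
D_{\gb}(S_\tau^1) - n\bbE_{\gs_0}\lsb S^1_{\tau+1}-S^1_\tau\mid\cF_\tau\rsb \;\le\; C\,U_\tau + C\,n^{-1},
\]
for some $C=C(\gb,q)$. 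It is crucial that this bound is \emph{quadratic} in the spread: a merely linear bound would cost an $n^{-1/2}$ term, useless against the $n^{-2/3}$ scale in part~\eqref{eqn:SharpnessOfDBoundInWindow}. Thus $K_n(\gs_0,t,y,\gd)\le \bbP_{\gs_0}\big(\max_{0\le\tau\le\min\{t,\tau_y^-\}}U_\tau > (\gd-Cn^{-1})/C\big)$, and everything reduces to a maximal estimate on $U_\tau$.

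Second, I would analyse the one-step drift of $U_\tau$. Componentwise $n\bbE_{\gs_0}\lsb S^k_{\tau+1}-S^k_\tau\mid\cF_\tau\rsb=-S_\tau^k+g_{\gb}^k(S_\tau)+O(n^{-1})$, and a Taylor expansion of $g_{\gb}$ about $\mathrm{bal}(S_\tau)$, using the explicit entries of $\nabla g_{\gb}$ quoted in the excerpt, shows that on the internal subspace $\nabla g_{\gb}(\mathrm{bal}(S_\tau))$ acts simply as multiplication by $2\gb\, g_{\gb}^2(\mathrm{bal}(S_\tau))$. On $\{\tau<\tau_y^-\}$ we have $S_\tau^1\ge y>1/q$, hence $g_{\gb}^2(\mathrm{bal}(S_\tau))\le \eta(y)<1/q$ and $2\gb\,g_{\gb}^2(\mathrm{bal}(S_\tau))\le 2\gb\eta(y)<1$ (this is where $\gb\le q/2$ enters). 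Combining these with $\normII{\Pi\xi_{\tau+1}}=O(n^{-1})$ gives, for $\tau<\tau_y^-$,
\[
\bbE_{\gs_0}\lsb U_{\tau+1}-U_\tau\mid\cF_\tau\rsb \;\le\; -\tfrac{\gt_0}{n}\,U_\tau + C\,n^{-2},
\qquad \gt_0=\gt_0(y,\gb,q)>0,
\]
after absorbing the $O(n^{-1}U_\tau^{3/2})$ remainder into the linear term on $\{U_\tau\le\rho_0^2\}$ for $\rho_0$ small; one also reads off $|U_{\tau+1}-U_\tau|=O(n^{-1}U_\tau^{1/2})+O(n^{-2})$ and $\Var_{\gs_0}(U_{\tau+1}\mid\cF_\tau)=O(n^{-2}U_\tau)+O(n^{-3})$. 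This recursion is exactly of the form met in Section~\ref{sec:DriftAnalysis} for $\normII{\wh S_t}^2$ (cf.~\eqref{eqn:RecursionForSHat} and Proposition~\ref{prop:ContractionForNorm}).

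Third, I would run a Doob decomposition plus Azuma bootstrap, as in Lemma~\ref{lem:ImprovedAzuma} and the proof of Proposition~\ref{prop:ContractionForNorm}, to turn this into the maximal bound $\max_{0\le\tau\le\min\{t,\tau_y^-\}}U_\tau = O_{P}\big(\min\{t,n\}/n^2\big)$ with exponential tails in the implied constant. Concretely: a first bootstrap, using only that the drift of $U$ is $\le Cn^{-2}$, confines $U_\tau<\rho_0^2$ with probability $\to 1$ for every $t=o(n^2)$; on that event the contractive recursion holds, and a second Doob--Azuma pass — bounding the predictable part by $Cn^{-2}t$ for $t\lesssim n$ and by the equilibrium level $O(n^{-1})$ (coming from the $\gt_0/n$ contraction) for $t\gtrsim n$, and the martingale part through its quadratic variation $\lesssim n^{-2}t\,\max_\sigma U_\sigma + n^{-3}t$ — gives $\max_\tau U_\tau\lesssim t/n^2$ for $t\lesssim n$ and $\lesssim 1/n$ for $t\gtrsim n$. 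For part~\eqref{eqn:SharpnessOfDBoundOutOfWindow}, the hypothesis $\gd(n)n^2 t(n)^{-1}\to\infty$ says precisely that $\gd(n)$ dominates $\min\{t(n),n\}/n^2$ (and, in the regime $t(n)\gtrsim n$ where the proposition is applied, $\gd(n)\gg n^{-1}$ so the additive $Cn^{-1}$ is harmless), whence $K_n\to 0$. For part~\eqref{eqn:SharpnessOfDBoundInWindow}, with $t(n)=\gga n^{4/3}\gtrsim n$ the predictable part of $U$ is $\lesssim \gga n^{-2/3}$; comparing $C\gga n^{-2/3}$ and the discretization term $Cn^{-1}$ with the target $\gd n^{-2/3}$ and then sending $\gga\to 0$ after $n\to\infty$ gives $K_n\to 0$ for every fixed $\gd>0$.

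The main obstacle is this third step: promoting the \emph{expected} contraction of $U_\tau$ to a \emph{uniform-in-time} high-probability bound over a horizon that can be superlinear in $n$, so that accumulated fluctuations never overwhelm the $n^{-1}$-scale equilibrium. This is exactly where one needs the strict negativity $\gt_0>0$ of the contraction coefficient on $\{S^1\ge y\}$ — itself a consequence of $y>1/q$ together with $\gb\le q/2$ (via Proposition~\ref{prop:DOfBetaProperties1}) — mirroring the treatment of $\normII{\wh S_t}$ in Section~\ref{sec:DriftAnalysis}; the first two steps are essentially algebraic.
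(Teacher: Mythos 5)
Your first two steps match the paper's reduction: the gap is controlled by the (squared) spread of the minority coordinates via a Taylor expansion of $g_\gb$ around the balanced profile, and this quadratic bound is exactly what the paper establishes (phrased as $O\big((\max_{2\le i<j\le q}\max_{\tau}|S^i_\tau-S^j_\tau|)^2\big)$). Your drift computation for $U_\tau$ is also correct, with $\gt_0>0$ following from $g^2_\gb(\mathrm{bal}(s))<1/q$ when $s^1>1/q$ and $\gb\le q/2$.

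The gap is in your third step. From the Doob decomposition $U_\tau = M_\tau + A_\tau$ with $\bbE U_s \lesssim n^{-1}$ (contraction in mean), you get $\bbE\langle M\rangle_t \lesssim n^{-2}\sum_s \bbE U_s \lesssim t n^{-3}$, so $\max_\tau |M_\tau|$ is typically of order $\sqrt{t/n^3}$. At $t=\gga n^{4/3}$ this is $\asymp n^{-5/6}\gg n^{-1}$, so the claimed bootstrap at scale $n^{-1}$ cannot close. Your assertion that ``the predictable part is $O(n^{-1})$ coming from the contraction'' is also unjustified: $A_\tau = U_\tau - M_\tau$, and since the contractive recursion for $A_\tau$ is driven by $M_\tau$ (namely $A_{\tau+1}\le(1-\gt_0/n)A_\tau - (\gt_0/n)M_\tau + Cn^{-2}$), one only gets $\max_\tau A_\tau \le Cn^{-1}+\max_\tau|M_\tau|$, not $Cn^{-1}$. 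The corrected conclusion is $\max_\tau U_\tau \lesssim n^{-1}+\sqrt{t/n^3}$, which \emph{does} suffice for both parts of the proposition, but is not what you wrote, and getting there requires treating the $A$--$M$ interaction carefully rather than bounding them separately.

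The paper's route is substantially simpler and avoids the bootstrap entirely: it tracks the \emph{signed, linear} spread $Y_t=S^i_t-S^j_t$ rather than its square. On the event $S^i_t,S^j_t<1/q$ (handled by a stopping-time argument showing this holds up to $\min\{t(n),\tau_y^-\}$ with high probability), $|Y_t|$ is a supermartingale with increments bounded by $4/n$; the auxiliary martingale $Z_t$ from the proof of Lemma~\ref{lem:ImprovedAzuma} part~\eqref{item:ImprovedAzuma2} satisfies $|Z_t|\ge|Y_t|-3/n$ and $\bbE Z_t^2\le 16t/n^2$, so Doob's $L^2$ maximal inequality gives $\bbE\big[(\max_{\tau\le t}|Y_\tau|)^2\big]=O(t/n^2)$ in one line, and Markov finishes. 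No contraction, no variance-scaled increments, no bootstrap. This also explains why the proposition needs $\lim_{\gga\to 0}$ in part~\eqref{eqn:SharpnessOfDBoundInWindow}: the paper's bound at $t=\gga n^{4/3}$ is $\gga n^{-2/3}$, whereas your (corrected) $n^{-5/6}$ would actually remove the need for $\gga\to 0$ --- at the cost of a considerably more delicate argument.
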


\subsection{Proof of Theorem~\ref{thm:NearCriticalMixing}}
\subsubsection{Upper Bound on Mixing Time}
Fix $\rho>0$ small enough and let $\gs_0 \in \gS_n$ be given. By Proposition~\ref{prop:DOfBetaProperties2} Part~\eqref{item:DOfBetaProperties2-1}, we can find $\delta>0$ so that
\begin{equation}
\label{eqn:UniformNegativeDrift}
\sup \lbr D_{\gb}(s) : \: |\gb-\gb_s| < \gd, \: 1/q + \rho/2 < s < 1, \: |s - s^*| > \rho/2 \rbr < -\gd.
\end{equation}
where we use $s^*$ in place of $s^*(\gb)$.
Then by Lemma~\ref{lem:ImprovedAzuma} Part~\eqref{item:ImprovedAzuma1}, we have that, 
\[
\bbP_{\gs_0} \lb \tau_{(s^* + \rho)}^-  > (2 /\gd) n \rb = o(1)
\]
where this and all hitting times below are of $S_t^1$.
Define now $Z_t=s^* - S^1_{t+\tau_{(s^*+\rho)}^-}$.
Using \eqref{eqn:OneCoordinateDrift}, \eqref{eqn:LittleDOfBetaDef}
Proposition~\ref{prop:DOfBetaProperties2} and applying Taylor's expansion for $D_{\gb}(s)$
around $s^*$ and then again for $D_0^*(\gb)$ around $\gb_s$, we infer that there exist
$a > 0$, $\ga \neq 0$, $b \in \bbR$ such that
\[
\bbE[Z_{t+1}-Z_t | \cF_t] \geq
\frac{1}{n} \lb \gz(n) + a Z_t^2 + b Z_t^3 + O \lb \gz(n) Z_t^2 + Z_t^4 \rb \rb \,,
\]
where $\gz(n) = \ga \xi(n) + O(\xi(n)^2 + n^{-1})$ and also \eqref{eqn:PosVarianceRequirement} holds (if needed), since the probability of choosing any new color at time $t+1$ is bounded above and below, uniformly in $n$ and $S_t$. Hence by Lemma~\ref{lem:QuadraticDriftPassTime}
and Remark~\ref{rem:ImprovedQuadraticDriftLem}, for all $\gga$
\[
\bbP_{\gs_0} \lb \tau_{(s^*-\rho)}^- -
\tau_{(s^*+\rho)}^-  > t^{\gz, a}_{\gga}(n)
\rb \leq U^*(\gga) + o(1)
\]
Now, using the relation between $\gz(n)$ and $\xi(n)$,
it is not difficult to verify that $t^{\gz, a}_{\gga}(n) \leq t^{\xi, a^{\prime}}_{\gga^{\prime}}(n)$ for all $\gga^{\prime}$, where
$a^{\prime} = \ga a$ and $\gga = F(\gga^{\prime})$ for some $F$ such that $\gga \to \infty$ if $\gga^{\prime} \to \infty$.

From Lemma~\ref{lem:ImprovedAzuma} Part~\eqref{item:ImprovedAzuma3}, applied to the process
$(S^1_{t+\tau_{(s^*-\rho)}^-} - \lb s^* - \rho \rb : \; t \geq 0
)$, it follows that with $1-o(1)$ probability $S^1_{t+\tau_{(s^*-\rho)}^-}$ stays to
the left of $s^* - \rho/2$ for all $t < n^2$. Then we may apply Lemma~\ref{lem:ImprovedAzuma} Part~\eqref{item:ImprovedAzuma1} 
to the process $(S^1_{t+\tau_{(s^*-\rho)}^-} - (1/q + \rho/2) : \; t \geq 0)$ to conclude
\[
\bbP_{\gs_0} \lb \tau_{(q^{-1} + \rho/2)}^- -
\tau_{(s^*-\rho)}^- > (2/\gd) n
\rb =o(1)
\]
Finally another application of Lemma~\ref{lem:ImprovedAzuma} Part~\eqref{item:ImprovedAzuma3}
gives $S^1_{t+\tau_{(q^{-1}+\rho/2)}^-} \leq 1/q + \rho$ for all $t < n^2$
with $1-o(1)$ probability.
For the [CR] case, we  use union bound (over all coordinates):
\begin{equation}
\label{eqn:Order1InCR}
\bbP_{\gs_0} \lb S_{t_{\gga^{\prime}}^{\xi, a^{\prime}}(n)} \notin \cS^{\rho+} \rb
	\leq  q U^*(\gga) +o(1) .
\end{equation}
For the [NCR] case, define $\tau^{(1)} = \tau_{(q^{-1}+\rho/2)}^-$ and
$\tau^{(k)} = \inf \{t \geq \tau^{(k-1)}: S^k_t \leq q^{-1}+\rho/2 \}$
for $k > 1$.
Then, by inductive conditioning we obtain
\[
\bbP_{\gs_0} \lb \tau^{(k)} \leq t_{\gga^{\prime}}^{\xi, a}(n) : \: k=1, \dots, q \rb
	\geq (1-U^*(\gga))^q + o(1) .
\]
Since also $S^k_{t+\tau^{(k)}} \leq 1/q + \rho$ for all $k \in [1,q]$, $t < n^2$
with $1-o(1)$ probability, we arrive to
\begin{equation}
\label{eqn:Order1InNCR}
\bbP_{\gs_0} \lb S_{t_{\gga^{\prime}}^{\xi, a^{\prime}}(n)} \notin \cS^{\rho+} \rb
	\leq  1- (1-U^*(\gga))^q +o(1) .
\end{equation}

We now re-employ the overall coupling in Sub-section~\ref{sub:FullCoupling}, but in view of \eqref{eqn:Order1InCR} and \eqref{eqn:Order1InNCR} we change step \eqref{item:FC_Burn} and instead of running the two chains for $\gga_1 n$ time, we run them for $t^{(1)}(n) = t_{\gga^{\prime}}^{\xi,a^{\prime}}(n)$. As \eqref{eqn:Order1InCR}, \eqref{eqn:Order1InNCR} show, we can choose $\gga^{\prime}$ large enough such that
$\bbP^{OC}_{\gs_0} (S_{t_{\gga^{\prime}}^{\xi, a^{\prime}}(n)} \notin \cS^{\rho+}) \leq \gep$
for $n$ sufficiently large. The remaining steps in the coupling are left unchanged and we choose the same parameter values, as in the proof of Theorem~\ref{thm:SubCriticalCutoff}.

Using the analysis of the modified step \eqref{item:FC_Burn} given by \eqref{eqn:Order1InCR} and \eqref{eqn:Order1InNCR}, together with the analysis in Sub-section~\ref{sub:ProofOfUpperBoundA} of the remaining
steps - which carries over (uniformly in $\gb$ near $\gb_s(q)$), since it only required $\gb < \gb_c(q)$, we recover
\eqref{eqn:JensenForTV}, namely
\[
\normTV{\bbP_{\gs_0} \lb \gs_{t^{(5)}(n)} \in \cdot \rb - \mu_n}
	\leq 6\gep .
\]
The time is now given by
\[
t^{(5)}(n) = t^{\ga_1}_{\gga}(n) + t^{\xi, a^{\prime}}_{\gga^{\prime}}(n),
\]
for some $\gga > 0$.
Since $\gs_0$ is arbitrary and $\gep$ can be made arbitrarily small, by having $\gga$, $\gga^{\prime}$ large enough, this completes the proof for the upper bound in \eqref{eqn:CriticalMixingThm_OutOfCriticalWindow} and
\eqref{eqn:CriticalMixingThm_InCriticalWindow} with $\ga_2 = \pi/\sqrt{\ga a}$.

\subsubsection{No Cutoff in {\rm NCR} Case}
Using the modified overall coupling as introduced above, we obtain from \eqref{eqn:JensenForTV}, \eqref{eqn:Order1InNCR} and \eqref{eqn:FastPassCond}
for any $\gga^{\prime}$ and sufficiently large $\gga$
\[
\normTV{\bbP_{\gs_0} \lb \gs_{t^{(5)}(n)} \in \cdot \rb - \mu_n}
	\leq 1-\gep,
\]
for all $\gs_0$, large enough $n$ and some $\gep > 0$.
Then, since in the [NCR] case
\[
t^{(5)}(n) = t^{\ga_1}_{\gga}(n) + t^{\xi, a^{\prime}}_{\gga^{\prime}}(n)
	\leq t^{\xi, a^{\prime}}_{\gga^{\prime} + C}(n) ,
\]
this shows that there is no cut-off.

\subsubsection{Lower Bound on Mixing Time.}
Fix $\rho > 0$ small enough and start with $\gs_0 \equiv 1$ - the all '1' configuration.
Define:
\[
\gd(n) =
		\lbr
			\begin{array}{ll}		
				n^{-1}\xi(n)^{-1/2} A(n)
						\quad & \text{if } \xi \in \text{\rm [CR]} \\
				\gd_1 n^{-2/3}	
						& \text{if } \xi \in \text{\rm [NCR]}
			\end{array}
		\rpr
\]
where $A(n)$ is a sequence tending to $\infty$ sufficiently slowly and $\gd_1 >0$. Set $$N = \inf \lbr t : \: D_\gb(S_t^1)  - n \bbE \lsb S_{t+1}^1- S_{t}^1 \mid \cF_t \rsb > \gd(n) \rbr$$
and define the process $Y_t$ which is equal to $S_t^1$ up to time $N$, but after this time evolves like a birth-and-death processes with $\pm 1/n$ increments and drift $-n^{-1} D_{\gb}(S^1_t)$.
Then $(Z_t \bydef s^* - Y_t :\: t \geq 0)$ satisfies
\[
\bbE[Z_{t+1}-Z_t | \cF_t] \leq
\frac{1}{n} \lb \gz(n) + a Z_t^2 + b Z_t^3 + O \lb \gz(n) Z_t^2 + Z_t^4 \rb \rb \,,
\]
with $a$, $b$, $\ga$ as in the upper bound case, but with
$\gz(n) = \ga \xi(n) + \gd(n) + O(\xi(n)^2 + n^{-1})$ and condition \eqref{eqn:PosVarianceRequirement} holds (if needed) as before.
Then, using Lemma~\ref{lem:QuadraticDriftPassTime} and Remark~\ref{rem:ImprovedQuadraticDriftLem}, we have for $n$ large enough
$\bbP_{\gs_0} \lb \tau_{\rho}^+ < t^{\gz, a}_{\gga}(n) \rb \leq 2 L^*(\gga)$, where $\tau_{\rho}^+$ is a hitting time for $Z$ and $\gga \in \bbR$. As before, it is not difficult to verify that if  $A(n)$ is increasing slowly enough, $t^{\gz, a}_{\gga}(n) \geq t^{\xi, a^{\prime}}_{\gga^{\prime}}(n)$, where $\gga = F(\gga^{\prime})$ satisfies $\gga \to -\infty$ if $\gga^{\prime} \to -\infty$.

Now define $T = \inf \{t: \: S_t \in \cS^{\rho+}\}$ and
$\tau^{\prime}$ as $\tau_{\rho}^+$, only with $S_t^1$ in place of $Y_t$. Then
\begin{eqnarray*}
\bbP_{\gs_0} \lb T < t^{\xi, a^{\prime}}_{\gga^{\prime}}(n) \rb
	& \leq 	&
		\bbP_{\gs_0} \lb \tau_{\rho}^+ < t^{\xi, a^{\prime}}_{\gga^{\prime}}(n) \rb + 		
		\bbP_{\gs_0} \lb N < \tau^{\prime} \minwith t^{\xi, a^{\prime}}_{\gga^{\prime}}(n) \rb	\\
	& \leq 	& 2L^*(\gga) + K_n(\gs_0, t^{\xi, a^{\prime}}_{\gga^{\prime}}(n), s^* - \rho, \gd(n))
\end{eqnarray*}
where $K_n$ is defined in \eqref{eqn:EOfNDefinition}.
Then, if $\rho$ is sufficiently small, we can use \eqref{eqn:UpperBoundOnSqrtNCloseToEquilibrium}
for $\wh{S}_t$ starting from time $T$ to obtain for all $r>0$ and $\gga^{\prime\prime}$:
\begin{eqnarray*}
\lefteqn{\bbP_{\gs_0} \lb \normII{\wh{S}_{t^{\ga_1}_{\gga^{\prime\prime}}(n) + t^{\xi, a^{\prime}}_{\gga^{\prime}}(n)}} <
    \frac{r}{\sqrt{n}} \rb} \\
    & \leq  &
        \bbP_{\gs_0} \lb T < t^{\xi, a^{\prime}}_{\gga^{\prime}}(n) \rb +
        \bbP_{\gs_0} \lb \lpr \normII{\wh{S}_{t^{\ga_1}_{\gga^{\prime\prime}}(n) + t^{\xi, a^{\prime}}_{\gga^{\prime}}(n)}}
            < \frac{r}{\sqrt{n}} \rabs T \geq t^{\xi, a^{\prime}}_{\gga^{\prime}}(n) \rb    \\
    &  \leq & 2L^*(\gga) + K_n(\gs_0, t^{\xi, a^{\prime}}_{\gga^{\prime}}(n), s^* - \rho, \gd(n)) + O((e^{-C_2 \gga^{\prime\prime}} - r)^{-2})
\end{eqnarray*}
Using Proposition~\ref{prop:SharpnessOfDBound} for the middle term, the last inequality
gives \eqref{eqn:UpperBoundOnSqrtNCloseToEquilibrium1} with
$t^{\ga_1}_{\gga^{\prime\prime}}(n) + t^{\xi, a^{\prime}}_{\gga^{\prime}}(n)$ in place of $t^{\ga_1}_{\gga}(n)$. The remaining of the proof is identical to the subcritical case and this shows the lower bound for both parts of Theorem \ref{thm:NearCriticalMixing} with $\ga_2 = \pi/\sqrt{\ga a}$.
\qed

\subsection{Proofs for Subsection~\ref{sub:PassgeTimeEstimates}}
\begin{proof}[Proof of Proposition~\ref{prop:DOfBetaProperties2}]
First observe that for all
$\gb$, $D_\gb(\frac1q)=0$ and for all $s>\frac1q$,
\[
\frac{d}{d\gb} D_{\gb}(s) = \frac{d}{d\gb} \lb -s + \frac1{1+(q-1)e^{-\frac{2\gb q}{q-1}(s-\frac1q)}} \rb > 0\,.
\]
Now since $D_\gb(s)$ is smooth as a function of $s$ and $\gb$ and $d_0(s)=-s+\frac1q$ it follows that $\gb_s>0$.  We have that
\begin{align}\label{eqn:g_properties}
\frac{d}{ds} D_{\gb}(s) &= -1 + \frac{2\gb q e^{-\frac{2\gb q}{q-1}(s-\frac1q)}}{\left(1+(q-1)e^{-\frac{2\gb q}{q-1}(s-\frac1q)}\right)^2}\,,\nonumber \\
\frac{d^2}{ds^2}D_{\gb}(s) &=  \frac{4\gb^2 q^2 e^{-\frac{2\gb q}{q-1}(s-\frac1q)}(1-(q-1)e^{-\frac{2\gb q}{q-1}(s-\frac1q)})}{(q-1)\left(1+(q-1)e^{-\frac{2\gb q}{q-1}(s-\frac1q)}\right)^3}\,,
\end{align}
and so
\begin{equation}
\left . \frac{d}{ds} D_\gb(s) \right|_{s=\frac1q}=-1+\frac{2\gb}{q},\qquad \left . \frac{d^2}{ds^2} D_\gb(s) \right|_{s=\frac1q}=\frac{4\gb^2 (q-2)}{q(q-1)}>0\,.
\end{equation}
which implies that $d_{q/2}(s) > 0$ when $s\in(\frac1q,\frac1q+\epsilon)$ for some small $\epsilon$.  This implies that $\gb_s<q/2$. It follows that $$\left . \frac{d}{ds} D_{\gb_s}(s) \right|_{s=\frac1q} < 0\,,$$ and so $D_{\gb}(s)<0$ when $\gb \in [\gb_s,\gb_s+\epsilon]$ and $s\in(\frac1q,\frac1q+\epsilon)$ for some small $\epsilon$.    It follows by compactness then that for some $\frac1q + \epsilon\leq s^*(\gb_s)\leq 1$ that $D_{\gb_s}(s^*(\gb_s))=0$.  By the definition of $\gb_s$ and since $D_\gb(s)$ is smooth we have that
\begin{equation}\label{eqn:gxM_properties}
\left . \frac{d}{ds} D_{\gb_s}(s) \right|_{s=s^*(\gb_s)}= 0 ,\qquad \left . \frac{d^2}{ds^2}D_{\gb_s}(s) \right|_{s=s^*(\gb_s)} \leq 0\,.
\end{equation}
The equation $ \frac{d}{ds} D_{\gb_s}(s)= 0$ is equivalent to
\[
2\gb q e^{-\frac{2\gb_s q}{q-1}(s-\frac1q)}=\left(1+(q-1)e^{-\frac{2\gb_s q}{q-1}(s-\frac1q)}\right)^2\,,
\]
which is a quadratic equation in $e^{-\frac{2\gb_s q}{q-1}(s-\frac1q)}$ and hence has at most 2 solutions which we denote $s_1,s_2$ with $s_1< s_2$.  Since $ \frac{d}{ds} D_{\gb_s}(0)<0$ then $D_{\gb_s}(s_1)<0$ and so $s^*(\gb_s)=s_2$.  In particular this implies that $s^*(\gb_s)$ is the unique $s\in(\frac1q,1]$ such that $D_{\gb_s}(s)=0$.  Also it follows that $ \frac{d}{ds} D_{\gb_s}(s)>0$ for $s\in(s_1,s^*(\gb_s))$ and that there exists $s'\in (s_1,s^*(\gb_s))$ such that $ \frac{d^2}{ds^2} D_{\gb_s}(s')=0$.  Since by equation~\eqref{eqn:g_properties} there is at most one $s$ such that $\frac{d^2}{ds^2}D_{\gb_s}(s) = 0$ it follows that
\[
 \left . \frac{d^2}{ds^2}D_{\gb_s}(s) \right|_{s=s^*(\gb_s)} < 0.
\]
Hence by the Inverse Function Theorem $s^*(\gb)$ is a
smooth function of $\gb$ when $\gb$ is in a small neighborhood of $\gb_s$.  Then we have that
\begin{align*}
\frac{d}{d \gb} D^*_0(\gb_s) &= \left. \frac{d}{d \gb} D_\gb(s^*(\gb_s))\right|_{\gb=\gb_s} +\lb \left.  \frac{d}{d\gb}s^*(\gb) \right|_{\gb=\gb_s} \rb \left.  \frac{d}{ds}D_{\gb_s}(s) \right|_{s=s^*(\gb_s)}\\
&= \left.\frac{d}{d \gb} D_\gb(s^*(\gb_s))\right|_{\gb=\gb_s}\\
&> 0\,,
\end{align*}
since $\frac{d}{ds}D_{\gb_s}(s)|_{s = s^*(\gb_s)} = 0$ which completes the proof of the second part.

We now turn to prove the third part. As we have observed
$D_{\gb_s}(s)$ is a smooth function satisfying
\[D_{\gb_s}(s^*(\gb_s)) = 0\,,\, \frac{d}{ds}D_{\gb_s}(s)|_{s = s^*(\gb_s)} = 0\,, \mbox{ and } \frac{d^2}{ds^2}D_{\gb_s}(s)|_{s = s^*(\gb_s)} < 0\,.\]
Therefore, we deduce that for any $\rho > 0$, there exists
$\delta_1 > 0$ such that
\begin{equation}\label{eq-d-beta-M-s}
D_{\gb_s}(s) < -2\delta_1 \mbox{ for all } s\in\left\{ s:|s - s^*(\gb_s)| \geq
\rho/2, s\in \left(\frac1q+\rho,1\right]\right\}\,.\end{equation}
Since $\left . \frac{d^2}{ds^2}D_{\gb_s}(s) \right|_{s=s^*(\gb)} < 0$  for all $\rho > 0$,
there exists $\delta_2 > 0$ such that
\[|s^*(\gb) - s^*(\gb_s)| < \rho /2 \mbox{ for all } |\gb - \gb_s| < \delta_2\,.\]
Combined with \eqref{eq-d-beta-M-s}, it follows that
\begin{equation}\label{eq-d-beta-M-s-2}
D_{\gb_s}(s) < -2\delta_1, \mbox{ for all } |s - s^*(\gb)| \geq
\rho \mbox{ and } |\gb - \gb_s| < \delta_2\,.
\end{equation}
Now that $D_\gb(s)$ can be viewed as a continuous function of
$(\gb, s)$ and by compactness, there exists $\delta_3 > 0$ such
that $|D_\gb(s) - D_{\gb_s}(s)| \leq \delta_1$ for all $s\in
[1/q, 1]$ and $|\gb - \gb_s| \leq \delta_3$. Combined with
\eqref{eq-d-beta-M-s-2}, it completes the proof by taking $\delta =
\delta_1 \wedge \delta_2 \wedge \delta_3$.
\end{proof}

\begin{proof}[Proof of Lemma~\ref{lem:QuadraticDriftPassTime}]
We do not lose anything by assuming that
\begin{equation}
\label{eqn:QuadDriftRequirementRest}
\bbE[Z_{t+1}-Z_t | \cF_t] = f(Z_t)
\quad \text{;} \qquad
f(z) = \inv{n} (\gz(n) + a z^2 + b z^3 + c z^4)\one_{[-\rho_0, +\rho_0]}(z) \\
\end{equation}
for some $a >0$, $b$, $c$ with $\rho_0 = 2\rho$ and that once $Z_t$ exits
$[-\rho_0, +\rho_0]$ it is stopped.
Indeed, having $f$ vanish outside of $[-\rho_0, +\rho_0]$, does not change the asymptotics of the passing time. Clearly, this is the case for $z > +\rho_0$. For $z < -\rho_0$, it follows from
\begin{equation}
\label{eqn:NoGoingBack1}
	\bbP_{-\rho} (\tau_{-\rho_0}^- < t^{\gz, a}_{\gga}(n)) = o(1),
\end{equation}
for all $\gga$, which is a consequence of Lemma~\ref{lem:ImprovedAzuma} part \eqref{item:ImprovedAzuma3} since the drift of $Z_t$ is at least $\tfrac{c}{n}$ on $[-\rho_0, -\rho]$ for some positive $c$ uniformly in $n$ (if $\rho$ is small enough).

As for replacing the error term in \eqref{eqn:QuadDriftRequirement} by $c Z_t^4$, as the proof below shows, the functions $U^*$, $L^*$ in the lemma
restricted to condition \eqref{eqn:QuadDriftRequirementRest} can be chosen
to be continuous in $a$ in a small interval $[a_0-\gep, a_0+\gep]$
and the limits \eqref{eqn:QuadDriftPassTimeUpperBound},
\eqref{eqn:QuadDriftPassTimeLowerBound} hold uniformly in $a$
in this interval. This together with Remark \ref{rem:ImprovedQuadraticDriftLem}  implies the existence of $U^*$, $L^*$ under which
\eqref{eqn:QuadDriftPassTimeUpperBound},
\eqref{eqn:QuadDriftPassTimeLowerBound}
hold in the general case \eqref{eqn:QuadDriftRequirement} with
$t_{\gga}^{\gz, a+O(\gz(n))}(n)$. Now, it is not difficult to see that the latter is bounded above and below by $t_{\gga \pm C}^{\gz, a}(n)$ for some $C>0$ and hence
\eqref{eqn:QuadDriftPassTimeUpperBound},
\eqref{eqn:QuadDriftPassTimeLowerBound}
hold with $t_{\gga}^{\gz, a}(n)$. Similar considerations apply for \eqref{eqn:FastPassCond}.

Set
\begin{equation*}
\Psi(z) = \int_0^z \frac{1}{f(x)} dx
\quad \text{and} \qquad
Y_t = \Psi(Z_t) - \Psi(Z_0) - t.
\end{equation*}
The motivation behind the above definitions comes from a continuous time deterministic analog of \eqref{eqn:QuadDriftRequirementRest} in the form of an ODE
\begin{equation}
\label{eqn:DeterministicAnalog}
	\dot{z}(t) = f(z(t))
\end{equation}
for which $z(t) = \Psi^{-1}(t-t_0)$ is a solution (roughly speaking
$Y_t$ measures how far behind or ahead ``in schedule'' $Z_t$ is, judging from its position).

Start with the [CR] case and set
\[
t^{\gz, a}(n) = t^{\gz,a}_0(n) =
\frac{\pi}{\sqrt{a}} \frac{n}{\sqrt{\gz(n)}} \quad; \quad \quad
w^{\gz}(n) = \frac{n^{1/2}}{\gz^{5/4}(n)} \maxwith n.
\]
In the deterministic setting
the time it takes for $z(t)$ to pass from $z(0) = -\rho$ to $\rho$ is
\[
\Psi(\rho) - \Psi(-\rho)
 	= 	\int_{-\rho}^{\rho} \frac{n\, dx}{\gz(n) + a x^2 + b x^3 + c x^4 }
	=	t^{\gz, a}(n) + O(n)
\]
if $\rho$ is small enough. This will be shown in Proposition~\ref{prop:DoobDecompBounds} below. Thus, bounding the passage time $\tau_{\rho}^+$ around $t^{\gz, a}(n)$ can be achieved by
bounding $|Y_{t^{\gz, a}(n)}|$.

Accordingly, let $Y_t = M_t + A_t$ be the Doob-decomposition of $Y_t$, with $M_t$ a zero-mean martingale and $A_t$ the predictable process. The next proposition will allow us to bound $Y_t$. The proof of this proposition will be deferred to the end of this section.
\begin{prop}
\label{prop:DoobDecompBounds}
If $\rho$ is small enough and $\gz \in \text{\rm [CR]}$ then
\begin{enumerate}
\item
	\label{item:DoobDecompBoundPsi}
	$\Psi(\rho) - \Psi(-\rho) =	t^{\gz, a}(n) + O(n)$.
\item
	\label{item:DoobDecompBoundM}
	$\bbE M^2_{t_{\gamma}^{\gz, a}(n)} = O( w^{\gz}(n)^2)$.
\item
	\label{item:DoobDecompBoundA}
	$A_{t_{\gga}^{\gz, a}(n)} = o(w^{\gz}(n))$ with probability $1$.
\end{enumerate}
\end{prop}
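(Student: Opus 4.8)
The plan is to treat part~\eqref{item:DoobDecompBoundPsi} by directly evaluating the defining integral, and then to read off parts~\eqref{item:DoobDecompBoundM} and~\eqref{item:DoobDecompBoundA} from a second-order Taylor expansion of $\Psi$, combined with crude uniform bounds on $\Psi'$, $\Psi''$, $\Psi'''$ in terms of $(\gz+aZ_t^2)^{-1}$ and the elementary observation that $t^{\gz,a}_{\gga}(n)=O\big(n/\sqrt{\gz(n)}\,\big)$ in the \text{[CR]} regime.

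For part~\eqref{item:DoobDecompBoundPsi}, since the indicator in~\eqref{eqn:QuadDriftRequirementRest} equals $1$ on $[-\rho,\rho]$, the quantity to estimate is $\Psi(\rho)-\Psi(-\rho)=\int_{-\rho}^{\rho}n\,dx/(\gz(n)+ax^2+bx^3+cx^4)$. First I would isolate the principal term $\int_{-\rho}^{\rho}n\,dx/(\gz(n)+ax^2)=\tfrac{2n}{\sqrt{a\gz(n)}}\arctan\!\big(\rho\sqrt{a/\gz(n)}\,\big)$ and use $\arctan(u)=\tfrac\pi2-u^{-1}+O(u^{-3})$ as $u\to\infty$ to see it equals $\tfrac{\pi n}{\sqrt{a\gz(n)}}+O(n)=t^{\gz,a}(n)+O(n)$. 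Then, writing $D_0=\gz+ax^2$ and $D=D_0+bx^3+cx^4$, the exact identity $\tfrac1D-\tfrac1{D_0}=-\tfrac{bx^3+cx^4}{D_0^2}+\tfrac{(bx^3+cx^4)^2}{D_0^2D}$ reduces the correction to three pieces: the integral of $-nbx^3/D_0^2$, which vanishes by oddness over $[-\rho,\rho]$; and two remainders dominated (using $D\geq\tfrac12D_0$ for $\rho$ small) by $Cn\int_{-\rho}^{\rho}x^4\,dx/(\gz+ax^2)^2$ and $Cn\int_{-\rho}^{\rho}x^6\,dx/(\gz+ax^2)^3$, each of which stays bounded as $\gz\to0$ and hence contributes only $O(n)$. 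I expect this to be the main obstacle, since a careless bound on the cubic correction produces a spurious $n\log(1/\gz)$ term and the oddness cancellation is exactly what avoids it.

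For parts~\eqref{item:DoobDecompBoundM} and~\eqref{item:DoobDecompBoundA}, I would run $Z_t$ stopped at $\tau^+_\rho$ (and at the exit of $[-\rho_0,\rho_0]$, which by~\eqref{eqn:NoGoingBack1} affects nothing) and expand $\Psi(Z_{t+1})=\Psi(Z_t)+\Psi'(Z_t)(Z_{t+1}-Z_t)+\tfrac12\Psi''(Z_t)(Z_{t+1}-Z_t)^2+\tfrac16\Psi'''(\xi_t)(Z_{t+1}-Z_t)^3$. Since $\Psi'=1/f$ and $\bbE[Z_{t+1}-Z_t\mid\cF_t]=f(Z_t)$, the first-order term has conditional mean $1$ and cancels the $-1$ in $Y_{t+1}-Y_t$; hence $A_{t+1}-A_t=\tfrac12\Psi''(Z_t)\,\bbE[(Z_{t+1}-Z_t)^2\mid\cF_t]+O\big(n^{-3}\sup_{I_t}|\Psi'''|\big)$ with $I_t=[Z_t-n^{-1},Z_t+n^{-1}]$, while $\bbE[(M_{t+1}-M_t)^2\mid\cF_t]\leq\bbE[(\Psi(Z_{t+1})-\Psi(Z_t))^2\mid\cF_t]\leq n^{-2}\big(\sup_{I_t}|\Psi'|\big)^2$. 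Next I would record, from $f(z)\geq\tfrac12(\gz+az^2)$, $|f'(z)|\leq C|z|/n$, $|f''(z)|\leq C/n$ on $[-\rho_0,\rho_0]$ (and $\gz\gg n^{-2}$, so $\gz+az^2$ is essentially constant across $I_t$), the uniform bounds $\sup_{I_t}|\Psi'|\leq Cn(\gz+aZ_t^2)^{-1}$, $|\Psi''(Z_t)|\leq Cn|Z_t|(\gz+aZ_t^2)^{-2}$, $\sup_{I_t}|\Psi'''|\leq Cn(\gz+aZ_t^2)^{-2}$, which give $\bbE[(M_{t+1}-M_t)^2\mid\cF_t]\leq C(\gz+aZ_t^2)^{-2}$ and $|A_{t+1}-A_t|\leq C|Z_t|\,n^{-1}(\gz+aZ_t^2)^{-2}+C\,n^{-2}(\gz+aZ_t^2)^{-2}$.

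Finally I would close the estimate purely by size considerations: in \text{[CR]} one has $t^{\gz,a}_{\gga}(n)=\tfrac{\pi n}{\sqrt{a\gz}}+\gga w^\gz(n)=O(n/\sqrt\gz)$, because $n^{2/3}\gz(n)\to\infty$ forces $n^{1/2}\gz^{-5/4}=o(n\gz^{-1/2})$. By orthogonality of martingale increments and $\gz+aZ_t^2\geq\gz$, $\bbE M^2_{t^{\gz,a}_{\gga}(n)}\leq C\,t^{\gz,a}_{\gga}(n)\,\gz^{-2}=O(n\gz^{-5/2})=O\big(w^\gz(n)^2\big)$ since $w^\gz(n)^2\geq n\gz^{-5/2}$ by definition. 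For part~\eqref{item:DoobDecompBoundA}, $|A_{t^{\gz,a}_{\gga}(n)}|\leq\sum|A_{t+1}-A_t|\leq Cn^{-1}(n/\sqrt\gz)\sup_z\tfrac{|z|}{(\gz+az^2)^2}+Cn^{-2}(n/\sqrt\gz)\sup_z(\gz+az^2)^{-2}=O(\gz^{-2})+O(n^{-1}\gz^{-5/2})$, and both terms are $o(w^\gz(n))$ precisely because $n^{2/3}\gz(n)\to\infty$ (which yields $\gz^{-2}=o(n^{1/2}\gz^{-5/4})$ and $n^{-1}\gz^{-5/2}=o(n^{1/2}\gz^{-5/4})$). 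This delivers all three bounds; apart from the delicate $O(n)$ error in part~\eqref{item:DoobDecompBoundPsi}, everything is routine once $\Psi'$, $\Psi''$, $\Psi'''$ have been controlled in terms of $(\gz+aZ_t^2)^{-1}$.
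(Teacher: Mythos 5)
Your proposal is correct and follows essentially the same route as the paper's proof. Part~\eqref{item:DoobDecompBoundPsi} is treated identically: split off $\int n\,dx/(\gz+ax^2)$, evaluate it as $\tfrac{2n}{\sqrt{a\gz}}\arctan(\rho\sqrt{a/\gz})=t^{\gz,a}(n)+O(n)$, and handle the correction by expanding $1/D-1/D_0$; you correctly pinpoint that the oddness cancellation of the $bx^3/D_0^2$ term is the crux that avoids a spurious $n\log(1/\gz)$ contribution, which is exactly what the paper exploits (it silently drops $bx^3$ and keeps only $cx^4$ after that step). For parts~\eqref{item:DoobDecompBoundM} and~\eqref{item:DoobDecompBoundA} the paper proceeds by the law of total variance with the crude bound $\max|\Psi'|\leq n/\gz$ (giving $\Var M_t\leq t/\gz^2$) and a second-order Taylor expansion of $\Psi$ with $\max|\Psi''|=O(n\gz^{-3/2})$; you do the same, except you carry the Taylor expansion to third order and explicitly keep the $(\gz+aZ_t^2)^{-1}$-dependence before taking the supremum. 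That extra precision is harmless but unnecessary --- the extra term $O(n^{-1}\gz^{-5/2})$ you pick up is already dominated by $O(\gz^{-2})$ under $n^{2/3}\gz\to\infty$, so you land on the same bounds $\bbE M^2_{t_\gga^{\gz,a}(n)}=O(n\gz^{-5/2})\leq O(w^\gz(n)^2)$ and $|A_{t_\gga^{\gz,a}(n)}|=O(\gz^{-2})=o(w^\gz(n))$ as the paper.
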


Now using the monotonicity of $\Psi$ in $[-\rho_0, +\rho_0]$ we have
\begin{eqnarray*}
\bbP_{-\rho} ( \tau_{\rho}^+ > t_{\gamma}^{\gz, a}(n) )
	& \leq 	& \bbP_{-\rho} \lb Z_{t_{\gamma}^{\gz, a}(n)} < \rho \rb \\
	& \leq 	& \bbP_{-\rho} \lb Y_{t_{\gamma}^{\gz, a}(n)} < t^{\gz, a}(n) - t_{\gamma}^{\gz, a}(n)
					+ O(n) \rb  \\
	& \leq 	& \bbP_{-\rho} \lb M_{t_{\gamma}^{\gz, a}(n)} < -\gamma w^{\gz}(n) + O(n) +
					o \left( w^{\gz}(n) \right) \rb \\
	& \leq	& \lb \frac{O(w^{\gz}(n))}{(\gga + O(1)) w^{\gz}(n)} \rb^2 \,,
\end{eqnarray*}
where the last inequality is a second moment bound. This shows
\eqref{eqn:QuadDriftPassTimeUpperBound}. For the lower bound, if $-\gga$ is large enough, we may write
\begin{eqnarray*}
\bbP_{-\rho} \left( \tau_{\rho}^+ < t_{\gamma}^{\gz, a}(n) \right)
	& = 		& \bbP_{-\rho} \left( \exists t < t_{\gamma}^{\gz, a}(n) \;\; : \;\;
					Z_{t}  \geq \rho \right)  \\
	& \leq 		& \bbP_{-\rho} \left( \exists t < t_{\gamma}^{\gz, a}(n) \;\; : \;\;
					Y_{t} \geq t^{\gz, a}(n) - t^{\gz, a}_{\gga}(n) + O(n) \right) \\
	& \leq 	& \bbP_{-\rho} \left( \exists t < t_{\gamma}^{\gz, a}(n) \;\; : \;\;
					M_{t} \geq -\gamma w^{\gz}(n) + O(n) + o \left( w^{\gz}(n) \right) \rb  \\
	& \leq	& \frac{O(w^{\gz}(n))}{(-\gga + O(1)) w^{\gz}(n)}
\end{eqnarray*}
where the last inequality follows from Doob's inequality.
This shows \eqref{eqn:QuadDriftPassTimeLowerBound}

\medskip
Next, we address the [NCR] case. We can no longer use the means analysis
\eqref{eqn:DeterministicAnalog} throughout the entire passage interval $[-\rho, \rho]$ as $Z_t$ is not concentrated around its mean near $0$. Accordingly, we analyze the passage time in each of the following segments
separately:
\[
[-\rho, -r n^{-1/3}]	\quad ; \qquad [-r n^{-1/3}, +r n^{-1/3}]
	\quad ; \qquad [+r n^{-1/3}, +\rho]\,,
\]
for some $r > 0$ to be chosen later.

We start with the upper bound. For the sequel, let $w = r n^{-1/3}$. The upper bound will follow if we show the following:
\begin{enumerate}
\item
	{\it Segment $[-\rho, -w]$.}
	For any $\gga$,
	\begin{equation}
	\label{eqn:PassTimeSeg1}
    	\lim_{r \to \infty} \limsup_{n \to \infty} \bbP_{-\rho} (\tau^+_{-w} > t^{\gz, a}_{\gga}(n)) = 0
	\end{equation}
\item
	\label{item:PassageTimeSeg2}
	{\it Segment $[+w, +\rho]$.}
	For any $\gga$,
	\begin{equation}
	\label{eqn:PassTimeSeg2}
    	\lim_{r \to \infty} \limsup_{n \to \infty}
			\bbP_{w} (\tau^+_{\rho} > t^{\gz, a}_{\gga}(n)) = 0
	\end{equation}
\item
	\label{item:PassageTimeSeg1}
	{\it Segment $[-w, +w]$.}
    For any $r > 0$, there exists $u:\bbR \to [0,1)$ such that
	\begin{equation}
	\label{eqn:PassTimeSeg3}
    	\limsup_{n \to \infty} \bbP_{-w} \lb \tau^+_{w} > t_{\gga}^{\gz, a}(n) \rb \leq u(\gga) <1 \,,
	\end{equation}
	for all $\gga$. Furthermore, $u(\gga) \to 0$ as $\gga \to \infty$.
\end{enumerate}
All are hitting times for $Z$. Indeed, by first choosing large enough $r$ and then choosing large enough $\gga$  both \eqref{eqn:QuadDriftPassTimeUpperBound} and \eqref{eqn:FastPassCond} will follow by multiplication. We proceed to prove each of the above statements.

\medskip
\noindent {\it Segments $[-\rho, -w]$, $[+w, +\rho]$.}
Here we can use the means analysis as in the [CR] case.
As before, we do not change the asymptotics of the passage time through these intervals, if we assume that $f(z)$ in \eqref{eqn:QuadDriftRequirementRest} satisfies:
\[
f(z) = \inv{n} (\gz(n) + a z^2 + b z^3 + c z^4)\one_{[-\rho_0, -w_0] \cup
[+w_0, +\rho_0]}(z)
\]
where $a$, $b$, $c$, $\rho_0$ are as before, $w_0 = w/2$ and once $Z_t$ exits
$[-\rho_0, -w_0] \cup [+w_0, +\rho_0]$ it is stopped. Indeed this follows from the same reasoning and in addition since for any $\gga$
\[
\lim_{r \to \infty} \bbP_{w} (\tau^-_{w_0} < t^{\gz, a}_{\gga}(n)) = 0\,,
\]
uniformly in $n$ (large enough) as it follows from Lemma~\ref{lem:ImprovedAzuma} part \eqref{item:ImprovedAzuma2} since the drift of $Z_t$ is non-negative on $[w_0, w]$.

We use the same definitions for $Y_t$, $M_t$ and $A_t$ as above. In place of Proposition~\ref{prop:DoobDecompBounds} we have
\begin{prop}
\label{prop:DoobDecompBoundsFC}
Assume $\gz \in \text{\rm [NCR]}$. There exists $k(r)$ satisfying $k(r) \to 0$ as $r \to \infty$ such that for any $\rho$ small enough, $r$ large enough, $n$ large enough and all $t$:
\begin{enumerate}
\item
	\label{item:Time_Bound}
    $\Psi(-w) - \Psi(-\rho),\, \Psi(\rho) - \Psi(w) \leq k(r) n^{4/3}$.
\item
	\label{item:DDFC_VBound}
		$\bbE M_{t}^2 \leq k(r) t n^{4/3}$.
\item
	\label{item:DDFC_ABound}
	$|A_{t}| \leq k(r) t $ with probability $1$.
\end{enumerate}
\end{prop}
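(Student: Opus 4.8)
The plan is to reduce all three estimates to one elementary fact about $f$ on the two outer segments. Fix $\rho$ small enough that in $\gz(n) + a z^2 + b z^3 + c z^4$ the cubic and quartic terms are dominated by the quadratic one on $[-\rho_0,\rho_0]$; then, taking $r$ large enough that $|\gz(n)| = O(n^{-2/3})$ is dominated by $a w_0^2 \asymp r^2 n^{-2/3}$, one obtains on $[-\rho_0,-w_0]\cup[w_0,\rho_0]$ the two-sided bound $\tfrac{a}{2n}z^2 \leq f(z) \leq \tfrac{2a}{n}z^2$ together with $|f'(z)| \leq C z/n$. On each of these segments let $\Psi$ be the antiderivative of $1/f$ (finite and $C^\infty$ there, since $f$ is positive); then $\Psi'(z) = 1/f(z) \leq 2n/(a z^2)$ and $|\Psi''(z)| = |f'(z)|/f(z)^2 \leq C' n / z^3$. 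These pointwise bounds are the only analytic input I will need.

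Part~\eqref{item:Time_Bound} then follows by direct integration:
\[
\Psi(-w) - \Psi(-\rho) = \int_{-\rho}^{-w}\frac{dx}{f(x)} \leq \int_{-\rho}^{-w}\frac{2n\,dx}{a x^2} \leq \frac{2n}{a|w|} = \frac{2}{a r}\, n^{4/3},
\]
and symmetrically $\Psi(\rho) - \Psi(w) \leq \tfrac{2}{ar} n^{4/3}$; so any $k(r) \geq 2/(ar)$ works here.

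For parts~\eqref{item:DDFC_ABound} and~\eqref{item:DDFC_VBound} I would analyze the one-step increments of $Y_t = \Psi(Z_t) - \Psi(Z_0) - t$. Expanding $\Psi(Z_{t+1}) = \Psi(Z_t) + \Psi'(Z_t)(Z_{t+1}-Z_t) + \tfrac12 \Psi''(\xi_t)(Z_{t+1}-Z_t)^2$ with $\xi_t$ between $Z_t$ and $Z_{t+1}$, and noting that a step of size $\leq 1/n$ keeps $|\xi_t| \geq w_0/2$ before the process is absorbed (so the expansion is applied only where $f$ is smooth and $\gtrsim z^2/n$), the predictable part satisfies
\[
A_{t+1} - A_t = \bbE[\Psi(Z_{t+1}) - \Psi(Z_t) \mid \cF_t] - 1 = \tfrac12\, \bbE\big[\Psi''(\xi_t)(Z_{t+1}-Z_t)^2 \mid \cF_t\big],
\]
because $\Psi'(Z_t)\, \bbE[Z_{t+1}-Z_t \mid \cF_t] = \Psi'(Z_t) f(Z_t) = 1$. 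Using $|\Psi''(\xi_t)| \leq C' n/w_0^3 \asymp n^2/r^3$ and $(Z_{t+1}-Z_t)^2 \leq 1/n^2$ gives $|A_{t+1} - A_t| \leq C/r^3$, hence $|A_t| \leq (C/r^3)\,t$ for all $t$ (the increments vanish once $Z$ is absorbed), which is part~\eqref{item:DDFC_ABound}. For the martingale part, $\bbE[(M_{t+1}-M_t)^2\mid\cF_t] = \Var[\Psi(Z_{t+1})\mid\cF_t]$; the linear term of the expansion contributes at most $\Psi'(Z_t)^2 \bbE[(Z_{t+1}-Z_t)^2\mid\cF_t] \leq (2n/(aw_0^2))^2 n^{-2} \asymp n^{4/3}/r^4$, while the quadratic remainder contributes $O(1/r^6) = o(n^{4/3}/r^4)$ for $n$ large; summing over $t$ steps (and using that $M$ is constant after absorption) yields $\bbE M_t^2 \leq (C/r^4)\, t\, n^{4/3}$, which is part~\eqref{item:DDFC_VBound}. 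Setting $k(r) := \max\{2/(ar),\, C/r^3,\, C/r^4\}$, which equals $2/(ar)$ for $r$ large and tends to $0$, completes the argument.

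The step needing the most care is the order of quantifiers together with the behaviour near the segment endpoints: $\rho$ must be chosen first (small enough that $f \asymp z^2/n$ holds on all of $[-\rho_0,\rho_0]$ independently of $r$), and only afterwards may $r$ be taken large so that $w_0 = \tfrac12 r n^{-1/3}$ dominates $\sqrt{|\gz(n)|}$; the Taylor remainders near $\pm w_0$ and $\pm\rho_0$ are harmless precisely because $Z$ is absorbed on leaving $[-\rho_0,-w_0]\cup[w_0,\rho_0]$, so on the range actually visited $f$ is smooth and bounded below. No idea beyond that of Proposition~\ref{prop:DoobDecompBounds} is required — the content is simply that, in the [NCR] regime, the outer segments $[w,\rho]$ and $[-\rho,-w]$ are traversed in time $O(k(r)n^{4/3})$, which becomes negligible next to $t^{\gz,a}_{\gga}(n) = e^{\gga} n^{4/3}$ once $r$ and then $\gga$ are taken large.
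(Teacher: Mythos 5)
Your proof is correct and follows essentially the same path as the paper's own argument: the paper simply states the two derivative bounds $\max |\Psi'|^2 \leq C r^{-4} n^{10/3}$ and $\max |\Psi''| \leq C r^{-3} n^2$ on $w_0/2 < |z| < 2\rho_0$ and invokes the induction from Proposition~\ref{prop:DoobDecompBounds}, while you reconstruct exactly those bounds from $f(z) \asymp z^2/n$ on the outer segments and run the same one-step Doob-decomposition estimates to get $k(r) = Cr^{-1}$, $Cr^{-4}$, $Cr^{-3}$ for the three parts respectively.
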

The proof is again deferred. Now, as before
\begin{eqnarray}
\nonumber
\bbP_{-\rho} \left( \tau^+_{-w} > t^{\gz, a}_{\gga}(n) \right)
    & \leq     & \bbP_{-\rho} \left( Z_{t^{\gz, a}_{\gga}(n)} < -w  \right) \\
\nonumber
    & \leq  & \bbP_{-\rho} \left( Y_{t^{\gz, a}_{\gga}(n)} <
			\Psi(-w) - \Psi(-\rho) - t^{\gz, a}_{\gga}(n) \right)  \\
\nonumber
    & \leq  & \bbP_{-\rho} \left( M_{t^{\gz, a}_{\gga}(n)} <
			k(r) n^{4/3} + k(r) e^\gga n^{4/3} - e^\gga n^{4/3} \right) \\
\label{eqn:ProofOfPassTimeSeg1}
    & \leq  & \frac{k(r) e^\gga}{\lb (1-k(r))e^\gga - k(r) \rb^2}
\end{eqnarray}
where the last inequality is Chebyshev. This goes to zero as $r \to \infty$ for any $\gga$. This shows \eqref{eqn:PassTimeSeg1}. Similarly,
\begin{eqnarray*}
\bbP_{w} \left( \tau^+_{\rho} > t^{\gz, a}_{\gga}(n) \right)
    & \leq     & \bbP_{w} \left( Z_{t^{\gz, a}_{\gga}(n)} < \rho \right) \\
    & \leq  & \frac{k(r) e^\gga}{\lb (1-k(r))e^\gga - k(r) \rb^2}
\end{eqnarray*}
and this shows \eqref{eqn:PassTimeSeg2}.

\medskip
\noindent
{\it Segment $[-w, w]$}.
Here we still assume \eqref{eqn:QuadDriftRequirementRest}, but instead of absorbing $Z_t$ at the boundaries, we shall now suppose that $Z_t$ evolves like a symmetric random walk with $\pm n^{-1}$ steps, once it exits $[-\rho_0, +\rho_0]$.

We first show that $u$ can be chosen to vanish at infinity.
Consider the process $U_t = (U_0 - (Z_t - Z_0+ \gd n^{-5/3}t)$, for $\gd>0$ with
$U_0$ to be chosen later and set $N = \inf \{t :\: U_t \leq 0\}$. Then, by
the definition of the [NCR] regime for $n$ large enough $U_{t \minwith N}$ is a non-negative supermartingale satisfying the requirements of Lemma~\ref{lem:DiffusionHittingTime} and hence
\begin{eqnarray*}
\bbP_{-w}(\tau^+_{w} > t_{\gga}^{\gz, a}(n))
    & =     & \bbP_{-w}(Z_t < w  \ \ \forall t \leq t_{\gga}^{\gz, a}(n)) \\
    & \leq  & \bbP_{-w}(U_t > U_0 + Z_0 - (w + \gd e^\gga n^{-1/3}) \ \ \forall t \leq t_{\gga}^{\gz, a}(n)) \\
    & =     & \bbP_{-w}(N > t_{\gga}^{\gz, a}(n)) \\
    & \leq  & \frac{4(w + \gd e^\gga n^{-1/3} + w)}{\sqrt{d} n^{-1} e^{\gga/2} n^{2/3}}
        \leq C (r+1)e^{-\gga/2}
\end{eqnarray*}
where we choose $U_0 = w + \gd e^\gga n^{-1/3} - Z_0$ and $\gd = e^{-\gga}$. The last expression can be made arbitrarily small by taking $\gga$ large enough, uniformly in $n$ if it is sufficiently large.

To show that $u$ can satisfy $u(\gga) < 1$ for all $\gga$, we have to show that $Z$ can cross from $-w$ to $w$ in   $t^{\gz, a}_{\gga}(n)$-time for arbitrarily small $\gga$. If $\rho$ is small and $n$ is large, then $X_t = n(Z_t - (-w))$ satisfies the conditions in Lemma~\ref{lem:HittingTimeLowerBound} with $\gd=\gz(n)^-$ and $a=d$. Therefore
\[
\begin{split}
\bbP_{-w}(\tau^+_{w} > t_{\gga}^{\gz, a}(n))
	& \ \geq\ \bbP_{-w} (\exists t \leq t_{\gga}^{\gz, a}(n) : \: X_t \geq 2r n^{2/3})	\\
	& \ \geq\  C_1 \exp \{-C_2 (2r e^{-\gga/2} + e^{\gga/2} \gz(n)^- n^{2/3})^2 \} + O(n^{-2/3})
\end{split}
\]
which is positive for all $\gga$, once $n$ is large enough. This proves
\eqref{eqn:PassTimeSeg3} and concludes the proof of the upper bound.

To show the lower bound in the [NCR] case,
set $V_t = Z_t - \gd n^{-5/3}t+w$ and choose $\gd, r > 0$ such that $V_{t \minwith \tau^+_{w}}$ has non-positive drift whenever $V_{t \minwith \tau^+_{w}} \geq 0$. Then,
\begin{eqnarray*}
\bbP_{-\rho} (\tau_{\rho}^+ < t_{\gga}^{\gz, a}(n))
    & \leq     & \bbP_{-\rho} (\exists t < t_{\gga}^{\gz, a}(n) \ \ :\:
                    Z_t \geq w) \\
    & \leq  & \bbP_{-\rho} (\exists t < t_{\gga}^{\gz, a}(n) \ \ :\:
                    V_{t \minwith \tau^+_{w}} \geq 2w - \gd e^\gga n^{-1/3}) \\
    & =  & \bbP_{-\rho} (\exists t < t_{\gga}^{\gz, a}(n) \ \ :\:
                    V_{t \minwith \tau^+_{w}} \geq (2r-\gd e^\gga)n^{-1/3})
\end{eqnarray*}
and part~\eqref{item:ImprovedAzuma2} of Lemma~\ref{lem:ImprovedAzuma} shows that the last expression goes to $0$ as $\gga \to -\infty$ uniformly in $n$ (large enough). This proves~\eqref{eqn:QuadDriftPassTimeLowerBound} and completes the [NCR] case.
\end{proof}

It remains to prove Propositions~\ref{prop:SharpnessOfDBound}--\ref{prop:DoobDecompBoundsFC}.
\begin{proof}[Proof of Proposition~\ref{prop:SharpnessOfDBound}]
Let
$\tau^* = \min\left\{ \tau_y^{-} , t(n),\min_{i\geq 2} \min\{t\geq 0 : S_t^i \geq \frac1q\} \right \}$.
Fix some $2\leq i<j\leq q$ and set
\[
Y_t = S_t^i - S_t^j.
\]
Let $U_{t-1} = Y_t-Y_{t-1} -  \bbE_{\gs_0}  \left[Y_t-Y_{t-1}\mid \mathcal{F}_{t-1}\right]$ and then since $|Y_t-Y_{t-1}| \leq \frac2n$ we have that
$|U_i | \leq \frac4n$.   Define the process $Z_t$ by $Z_0=0$ and
\[
Z_t -Z_{t-1} :=  \hbox{sign}(Z_{t-1})\hbox{sign}(Y_{t-1}) U_{t-1}
\]
where
\[
\hbox{sign}(x)= \begin{cases} 1 & x \geq 0,\\ -1 &x <0. \end{cases}
\]
With this definition $Z_t$ is clearly a martingale and since $|Z_t-Z_{t-1}|\leq \frac4n$, then $
\bbE_{\gs_0} Z_t^2 \leq \frac{16t}{n^2}$ and so by Doob's maximal inequality,
\begin{equation}\label{e:maximalBound}
 \bbE_{\gs_0}  \left[ \max_{0\leq t \leq t(n)} |Z_t| \right]^2 \leq 2 \bbE_{\gs_0} Z_{t(n)}^2 \leq \frac{32t(n)}{n^2}.
\end{equation}
Now when $t<\tau^*$ we have that $S^i_t,S_t^j<\frac1q$ and so $$\left|e^{2\gb(S_t^i-\frac1q)}-e^{2\gb(S_t^j-\frac1q)}\right| \leq 2\gb \left| S_t^i
- S_t^j\right|.$$  By Jensen's inequality $\sum_{k=1}^q e^{2\gb(S_t^i-\frac1q)} \geq q$ so
\begin{align}
\nonumber
 &\bbE_{\gs_0}  \left[\hbox{sign}(Y_{t-1})(Y_t-Y_{t-1}) \mid \mathcal{F}_{t-1}\right]\\
\nonumber
 &\qquad= \bbE_{\gs_0}  \left[\hbox{sign}(S_{t-1}^i - S_{t-1}^j)\frac1n\left(\frac{e^{2\gb(S_t^i-\frac1q)}-e^{2\gb(S_t^j-\frac1q)}}{\sum_{k=1}^q
 e^{2\gb(S_t^i-\frac1q)}}  - (S_{t-1}^i - S_{t-1}^j)\right) \right]\\
\label{e:expectedChangeBound}
&\qquad\leq -\frac{q-2\gb}{qn} \left| S_t^i - S_t^j\right| \leq 0.
\end{align}
Now when $|Y_{t-1}| \geq \frac2n$ we have that $|Y_t| - |Y_{t-1}| = \hbox{sign}(Y_{t-1})(Y_t-Y_{t-1})$ and we always have $$|Z_t| - |Z_{t-1}| \geq
\hbox{sign}(Z_{t-1})(Z_t-Z_{t-1})= \hbox{sign}(Y_{t-1}) U_{t-1} $$  with equality when $\hbox{sign}(Z_t) = \hbox{sign}(Z_{t-1})$. Hence it follows that
when $|Y_{t-1}| \geq \frac2n$ and $t \leq \tau^*$,
\begin{align*}
|Y_t| - |Y_{t-1}| &=  \hbox{sign}(Y_{t-1})(Y_t-Y_{t-1})\\
&\leq \hbox{sign}(Y_{t-1}) U_{t-1} \\
&\leq |Z_t| - |Z_{t-1}|
\end{align*}
where the first inequality follows from equation \eqref{e:expectedChangeBound}.  It follows by induction that $|Z_t| \geq |Y_t| - \frac3n$ for all
$t\leq \tau^*$.  In particular we have by equation \eqref{e:maximalBound}  that
\begin{equation}\label{e:maxSiDiff}
 \bbE_{\gs_0} \lsb \max_{2\leq i < j \leq q} \max_{0\leq t \leq \tau^*} \left|S_{t}^i - S_{t}^j \right|  \rsb^2 =
 O\left(\frac{t(n)}{n^2}\right)=o(1).
\end{equation}
By Markov's inequality with probability tending to 1 we have that $\left|S_{\tau^*}^i -
S_{\tau^*}^j \right| = o(1)$ for every pair $2\leq i,j\leq q$.  Now by construction
 $S_{\tau^*}^1 \geq y-\frac1n$ so with high probability we have that $S_{\tau^*}^i
\leq \frac1q-\frac{y-\frac1q}{q-1} + o(1) < \frac1q$ which implies that with high probability
$ \tau^* = \min\{t(n),\tau_y^{-}\}$.

Now, by Taylor series expansions,
\begin{align*}
0\leq \left(\sum_{i=2}^q e^{2\gb S_t^i}\right) - (q-1)e^{2\gb \frac{1-S_t^1}{q-1}} & \leq \sum_{i=2}^q (2\gb S_t^i - 2\gb \frac{1-S_t^1}{q-1} ) +
\sum_{i=2}^q O\left ( ( 2\gb S_t^i- 2\gb \frac{1-S_t^1}{q-1})^2\right)\\
&\leq   O\left ( \left(\max_{2\leq i<j\leq q} \left| S_t^i - S_t^j   \right|\right)^2\right),
\end{align*}
where the first inequality is by Jensen, and we have used the fact that $\sum_{i=2}^q  S_t^i = 1 - S_t^1$.  It therefore follows that with high
probability for all $0\leq t\leq \max\{t(n),\tau_y^{-}\} $ that
\begin{align*}
n \bbE_{\gs_0} \left[ S_{t}^1- S_{t-1}^1\mid\mathcal{F}_{t-1}\right] & = \frac{e^{2\gb S_{t-1}^1}}{\sum_{i=1^q} e^{2\gb S_{t-1}^i}} - S_{t-1}^1\\
&= \frac{e^{2\gb S_{t-1}^1}}{e^{2\gb S_{t-1}^1} + (q-1)e^{2\gb \frac{1-S_{t-1}^1}{q-1}}} - S_{t-1}^1 - O\left ( \left(\max_{2\leq i<j\leq q} \left|
S_t^i - S_t^j   \right|\right)^2\right) \\
&=  D_\gb(S_{t-1}^1) -  O\left ( \left(\max_{2\leq i<j\leq q} \left| S_t^i - S_t^j   \right|\right)^2\right)
\end{align*}
and hence that
\begin{align*}
& \max_{0\leq t \leq \tau^*} D_\gb(S_{t-1}^1) - n \bbE_{\gs_0} \left[ S_{t}^1- S_{t-1}^1\mid\mathcal{F}_{t-1}\right] \\
 &\qquad\leq O\left ( \left(\max_{2\leq i < j \leq q} \max_{0\leq t \leq \tau^*} \left| S_t^i - S_t^j   \right|\right)^2\right)
\end{align*}
which combined with equation \eqref{e:maxSiDiff} and Markov's inequality completes the result.
\end{proof}

\begin{proof}[Proof of Proposition~\ref{prop:DoobDecompBounds}]
Starting with part \eqref{item:DoobDecompBoundPsi},
\begin{eqnarray*}
\lefteqn{
	\Psi(\rho) - \Psi(-\rho)}	\\
	& =	&
 		\int_{-\rho}^{\rho} \frac{n\, dx}{\gz(n) + a x^2} +
		\int_{-\rho}^{\rho} \lb \frac{n}{\gz(n) + a x^2} -
			\frac{n}{\gz(n) + a x^2 + b x^3 + c x^4} \rb dx \\
	& = 	& \frac{2n}{\sqrt{a \gz(n)}} \tan^{-1}
				\left( \frac{\rho \sqrt{a}}{\sqrt{\gz(n)}}  \right)
				+ n \int_{-\rho}^{\rho} \left( - \frac{b x^3 + c x^4}
					{(\gz(n)+a x^2)^2} + O\left( \frac{ \left( b x^3 + c x^4
					\right)^2 }{(\gz(n)+a x^2)^3} \right) \right) dx \\
	& =	&
		\frac{\pi}{\sqrt{a}} \frac{n}{\sqrt{\gz(n)}} + O(n) +
			O\left( n \int_{-\rho}^{\rho} \left( - \frac{c x^4}{a^2 x^4} + \frac{
			\left( b x^3 + c x^4 \right)^2 }{a^3 x^6} \right) dx \right) \\
	& = 	&
		t^{\gz, a}(n) + O(n) + O \lb n \int_{-\rho}^{\rho} dx \rb = t^{\gz, a}(n) + O(n) .
\end{eqnarray*}

To prove part \eqref{item:DoobDecompBoundM}, we use the law of total variance:
\begin{eqnarray*}
\Var M_{t} & = & \Var \; \bbE[M_{t}|\cF_{t-1}] \; + \; \bbE \; \Var[M_{t}|\cF_{t-1}] \nonumber \\
& = & \Var M_{t-1} \; + \; \bbE \; \Var[Y_{t} | \cF_{t-1}] \nonumber \\
& \leq & \Var M_{t-1} \; + \; \max_{|z| \leq 2 \rho_0} \left| \Psi^{\prime}(z) \right|^2 \bbE \; \Var[Z_{t} - Z_{t-1} | \cF_{t-1}] \nonumber \\
& \leq & \Var M_{t-1} \; + \; \frac{n^2}{\gz^2(n)} \frac{1}{n^2}.
\end{eqnarray*}
Hence by induction
\[
\bbE M^2_{t_{\gamma}^{\gz, a}(n)} = \Var M_{t_{\gamma}^{\gz, a}(n)} \leq \frac{ t_{\gamma}^{\gz, a}(n)}{\gz^2(n)} =
	O ( w^{\gz}(n)^2)	.
\]

As for part \eqref{item:DoobDecompBoundA},
\begin{eqnarray*}
A_{t+1} - A_{t}
	& = 		& \bbE[Y_{t+1}-Y_{t}|\cF_{t}] \\
	& = 		& \bbE \left[ \lpr \Psi(Z_{t+1}) - \Psi(Z_t) \rabs
					\cF_t \right] - 1 \\
	& = 	& \bbE \left[ \lpr \Psi'(Z_t) (Z_{t+1}-Z_t) \; + \;
					O(\max_{|z| \leq 2 \rho} \left| \Psi^{\prime\prime}(z)
						\right| (Z_{t+1}-Z_t)^2) \; \rabs \; \cF_t \right] - 1 \\
	& =	& \Psi^{\prime}(Z_t) \bbE[Z_{t+1}-Z_t|\cF_t] \; + \;
					O(\max_{|z| \leq 2 \rho_0} \left| \Psi^{\prime\prime}(z) 		
					\right| n^{-2}) - 1 \\
	& = 	& O(\max_{|z| \leq 2 \rho} \left| \Psi^{\prime\prime}(z) \right|
					n^{-2})
			= O \left( \frac{1}{n \gz^{3/2}(n)} \right) ,
\end{eqnarray*}
where the last inequality follows from
\begin{eqnarray*}
\left| \frac{d^2}{dz^2} \Psi(z) \right|
	& = 		& \frac{n | 2 a z + 3 b z^2 + 4 c z^3 |}
						{(\gz(n) + a z^2 + b z^3 + c z^4)^2} \\
	& \leq 	& \frac{n C_1 |z|}{(\gz(n) + C_2 z^2)^2} =
		O \lb \frac{n}{\gz^{3/2}(n)} \rb .
\end{eqnarray*}
if $\rho$ is small enough. Then again by induction, we conclude that
\begin{equation*}
A_{t_{\gamma}^{\gz, a}(n)} = O \lb \frac{t_{\gamma}^{\gz, a}(n)}{n \gz^{3/2}(n)} \rb
	= o \left( w^{\gz}(n) \right).
\qedhere
\end{equation*}
\end{proof}

\begin{proof}[Proof of Proposition \ref{prop:DoobDecompBoundsFC}]
If $r$ is large enough and $\rho$ is small, we have $f(z) \geq n^{-1} (a/2)z^2$
for all $w_0 < |z| < \rho_0$, where as before $w_0 = w / 2$ and $\rho_0 = 2 \rho$. This immediately gives part \eqref{item:Time_Bound} with $k(r) = C r^{-1}$.

The proof for parts \eqref{item:DDFC_VBound},\eqref{item:DDFC_ABound} are similar to the ones in Proposition~\ref{prop:DoobDecompBounds}. This time the bounds on the derivatives become
\[
\max_{w_0/2 < |z| < 2\rho_0} \left| \Psi^{\prime}(z) \right|^2 \leq C r^{-4} n^{10/3}
	\quad ; \quad
\max_{w_0/2 < |z| < 2\rho_0} \left| \Psi^{\prime\prime}(z) \right| \leq
C r^{-3} n^2  .
\]
Proceeding by induction as before, we obtain \eqref{item:DDFC_ABound}, \eqref{item:DDFC_VBound} with $k(r)=C r^{-4}$ and $k(r) = C r^{-3}$ respectively.
\end{proof}

\section{Essential Mixing}
\label{sec:EssentialMixing}
\begin{proof}[Proof of Theorem~\ref{thm:EssentialMixing}]
As the reader can verify, most statements in Sections \ref{sec:DriftAnalysis} and \ref{sec:SubcriticalRegime} hold when $\gb < \gb_c(q)$ and even $\gb < q/2$ (the restrictions on $\gb$ are indicated before each statement there). The only time $\gb < \gb_s(q) < \gb_c(q)$ is required is in step \eqref{item:FC_Burn} of the overall coupling, where the condition ensures that the drift of each single coordinate $S^i_t$ is negative in all $(1/q, 1]$, which, in turn, implies that for any initial configuration, after $t=O(n)$ time, $\gs_t \in \gS_n^{\rho}$, which is a necessary starting point for the couplings that follow.

Now, if $\gb \geq \gb_s(q)$, but still $\gb <\gb_c(q)$, we may replace this step, with the requirement that $\gs_0$ is initially chosen from $\wt{\gS}_n = \gS_n^{\rho}$.
The analysis of the overall coupling will remain the same, with the coalescence time being even smaller (but just by a linear term, which can be absorbed in the cutoff-window term). Thus, the restricted mixing time $t^{\wt{\gS_n}}_{\mix(\gep)}(n)$ will be upper bounded as before.
In addition, the lower bound in Subsection \ref{sub:ProofOfLowerBound} will also hold for $t^{\wt{\gS_n}}_{\mix(\gep)}(n)$, since as initial configuration, we may take any $\gs_0 \in \partial_{P_n} \gS_n(\rho)$ for $\rho > 0$.

It remains to show that $\gS_n \setminus \wt{\gS}_n$ has an exponentially decreasing probability under $\mu_n$. This follows immediately from the large deviations analysis in Subsection~\ref{sub:CWLDP}. If $\gb < \gb_c(q)$, the rate function $I_{\gb, q}$ is strictly positive away from $\vq$ and in particular there exist $C_1 > 0$, $C_2 > 0$,
such that
\[
\mu_n \lb \gS_n \setminus \wt{\gS}_n \rb
	= \pi_n \lb \cS_n \setminus \cS^{\rho_1}_n \rb
	\leq C_1 e^{-C_2 n}.
\]
This concludes the proof of the theorem.
\end{proof}
\newpage
\section*{Acknowledgments}
This work was initiated while P.C., O.L.~ and A.S.\ were interns at the Theory Group of Microsoft Research, and
they thank the Theory Group for its hospitality.

\begin{bibdiv}
\begin{biblist}

\bib{AM}{article}{
   author={Alon, N.},
   author={Milman, V. D.},
   title={$\lambda_1,$ isoperimetric inequalities for graphs, and superconcentrators},
   journal={J. Combin. Theory Ser. B},
   volume={38},
   date={1985},
   number={1},
   pages={73--88},
}

\bib{Baxter}{book}{
   author={Baxter, Rodney J.},
   title={Exactly solved models in statistical mechanics},
   note={Reprint of the 1982 original},
   publisher={Academic Press Inc. [Harcourt Brace Jovanovich Publishers]},
   place={London},
   date={1989},
   pages={xii+486},
   isbn={0-12-083182-1},
   review={\MR{998375 (90b:82001)}},
}

\bib{BKMP}{article}{
   author={Berger, N.},
   author={Kenyon, C.},
   author={Mossel, E.},
   author={Peres, Y.},
   title={Glauber dynamics on trees and hyperbolic graphs},
   journal={Probability Theory and Related Fields},
   volume={131},
   date={2005},
   pages={311--340},
}


\bib{BhaRan:04}{article}{
  title={Torpid mixing of simulated tempering on the Potts model},
  author={Bhatnagar, N.},
  author={Randall, D.},
  journal={Proceedings of the 15th annual ACM-SIAM symposium on Discrete algorithms},
  pages={478--487},
  date={2004},
}

\bib{Binder}{article}{
  author={Binder, Kurt},
  title={Theory of first-order phase transitions},
  journal={Reports on Progress in Physics},
  volume={50},
  number={7},
  pages={783--859},
  date={1987},
}	

\bib{Biskup}{article}{
   author={Biskup, Marek},
   title={Reflection positivity and phase transitions in lattice spin
   models},
   conference={
      title={Methods of contemporary mathematical statistical physics},
   },
   book={
      series={Lecture Notes in Math.},
      volume={1970},
      publisher={Springer},
      place={Berlin},
   },
   date={2009},
   pages={1--86},
}

\bib{BC}{article}{
   author={Biskup, Marek},
   author={Chayes, Lincoln},
   title={Rigorous analysis of discontinuous phase transitions via
   mean-field bounds},
   journal={Comm. Math. Phys.},
   volume={238},
   date={2003},
   number={1-2},
   pages={53--93},
}

\bib{BCC}{article}{
   author={Biskup, Marek},
   author={Chayes, Lincoln},
   author={Crawford, Nicholas},
   title={Mean-field driven first-order phase transitions in systems with
   long-range interactions},
   journal={J. Stat. Phys.},
   volume={122},
   date={2006},
   number={6},
   pages={1139--1193},
}

\bib{BGJ:96}{article}{
  title={The random-cluster model on the complete graph},
  author={Bollob{\'a}s, B.},
  author={Grimmett, G.},
  author={Janson, S.},
  journal={Probability Theory and Related Fields},
  volume={104},
  pages={283--317},
  date={1996},
  publisher={Springer}
}

\bib{BCT}{article}{
   author={Borgs, Christian},
   author={Chayes, Jennifer T.},
   author={Tetali, Prasad},
   title = {Tight bounds for mixing of the Swendsen-Wang algorithm at the Potts transition point},
   journal = {Probability Theory and Related Fields},
   publisher = {Springer},
   pages = {509--557},
   volume = {152},
   number = {3},
   date = {2012},
}

\bib{BCKFTVV:99}{article}{
   author={Borgs, Christian},
   author={Chayes, Jennifer T.},
   author={Frieze, Alan},
   author={Kim, Jeong Han},
   author={Tetali, Prasad},
   author={Vigoda, Eric},
   author={Vu, Van Ha},
   title={Torpid mixing of some Monte Carlo Markov chain algorithms in
   statistical physics},
   conference={
      title={40th Annual Symposium on Foundations of Computer Science (New
      York, 1999)},
   },
   book={
      publisher={IEEE Computer Soc., Los Alamitos, CA},
   },
   date={1999},
   pages={218--229},
}

\bib{Bovier}{article}{
   author={Bovier, Anton},
   title={Metastability: a potential theoretic approach},
   conference={
      title={International Congress of Mathematicians. Vol. III},
   },
   book={
      publisher={Eur. Math. Soc., Z\"urich},
   },
   date={2006},
   pages={499--518},
}

\bib{CGMS}{article}{
   author={Cesi, F.},
   author={Guadagni, G.},
   author={Martinelli, F.},
   author={Schonmann, R. H.},
   title={On the two-dimensional stochastic Ising model in the phase
   coexistence region near the critical point},
   journal={J. Statist. Phys.},
   volume={85},
   date={1996},
   number={1-2},
   pages={55--102},
}

\bib{CCS}{article}{
   author={Chayes, J. T.},
   author={Chayes, L.},
   author={Schonmann, R. H.},
   title={Exponential decay of connectivities in the two-dimensional Ising
   model},
   journal={J. Statist. Phys.},
   volume={49},
   date={1987},
   number={3-4},
   pages={433--445},
}

\bib{CL}{article}{
   author={Cirillo, Emilio N. M.},
   author={Lebowitz, Joel L.},
   title={Metastability in the two-dimensional Ising model with free
   boundary conditions},
   journal={J. Statist. Phys.},
   volume={90},
   date={1998},
   number={1-2},
   pages={211--226},
}

\bib{CET}{article}{
   author={Costeniuc, M.},
   author={Ellis, R. S.},
   author={Touchette, H.},
   title={Complete analysis of phase transitions and ensemble equivalence
   for the Curie-Weiss-Potts model},
   journal={J. Math. Phys.},
   volume={46},
   date={2005},
   number={6},
   pages={063301, 25},
}

\bib{DLP}{article}{
   author={Ding, Jian},
   author={Lubetzky, Eyal},
   author={Peres, Yuval},
   title={The mixing time evolution of Glauber dynamics for the mean-field Ising model},
   journal={Comm. Math. Phys.},
   volume={289},
   date={2009},
   number={2},
   pages={725--764},
}

\bib{DLP-cens}{article}{
   author={Ding, Jian},
   author={Lubetzky, Eyal},
   author={Peres, Yuval},
   title={Censored Glauber dynamics for the mean field Ising model},
   journal={J. Stat. Phys.},
   volume={137},
   date={2009},
   number={3},
   pages={407--458},
}

\bib{DLPtree}{article}{
   author={Ding, Jian},
   author={Lubetzky, Eyal},
   author={Peres, Yuval},
   title={Mixing time of critical Ising model on trees is polynomial in the height},
   journal={Comm. Math. Phys.},
   volume={295},
   date={2010},
   number={1},
   pages={161--207},
}

\bib{EW}{article}{
   author={Ellis, R. S.},
   author={Wang, K.},
   title={Limit theorems for the empirical vector of the Curie-Weiss-Potts
   model},
   journal={Stochastic Process. Appl.},
   volume={35},
   date={1990},
   number={1},
   pages={59--79},
}

\bib{GMRZ:06}{article}{
   author={Georgii, Hans-Otto},
   author={Miracle-Sole, Salvador},
   author={Ruiz, Jean},
   author={Zagrebnov, Valentin A.},
   title={Mean-field theory of the Potts gas},
   journal={J. Phys. A},
   volume={39},
   date={2006},
   number={29},
   pages={9045--9053},
}


\bib{GorJer:99}{article}{
  title={The Swendsen-Wang process does not always mix rapidly},
  author={Gore, V.K.},
  author={Jerrum, M.R.},
  journal={Journal of Statistical Physics},
  volume={97},
  pages={67--86},
  date={1999},
  publisher={Springer}
}

\bib{Grimmett}{book}{
   author={Grimmett, Geoffrey},
   title={The random-cluster model},
   series={Grundlehren der Mathematischen Wissenschaften [Fundamental
   Principles of Mathematical Sciences]},
   volume={333},
   publisher={Springer-Verlag},
   place={Berlin},
   date={2006},
   pages={xiv+377},
}

\bib{GWL}{article}{
   author = {Griffiths, R.~B.},
   author = {Weng, C.-Y.},
   author = {Langer, J.~S.},
    title = {Relaxation Times for Metastable States in the Mean-Field Model of a Ferromagnet},
  journal = {Physical Review},
     date = {1966},
   volume = {149},
    pages = {301-305},
}

\bib{KW}{article}{
  author = {Kirkpatrick, T. R.},
  author = {Wolynes, P. G.},
  date = {1987},
  pages = {8552--8564},
  title = {Stable and metastable states in mean-field Potts and structural glasses},
  volume = {36},
  journal = {Phys. Rev. B},
  number= {16}
}

\bib{KOT}{article}{
   author={Kovchegov, Yevgeniy},
   author={Otto, Peter T.},
   author={Titus, Mathew},
   title={Mixing times for the mean-field Blume-Capel model via aggregate
   path coupling},
   journal={J. Stat. Phys.},
   volume={144},
   date={2011},
   number={5},
   pages={1009--1027},
}

\bib{LawlerSokal}{article}{
   author={Lawler, Gregory F.},
   author={Sokal, Alan D.},
   title={Bounds on the $L\sp 2$ spectrum for Markov chains and Markov
   processes: a generalization of Cheeger's inequality},
   journal={Trans. Amer. Math. Soc.},
   volume={309},
   date={1988},
   number={2},
   pages={557--580},
}

\bib{LLP}{article}{
  journal = {Probability Theory and Related Fields},
  volume={146},
  number={1},
  pages={223--265},
  date={2010},
  title   = {Glauber dynamics for the Mean-field Ising Model: cut-off, critical power law, and metastability},
  author  = {Levin, E. A.},
  author = {Luczak, M.},
  author = {Peres, Y.},
}

\bib{LPW}{book}{
   author={Levin, D. A.},
   author={Peres, Y.},
   author={Wilmer, E. L.},
   title={Markov chains and mixing times},
   note={With a chapter by James G. Propp and David B. Wilson},
   publisher={American Mathematical Society},
   place={Providence, RI},
   date={2009},
   pages={xviii+371},
}

\bib{LS:10}{article}{
  title={Critical Ising on the square lattice mixes in polynomial time},
  author={Lubetzky, Eyal},
  author={Sly, Allan},
  journal={Comm. Math. Phys.},
  status = {to appear},
}

\bib{LS:12}{article}{
    author = {Lubetzky, Eyal},
    author = {Sly, Allan},
    title = {Cutoff for general spin systems with arbitrary boundary conditions},
    status = {preprint},
    note = {Available at \texttt{arXiv:1202.4246}},
}

\bib{LS:09}{article}{
    author = {Lubetzky, Eyal},
    author = {Sly, Allan},
    title = {Cutoff for the Ising model on the lattice},
    journal = {Inventiones Mathematicae},
    status = {to appear},
}

\bib{Martinelli97}{article}{
   author={Martinelli, F.},
   title={Lectures on Glauber dynamics for discrete spin models},
   conference={
      title={Lectures on probability theory and statistics},
      address={Saint-Flour},
      date={1997},
   },
   book={
      series={Lecture Notes in Math.},
      volume={1717},
      publisher={Springer},
   },
   date={1999},
   pages={93--191},
}

\bib{MO}{article}{
   author={Martinelli, F.},
   author={Olivieri, E.},
   title={Approach to equilibrium of Glauber dynamics in the one phase
   region. I. The attractive case},
   journal={Comm. Math. Phys.},
   volume={161},
   date={1994},
   number={3},
   pages={447--486},
}

\bib{MO2}{article}{
   author={Martinelli, F.},
   author={Olivieri, E.},
   title={Approach to equilibrium of Glauber dynamics in the one phase
   region. II. The general case},
   journal={Comm. Math. Phys.},
   volume={161},
   date={1994},
   number={3},
   pages={487--514},
}

\bib{RTMS}{article}{
  author = {Rikvold, Per Arne},
  author = {Tomita, H.},
  author = {Miyashita, S.},
  author = {Sides, Scott W.},
  date = {1994},
  pages = {5080--5090},
  title = {Metastable lifetimes in a kinetic Ising model: Dependence on field and system size},
  volume = {49},
  journal = {Phys. Rev. E},
  number = {6},
}

\bib{SS}{article}{
   author={Schonmann, Roberto H.},
   author={Shlosman, Senya B.},
   title={Wulff droplets and the metastable relaxation of kinetic Ising
   models},
   journal={Comm. Math. Phys.},
   volume={194},
   date={1998},
   number={2},
   pages={389--462},
}

\bib{SJ}{article}{
   author={Sinclair, Alistair},
   author={Jerrum, Mark},
   title={Approximate counting, uniform generation and rapidly mixing Markov
   chains},
   journal={Inform. and Comput.},
   volume={82},
   date={1989},
   number={1},
   pages={93--133},
}

\bib{Thomas}{article}{
   author={Thomas, Lawrence E.},
   title={Bound on the mass gap for finite volume stochastic Ising models at
   low temperature},
   journal={Comm. Math. Phys.},
   volume={126},
   date={1989},
   number={1},
   pages={1--11},
}


\end{biblist}
\end{bibdiv}
\end{document}